\newtheorem{thm}{Theorem}
\newtheorem{prop}{Proposition}
\newtheorem{lma}[prop]{Lemma}
\newtheorem{cor}[prop]{Corollary}
\theoremstyle{definition}
\newtheorem{df}[prop]{Definition} 
\theoremstyle{remark}
\newtheorem{rmk}[prop]{Remark} 
\def\mrm#1{{\mathrm{#1}}}
\newcommand{\R}{{\mathbb{R}}}
\newcommand{\Z}{{\mathbb{Z}}}
\newcommand{\C}{{\mathbb{C}}}
\newcommand{\N}{{\mathbb{N}}}
\newcommand{\D}{{\mathbb{D}}}
\newcommand{\bK}{{\mathbb{K}}}
\newcommand{\HH}{{\mathbb{H}}}
\newcommand{\bH}{{\mathbb{H}}}
\newcommand{\bF}{{\mathbb{F}}}
\newcommand{\bs}{\bigskip}
\newcommand{\del}{\partial}
\newcommand{\sm}[1]{C^\infty(#1)}
\newcommand\vol{\operatorname{vol}}
\newcommand{\A}{\mathcal{A}}
\newcommand{\cL}{\mathcal{L}}
\newcommand{\til}[1]{\widetilde{#1}}
\newcommand{\codim}{\text{codim}}
\newcommand{\om}{\omega}
\newcommand{\Om}{\Omega}
\newcommand{\eps}{\epsilon}
\newcommand{\cA}{\mathcal{A}}
\newcommand{\cB}{\mathcal{B}}
\newcommand{\cD}{\mathcal{D}}
\newcommand{\cG}{\mathcal{G}}
\newcommand{\cH}{\mathcal{H}}
\newcommand{\cJ}{\mathcal{J}}
\newcommand{\cO}{\mathcal{O}}
\newcommand{\cP}{\mathcal{P}}
\newcommand{\cR}{\mathcal{R}}
\newcommand{\cS}{\mathcal{S}}
\newcommand{\cM}{\mathcal{M}}
\newcommand{\tmin}{{\text{min},\bK}}
\newcommand{\tmon}{{\text{mon},\bK}}
\newcommand{\tuniv}{{\text{univ},\bK}}
\newcommand{\CP}{\mathbb{C} {P}}
\newcommand{\RP}{\mathbb{R} {P}}
\newcommand{\tHam}{\widetilde{\text{Ham}}}
\newcommand{\Id}{{\mathbbm{1}}}
\renewcommand{\bar}[1]{\overline{#1}}
\DeclareMathOperator{\id}{\mathrm{id}}
\DeclareMathOperator{\ind}{\mathrm{ind}}
\DeclareMathOperator{\supp}{\mathrm{supp}}
\DeclareMathOperator{\val}{\mathrm{val}}
\DeclareMathOperator{\Ham}{\mathrm{Ham}}
\DeclareMathOperator{\Symp}{\mathrm{Symp}}
\DeclareMathOperator{\ima}{\mathrm{im}}
\DeclareMathOperator{\Spec}{\mathrm{Spec}}
\DeclareMathOperator{\pemod}{{\mathbf{pmod}}}
\DeclareMathOperator{\barc}{{\mathbf{Barcodes}}}
\DeclareMathOperator{\inte}{{\mathrm{int}}}
\def\H2{H^{(2)}}
\newenvironment{Properties}
{\begin{list}{}{
\setlength{\topsep}{6pt}%
\setlength{\itemsep}{4pt}%
\setlength{\labelsep}{0pt}%
\setlength{\leftmargin}{0pt}%
\setlength{\labelwidth}{0pt}%
\setlength{\listparindent}{0pt}}%
\setlength{\parskip}{0pt}%
}
{\end{list}}
\newcommand{\akemail}{asafkisl@post.tau.ac.il}
\newcommand{\esemail}{shelukhin@dms.umontreal.ca}
\begin{document}

\title{Bounds on spectral norms and barcodes}
\date{\today}

\author{Asaf Kislev}
\address{Asaf Kislev, School of Mathematical Sciences, Tel Aviv University, Israel}
\email{\akemail}

\author{Egor Shelukhin}
\address{Egor Shelukhin, Department of Mathematics and Statistics,
  University of Montreal, C.P. 6128 Succ.  Centre-Ville Montreal, QC
  H3C 3J7, Canada}
\email{\esemail}

\bibliographystyle{abbrv}

\begin{abstract}

We investigate the relations between algebraic structures, spectral invariants, and persistence modules, in the context of monotone Lagrangian Floer homology with Hamiltonian term. Firstly, we use the newly introduced method of filtered continuation elements to prove that the Lagrangian spectral norm controls the barcode of the Hamiltonian perturbation of the Lagrangian submanifold, up to shift, in the bottleneck distance. Moreover, we show that it satisfies Chekanov type low-energy intersection phenomena, and non-degeneracy theorems. Secondly, we introduce a new averaging method for bounding the spectral norm from above, and apply it to produce precise uniform bounds on the Lagrangian spectral norm in certain closed symplectic manifolds. Finally, by using the theory of persistence modules, we prove that our bounds are in fact sharp in some cases. Along the way we produce a new calculation of the Lagrangian quantum homology of certain Lagrangian submanifolds, and answer a question of Usher.

\end{abstract}

\maketitle

\tableofcontents

\section{Introduction and main results}\label{Sect:intro}

In both Morse and Floer theory, one deals with the homology of a natural filtered complex, whose differential is given by counting trajectories of the negative gradient vector field of a certain functional connecting certain pairs of critical points. Since the Morse function, or the action functional in the Floer case, decreases along these trajectories, for each $s \in \R$ the generators whose critical values are strictly smaller than $s$ form a subcomplex. In the case of a Morse function $f$ on a closed manifold $X$, the homology $V^s_*(f)$ of this subcomplex with coefficients in a field $\bK$ is isomorphic to the homology $H_*(\{f<s\},\bK)$ of the sublevel set $\{f<s\}$ with coefficients in $\bK.$ This is a vector space of finite dimension, which is trivial for all $s \ll 0.$ Inclusions $\{f<s\} \subset \{f<t\}$ for $s \leq t$ yield maps $\pi_{s,t}: V^s_*(f) \to V^t_*(f)$ such that $\pi_{s,t} \circ \pi_{r,s} = \pi_{r,t}$ for $r \leq s \leq t.$ This family of vector spaces parametrized by a real parameter forms an algebraic structure called a {\em persistence module} which was introduced and studied extensively since the early 2000s in the data analysis community (see e.g. \cite{CEH-stability,CdSGO-structure,ELZ-simplification, BauLes,Carlsson,CZCG,CrawBo,Ghrist,CarlZom}). Recently, persistence modules found applications in symplectic topology, see for example \cite{Team,PolShe,PolSheSto,UsherZhang,Zhang,Fraser,stevenson}, with precursors in \cite{UsherBD1,UsherBD2,FOOO-polydiscs,Barannikov,CorneaRanicki}.

One of the main features of the theory of persistence modules, is the normal form theorem, whereby a persistence module $V$ is determined, up to isomorphism, by a multiset $\cB(V) = \{(I_j,m_j)\}$ of intervals $I_j \subset \R,$ that are either finite, or infinite bounded from below, coming with multiplicity $m_j \in \Z_{>0}$. The isometry theorem for pesistence modules states that the interleaving distance $d_{\mrm {inter}}(V,W)$ between two persistence modules $V,W,$ that is controlled from above by $|f-g|_{C^0}$ in the case $V=V(f), W = V(g),$ of persistence modules of Morse functions, is completely recovered by the bottleneck distance between the associated barcodes $\cB(V),\cB(W),$ which is roughly speaking the minimal distance by which one can move the endpoints of the bars in $\cB(V)$ to obtain the barcode $\cB(W).$ We refer to Section \ref{Sect:pmod} for further details.  We denote by $\pemod$ the category of persistence modules over the base field $\bK,$ which we sometimes consider as a metric space endowed with $d_{\mrm {inter}}$
and by $\barc$ the space of barcodes with the bottleneck distance. It shall also be convenient to consider the quotients $(\pemod',d'_{\mrm {inter}})$ and $({\barc}',d'_{\mrm {bottle}})$ of $(\pemod,d_{\mrm {inter}})$ and $(\barc,d_{\mrm {bottle}})$ by the isometric $\R$-action by shifts: $V \mapsto V[c], \cB \mapsto \cB[c]$ for $c \in \R,$ defined by $V[c]^t = V[c+t],$ $\cB[c] = \{(I_j-c,m_j)\}$ for $\cB = \{(I_j,m_j)\}.$ The induced metrics are the quotient metrics: for example $d'_{\mrm {inter}}([V],[W]) = \inf_{c\in \R} d_{\mrm {inter}} (V,W[c]).$ We will formulate our results in the language of $(\pemod,d_{\mrm{inter}})$ primarily, however we keep in mind that by the isometry theorem, they are equivalent to the analogous statements for $(\barc,d_{\mrm{bottle}}).$ We remark that elements of $\barc'$ have well-defined bar lengths.

In symplectic topology, for a symplectic manifold $(M,\om),$ persistence modules have been associated to Hamiltonians $H \in \cH = \sm{[0,1] \times M,\R}$ in the absolute case, and more generally to pairs $(L,H)$ where $H \in \cH$ and $L \subset M$ is a Lagrangian submanifold in the relative case. We remark that the relative case can be alternatively described by associating persistence modules to certain pairs $(L,L')$ of Lagrangian submanifolds. Let us briefly describe the relative case.

A Lagrangian submanifold $L \subset M$ of a symplectic manifold $(M,\omega)$ is called weakly monotone if there exists a positive constant $\kappa = \kappa_L$ such the relative cohomology class $\omega_L \in H^2(M,L;\R)$ of $\omega$  and the Maslov class $\mu_L$ satisfy \[\omega_L = \kappa \cdot \mu_L\] on $H^D_2(M,L;\Z) = \ima(\pi_2(M,L) \to H_2(M,L;\Z)).$ We assume in addition that $N_L \geq 2$ for the positive generator $N_L$ of $\ima(\mu_L) \subset \Z.$ If $\mu_L = 0, \omega_L = 0$ on $H^D_2(M,L;\Z),$ we call $L$ weakly exact. Finally, we call $L$ monotone if it is weakly monotone, and not weakly exact. We call a symplectic manifold $(M,\omega)$ weakly monotone, symplectically aspherical, or monotone, if respectively, the Lagragian diagonal $\Delta_M \subset M \times M^{-}$ is weakly monotone, weakly exact, or monotone, where $M^{-}$ denotes the symplectic manifold $(M,-\omega).$ Finally, for a Hamiltonian $H \in \cH$ we denote by $\{\phi^t_H\}_{t \in [0,1]}$ the Hamiltonian isotopy generated by $H:$ it is given by integrating the time-dependent vector field $X^t_H$ determined uniquely by $\iota_{X^t_H} \omega = - dH_t,$ where $H_t(-) = H(t,-) \in \sm{M,\R}.$ The group of Hamiltonian diffeomorphisms $\Ham(M,\om)$ consists of the time-one maps of such isotopies (see \cite{P-book}).

Given $L \subset M$ weakly monotone, and $H \in \cH,$ such that the intersection $\phi^1_H L \cap L$ is transverse, filtered Floer theory in the contractible class of paths from $L \to L,$ can be considered to be essentially Morse theory on a suitable cover $\til{\cP}_{pt}(L,L)$ of the path space $\cP_{pt}(L,L)$ for the action functional \[\cA_{L,H}(x,\overline{x}) = \int_{0}^{1} H(t,x(t)) - \int_{\overline{x}}\om\] where $\overline{x}$ is a capping of $x$ relative to $L,$ that is a smooth map $\overline{x}:\D \to M$ with $\overline{x}(e^{i\pi t}) = x(t),$ for $t \in [0,1]$ and $\overline{x}(e^{i\pi t}) \in L$ for $t \in [1,2].$ It gives for each $r \in \Z$ a persistence module $V_r(L,H) \in \pemod.$ We can similarly consider the action functional $\cA_{H}$ on a suitable cover $\til{\cL}_{pt} M$ space of contractible loops $\cL_{pt} M.$ In this case, if the intersection $(\phi \times \id) \Delta_M \cap \Delta_M$ is transverse in $M \times M^{-},$ Floer theory provides for each $r \in \Z$ a persistence module $V_r(H) \in \pemod.$ Moreover, $V_r(L,H), V_r(H)$ depend up to isomorphism only on the class \[[H] = [\{\phi^t_H\}_{t \in [0,1]}] \in \til{\Ham}(M,\om)\] in the universal cover of the group of Hamiltonian diffeomorphisms of $(M,\om).$ Hence the barcode $\cB_r(L,[H])$ of $V_r(L,H)$ is defined canonically. We also note that in fact by \cite[Proposition 6.2]{UsherBD2}, if $\phi^1_F(L) = \phi^1_G(L),$ there exist constants $C \in \R$ and $I \in \Z$ such that for all $r\in \Z,$ $V_r(L,F) \cong V_{r+I}(L,G)[C].$ Similarly, by \cite[Proposition 5.3]{UsherBD2}, if $\phi^1_F = \phi^1_G$ then for all $r\in \Z,$ $V_r(F) \cong V_{r+I}(G)[C],$ for certain $C \in \R,$ and $I \in 2\Z.$ Finally, if we choose the covers $\til{\cP}_{pt}(L,L),$ and $\til{\cL}_{pt} M$ to be sufficiently large (see Section \ref{Sec:Floer-persistence}), then $V_{r+1}(L,H) \cong V_r(L,H)[-\kappa_L]$ and $V_{r+2}(H) \cong V_r(H)[-2 \kappa_{\Delta_M}].$ This way, for $L \subset M$ a weakly monotone Lagrangian submanifold, and $\phi \in \Ham(M,\om),$ satisfying suitable non-degeneracy assumptions, we obtain elements $\cB'(L,\phi(L)) \in \barc'$ and $\cB'_0(\phi),\cB'_1(\phi) \in \barc',$ that depend only on $\phi(L).$ In the latter case, we may take a larger cover and get $\cB'(\phi) \in \barc',$ which should be considered as the union of $\cB_0([H]) \sqcup \cB_1([H])[-\kappa_{\Delta_M}]$ considered modulo shifts. As explained in \cite{PolShe}, these definitions can be extended to the case when $(L,H)$ or $H$ are degenerate, by taking suitable sufficiently $C^2$-small perturbations of $H.$

Finally, we note that similarly to the case of Morse theory, natural distances between Hamiltonian diffeomorphisms control the interleaving distance between the associated persistence modules. For example \cite{PolShe} for $F,G \in \cH$ with zero mean, \begin{equation}\label{eq:bottle-Hofer}d_{\mrm{inter}} (V_r(F),V_r(G)) \leq d_{\mrm{Hofer}} ([F],[G]),\end{equation}  where $d_{\mrm{Hofer}}(-,-)$ is the bi-invariant pseudo-metric on $\til{\Ham}(M,\omega)$ given in the following two equivalent ways \[d_{\mrm{Hofer}} ([F],[G]) = \inf_{[H] = [F]^{-1}[G]} \int_0^1 (\max_M H_t - \min_M H_t)\,dt,\] 
\begin{equation}\label{eq:Hofer norm linear} d_{\mrm{Hofer}} ([F],[G])= \inf_{\substack{[F']=[F],\\ [G']=[G]}} \int_0^1 (\max_M (F'_t - G'_t) - \min_M (F'_t - G'_t))\,dt.\end{equation}
 Moreover for all $f,g \in \Ham(M,\om),$ \[d'_{\mrm{bottle}} (\cB'(f),\cB'(g)) \leq d_{\mrm{Hofer}}(f,g),\] where \[d_{\mrm{Hofer}}(f,g) = \inf_{\substack{\phi^1_F=f \\ \phi^1_G = g}} d_{\mrm{Hofer}}([F],[G])\] is the bi-invariant Hofer metric on $\Ham(M,\om)$ introduced in \cite{HoferMetric} and proved to be non-degenerate in \cite{Viterbo-specGF,Polterovich-isotopy,Lalonde-McDuff-Energy}. Moreover, one can replace $\cB'(-)$ by either $\cB'_0(-)$ or $\cB'_1(-).$ Similarly, one shows following closely the absolute case \[d_{\mrm{inter}} (V_r(L,F),V_r(L,G)) \leq d_{\mrm{Hofer}} ([F],[G]),\] where $F,G \in \cH$ have zero mean. In fact, reinterpreting the Floer complex $(L,H)$ for $H \in \cH$ up to a shift in action, via the Lagrangian Floer complex of $(L,\phi^1_H(L)),$ and replacing the Floer continuation maps by suitable Floer equations with moving boundary conditions (cf. \cite{Oh-Turkish} and \cite{OhBook,SeidelBook} and the references therein), one can show that:
\[d'_{\mrm{inter}} (V'_r(L,F),V'_r(L,G)) \leq d_{\mrm{Hofer}} ([F]\cdot L,[G] \cdot L),\] where for $H \in \cH$ we denote by $[H]\cdot L$ the class of $\{\phi^t_H(L)\}_{t \in [0,1]}$ in the universal cover $\til{\cO}_L$ of the orbit $\cO_L = \Ham(M,\om) \cdot L$ of $L$ under the action of the Hamiltonian group. The pseudo-metric is given by \[d_{\mrm {Hofer}}([F]\cdot L,[G]\cdot L) = \int_0^1 (\max_L H(t,-) - \min_L H(t,-)) \, dt,\] with the infimum now running over $H\in \cH$ with $[H]\cdot L = [F]^{-1}[G] \cdot L.$ We shall show this, as well as the following bound in a different way, when $L \subset M$ is wide, as a consequence of Theorem \ref{theorem: Lipschitz wide} and Inequality \eqref{eq: gamma and Hofer}. Finally, \[d'_{\mrm{bottle}} (\cB'(L,f \cdot L),\cB'(L,g \cdot L)) \leq d_{\mrm{Hofer}} (f \cdot L, g \cdot L),\] the latter being the Lagrangian Hofer metric \cite{Chekanov,ChekanovFinsler}, defined as \[d_{\mrm{Hofer}} (f \cdot L, g \cdot L) =  \inf_{\substack{ \phi^1_F L = f L,\\ \phi^1_G L = g L}} d_{\mrm{Hofer}}([F]\cdot L, [G]\cdot L).\] Equivalently \cite{Usher-trick}, for $L' = \phi^1_F(L) \in \cO_L,$ we have \begin{align*}d_{\mrm {Hofer}}(L,L') & = \inf \int_0^1 (\max_L H(t,-) - \min_L H(t,-)) \, dt, \\ & = \inf \int_0^1 (\max_{\phi^t_H(L)} H(t,-) - \min_{\phi^t_H(L)} H(t,-)) \, dt, \\ &= \inf \int_0^1 (\max_M H(t,-) - \min_M H(t,-)) \, dt,\end{align*} the infimum running over all $H \in \cH$ with $\phi^1_H(L) = L'.$

We call the maximal length of a finite bar in $\cB'(L,\phi(L)),$ respectively $\cB'(\phi),$ the {\em boundary depth} $\beta(L,\phi(L)) = \beta(L,\phi^1_H) = \beta(L,H),$ respectively $\beta(\phi) = \beta(H).$ It was introduced and studied by Usher in \cite{UsherBD1,UsherBD2} in a different way, and proven to depend only on $\phi^1 L,$ resp. $\phi^1_H$ in \cite[Theorem 1.7, Section 6]{UsherBD2}. 

Finally, by the PSS isomorphisms in Floer theory, $V_*(H)^{\infty} = \lim_{s \to \infty} V_*(H)^s \cong QH_*(M; \Lambda)$ the quantum homology of $M,$ with a suitable choice $\Lambda$ of Novikov coefficients, and similarly $V_*(L,H)^{\infty} = \lim_{s \to \infty} V_*(L,H)^s \cong QH_*(L; \Lambda),$ the Lagrangian quantum homology, as introduced in \cite{BiranCorneaLagrangianQuantumHomology,Bi-Co:qrel-long} with suitable coefficients $\Lambda.$ Considering a non-zero class $a \in QH(M;\Lambda),$ resp. $a \in QH(L;\Lambda)$ we obtain the numbers $c(a,H),$ and $c(L; a,H)$ by looking at the infimum of levels $s \in \R$ with $a$ being in the image of $V^s(H) \to QH(M;\Lambda),$ resp. in the image of $V^s(L,H) \to QH(L;\Lambda).$ Spectral invariants enjoy various remarkable properties, and allow one to prove many interesting results. They were first introduced in symplectic topology by Viterbo in \cite{Viterbo-specGF} by means of generating functions, then in Floer theory by Schwarz \cite{Schwarz:action-spectrum}, and Oh \cite{Oh-construction,Oh-specnorm}, and in Lagrangian Floer theory by Leclercq \cite{Leclercq-spectral} and Leclercq-Zapolsky \cite{LeclercqZapolsky} (see also \cite{Oh-spec-lagr,MonznerVicheryZapolsky,FO3-spec}). We refer to \cite{LeclercqZapolsky} for a review of the literature. 

In terms of Lagrangian (resp. Hamiltonian) spectral invariants, the relative, resp. absolute, spectral pseudo-norms are given for $H \in \cH$ by \[\gamma(L,H) = c(L; [L],H) + c(L; [L],\overline{H}),\] \[\gamma(H) = c([M],H) + c([M],\overline{H}).\] We will omit $L$ in the notation for Lagrangian spectral invariants, once it is clear which Lagrangian submanifolds are being discussed. Recall from \cite{BiranCorneaRigidityUniruling} that a weakly monotone Lagrangian submanifold $L \subset M$ is called {\em wide} if $QH(L,\Lambda) \cong H_*(L,\Lambda).$ Note that weakly exact Lagrangians are always wide. We refer to Section \ref{Sect:prelim} for further preliminaries related to Floer homology.

It has long been known \cite{EntovPolterovichCalabiQM}, via Lemma \ref{lma: Hamiltonian geq Lagrangian}, that \begin{align}\label{eq:bound of gamma on CP^n}\gamma(H) \leq A,\\ \notag \gamma(L,H) \leq A,\end{align} for all Hamiltonians $H$ on $\C P^n,$ and all wide weakly monotone Lagrangian submanifolds $L \subset \C P^n,$ where $A = \left< [\om_{FS}] , [\C P^1] \right>.$ Usher \cite{Usher-private} has asked whether the boundary depth $\beta(H)$ and $\beta(L,H)$ is similarly uniformly bounded in this case.

We give a preview of our results in the following statement, that answers the question of Usher. This statement is expanded and refined in Sections \ref{sec:intro-continuation} and \ref{sec:filteredYoneda}.

\begin{thm}
	\label{thmSpectralNormBoundsBoundaryDepth}
	Let $L \subset M$ be wide. Then for each Hamiltonian $H \in \cH$,
	\[ \beta(L,H) \leq \gamma(L,H) .\]
	\[ \beta(H) \leq \gamma(H) .\]
\end{thm}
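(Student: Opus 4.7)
The plan is to prove $\beta(L,H) \leq \gamma(L,H)$ by combining filtered continuation elements with a chain-homotopy argument whose action shift is controlled by $\gamma(L,H)$; the Hamiltonian statement should then follow by applying the same argument to the diagonal $\Delta_M \subset M \times M^-$, whose Lagrangian Floer persistence recovers the Hamiltonian one. As a first reformulation, I would invoke Poincar\'e duality for Lagrangian Floer persistence modules, which exchanges $V_*(L,H)$ with $V_{n-*}(L,\overline{H})^\vee$ up to a degree shift. Wideness guarantees that the top class $[L]\in QH_n(L)$ and the point class $[\mathrm{pt}]\in QH_0(L)$ both live in $QH(L)\cong H(L)$ and are paired by this duality, so that $c(L;[L],\overline{H}) = -c(L;[\mathrm{pt}],H)$, and hence
\[
\gamma(L,H) \;=\; c(L;[L],H) - c(L;[\mathrm{pt}],H).
\]

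Next, using the PSS isomorphism I would construct, for every $\epsilon>0$, cycles $e^+_\epsilon \in CF^{c(L;[L],H)+\epsilon}(L,H)$ and $\bar e^+_\epsilon \in CF^{c(L;[L],\overline{H})+\epsilon}(L,\overline{H})$, each representing $[L]$ for the respective Hamiltonian. These are the \emph{filtered continuation elements}: chain representatives of extremal quantum homology classes at the minimum possible action level. Combining them via a pair-of-pants product with the Morse-theoretic PSS in reverse, I would produce a chain map $\Pi\colon CF(L,H) \to CF(L,H)$ that factors through $V^\infty(L,H)\cong QH(L)$, so that $\Pi(\alpha)=0$ whenever $[\alpha]=0$ in $V^\infty$, together with a chain homotopy $K$ satisfying $\id - \Pi = dK + Kd$ and raising action by at most $\gamma(L,H)+O(\epsilon)$.

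Now let $[a,b)$ be a finite bar of $\cB(L,H)$, realized by a cycle $\alpha \in CF^{a+\epsilon}(L,H)$ that is not a boundary below level $b$. Since $[\alpha]=0$ in $V^\infty$, the construction gives $\Pi(\alpha)=0$, and the chain-homotopy identity becomes $\alpha = dK(\alpha)$ with $K(\alpha)$ of action at most $a+\epsilon+\gamma(L,H)+O(\epsilon)$. Thus $\alpha$ already bounds at that level, so $b \leq a+\gamma(L,H)+O(\epsilon)$; letting $\epsilon\to 0$ yields $b-a \leq \gamma(L,H)$, and therefore $\beta(L,H)\leq \gamma(L,H)$.

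The hard part will be engineering $\Pi$ and $K$ with the sharp action bound $\gamma(L,H)$, rather than twice that or worse. This demands a combinatorially careful assembly of $e^+_\epsilon$ and $\bar e^+_\epsilon$ using a moving-boundary Floer equation together with a chain-level splitting of PSS, which is available only thanks to wideness: without it, ``quantum'' infinite bars with spectral invariants outside $[c(L;[\mathrm{pt}],H),c(L;[L],H)]$ could appear, and $\Pi$ could not be arranged to annihilate all of $\ker(V\to V^\infty)$ while shifting action by only $\gamma(L,H)$.
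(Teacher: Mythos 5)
Your overall strategy --- filtered continuation elements combined with wideness --- matches the paper's, but you package it as a direct argument on individual finite bars rather than via the interleaving distance. The paper instead proves the interleaving bound $d_{\mrm{inter}}(V_r(L,H),V_r(L,0)[c]) \le \tfrac12\gamma(L,H)$ (Theorem~\ref{theorem: Lipschitz wide}, deduced from Proposition~\ref{prop: Lipschitz general coarse}), shows that wideness forces $\beta(L,0)=0$ (Proposition~\ref{prop: beta 0 for wide}), and then invokes the $2$-Lipschitz dependence of the boundary depth on the interleaving distance. Your unrolled version can be made to work, but as written there is a gap in the chain-level reasoning.

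The specific gap: you claim a chain map $\Pi$ ``factoring through $V^\infty \cong QH(L)$'' so that $\Pi(\alpha)=0$ for any null-homologous cycle $\alpha$, with a chain homotopy $K$ to the identity shifting action by at most $\gamma(L,H)+O(\epsilon)$. What filtered continuation elements actually give is $\Pi' = \mu_2(e^+_\epsilon,-)\circ\mu_2(\bar e^+_\epsilon,-)$, which factors through $CF(L,0)$ (equivalently, the pearl complex), not through $QH(L)$ with its zero differential. For a null-homologous cycle $\alpha$ one gets that $\mu_2(\bar e^+_\epsilon,\alpha)$ is a null-homologous cycle in $CF(L,0)$ at level $\le a + c([L],\overline{H}) + O(\epsilon)$, but it need not vanish, and $\Pi'(\alpha)$ is therefore a boundary, not zero. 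To conclude $b - a \le \gamma(L,H)$ you additionally need that this null-homologous cycle in $CF(L,0)$ bounds at essentially its own action level --- that is, $\beta(L,0)=0$. This is exactly how wideness enters (Proposition~\ref{prop: beta 0 for wide}), and it is what the general Proposition~\ref{prop: Lipschitz general coarse} tracks via the extra $\beta(L,0)$ error term. Your stated rationale for wideness (preventing ``quantum infinite bars with spectral invariants outside $[c_-,c_+]$'') misses this point. As a side remark, the reformulation $\gamma(L,H)=c([L],H)-c([pt],H)$ you invoke at the start via Poincar\'e duality requires $N_L>n$ (so that $[pt]$ is the unique degree-$0$ class up to scalars), which wideness alone does not give; fortunately your main construction only uses $\gamma(L,H)=c_+(H)+c_+(\overline H)$, which is the definition.
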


\begin{rmk}

We note that it follows from \cite{LeclercqZapolsky} that $\gamma(L,H)$ depends only on the class $[H] \cdot L \in \til{\cO}_L,$ hence we write $\gamma(L,H) = \gamma(L,[H]) = \gamma(L, [H]\cdot L).$ In the absolute case, we get the number $\gamma(H) = \gamma([H]).$
	
Theorem \ref{thmSpectralNormBoundsBoundaryDepth} therefore implies in the relative case that if $L$ is wide, then \[\beta(L, \phi^1_H) \leq \gamma(L, \phi^1_H),\] where the latter is defined as follows. Denote by $\Ham_L$ the group of Hamiltonian diffeomorphisms $f$ of $M$ that satisfy $f(L) = L$. Let $\Gamma_L$ be the space of paths $\eta : [0,1] \to \Ham(M)$ such that $\eta^0 = \Id$ and $\eta^1 \in \Ham_L$. Let $\mathcal{P}_L = \pi_0(\Gamma_L)$. Then \[\gamma(L, \phi^1_H) = \inf_{\eta \in \cP_L} \gamma(L,\eta \cdot  [H]) \geq 0.\]
Of course the infimum in the right hand side is the same as \[\inf_{F} \gamma(L,[F])\] running over all $F \in \cH$ with $\phi^1_F(L) = \phi^1_H(L).$ In other words we minimize $\gamma(L,-)$ over all elements of $\til{\cO}_L $ that project to $\phi^1_H(L)$ under the natural covering map $\til{\cO}_L \to \cO_L.$
	
It is well-known that the absolute analogue $\gamma(\phi^1_H) = \inf_{F} \gamma([F]),$ over all $F \in \cH$ with $\phi^1_F = \phi^1_H,$ is non-degenerate, and hence defines a norm on $\Ham(M,\om),$ called the spectral norm \cite[Theorem A(1)]{Oh-specnorm} (see also \cite[Theorem 12.4.4]{McDuffSalamonBIG},\cite[Remark 2.2]{Usher-Sharp}). The Lagrangian spectral norm $\gamma(L,-)$ was introduced in \cite{Viterbo-specGF} for $L = 0_{Q},$ the zero section in the cotangent bundle $M = T^*Q,$ wherein its non-degeneracy was proven (see also \cite{MonznerVicheryZapolsky,LisiRieser,HLS-coisotropic}). Since the introduction of Lagrangian spectral invariants \cite{Leclercq-spectral,LeclercqZapolsky} for weakly monotone closed Lagrangian submanifolds with non-vanishing Floer homology (easily extended to the case of open manifolds tame at infinity or compact with convex boundary), the notion of Lagrangian spectral norm was extended to these cases, and its non-degeneracy has remained a mystery. We settle it in Theorem \ref{thm:spectral_norm non-deg} by proving a strong version of a relative energy capacity inequality.
	
Finally, we note that by the Lagrangian control property of Lagrangian spectral invariants, similarly to the absolute case, $\gamma(L,-)$ is bounded from above by the Lagrangian Hofer norm: \begin{equation}\label{eq: gamma and Hofer}\gamma(L,L') \leq d_{\mathrm{Hofer}}(L,L').\end{equation}

\end{rmk}

\subsection{Filtered continuation elements and their applications}\label{sec:intro-continuation}

We first recall the idea of the proof of Inequality \eqref{eq:bottle-Hofer}. Given $F,G \in \cH$ non-degenerate with zero mean, the Floer continuation maps $C(F,G): CF(F;\cD) \to CF(G;\cD),$ $C(G,F): CF(G;\cD) \to CF(F;\cD),$ with $\cD$ suitable perturbation data with zero Hamiltonian part, yield $\delta$-interleavings between the persistence modules $V_r(F)$ and $V_r(G)$ for $\delta = \int_0^1 (\max_M (F_t - G_t) - \min_M (F_t - G_t) )\, dt.$ Since $V_r(F), V_r(G)$ only depend on $[F],[G]$ up to isomorphism, this finishes the proof, by Equation \eqref{eq:Hofer norm linear}.

Following \cite{ShelukhinHZ}, inspired by \cite{BiranCorneaS-Fukaya, AK-simplehomotopy}, we observe that the continuation map $C(F,G)$ is chain homotopic to the multiplication operator $m_2(x,-): CF(F;\cD) \to CF(G;\cD)$ with a cycle $x \in CF(G \# \overline{F}; \cD)$ representing $PSS_{G \# \bar{F}; \cD}([M]).$ Now, instead of the Hofer norm, it is the minimal filtration levels, in other words, spectral invariants $c([M],G \# \bar{F}),$ $c([M], F \# \bar{G})$ that govern the shifts in filtrations of the maps, which allows to connect the spectral norm to the interleaving distance. Similar observations apply in the relative case. The method of filtered continuation elements hence consists of {\em replacing continuation maps by multiplication operators} with certain cycles of minimal filtration level in their homology class. We provide two main applications of this method. The homology classes of these cycles are called continuation elements in the literature.

\subsubsection{Interleaving distance up to shift, and spectral norm.}

The first application of this method  whose proof is described in Section \ref{sec:filteredYoneda}, is the following result. Theorem  \ref{thmSpectralNormBoundsBoundaryDepth} is its immediate consequence.

\begin{thm}\label{theorem: Lipschitz wide}
	Let $L$ in $M$ be wide. Then the barcode of $(L,H),$ as a function of $H,$ is Lipschitz in the spectral norm up to $\R$-shifts: for each $r \in \Z,$ $F\in \cH, \;G \in \cH$ there exists $c \in \R,$ such that
	\[d_{\mrm{inter}}(V_r(L,F),V_r(L,G)[c]) \leq \frac{1}{2} \gamma(L,G \# \overline{F}).\]  In fact $c = - \frac{1}{2} (- c([L],G \# \overline{F}) + c([L], F \# \overline{G})).$
\end{thm}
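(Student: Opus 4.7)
My plan is to implement the method of filtered continuation elements outlined in Section~\ref{sec:intro-continuation}: instead of comparing $CF(L;F)$ and $CF(L;G)$ via the usual Floer continuation maps (whose filtration shifts are governed by the Hofer norm), I will compare them by pair-of-pants multiplication with carefully chosen Floer cycles whose actions are controlled by the Lagrangian spectral invariants $c([L], G\#\bar F)$ and $c([L], F\#\bar G)$. The wide hypothesis enters in exactly one, but crucial, place: it forces the PSS image of $[L]$ to be the unit of the Floer product and to satisfy $[L]\ast[L] = [L]$.

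First I would fix $\epsilon > 0$ and, by definition of the Lagrangian spectral invariants, pick Floer cycles
\[ x \in CF(L; G \# \bar F), \qquad y \in CF(L; F \# \bar G)\]
representing the PSS image of $[L]$, with actions $\cA(x) \leq \alpha := c([L], G \# \bar F) + \epsilon$ and $\cA(y) \leq \beta := c([L], F \# \bar G) + \epsilon$. I then set
\[ \Phi := \mu_2(x, -) : CF(L;F) \to CF(L;G), \qquad \Psi := \mu_2(y,-): CF(L;G) \to CF(L;F),\]
using that $(G\#\bar F)\#F = G$ and $(F\#\bar G)\#G = F$ hold for normalized Hamiltonians, so the pair-of-pants product $\mu_2$ lands in the correct Floer complex with no auxiliary identification. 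By construction $\Phi$ shifts action filtration by at most $\alpha$ and $\Psi$ by at most $\beta$, and both are of degree $r$ since $[L]$ is the unit of the product.

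The essential step is to show that $\Psi\circ\Phi$ is chain homotopic, through a homotopy whose action shift is at most $\alpha+\beta$, to the canonical identity map $CF(L;F) \to CF(L;F)$, and analogously for $\Phi\circ\Psi$. For this I invoke the filtered chain-level associativity of $\mu_2$, which yields $\Psi\circ\Phi \simeq \mu_2(\mu_2(y,x), -)$ up to a homotopy of the claimed action shift, and observe that $\mu_2(y,x) \in CF(L; F\#\bar G\#G\#\bar F) = CF(L; 0)$ represents $[L]\ast[L]$ in $HF_*(L;0) \cong QH_*(L;\Lambda)$. Here the wide hypothesis enters: $[L]\ast[L] = [L]$ in $QH(L;\Lambda) \cong H_*(L;\Lambda)$, so $\mu_2(y,x)$ again represents the unit, and $\mu_2(\mu_2(y,x),-)\colon CF(L;F) \to CF(L;0\#F)=CF(L;F)$ is multiplication by the Floer unit, chain-homotopic to the identity.

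Finally, to extract the interleaving I set $c := \tfrac{1}{2}(\alpha - \beta) = \tfrac{1}{2}(c([L],G\#\bar F) - c([L], F\#\bar G)) = -\tfrac{1}{2}(-c([L],G\#\bar F)+c([L],F\#\bar G))$, which is independent of $\epsilon$ and coincides with the shift asserted in the statement. Then $\Phi$ provides a map $V_r(L,F)^s \to V_r(L,G)[c]^{s+\delta}$ and $\Psi$ provides $V_r(L,G)[c]^s \to V_r(L,F)^{s+\delta}$ for $\delta := \tfrac{1}{2}(\alpha+\beta) = \tfrac{1}{2}\gamma(L,G\#\bar F) + \epsilon$, and by the previous paragraph their compositions are the canonical inclusions; together they exhibit a $\delta$-interleaving between $V_r(L,F)$ and $V_r(L,G)[c]$. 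Letting $\epsilon \to 0$ yields the stated bound. The main obstacle I anticipate is executing the filtered chain-level associativity of $\mu_2$ with quantitative control on the action shift of the associator homotopy, for which one must carefully assemble the appropriate pair-of-pants-with-homotopy-disc moduli spaces and invoke the energy estimates of Lagrangian Floer theory in the style of \cite{BiranCorneaS-Fukaya,LeclercqZapolsky,ShelukhinHZ}; once this filtered associativity and the analogous filtered chain-level unitality of the PSS image of $[L]$ are in place, the rest of the argument is formal.
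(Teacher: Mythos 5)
Your proposal follows the same filtered-continuation-element strategy as the paper, but you misidentify where the wide hypothesis actually enters, and as a result the most delicate step is hidden rather than addressed.

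The identity $[L]\ast[L]=[L]$ is not where wideness is used: $QH(L)$ is a unital ring with unit $[L]$ whenever $QH(L)\neq 0$, so this holds for any weakly monotone $L$ with nonzero quantum homology. The actual subtlety is quantitative, and it sits exactly at the point you dismiss as ``formal once filtered unitality is in place.'' Having produced the cycle $\mu_2(y,x)\in CF(L,0;\cD)$ representing the unit with action roughly $\alpha+\beta$, you must show that the operator $\mu_2(\mu_2(y,x),-)$, regarded as a map that shifts filtration by $\alpha+\beta$, is filtered-chain-homotopic to the shift morphism \emph{with no further filtration loss}. The natural move is to write $\mu_2(y,x)=z+d\,b_{y,x}$ with $z$ a chain-level unit of small action (close to $0$, since $c([L],0)=0$), so that $\mu_2(b_{y,x},-)$ is the desired homotopy. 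But the filtration level $\cA(b_{y,x})$ of a primitive of the cycle $\mu_2(y,x)-z$ in $CF(L,0;\cD)$ is in general only bounded by $\cA(\mu_2(y,x))+\beta(L,0)$, where $\beta(L,0)$ is the boundary depth of the pearl (quantum homology) complex. Carrying this through yields only the weaker estimate $d_{\mrm{inter}}(V_r(L,F),V_r(L,G)[c])\leq \tfrac12\bigl(\gamma(L,G\#\overline F)+\beta(L,0)\bigr)$, which is the paper's Proposition~\ref{prop: Lipschitz general coarse}, valid for any weakly monotone $L$ with $QH(L)\neq 0$. The wide hypothesis is then invoked precisely to kill the extra term, via Proposition~\ref{prop: beta 0 for wide}, which says $\beta(L,0)=0$ when $L$ is wide (for each degree, $V_m(L)$ is free). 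Until you make the decomposition $\mu_2(y,x)=z+d\,b_{y,x}$, bound $\cA(b_{y,x})$ by the boundary depth, and observe that wideness forces this boundary depth to vanish, your argument has a genuine gap at the step that carries the entire content of the hypothesis.
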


\medskip

Theorem \ref{theorem: Lipschitz wide} has the following corollary on the absolute Hamiltonian case. This is immediate from \cite[Sections 2.7, 4.2.2]{LeclercqZapolsky}.

\begin{thm}\label{theorem: Lipschitz Hamiltonian}
	Let $M$ be a weakly monotone symplectic manifold. Then for each $r \in \Z,$ $F\in \cH, \;G \in \cH$ there exists $c \in \R,$ such that \[d_{\mrm{inter}}(V_r(F),V_r(G)[c]) \leq \frac{1}{2} \gamma(G \# \overline{F}).\] In fact $c = - \frac{1}{2} (- c([M],G \# \overline{F}) + c([M], F \# \overline{G})).$
\end{thm}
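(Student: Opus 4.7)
\begin{pfs}
The plan is to reduce the statement to Theorem~\ref{theorem: Lipschitz wide} via the standard translation between absolute Hamiltonian Floer theory on $(M,\om)$ and Lagrangian Floer theory of the diagonal $\Delta_M \subset M \times M^-$, following \cite[Sections~2.7 and 4.2.2]{LeclercqZapolsky}.

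First, I would recall that weak monotonicity of $(M,\om)$ is precisely the hypothesis that the Lagrangian $\Delta_M$ is weakly monotone in $(M\times M^-, \om \oplus (-\om))$. Moreover, $\Delta_M$ is wide: via the canonical PSS/Kün\-neth type isomorphism, its Lagrangian quantum homology $QH(\Delta_M;\Lambda)$ is identified with $QH(M;\Lambda)$ as a $\Lambda$-module, and under this identification the fundamental class $[\Delta_M]$ corresponds to $[M]$. Given Hamiltonians $F,G \in \cH$, the associated ``doubled'' Hamiltonian $\wh F(t,x,y) = F(t,x)$ (or a standard homotopic variant compactly supported near the diagonal, as in \cite{LeclercqZapolsky}) generates a Hamiltonian isotopy of $M \times M^-$ whose Lagrangian Floer complex with boundary on $\Delta_M$ is canonically isomorphic, as a filtered chain complex up to an overall shift, to the Hamiltonian Floer complex $CF(F)$. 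In particular, the persistence modules agree up to shift,
\[ V_r(\Delta_M, \wh F) \cong V_r(F)[s_F], \]
and spectral invariants satisfy $c(\Delta_M; [\Delta_M], \wh F) = c([M], F)$, so that $\gamma(\Delta_M, \wh G \#\overline{\wh F}) = \gamma(G \#\overline F)$.

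Next, I would apply Theorem~\ref{theorem: Lipschitz wide} to $L = \Delta_M$ with the pair $(\wh F, \wh G)$. This yields a constant $c' \in \R$ such that
\[ d_{\mrm{inter}}(V_r(\Delta_M,\wh F), V_r(\Delta_M,\wh G)[c']) \leq \tfrac{1}{2} \gamma(\Delta_M, \wh G \# \overline{\wh F}), \]
with $c' = -\tfrac{1}{2}(-c([\Delta_M], \wh G\#\overline{\wh F}) + c([\Delta_M], \wh F \#\overline{\wh G}))$. Translating back via the identifications above, the left hand side becomes $d_{\mrm{inter}}(V_r(F), V_r(G)[c])$ for the claimed constant $c = -\tfrac{1}{2}(-c([M], G\#\overline F) + c([M], F\#\overline G))$, and the right hand side becomes $\tfrac{1}{2}\gamma(G\#\overline F)$, which is the desired inequality.

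The only real content beyond Theorem~\ref{theorem: Lipschitz wide} is the naturality of the correspondence $M \leftrightarrow (M\times M^-, \Delta_M)$ at the level of \emph{filtered} chain complexes, continuation maps, PSS, and spectral invariants; this is the content of \cite[Sections~2.7, 4.2.2]{LeclercqZapolsky}, and is the main technical point one must invoke. Once that is in hand, the conclusion is a formal consequence, and Theorem~\ref{theorem: Lipschitz wide} does all the Floer-theoretic work through the method of filtered continuation elements.
\end{pfs}
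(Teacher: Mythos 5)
Your proposal is correct and matches the paper's own argument: the authors state that Theorem~\ref{theorem: Lipschitz Hamiltonian} is an immediate corollary of Theorem~\ref{theorem: Lipschitz wide} via the identification of absolute Floer theory on $(M,\om)$ with Lagrangian Floer theory of $\Delta_M \subset M \times M^-$, citing the same \cite[Sections 2.7, 4.2.2]{LeclercqZapolsky}; the details of this identification (doubled Hamiltonian, filtered chain isomorphism, $QH(\Delta_M) \cong QH(M)$) are spelled out in the paper's Section~\ref{sect: abs as rel}. The only small inaccuracy is your description of the doubled Hamiltonian---the paper uses a time-reparametrization $\wh H(t,x,y) = \tfrac{1}{2}H_{t/2}(x)$ rather than a compactly supported variant near the diagonal---but this does not affect the argument.
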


\begin{rmk}
	In fact Theorem \ref{theorem: Lipschitz Hamiltonian} admits a direct proof \cite{ShelukhinHZ} that is somewhat shorter, since, in the absolute setup, Proposition \ref{prop: beta 0 for wide} is immediate.
\end{rmk}

\begin{rmk}\label{Remark:idempotents}
	If $L$ is wide, and in its quantum homology $QH(L;\Lambda_{\min})$ the unit $[L]$ decomposes as $[L] = e_1 + \ldots + e_N$ for idempotents $E = \{e_j\}$ in $QH_n(L)$, satisfying $e_j \ast e_k = \delta_{j,k} e_j,$ then a similar bound is attained with $\gamma_{E}(L,H) = \frac{1}{2} \max \{c(e_j; L, H) + c(e_j; L, \overline{H})\}_{1 \leq j \leq N} + R(E),$ with error term $R(E) = \max\{A(e_j)\}_{1 \leq j \leq N},$ in the right hand side. See \cite{ShelukhinHZ} for details in the absolute case: the relative case is proven analogously. Since in this paper we focus on sharp bounds, which seem unlikely to be attained with splittings into $N \geq 2$ idempotents, because of the error term $R(E) \geq A_L$, we do not discuss the details of this generalization here, deferring them elsewhere. 
\end{rmk}

\medskip

We note that Theorem \ref{thmSpectralNormBoundsBoundaryDepth} is a direct corollary of Theorems \ref{theorem: Lipschitz wide} and \ref{theorem: Lipschitz Hamiltonian}. Moreover, it seems fit to note that the two spectral norms $\gamma(H),\; \gamma(L; H)$ for $H \in \cH$ featured above are related as follows.

\medskip

\begin{lma}\label{lma: Hamiltonian geq Lagrangian}
	Let $L$ in $M$ be monotone, with $N_L \geq 2.$ Then for each $H \in \cH,$ \[\gamma(L,H) \leq \gamma(H).\]
\end{lma}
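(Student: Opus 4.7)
The plan is to reduce the claim to the pointwise inequality $c(L;[L],F) \leq c([M],F)$ for every $F \in \cH$, and then apply this inequality to both $H$ and $\overline{H}$ and sum, obtaining
\[ \gamma(L,H) = c(L;[L],H) + c(L;[L],\overline{H}) \leq c([M],H) + c([M],\overline{H}) = \gamma(H). \]

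For the pointwise inequality, I would invoke the filtered quantum module structure of Lagrangian Floer theory over Hamiltonian Floer theory, as constructed by Leclercq--Zapolsky (see in particular the module/triangle inequality in \cite[Sections 2.7, 4.2.2]{LeclercqZapolsky}). This provides, for all $F, G \in \cH$ and all $a \in QH(M;\Lambda)$, $b \in QH(L;\Lambda)$, a triangle inequality of the form
\[ c(L;\, a \ast b,\, F \# G) \;\leq\; c(a, F) + c(L; b, G), \]
where $\ast$ denotes the quantum module action of $QH(M;\Lambda)$ on $QH(L;\Lambda)$. The monotonicity hypothesis $N_L \geq 2$ is exactly what ensures that disk and sphere bubbling are controlled and that the PSS-type moduli spaces with a closed insertion are regular, so that the filtration estimate above holds on the nose.

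Specializing to $a = [M]$, $b = [L]$, $F$ arbitrary, and $G \equiv 0$, we have $F \# G = F$ and $[M] \ast [L] = [L]$ since $[M]$ is the unit of $QH(M;\Lambda)$ acting on the unital $QH(M;\Lambda)$-module $QH(L;\Lambda)$. Approximating $G=0$ by a $C^2$-small Morse perturbation and passing to the limit via the $C^0$-continuity of Lagrangian spectral invariants, one obtains $c(L;[L],0) = 0$ by the normalization, since the corresponding Morse minimum may be arranged to have arbitrarily small oscillation. The triangle inequality then gives
\[ c(L;[L], F) \;\leq\; c([M], F) + c(L;[L], 0) \;=\; c([M], F), \]
which is the desired pointwise comparison.

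The only nontrivial ingredient is the filtered module action together with its behavior under the unit, both of which are established in \cite{LeclercqZapolsky}; the rest of the argument is purely formal. I do not expect any obstacle beyond carefully citing these structural results and checking the normalization $c(L;[L],0)=0$ via a limiting Morse perturbation.
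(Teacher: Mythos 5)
Your proof is correct and follows essentially the same route as the paper: you apply the Leclercq--Zapolsky module triangle inequality with $a=[M]$, $b=[L]$, $G=0$, use $[M]\ast[L]=[L]$, and then sum the resulting pointwise inequality over $H$ and $\overline{H}$. The only cosmetic difference is that you justify $c(L;[L],0)=0$ by a limiting Morse perturbation, whereas this is already immediate from the normalization property $c_+(0)=0$ listed in Proposition~\ref{prop:main_properties_Lagr_sp_invts}.
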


\begin{proof}
	Using Proposition 4 in \cite{LeclercqZapolsky}, we have the following relation between the relative and absolute spectral invariants.
	\[ c(L;[M] \ast [L],H) \leq c([M],H) + c(L;[L],0) = c([M],H).\]
	Since $[M]\ast [L] = [L]$ (see \cite[Theorem 2.5.2]{BiranCorneaLagrangianQuantumHomology}), we get
	\[ c(L;[L],H) \leq c([M],H),\]
	and hence
	\[ \gamma(L,H) = c(L;[L],H) + c(L;[L],\overline{H}) \leq c([M],H) + c([M],\overline{H}) = \gamma(H) .\]
\end{proof}

\begin{rmk}
	Lemma \ref{lma: Hamiltonian geq Lagrangian} can be generalized to show that for an idempotent $\tau \in QH_{2n}(M,\Lambda), \, \tau^2 = [M]$ with $\tau \ast [L] = [L],$ \[\gamma(L, H) \leq \gamma_\tau(H) = c(\tau,H) + c(\tau,\overline{H}).\]  This leads to additional uniform bounds on the Lagrangian spectral norm, which we have found, however, to be less sharp than the ones given by Proposition \ref{Prop:slimLagr}.
\end{rmk}

\medskip

We briefly discuss applications to $C^0$ symplectic topology. Consider a distance function $d$ on a closed symplectic manifold $(M,\omega),$ coming from an auxiliary Riemannian metric. Define the $C^0$ metric on $\Ham(M,\om)$ by \[d_{C^0}(f,g) = \min_{x\in M} d(f(x),g(x)).\] The topology this metric induces is independent of the Riemannian metric, and is called the $C^0$ topology. The completion  $\overline{\Ham}(M,\om)$ of $\Ham(M,\om)$ with respect to this topology is called the group of Hamiltonian homeomorphisms of $(M,\om).$ Theorem \ref{theorem: Lipschitz Hamiltonian} and Lemma \ref{lma: Hamiltonian geq Lagrangian} have the following immediate corollary:

\begin{cor}\label{Corollary: C^0}
	If the spectral norm $\gamma:\Ham(M,\omega) \to \R_{\geq 0}$ on a weakly monotone symplectic manifold is $C^0$-continuous so are the maps $(\Ham(M,\om),d_{C^0}) \to (\barc',d'_{bottle}),$ $\phi \mapsto \cB'(\phi),$ $\phi \mapsto \cB'(L,\phi),$ for each weakly monotone closed Lagrangian submanifold $L$ of $M.$ In the weakly aspherical resp. weakly exact case, $\cB'(\phi),$ resp. $\cB'(L,\phi)$ stand for the degree $r \in \Z$ barcodes $\cB_r(\phi), \cB_r(L,\phi),$ for each $r \in \Z,$ considered up to shifts. In the monotone case, $\cB'(\phi), \cB'(L,\phi)$ stand for the degree $0$ or $1,$ resp. degree $0$ barcodes with coefficients respectively in $\Lambda_{M,\tmon},\Lambda_{L,\tmon}$, with respective quantum variables $s$ of degree $2$ and $t$ of degree $1$, considered up to shifts. In particular these maps extend to $\overline{\Ham}(M,\om),$ taking values in $\overline{\barc}'.$
\end{cor}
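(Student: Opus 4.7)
The plan is to leverage the Lipschitz bounds of Theorems \ref{theorem: Lipschitz Hamiltonian} and \ref{theorem: Lipschitz wide} together with Lemma \ref{lma: Hamiltonian geq Lagrangian} to reduce $C^0$-continuity of the barcode-valued maps to the assumed $C^0$-continuity of $\gamma$.

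First, in the absolute setting, given $\phi,\psi\in\Ham(M,\om)$ and $r\in\Z$, pick any $F,G\in\cH$ with $\phi^1_F=\phi$, $\phi^1_G=\psi$. Theorem \ref{theorem: Lipschitz Hamiltonian} gives a $c\in\R$ with $d_{\mrm{inter}}(V_r(F),V_r(G)[c])\leq\tfrac12\gamma(G\#\overline F)$, hence $d'_{\mrm{inter}}([V_r(F)],[V_r(G)])\leq\tfrac12\gamma(G\#\overline F)$. Since this upper bound depends only on the class of $G\#\overline F$ in $\til\Ham(M,\om)$, taking the infimum over Hamiltonian representatives of $\psi\phi^{-1}\in\Ham(M,\om)$ and invoking the isometry theorem yields
\[ d'_{\mrm{bottle}}(\cB'(\phi),\cB'(\psi))\leq\tfrac12\gamma(\psi\phi^{-1}). \]
In the relative setting with $L$ wide (in particular weakly exact), the same passage applied to Theorem \ref{theorem: Lipschitz wide}, combined with Lemma \ref{lma: Hamiltonian geq Lagrangian} passed to infima, gives
\[ d'_{\mrm{bottle}}(\cB'(L,\phi),\cB'(L,\psi))\leq\tfrac12\gamma(L,\psi\phi^{-1}(L))\leq\tfrac12\gamma(\psi\phi^{-1}). \]
For the ambient monotone resp.\ monotone Lagrangian case, I would invoke the coefficient analogs with $\Lambda_{M,\tmon}$ resp.\ $\Lambda_{L,\tmon}$, in which the $s$- resp.\ $t$-shift collapses the $\Z$-grading into degrees $0,1$ resp.\ degree $0$ and makes the ambient and $L$ wide in the relevant sense; the proofs are identical modulo coefficient bookkeeping and yield the same bound.

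With these Lipschitz estimates in hand, the conclusion follows from bi-invariance: if $\phi_n\to\phi$ in $C^0$, then $\phi_n\phi^{-1}\to\id$ in $C^0$ and the hypothesis forces $\gamma(\phi_n\phi^{-1})\to 0$, hence $\cB'(\phi_n)\to\cB'(\phi)$ and $\cB'(L,\phi_n)\to\cB'(L,\phi)$ in $(\barc',d'_{\mrm{bottle}})$. Uniform continuity with respect to the right-invariant $C^0$ metric then produces the unique continuous extensions $\overline\Ham(M,\om)\to\overline\barc'$. The main obstacle, in my view, is the monotone-coefficient bookkeeping: verifying that the change to $\Lambda_{M,\tmon}$ resp.\ $\Lambda_{L,\tmon}$ does not introduce a loss in the Lipschitz constant and that the output really matches the degree $0,1$ resp.\ degree $0$ barcodes referred to in the statement. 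The weakly aspherical and weakly exact $\Z$-graded cases are essentially formal once the two Lipschitz inputs and the module-action inequality of Lemma \ref{lma: Hamiltonian geq Lagrangian} are in place.
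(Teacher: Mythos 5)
The overall strategy you chose---reduce $C^0$-continuity of the barcode maps to the hypothesis on $\gamma$ by way of the Lipschitz estimates, then use bi-invariance to turn these into the statement of continuity and a unique continuous extension to $\overline{\Ham}(M,\om)$---is the right one and is essentially what the authors have in mind. The absolute case is handled correctly: applying Theorem~\ref{theorem: Lipschitz Hamiltonian}, minimizing over representatives, and invoking the isometry theorem produces $d'_{\mrm{bottle}}(\cB'(\phi),\cB'(\psi))\leq\frac12\gamma(\psi\phi^{-1})$, and since the diagonal $\Delta_M\subset M\times M^{-}$ is always wide as a $\Lambda$-module (the module structure on $QH(M)$ over $\Lambda$ is undeformed), this case works even in the ambient monotone setting.

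The gap is in the relative case. The corollary is stated for \emph{every weakly monotone} closed Lagrangian $L$, and your route goes through Theorem~\ref{theorem: Lipschitz wide}, which requires $L$ \emph{wide}, combined with Lemma~\ref{lma: Hamiltonian geq Lagrangian}, which requires $L$ \emph{monotone}. You then propose to reach the general monotone Lagrangian case by switching to $\Lambda_{L,\tmon}$ coefficients, asserting that this ``makes $L$ wide in the relevant sense.'' This is not correct: wideness of a monotone $L$ (the statement $QH(L;\Lambda)\cong H_*(L;\Lambda)$) is a geometric property of $L$ itself, not something altered by passing between $\Lambda_{\tmin}$ and $\Lambda_{\tmon}$ coefficients. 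The $t$-shift isomorphism \eqref{eq:t mult iso} collapses the grading of the persistence modules to degree zero, but it does not affect whether the quantum differential deforms away the classical homology. Monotone non-wide Lagrangians exist, and for them Theorem~\ref{theorem: Lipschitz wide} simply does not apply.

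The correct ingredient for the relative case is Theorem~\ref{theorem: Lipschitz wide ext}, which is exactly what is needed: it requires only that $L$ be weakly monotone and bounds $d_{\mrm{inter}}(V_r(L,F),V_r(L,G)[c])$ directly by $\frac12\gamma(G\#\overline F)$ in terms of the \emph{absolute} spectral norm, with no recourse to wideness or to Lemma~\ref{lma: Hamiltonian geq Lagrangian}. Plugging that bound into your argument (pass to infima, use bi-invariance, conclude extension to $\overline\Ham$) completes the proof in the generality of the corollary's statement. (As a side remark, the paper's framing of the corollary as ``immediate from Theorem~\ref{theorem: Lipschitz Hamiltonian} and Lemma~\ref{lma: Hamiltonian geq Lagrangian}'' is likewise a little terse for the Lagrangian case; the statement for weakly monotone $L$ really rests on Theorem~\ref{theorem: Lipschitz wide ext}.)
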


\begin{rmk}
In view of \cite[Proposition 5.3, Proposition 6.2]{UsherBD2}
(see also Lemma \ref{propActionFormula}, Equation \eqref{eq:t mult iso} and the index calculation \cite[p. 245]{PolShe}) it is easily seen that $\cB'(\phi), \cB'(L,\phi)$ indeed depend only on $\phi,$ resp. $\phi(L).$
\end{rmk}

\begin{rmk}
	Corollary \ref{Corollary: C^0} immediately implies the $C^0$ continuity of barcodes of Hamiltonian diffeomorphisms of surfaces $(\Sigma,\sigma)$ by work of Seyfaddini \cite{SeyfaddiniC0Limits}. The paper \cite{LSV-conj} proves this result in a different way, and computes examples of barcodes of Hamiltonian homeomorphisms, allowing one to distinguish different conjugacy classes in $\overline{\Ham}(\Sigma,\sigma).$ This shows as a corollary that there exist Hamiltonian homeomorphisms on surfaces that are not conjugate to any Hamiltonian diffeomorphism. Moreover, as pointed out to us by S. Seyfaddini, it is shown in \cite{LSV-conj} that the map $(\Ham(\Sigma,\om),d_{C^0}) \to (\barc,d_{\mrm{bottle}}),$ where the barcodes are not considered up to shift, is $C^0$ discontinuous. This implies that the non-zero, in general, shift $c$ in Theorem \ref{theorem: Lipschitz Hamiltonian} (and hence in Theorem \ref{theorem: Lipschitz wide}) cannot be removed.
\end{rmk}

\begin{rmk}
	In \cite{BHS-spectrum} and \cite{ES-Viterbo}, the spectral norm is proven to be $C^0$-continuous, and Corollary \ref{Corollary: C^0} is consequently used to establish the $C^0$-continuity of barcodes of Hamiltonian diffeomorphisms, in the case of closed symplectically aspherical symplectic manifolds and of $(\C P^n, \om_{FS}),$ respectively, with applications to conjugacy classes in the group of Hamiltonian homeomorphisms. 
\end{rmk}

\subsubsection{Chekanov type theorems for the spectral norm}

Further, using filtered continuation elements, we sharpen two results of Chekanov \cite{Chekanov,ChekanovFinsler} as follows. We start with the following definitions.

\begin{df}\label{def: hbar}
	Let $(M,\omega)$ be a closed symplectic manifold and $L \subset M$ a closed connected Lagrangian submanifold. For an $\om$-compatible almost complex structure $J$ on $M,$ let $\hbar(J,L) > 0$ be the minimal area of a non-constant $J$-holomorphic disk on $L,$ or a non-constant $J$-holomorphic sphere in $M.$ If no such disks or curves exist, set $\hbar(J,L) = +\infty.$
\end{df}

\begin{rmk}\label{rmk: rational L}
	If $L$ is rational, that is $\omega_L (H^D_2(M,L;\Z)) = \rho \cdot \Z$ for $\rho > 0,$ then $\hbar(J,L) \geq \rho$ for all $J.$
\end{rmk}

\begin{df}\label{def: relative Gromov width}
	Let $(M,\omega)$ be a symplectic manifold, $L$ a Lagrangian submanifold of $M,$ and $A \subset M$ a subset. Then the relative Gromov width $w(L;A)$ of $L$ with respect to $A$ is defined as \[ w(L;A) = \sup\{\frac{\pi r^2}{2}\,|\, \exists \; e:B_r \to M, e^{-1}(L) = B_r \cap \R^n, e^{-1}(A) = \emptyset\}, \] where $B_r \subset \C^n$ is the standard ball of radius $r$ endowed with the standard symplectic form $\om_{st},$ the map $e:(B_r,\om_{st}) \to (M,\omega)$ is a symplectic embedding, and $\R^n \subset \C^n$ is the real part. Set the relative Gromov width of $L$ to be \[w(L):= w(L;\emptyset).\] As a matter of terminology, the symplectic embedding $e:(B_r,\om_{st}) \to (M,\omega)$ satisfying the condition $e^{-1}(L) = B_r \cap \R^n$ with respect to the Lagrangian submanifold $L \subset M,$ will be called a symplectic ball embedding {\em relative to $L$}.
\end{df}

\medskip

First, following Chekanov's argument for his non-displacement theorem \cite{Chekanov} (see also \cite{BarraudCorneaSerre,Char,CorneaS}) one has the following consequence of a Hamiltonian $H \in \cH$ having small spectral norm. Whenever it applies, this statement sharpens the previous results in this direction by Inequality \eqref{eq: gamma and Hofer}.

\medskip

\begin{thm}\label{thm:Chekanov}
	Let $(M,\omega)$ be weakly monotone, and $L \subset M$ a closed connected Lagrangian submanifold. If for an $\om$-compatible almost complex structure $J$ on $M,$ a Hamiltonian diffeomorphism $\phi$ satisfies $\gamma(\phi) < \hbar(J,L),$ then \[L \cap \phi(L) \neq \emptyset.\] If in addition $L$ and $\phi(L)$ intersect transversely, then \[\# (L \cap \phi(L)) \geq \dim_{\mathbb{F}_2}H_*(L; \mathbb{F}_2).\]
\end{thm}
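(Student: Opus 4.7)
I would argue by contradiction following Chekanov's scheme for Lagrangian non-displacement, with the novelty that the filtered continuation element technique of Section~\ref{sec:intro-continuation} allows the classical Hofer-norm hypothesis to be replaced by the sharper spectral-norm hypothesis $\gamma(\phi)<\hbar(J,L)$.

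Suppose for contradiction that $L\cap\phi(L)=\emptyset$. Fix $H\in\cH$ with $\phi^1_H=\phi$ and $\gamma(H)<\hbar(J,L)$, which is possible by the definition of $\gamma(\phi)$. Since the intersection is empty, $CF(L,H)$ has no generators, so the Lagrangian Floer persistence module $V(L,H)$ vanishes. Pick a Weinstein neighbourhood $U\cong T^*L$ of $L$ disjoint from $\phi(L)$ (possible by compactness of $L$ and $\phi(L)$), a Morse function $f$ on $L$ whose oscillation $c_f$ is small enough that $\gamma(H)+c_f<\hbar(J,L)$, and let $F$ be the autonomous Hamiltonian supported in $U$ obtained by radially lifting $f$ under the Weinstein identification.

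A Pozniak/Morse--Bott analysis identifies $V(L,F)$ below scale $\hbar(J,L)$ with the Morse persistence module of $f$: any Floer strip for $(L,F)$ either stays in a small tube around $L$, reducing to a Morse trajectory of $f$, or leaves the tube, which by Gromov compactness forces the bubbling of a $J$--holomorphic disk on $L$ or sphere in $M$, contributing area at least $\hbar(J,L)$ by the very definition of $\hbar(J,L)$. Hence within any action window of width strictly below $\hbar(J,L)$, the barcode of $V(L,F)$ already contains at least $\dim_{\mathbb F_2}H_*(L;\mathbb F_2)$ bars. The filtered continuation element method of Section~\ref{sec:intro-continuation}, combined with Lemma~\ref{lma: Hamiltonian geq Lagrangian} to pass from the Lagrangian to the Hamiltonian spectral norm, produces a $\delta$-interleaving (up to $\R$-shift) between $V(L,F)$ and $V(L,H)$ with
\[
\delta\;\le\;\tfrac{1}{2}\gamma(L,H\#\overline F)\;\le\;\tfrac{1}{2}\gamma(H\#\overline F)\;\le\;\tfrac{1}{2}\bigl(\gamma(H)+c_f\bigr)\;<\;\tfrac{1}{2}\hbar(J,L).
\]
Interleaving stability then transports the $\dim_{\mathbb F_2}H_*(L;\mathbb F_2)$ long bars of $V(L,F)$ to bars of $V(L,H)$, contradicting $V(L,H)=0$. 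This proves $L\cap\phi(L)\ne\emptyset$.

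For the quantitative count in the transverse case, $V(L,H)$ no longer vanishes and the same interleaving provides at least $\dim_{\mathbb F_2}H_*(L;\mathbb F_2)$ bars in its barcode; since the number of bars is bounded above by $\dim_{\mathbb F_2}CF(L,H)=\#(L\cap\phi(L))$, the Morse-type inequality follows. The main technical obstacle is the energy / Pozniak--Morse--Bott identification of $V(L,F)$ with the Morse persistence module of $f$ below the $\hbar(J,L)$ threshold, which demands a careful exclusion of bubbling keyed to the very definition of $\hbar(J,L)$; this is the classical Chekanov analysis. The contribution of the paper is the filtered continuation element bound, which upgrades the comparison from Hofer-norm error (as in Chekanov's original argument) to the sharp spectral-norm error, yielding the stated inequality.
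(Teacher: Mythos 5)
Your proposal correctly captures the high-level shape of a Chekanov-type argument, but there is a gap that makes it fail on the actual hypotheses of the theorem: Theorem~\ref{thm:Chekanov} allows $L$ to be \emph{any} closed connected Lagrangian submanifold of a weakly monotone $(M,\omega)$ — in particular $L$ need not be weakly monotone. Every tool you invoke requires weak monotonicity of $L$: the global Lagrangian Floer persistence modules $V(L,H)$ and $V(L,F)$ are only defined in that setting, Theorem~\ref{theorem: Lipschitz wide ext} is stated for weakly monotone $L$, and Lemma~\ref{lma: Hamiltonian geq Lagrangian} explicitly assumes $L$ monotone with $N_L\geq 2$. So your argument would prove a weaker statement, not the theorem as written. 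The paper avoids this by never defining a global Lagrangian persistence module at all: it works only with action-windowed Floer homologies $HF(L,0;\cD)^{[v,w)}$ and $HF(L,\overline H;\cD)^{[v,w)}$ for windows of width strictly below $\hbar(J,L)$, where bubbling is excluded for \emph{any} closed $L$ by a Gromov compactness argument. And crucially, the continuation elements live in the \emph{absolute} Hamiltonian Floer complexes $CF(H;\cD)$, $CF(\overline H;\cD)$, acting on the Lagrangian windowed complex via the closed–open module map $\mu_{1:1}$ (Section~\ref{sec:Products}), not via $\mu_2$-multiplication with Lagrangian continuation elements. That is the device that converts the hypothesis on the Hamiltonian spectral norm $\gamma(\phi)$ into intersection information without ever needing a Lagrangian Floer homology or a Lagrangian spectral norm for $L$.

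Two secondary points. First, even in the weakly monotone case, your final count ``the number of bars is bounded above by $\dim_{\mathbb F_2}CF(L,H)=\#(L\cap\phi(L))$'' is not literally meaningful: over the Novikov field the complex is infinite-dimensional over $\mathbb F_2$, and the barcode contains infinitely many bars (shifted by the quantum variable). What one has is $\#(L\cap\phi(L))$ generators over the Novikov \emph{ring}, and extracting the multiplicity bound from this requires working with the valuation ring $\Lambda_{0,L,\Gamma_\omega,\bK}$, Smith normal form, and a careful comparison of valuations — this is exactly how the paper closes that step. Second, your auxiliary Morse Hamiltonian $F$ and the Pozniak identification are replaceable by what the paper does: compare $CF(L,\overline H;\cD)^{[v,w)}$ directly with $CF(L,0;\cD)^{[v,w)}$, where the PSS-type injections $H_*(L;\bK)\hookrightarrow HF(L,0;\cD)^{[v,w)}$ (and their left inverses) encode the Morse homology input below the bubbling threshold without any Pozniak analysis; and the module-morphism property of $\mu_{1:1}$ then does the transport. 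This is both simpler and, more importantly, well-defined in the generality required.
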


\medskip

\begin{rmk}

	It was proven in \cite{GG-pseudorotations} that each Hamiltonian pseudo-rotation of $\C P^n,$ that is $\phi \in \Ham(\C P^n, \om_{FS})$ that has precisely $n+1$ periodic points (of all periods) satisfies the following property. There exists a constant $C>0,$ and integer $d\geq n$ depending on $\phi,$ such that for each $\epsilon >0,$ $\liminf_{k\to \infty} \frac{1}{k}\cdot\# \{1\leq l \leq k|\, \gamma(\phi^l) < \epsilon\}  \geq C \eps^d;$  moreover, for each subset $L \subset \C P^n$ with positive homological capacity, as defined {\em ibid.}, $c_{hom}(L)>0,$ the following Poincar\'{e} recurrence statement holds: $\liminf_{k\to \infty} \frac{1}{k}\cdot\# \{1\leq l \leq k|\, \phi(L) \cap L \neq \emptyset\} \geq C (c_{hom}(L))^d.$ Theorem \ref{thm:Chekanov} implies that for each closed Lagrangian submanifold $L$ of $\C P^n$  \[\liminf_{k\to \infty} \frac{1}{k}\cdot \# \{1\leq l \leq k|\, \phi^l(L) \cap L \neq \emptyset\}  \geq C \big(\hbar(J,L)\big)^d > 0,\] whereby there exists a sequence $k_j \in \Z, \displaystyle\lim_{j \to \infty} k_j = +\infty$ such that for all $j,$ $\phi^{k_j}(L) \cap L \neq \emptyset.$ This generalizes the Poincar\'{e} recurrence result from \cite{GG-pseudorotations} in the case of Lagrangian submanifolds, as it is not currently known that each closed Lagrangian submanifold of $\C P^n$ has positive homological capacity. Furthermore, if we assume the non-empty intersections $\phi^{k_j}(L) \cap L$ to be transverse, Theorem \ref{thm:Chekanov} gives the lower bound $\dim H_*(L;\mathbb{F}_2)$ on the multiplicity of the intersection.
\end{rmk}

We proceed to recall the Lagrangian spectral pseudo-distance, and a certain extrinsic version of this pseudo-distance.

\begin{df}
	Let $(M,\om)$ be a symplectic manifold, and $L \subset M$ be a closed connected Lagrangian submanifold, and let $L' \in \cO_L.$ 
	\begin{enumerate}
		\item If $M$ is closed weakly monotone we define \[\gamma_{ext}(L,L') = \inf \{\gamma([H])\,|\, \phi^1_H(L) = L'\}.\]  
		\item If $M$ is weakly monotone, closed, open tame at infinity, or compact with convex boundary, $L$ is closed weakly monotone, and $QH(L) \neq 0,$ recall that \[\gamma_{int}(L,L') = \gamma(L,L') =\inf \{\gamma(L,[H])\,|\, \phi^1_H(L) = L'\}. \]
	\end{enumerate}
	
	Both $\gamma_{ext}$ and $\gamma_{int},$ when they are defined, are invariant under shifts by Hamiltonian diffeomorphisms, and define pseudo-metrics $d_{\gamma,{ext}},$ $d_{\gamma,{int}}$ on the orbit $\cO_L = \Ham(M,\omega) \cdot L$ of $L$ under $\Ham(M,\omega).$
	
\end{df}

In this setting the following sharper version of Chekanov's non-degeneracy theorem \cite{ChekanovFinsler} (see also \cite{BarraudCorneaSerre,Char,CorneaS}) holds. It is indeed a sharpening by Inequality \ref{eq: gamma and Hofer}.

\medskip

\begin{thm}\label{thm:spectral_norm non-deg}
	The pseudo-metrics $d_{L,\gamma, ext}$ and $d_{L,\gamma,int},$ whenever defined, are non-degenerate, and hence define genuine $\Ham$-invariant metrics on $\cO_L.$ Moreover, for each $\epsilon > 0$ with $ \epsilon < w(L,L')$ there exists an $\om$-compatible almost complex structure $J_{\epsilon},$ such that \begin{align*}
	\gamma_{ext}(L,L') \geq \min\{w(L;L') - \epsilon, \hbar(J_{\epsilon},L), \hbar(J_{\epsilon},L') \} > 0,\\
	\gamma_{int}(L,L') \geq \min\{w(L;L') - \epsilon, \hbar(J_{\epsilon},L), \hbar(J_{\epsilon},L') \} > 0.
	\end{align*}
	
\end{thm}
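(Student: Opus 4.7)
The plan is to adapt Chekanov's proof of non-degeneracy of the Lagrangian Hofer metric \cite{ChekanovFinsler} to the spectral norm, replacing the Hofer energy bound on Floer trajectories by the sharper spectral-invariant bound on action levels of PSS cycles representing the unit. I would prove the two inequalities in parallel: for $\gamma_{int}$ via Lagrangian Floer theory of $(L,L')$, and for $\gamma_{ext}$ via ambient Hamiltonian Floer theory and spectral invariants of $[M]$. In the monotone case, the bound for $\gamma_{ext}$ alternatively follows from that for $\gamma_{int}$ by Lemma \ref{lma: Hamiltonian geq Lagrangian}.

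Since $L' \neq L$ (the case $L = L'$ being trivial) we have $w(L;L') > 0$. First I would fix $\epsilon \in (0, w(L;L'))$ and a relative symplectic ball embedding $e:(B_r,\om_{st}) \to (M,\omega)$ with $e^{-1}(L) = B_r \cap \R^n$, $e(B_r) \cap L' = \emptyset$, and $\pi r^2/2 > w(L;L') - \epsilon$. Then I would pick an $\omega$-compatible $J_\epsilon$ that agrees on $e(B_r)$ with the push-forward of the standard integrable structure on $\C^n$, ensuring $\hbar(J_\epsilon,L), \hbar(J_\epsilon,L') > 0$ via the usual Gromov monotonicity inside the ball. Assuming for contradiction that some $H$ with $\phi^1_H(L)=L'$ satisfies
\[\gamma(L,H) < \delta := \min\{w(L;L')-\epsilon,\ \hbar(J_\epsilon,L),\ \hbar(J_\epsilon,L')\},\]
and analogously for $\gamma(H)$ in the ambient case, one obtains from the definition of the spectral invariants $c(L;[L],H), c(L;[L],\bar H)$ (respectively $c([M],H), c([M],\bar H)$) Floer cycles representing the PSS image of the unit whose action levels differ by strictly less than $\delta$.

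The main step is then a Gromov compactness argument with $J_\epsilon$ applied to a suitable one-parameter family of perturbed holomorphic strips for $(L,L')$ (respectively Floer cylinders in the ambient case) of total area bounded by $\gamma(L,H) < \delta$. The compactification must produce a non-trivial limit configuration of one of the following types: (i) a non-constant $J_\epsilon$-holomorphic strip on $(L,L')$ whose $L$-boundary enters $e(B_r)$ and hence must exit it since $L' \cap e(B_r) = \emptyset$, yielding area $\geq \pi r^2/2 > w(L;L') - \epsilon$ by the standard Darboux reflection/doubling trick inside $B_r$; (ii) a disk bubble on $L$ of area $\geq \hbar(J_\epsilon,L)$; (iii) a disk bubble on $L'$ of area $\geq \hbar(J_\epsilon,L')$; or (iv) a sphere bubble of area $\geq \hbar$. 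Each case contradicts the area bound $\delta$, proving both lower bounds. The hard part will be isolating the precise moduli problem whose compactness simultaneously controls the area by the spectral norm and forces the limit curve to have boundary entering $e(B_r)$; this is the step at which the method of filtered continuation elements underlying Theorem \ref{theorem: Lipschitz wide} is needed, sharpening the crude Hofer estimate \eqref{eq: gamma and Hofer} into the spectral estimate required here.
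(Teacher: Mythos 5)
Your overall framework is right — choice of a relative ball embedding and $J_\epsilon$ standard on it, filtered continuation elements (low-action PSS cycles for the unit) to replace the Hofer bound by a spectral one, Gromov compactness and monotonicity — and the bubbling dichotomy in your cases (ii)--(iv) is exactly what produces the $\hbar(J_\epsilon,L)$, $\hbar(J_\epsilon,L')$ terms. But there is a genuine gap in case (i): nothing in your setup forces the limit curve to have boundary \emph{entering} $e(B_r)$, and you have identified the wrong ingredient as the one responsible for supplying that constraint. Filtered continuation elements govern the \emph{energy} of the curves you produce; they do not, by themselves, localize any of them near $e(0)$. Without a point constraint, the limit curve could simply miss the ball and the Lelong/doubling estimate would give nothing.

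The paper's mechanism for this localization is a separate piece of structure: the action-window Floer homologies $HF(L,0;\cD)^{[v,w)}$ and $HF(L,\overline{H};\cD)^{[v,w)}$ are \emph{modules over} $H_*(L;\bK)$, where the module action is defined Morse-theoretically by requiring a marked boundary point of the Floer strip to be incident with the descending manifold of a critical point of a Morse function $f:L\to\R$ (this uses the moduli spaces $\cR_{21:00}$, with associativity via $\overline{\cR}_{21:10}$). Choosing $f$ to have its unique minimum at $R=e(0)$, and using the obvious intersection-product relation $[pt]\circ[L]=[pt]\neq 0$ in $H_*(L;\bK)$, together with the module-morphism property of the maps $[\mu_{1:1}(-:x)]$, $[\mu_{1:1}(-:y)]$, $pss$, $\overline{pss}$ from the non-displacement argument, one concludes that some Floer trajectory $u$ with energy $<\gamma(\overline H)+O(\delta)$ must satisfy $u(0,0)=R$. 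Applying the naturality transformation $v(s,t)=(\phi^t_{\overline H^{\cD}})^{-1}u(s,t)$ converts $u$ to a (nearly) $J_\epsilon$-holomorphic strip with boundary on $L$ and on a small $C^1$-perturbation $L''$ of $L'$ disjoint from $e(B_{r'})$, still passing through $R$. After taking a Gromov limit as the perturbations go to zero, the Lelong/monotonicity inequality applied to the component through $R$ gives $E(v_\infty)\geq \pi(r')^2/2 > w(L;L')-\epsilon$, which yields the desired bound. So the idea you tagged as the ``hard part'' is indeed the crux, but the sharpening of Hofer to spectral energy is only half of it; the other half is the module action over $H_*(L;\bK)$ with the point constraint at the ball center, which your proposal does not supply. (The $\gamma_{int}$ case is identical with $[\mu_{1:1}]$ replaced by $[\mu_2]$.)
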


We note that Theorem \ref{thm:spectral_norm non-deg} can be considered to be a new energy-capacity inequality for the Lagrangian spectral norm. Indeed, whenever $\omega_L(H_2^D(M,L;\Z)) \subset \rho \cdot \Z,$ for a constant $\rho \in (0,\infty],$ the lower bound in both cases takes the form\begin{align*}
\gamma_{ext}(L,L') \geq \min\{w(L,L'),\rho\},\\ 
\gamma_{int}(L,L') \geq \min\{w(L,L'),\rho\}. 
\end{align*}

Finally, combining the proof of Theorem \ref{theorem: Lipschitz wide}, and Theorem \ref{thm:spectral_norm non-deg}, we obtain the following alternative version of control on barcodes.

\begin{thm}\label{theorem: Lipschitz wide ext}
	Let $L$ in $M$ be weakly monotone. Then for each $r \in \Z,$ $F\in \cH, \;G \in \cH$ there exists $c \in \R,$ such that
	\[d_{\mrm{inter}}(V_r(L,F),V_r(L,G)[c]) \leq \frac{1}{2} \gamma(G \# \overline{F}).\]  In fact $c = - \frac{1}{2} (- c([M],G \# \overline{F}) + c([M], F \# \overline{G})).$
\end{thm}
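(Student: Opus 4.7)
The plan is to run the filtered continuation element argument used for Theorem \ref{theorem: Lipschitz wide}, but with the Lagrangian pair of pants product replaced by the Hamiltonian--Lagrangian quantum module action. Under the wideness assumption of Theorem \ref{theorem: Lipschitz wide} one represents the Lagrangian continuation map by multiplication with a cycle in $CF(L, G \# \bar F)$ representing the PSS image of the unit $[L] \in QH(L)$; here we instead represent it by multiplication with a cycle in the \emph{Hamiltonian} Floer complex $CF(M, G\# \bar F)$ representing the PSS image of the unit $[M] \in QH(M)$, whose existence requires no assumption on $L$ beyond weak monotonicity.

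Concretely, first assume $F$ and $G$ are non-degenerate together with $L$. The key step is to establish that the Lagrangian Floer continuation map $C(F,G) : CF_*(L,F) \to CF_*(L,G)$ is chain homotopic to the operator $m_2(x, -) : CF_*(L,F) \to CF_*(L,G)$, where $m_2$ denotes the quantum module action of the Hamiltonian Floer complex on the Lagrangian one, and $x \in CF_*(M, G \# \bar F)$ is any cycle whose homology class is the PSS image of $[M] \in QH(M; \Lambda)$. This is a relative version of the observation of \cite{ShelukhinHZ} and follows from a standard homotopy-of-TQFT-operations argument: one interpolates between the moduli space defining $C(F,G)$ and the one defining $m_2$ by sliding a marked point with a Hamiltonian-valued asymptotic condition towards the boundary of the Floer strip, with the fundamental class $[M]$ corresponding to the vacuous constraint.

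Choose $x$ and $y \in CF_*(M, F \# \bar G)$ with filtration levels arbitrarily close to $c([M], G \# \bar F)$ and $c([M], F \# \bar G)$ respectively, representing the PSS images of $[M]$. Because $[M]$ is the unit of the Hamiltonian quantum homology, the compositions $m_2(y, m_2(x, -))$ and $m_2(x, m_2(y, -))$ are chain homotopic to the appropriate continuation maps, hence to the identity up to the natural action shifts built into the Floer setup. Since multiplication $m_2(x,-)$ shifts the action filtration by at most the action level of $x$, we obtain an interleaving pair of chain maps whose filtration shifts are controlled by $c([M], G \# \bar F)$ and $c([M], F \# \bar G)$. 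Symmetrizing by translating $V_r(L,G)$ by the chosen $c = -\tfrac12\bigl(-c([M], G\#\bar F) + c([M], F\#\bar G)\bigr)$ converts this into a $\delta$-interleaving with $\delta = \tfrac12\bigl(c([M], G\#\bar F) + c([M], F\#\bar G)\bigr) = \tfrac12 \gamma(G\#\bar F)$, which is the desired bound. The degenerate case is handled by $C^2$-small perturbation and continuity of both sides of the inequality, as in the proof of Theorem \ref{theorem: Lipschitz wide}.

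The principal technical obstacle is step two: setting up the moduli spaces interpolating between the Lagrangian continuation map and the Hamiltonian-to-Lagrangian module multiplication, while keeping tight control on action filtrations and ensuring compactness in the weakly monotone setting where disk and sphere bubbling must be accounted for. Once this module-theoretic replacement for continuation is in place, the remainder of the argument is a direct transcription of the filtered continuation element method: the only reason the wideness assumption enters Theorem \ref{theorem: Lipschitz wide} is to guarantee that the Lagrangian unit $[L]$ admits a PSS-representing continuation cycle, and this role is now played by $[M]$ via the module action, eliminating the wideness hypothesis at the cost of replacing $\gamma(L, \cdot)$ by the a priori larger $\gamma(\cdot)$.
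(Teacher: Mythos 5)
Your overall strategy is correct and matches the paper's intent: replace the Lagrangian quantum product $\mu_{2:}$ by the module action $\mu_{1:1}$ of the Hamiltonian Floer complex on the Lagrangian one, take $x,y$ in $CF(\Delta_{F,G};\cD)$, $CF(\Delta_{G,F};\cD)$ representing $PSS([M])$ with filtration level near $c([M],G\#\bar F)$, $c([M],F\#\bar G)$, and use the $\mu_{1:2}$ associator (the compactification of $\cR_{11:20}$, giving \eqref{eq:mu11:1 assoc}) to relate the compositions to $\mu_{1:1}(-:\mu_{:2}(x,y))$. This is indeed what the phrase ``combining the proof of Theorem \ref{theorem: Lipschitz wide}, and Theorem \ref{thm:spectral_norm non-deg}'' points at, since the latter sets up the $\mu_{1:1}$ machinery.

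However, your explanation of why wideness is dispensable misses the actual mechanism and glosses over a nontrivial step. Wideness does not enter Theorem~\ref{theorem: Lipschitz wide} to guarantee that $[L]$ has a PSS-representing cycle; such a cycle exists for any $L$ with $QH(L)\ne 0$, with action close to $0$. Wideness enters through Proposition \ref{prop: beta 0 for wide}: in the proof of Proposition \ref{prop: Lipschitz general coarse} one has $\mu_{2:}(y,x)=z+db_{y,x}$ with $z$ of small action, but the primitive $b_{y,x}$ can only be bounded by $a+b+\beta(L,0)+O(\epsilon')$, and this $\beta(L,0)$ term is killed by wideness. Your phrase ``chain homotopic to the identity up to the natural action shifts built into the Floer setup'' elides precisely this point: one must show the chain homotopy $\mu_{1:1}(-:b_{x,y})$ has action shift at most $a+b+O(\epsilon')$, where now $\mu_{:2}(x,y)=z+db_{x,y}$ is a decomposition in the \emph{absolute} Floer complex $CF(0;\cD)$. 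The relevant penalty is therefore the boundary depth of the absolute persistence module for $H\equiv 0$, i.e.\ $\beta(M\times M^-,\Delta_M;0)$, which vanishes because $\Delta_M$ is automatically wide (this is exactly the remark the paper makes after Theorem~\ref{theorem: Lipschitz Hamiltonian}, that in the absolute setup Proposition \ref{prop: beta 0 for wide} is immediate). You should state this explicitly; without it, the filtered chain homotopy controlling the interleaving shift is not justified. Once that is in place the rest — the shift $c=\tfrac12(c([M],G\#\bar F)-c([M],F\#\bar G))$ and the final $\tfrac12\gamma(G\#\bar F)$ bound — follows exactly as you describe.
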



\subsection{An averaging technique and sharp upper bounds on the spectral norm}

In this section we introduce a new technique for producing upper bounds on the spectral norm, and prove that in certain cases the upper bound on boundary depth that we obtain is sharp. Moreover, we find the unexpected fact that this bound is stricly smaller than the minimal area of a non-constant holomorphic disk with boundary on our given Lagrangian (or a holomorphic sphere in the Hamiltonian case)! 

It was observed in \cite{huLalonde,huLalondeLeclercq,SeidelBook,charetteCornea} that Hamiltonian paths with time-one map sending a Lagrangian submanifold $L$ to itself, induce automorphisms of the Floer homology of $L,$  similarly to the well-known Seidel representation \cite{seidelInvertibles} in Hamiltonian Floer theory. We use this notion to prove the following bound on the spectral norm. We refer to Section \ref{Sect:Seidel} for related notation and preliminaries.

\begin{prop}\label{Prop:slimLagr} 
	Let $L \subset M$ be wide. Set $n= \dim(L),$ and $A_L = \kappa N_L.$ Let $t$ be a monotone Novikov variable of degree $1.$  For an element $\eta \in \cP_L$ and section class $\sigma,$ let $S=S_{\eta,\sigma}([L])$ denote the associated Lagrangian Seidel element. Assume that 
	\begin{enumerate}
		\item \label{assumption 1} $N_L > n,$ and hence there is a well-defined class $[pt] \in QH_0(L),$
		\item \label{assumption 2} $S^k = c_1 \cdot [pt] t^n$ for some $k \geq 1,$ $c_1 \in \bK \setminus \{0\}$ and
		\item \label{assumption 3} $S^m = c_2 \cdot [L],$ for some $m > k,$ $c_2 \in \bK \setminus \{0\}.$ 
	\end{enumerate}
	
	Then for each Hamiltonian $H$ on $M,$ \[\beta(L,H) = \beta(L,\phi^1_H) \leq \gamma(L,\phi^1_H) \leq \frac{n}{N_L} \cdot A_L.\]
	In particular this bound is stricly smaller than $A_L.$

\end{prop}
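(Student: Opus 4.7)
The strategy combines the Lagrangian Seidel representation with an averaging argument over iterates of the path $\eta$. By the Seidel construction of \cite{huLalonde,huLalondeLeclercq,charetteCornea}, the data $(\eta,\sigma)$ yields a filtered chain isomorphism
\[
\Psi_{\eta,\sigma}\colon CF_*(L,H)\to CF_{*+\deg S}(L,H_\eta\#H)
\]
whose action-filtration shift equals the symplectic area $\bar\nu=\int_\sigma\omega$ and whose induced action on $QH(L)$ is multiplication by $S=S_{\eta,\sigma}([L])$. Multiplicativity of the Seidel representation then gives, for each $j\in\Z$, a filtered isomorphism for $\eta^{(j)}$ with shift $j\bar\nu$ and homology action $S^j$.

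The first step is to establish the key spectral identity
\[
\gamma(L,H_\eta^{\#j}\#H)=c(S^{-j},H)+c(S^j,\bar H),
\]
derived by translating $c([L],H_\eta^{\#j}\#H)=c(S^{-j},H)+j\bar\nu$ and its analogue $c([L],\overline{H_\eta^{\#j}\#H})=c(S^j,\bar H)-j\bar\nu$ for the reverse path $\bar\eta^{(j)}$ through the Seidel isomorphism; the $\pm j\bar\nu$ contributions cancel in the sum. Since $\phi_\eta\in\Ham_L$ preserves $L$, the composition $H_\eta^{\#j}\#H$ realizes the same $\Ham$-orbit of $L$ as $H$, so
\[
\gamma(L,\phi^1_H)\leq\inf_{j\in\Z}\bigl[c(S^{-j},H)+c(S^j,\bar H)\bigr].
\]

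The second step evaluates this infimum at $j=k$ using the Seidel relations. Assumption (\ref{assumption 2}) gives $c(S^k,\bar H)=c([pt]t^n,\bar H)=c([pt],\bar H)+n\kappa$; together with (\ref{assumption 3}) it exhibits $[pt]t^n$ as invertible in $QH(L)$ with inverse $c_1 S^{-k}$, via the relation $S^{m-k}\ast[pt]t^n=(c_2/c_1)[L]$. Invoking Poincar\'{e} duality for Lagrangian spectral invariants \cite{LeclercqZapolsky}, namely the identity $c([pt],\bar H)=-c([L],H)$ for normalized $H$, together with the analogous duality for the invertible pair $([pt]t^n,\,([pt]t^n)^{-1})$, which supplies an upper bound on $c(S^{-k},H)=c_1^{-1}c(([pt]t^n)^{-1},H)$ by $c([L],H)$, one obtains
\[
\gamma(L,H_\eta^{\#k}\#H)\leq n\kappa=\tfrac{n}{N_L}A_L.
\]
Combined with Theorem~\ref{thmSpectralNormBoundsBoundaryDepth}, this yields $\beta(L,H)\leq\gamma(L,\phi^1_H)\leq\tfrac{n}{N_L}A_L$, strictly less than $A_L$ since $n<N_L$.

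The main technical obstacle is the Poincar\'{e}-duality-based estimate $c(S^{-k},H)\leq c([L],H)$, which relies on the invertibility of $[pt]t^n$ forced by the Seidel relations together with the hypothesis $N_L>n$ (the latter ensuring $[pt]\in QH_0(L)$ is unambiguous and controlling the quantum structure). The chain-level construction of the filtered Seidel isomorphism and the precise identification of the shift $\bar\nu$ with the symplectic area of $\sigma$ are standard in the framework of \cite{charetteCornea}, though they require careful bookkeeping of signs, gradings, and perturbation data so that the multiplicativity used in Step~1 holds as stated.
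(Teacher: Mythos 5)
Your Step~1 (the formula $\gamma(L,H_\eta^{\#j}\#H)=c(S^{-j},H)+c(S^j,\bar H)$, derived via the filtered Seidel isomorphism and Corollary~\ref{corSpecInv}) is essentially correct and consistent with the paper. The fatal problem is Step~2: the claimed bound at the \emph{specific} index $j=k$ is false, and the ``analogous duality for the invertible pair $([pt]t^n,([pt]t^n)^{-1})$'' that is supposed to supply $c(S^{-k},H)\leq c([L],H)$ is not an actual property of Lagrangian spectral invariants. Poincar\'e duality (Proposition~\ref{prop:main_properties_Lagr_sp_invts}) relates $a$ to its Poincar\'e dual $a^\vee$ in complementary degree, not to its multiplicative inverse in the same degree; the triangle inequality only gives lower bounds of the form $\cA([L])\leq\cA(a)+\cA(a^{-1})$, not upper bounds on $\cA(a^{-1})$.

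To see concretely that your claim fails, take $L=\R P^1\subset\C P^1$ with $\omega([\C P^1])=1$, $H=0$. Then $n=1$, $N_L=2$, $\kappa=\frac14$, $k=1$, $m=2$; one has $S=[pt]t$, $S^{-1}=S$ (since $S^2=[L]$), and $\cA(S)=\cA(S^{-1})=\kappa$. Your claim $c(S^{-k},H)\leq c([L],H)$ reads $\cA(S^{-1})\leq\cA([L])$, i.e.\ $\kappa\leq 0$, which is false. Correspondingly $\gamma(L,H_\eta^{\#k})=\cA(S^{-1})+\cA(S)=2\kappa$, strictly larger than $n\kappa=\kappa$, so the index $j=k$ does \emph{not} realize the bound. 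The minimum over $j\in\{0,1\}$ is attained at $j=0$ (where $\gamma=0$), not $j=k$.

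This is precisely why the paper takes a different route: it proves, via Corollary~\ref{corSpecInv}, Poincar\'e duality (using $N_L>n$), and Assumption~(\ref{assumption 2}), the identity $\gamma(L,[H]\cdot\eta^{-j})=c(S^j,H)-c(S^{k+j},H)+\frac{n}{N_L}A_L$ for all $j$, and then \emph{sums over a full period} $j=0,\ldots,m-1$. Assumption~(\ref{assumption 3}) makes the $c(S^j,H)$ terms periodic modulo $m$, so the sum telescopes and the average equals exactly $\frac{n}{N_L}A_L$. Since the minimum of a finite set is at most its average, $\gamma(L,\phi^1_H)\leq\min_j\gamma(L,[H]\cdot\eta^{-j})\leq\frac{n}{N_L}A_L$ follows without having to identify which $j$ realizes it. Your ingredients (Step~1, the duality $c([pt],\bar H)=-c([L],H)$, and the Seidel relation rewriting $c([pt],H)$ as $c(S^k,H)-n\kappa$) are exactly the ones needed; you just need to replace the faulty pointwise estimate at $j=k$ by this averaging step.
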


We present the proof of this statement now, because it summarizes our new averaging method for bounding the spectral norm.

\begin{proof}[Proof of Proposition \ref{Prop:slimLagr}]
	Assumption \ref{assumption 1} implies by Poincar\'{e} duality for Lagrangian spectral invariants \cite{LeclercqZapolsky} and our choice of coefficients in $\mathbb{F}_2$  that for each $H \in \cH$ \[c([L],\overline{H}) = - c([pt],H).\] Indeed, since $N_L>n=\dim(L),$ and $L$ is wide, taking coefficients in the Novikov field $ \Lambda_{\tmin}$ with quantum variable of degree $N_L,$ we see that $[pt]$ is the unique degree $0$ element (up to $\bK\setminus \{0\}$ multiples). This yields by \cite[Theorem 27]{LeclercqZapolsky}, which we reproduce in Proposition \ref{prop:main_properties_Lagr_sp_invts} below, the desired inequality with coefficients in $\Lambda_{\tmin}.$ Therefore by \cite[Section 5.4]{BiranCorneaRigidityUniruling} the same equality holds for spectral invariants with coefficients in any field extension of $\Lambda_{\tmin},$ and in particular for coefficients in our Novikov field $\Lambda_{\tmon}.$ Strictly speaking, we should first work with non-degenerate pairs $(L,H),$ and then use continuity of spectral invariants \cite{LeclercqZapolsky} to extend this equality to all $H \in \cH.$ 
	
	Therefore \[\gamma(L,[H]) = c([L],H) - c([pt],H),\] where we denote by $[H]$ the class of the path $\{\phi^t_H\}_{t \in [0,1]}$ in $\til{\Ham}(M,\om).$ Consequently by Assumption \ref{assumption 2} we obtain \[\gamma(L,[H]) = c([L],H) - c(S^k,H) + \frac{n}{N_L} \cdot A_L.\] Therefore, by Corollary \ref{corSpecInv} below, for each $0 \leq j \leq m-1,$ we have \[\gamma(L,[H]\cdot\eta^{-j}) = c(S^j,H) - c(S^{k+j},H) + \frac{n}{N_L} \cdot A_L.\] Summing up these equalities, and using $S^m = c_2 \cdot [L],$ $c_2 \in \bK \setminus \{0\},$ we obtain \[\frac{1}{m} \sum_{j=0}^{m-1} \gamma(L,[H]\cdot \eta^{-j}) = \frac{n}{N_L} \cdot A_L.\] Since the minimum of a finite set does not exceed its average, we get \[\gamma(L,\phi^1_H) \leq \min_{0 \leq j \leq m-1} \gamma(L,[H]\cdot \eta^{-j}) \leq \frac{n}{N_L} \cdot A_L,\] and hence by Theorem \ref{thmSpectralNormBoundsBoundaryDepth} \[\beta(L,\phi^1_H) \leq \frac{n}{N_L} \cdot A_L.\]
	
\end{proof}

At this point we introduce the following two invariants measuring the best possible uniform bound on $\beta(L,-),$ and $\gamma(L,-):$ \[\overline{\beta}(M,L) := \sup_{H \in \cH} \beta(L,H),\] \[\overline{\gamma}(M,L) := \sup_{H \in \cH} \gamma(L,\phi^1_H).\] Similarly, in the absolute case, we define
\[\overline{\beta}(M) = \sup_{H \in \cH} \beta(H),\]
\[\overline{\gamma}(M) = \sup_{H \in \cH} \gamma(\phi^1_H).\]

\begin{rmk}\label{rmk:Usher-question-details}
	In \cite{UsherBD2} it was shown that $\overline{\beta}(M,L) = +\infty$ in a large collection of cases when $L$ is weakly monotone and $HF(L,L) \neq 0.$ However, no examples with $\overline{\beta}(M,L) < +\infty$ were known to date. In fact, Usher \cite{Usher-private} posed the question of whether $\overline{\beta}(\C P^1, \R P^1)$ is finite or not.
\end{rmk}

As a corollary of Proposition \ref{Prop:slimLagr}, using variations on the computations of the Lagrangian Seidel element in \cite{hyvrier}, we obtain uniform bounds on $\gamma(L,\phi^1_H)$ and $\beta(\phi^1_H)$ in the following four cases. Moreover, adapting arguments of Stevenson \cite{stevenson}, we show that in some of these cases, our bounds are the best ones possible, uniform in $H$. This provides a class of examples with finite $\overline{\beta}(M,L)$ and provides a more precise answer to Usher's question.

\begin{thm} \label{thm:slimLagrExamples}
Let the ground field be $\bK = \mathbb{F}_2.$ Then the following identities hold:
	\begin{enumerate}
 \item\label{thmBDbounds-case:RPn} $\displaystyle{\overline{\beta}(\R P^1, \C P^1) = \overline{\gamma}(\R P^n, \C P^n) = \frac{1}{4} \omega([\mathbb{C}{P}^1]),}$ \\
 $\displaystyle{\frac{1}{4}\omega([\mathbb{C}{P}^1]) \leq \overline{\beta}(\R P^n, \C P^n) \leq \overline{\gamma}(\R P^n, \C P^n) \leq \frac{n}{2n+2} \omega([\mathbb{C}{P}^1]),}$ for $n>1$ 
 \medskip
 \item\label{thmBDbounds-case:CPn} $\displaystyle{\overline{\beta}(\Delta_{\C P^n}, \C P^n \times (\C P^n)^{-}) = \overline{\gamma}(\Delta_{\C P^n}, \C P^n \times (\C P^n)^{-}) = \frac{n}{n+1} \omega([\mathbb{C}{P}^1]),}$ \\ $\displaystyle{\overline{\beta}(\C P^n) = \overline{\gamma}(\C P^n) = \frac{n}{n+1} \omega([\mathbb{C}{P}^1]),}$
\medskip 
 \item\label{thmBDbounds-case:Sn} $\displaystyle{\overline{\beta}(L, Q^n) = \overline{\gamma}(L, Q^n) = \frac{1}{2} \omega([\mathbb{C}{P}^1]),}$ for $n>1,$  
 
 \smallskip 
 \noindent where $Q^n \subset \C P^{n+1},$ is the complex $n$-dimensional smooth quadric (of real dimension $2n$), $L \cong S^n$ is a natural monotone Lagrangian sphere in $Q^n,$ and $\omega= \om_{FS}$ on $\C P^{n+1},$ 
\medskip
 \item\label{thmBDbounds-case:HPn} $\displaystyle{\frac{1}{2} \omega([\C P^1]) \leq \overline{\beta}(\bH P^n, Gr(2,2n+2)) \leq  \overline{\gamma}(\bH P^n, Gr(2,2n+2)) \leq \frac{n}{n+1} \omega([\mathbb{C}{P}^1]),}$ for $n \geq 1,$ 
 
 \smallskip 
 \noindent where $\omega$ is the standard symplectic form on $Gr(2,2n+2),$ the complex Grassmannian of complex $2$-planes in $\C^{2n+2}$, and $L = \bH P^n$ is the quaternionic projective space of real dimension $4n,$ embedded as the fixed point set of the involution on $Gr(2,2n+2)$ induced by multiplying $\C^{2n+2} \cong \bH^{n+1}$ on the right by the quaternionic root of unity $j.$ Here $\mathbb{C}{P}^1$ is a complex line in $Gr(2,2n+2)$ such that $[\mathbb{C}{P}^1]$ is a generator of $H_2(Gr(2,2n+2);\Z).$
	\end{enumerate}
\end{thm}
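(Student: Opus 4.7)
The overall strategy is twofold: obtain every upper bound as an immediate application of Proposition \ref{Prop:slimLagr} (the averaging method) by verifying its hypotheses via a computation of the relevant Lagrangian Seidel element, and obtain the lower bounds by adapting Stevenson's persistence-module rigidity arguments \cite{stevenson}. The inequalities $\overline{\beta} \leq \overline{\gamma}$ are automatic from Theorem \ref{thmSpectralNormBoundsBoundaryDepth}, so the work is concentrated at the two endpoints.

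For the upper bounds, the common task in each case is to exhibit a Hamiltonian loop $\eta \in \cP_L$ preserving $L$ (in case (\ref{thmBDbounds-case:CPn}), a Hamiltonian loop on $\CP^n$ itself), together with a section class $\sigma,$ so that the Lagrangian Seidel element $S = S_{\eta,\sigma}([L]) \in QH(L)$ satisfies hypotheses (\ref{assumption 1})-(\ref{assumption 3}) of Proposition \ref{Prop:slimLagr}. In case (\ref{thmBDbounds-case:RPn}) I would take the standard $S^1$-action on $\CP^n$ rotating one homogeneous coordinate, which fixes $\RP^n;$ a Hyvrier-type analysis should give $S^{n+1} = c_1 [pt]\, t^n$ and $S^{2(n+1)} = c_2 [L],$ so that with $N_L = n+1$ and $A_L = \tfrac{1}{2}\om([\CP^1]),$ Proposition \ref{Prop:slimLagr} yields $\overline{\gamma}(\RP^n,\CP^n) \leq \tfrac{n}{2(n+1)}\om([\CP^1]).$ Case (\ref{thmBDbounds-case:CPn}) reduces, via the standard identification of the diagonal Floer complex of $\CP^n\times(\CP^n)^-$ with the Hamiltonian Floer complex of $\CP^n,$ to the absolute problem on $\CP^n;$ there the natural rotation loop produces Seidel's classical class $c\cdot [pt]\, t^n,$ so the iterate hypotheses are immediate and the bound $\tfrac{n}{n+1}\om([\CP^1])$ follows. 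In case (\ref{thmBDbounds-case:Sn}) I would use a Hamiltonian circle action on the quadric $Q^n$ whose fixed locus contains $L \cong S^n,$ and carry out the analogous Seidel-element computation to produce $\tfrac{1}{2}\om([\CP^1]).$ In case (\ref{thmBDbounds-case:HPn}) I would use the $S^1$-action on $\mrm{Gr}(2,2n+2)$ induced by right multiplication by $e^{i\theta}$ on $\bH^{n+1}\cong\C^{2n+2},$ which preserves $\bH P^n;$ the expected minimal Maslov number $N_L = 4(n+1)$ exceeds $\dim L = 4n,$ so assumption (\ref{assumption 1}) is satisfied and a Hyvrier-style computation of the iterates of $S$ gives $\tfrac{n}{n+1}\om([\CP^1]).$

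For the lower bounds, I would adapt Stevenson's argument: for each loop $\eta$ above, one constructs an autonomous $C^2$-small Hamiltonian $H$ supported near $L,$ whose Floer complex admits an explicit Morse--Bott/pearl description, and then shows that the Seidel automorphism associated to $\eta$ acts on this complex as a strict filtration shift by $\tfrac{n}{N_L}A_L$ on some cycle, forcing a bar of that length in $\cB(L,H).$ In cases (\ref{thmBDbounds-case:CPn}) and (\ref{thmBDbounds-case:Sn}) this matches the upper bound and gives $\overline\beta = \overline\gamma.$ In case (\ref{thmBDbounds-case:RPn}) the Floer complex of $\RP^1 \subset \CP^1$ can be analyzed explicitly and realizes the value $\tfrac{1}{4}\om([\CP^1]);$ for $n>1,$ embedding $\RP^1\hookrightarrow\RP^n$ equivariantly with respect to the $S^1$-action still yields the weaker lower bound $\tfrac{1}{4}\om([\CP^1]).$ In case (\ref{thmBDbounds-case:HPn}) the same type of persistence argument, combined with a minimal-disk area estimate, produces only $\tfrac{1}{2}\om([\CP^1]).$

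The principal obstacle will be the precise computation of the Lagrangian Seidel elements in cases (\ref{thmBDbounds-case:Sn}) and (\ref{thmBDbounds-case:HPn}): although the absolute Seidel elements in $QH^*(Q^n)$ and $QH^*(\mrm{Gr}(2,2n+2))$ are well-understood, the Lagrangian enhancement requires identifying the moduli of holomorphic sections of the associated fibrations that have boundary on $L$ with the prescribed Maslov index, and showing that their signed counts agree with $[pt]\,t^n$ (respectively $[L]$) up to nonzero constants in $\bK = \bF_2.$ A secondary difficulty is the persistence-module rigidity behind the sharpness statements $\overline{\beta} = \overline{\gamma}$ in cases (\ref{thmBDbounds-case:CPn}) and (\ref{thmBDbounds-case:Sn}), which demands exhibiting a specific Hamiltonian whose barcode contains an honest bar of maximal length $\tfrac{n}{N_L}A_L,$ rather than merely approaching it; this is where Stevenson's methods must be transferred carefully from the absolute to the Lagrangian setting.
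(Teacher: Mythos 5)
Your upper-bound strategy coincides with the paper's: apply Proposition \ref{Prop:slimLagr} after exhibiting an element of $\cP_L$ (or, in case (\ref{thmBDbounds-case:CPn}), of $\pi_1(\Ham)$) whose Lagrangian Seidel element satisfies assumptions (\ref{assumption 1})--(\ref{assumption 3}). Your specific choices, however, diverge. In case (\ref{thmBDbounds-case:RPn}) the paper uses the half-turn $\eta(t) = e^{-\pi i t}$ (which is a genuine path, not a loop), getting $S^{n} = [pt]t^n$ and $S^{n+1}=[L]$; your $S^1$-loop version gives different iterates, but since the bound $\frac{n}{N_L}A_L$ depends only on $n,N_L,A_L$, this is harmless. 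In case (\ref{thmBDbounds-case:Sn}) you miss a real subtlety: for $n$ odd, the relevant $S^1$-action on $Q^n$ is not semi-free and $F_{\max}\cap L$ is not Lagrangian in $F_{\max}$, so Hyvrier's theorem does not apply directly; the paper has to modify the argument and verify Fredholm regularity of a Riemann--Hilbert problem by hand. In case (\ref{thmBDbounds-case:HPn}) the action you propose — right multiplication by $e^{i\theta}$ on $\bH^{n+1}$ — is $\bH$-linear and therefore acts as a scalar $U(1)$ on $Gr(2,2n+2)$, fixing $\bH P^n$ pointwise; this is not the action the paper uses (rotation of a single coordinate $z_1$), and it will not produce the needed nontrivial Seidel iterate $S^k = c_1[pt]t^n$.

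The genuine gap is in the lower bounds. The paper's sharpness argument (Lemma \ref{lem:lower bound on beta}) has nothing to do with the Seidel automorphism acting as a filtration shift on a $C^2$-small Hamiltonian. Instead one embeds a symplectic ball relative to $L$ of capacity close to $\min\{w(L),\,\tfrac{n}{N_L}A_L\}$, places a radially symmetric, \emph{not} small, Hamiltonian inside, and follows Stevenson's barcode-continuity argument for a homotopy of piecewise-linear profiles. Your proposal to use a $C^2$-small Hamiltonian supported near $L$ fails at the outset: for wide $L$ such a Hamiltonian has $\beta(L,H)\approx\beta(L,0)=0$ by Proposition \ref{prop: beta 0 for wide}, and the Seidel automorphism produces a shifted copy of the same barcode rather than forcing new finite bars. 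You also do not isolate the arithmetic hypothesis $\gcd(N_L,n)>1$ (for $n>1$, $L$ monotone) that the paper needs in Lemma \ref{lem:lower bound on beta} to preclude unwanted degree-$(n+1)$ actions; this hypothesis holds for $Q^n$ (where $\gcd(2n,n)=n$) and $\bH P^n$ (where $\gcd(4n+4,4n)=4$), but \emph{fails} for $\RP^n$ with $n>1$ (where $\gcd(n+1,n)=1$) — precisely why the paper only gives a $\tfrac{1}{4}\om([\CP^1])$ lower bound there instead of matching the upper bound. Your idea of restricting along an embedding $\RP^1\hookrightarrow\RP^n$ does not repair this, as the boundary depth does not obviously transfer under such an inclusion; the paper's $\tfrac14$ bound comes instead from an explicit one-peak radial profile in a relative ball, analogous to the $n=1$ case.
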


\medskip

\begin{rmk}
	It would be interesting to investigate the sharpness in Cases (\ref{thmBDbounds-case:RPn}), $n>1,$ and (\ref{thmBDbounds-case:HPn}) of Theorem \ref{thm:slimLagrExamples}.
\end{rmk}

\subsection{Calabi quasimorphisms with precise bounds on the defect}

We recall the following basic definition.

\begin{df}
	A quasimorphism $\sigma:G \to \R$ on a group $G$ is a function satisfying the bound \[D_{\sigma} = \sup_{x,y \in G}|\sigma(xy) - \sigma(x) - \sigma(y)| < \infty.\]
\end{df}

For each quasimorphism $\sigma$ there exists a unique {\em homogeneous}, that is additive on each abelian subgroup of $G,$ quasimorphism $\overline{\sigma},$ such that $\sigma - \overline{\sigma}$ is a bounded function. This homogeneization is given by the formula \[\overline{\sigma}(x) = \lim_{k \to \infty} \frac{\sigma(x^k)}{k}.\]  

Homogeneous quasimorphisms that are non-trivial, that is they are not homomorphisms, in general reflect non-commutative features of group and are an important notion of study in geometric group theory (see \cite{scl}). 

Quasimorphisms on the (universal cover of) the Hamiltonian group of closed symplectic manifolds were constructed in many cases, starting with the work \cite{EntovPolterovichCalabiQM} (see \cite{Entov-ICM} for a review of the literature). In particular, it was shown in \cite{EntovPolterovichCalabiQM} that $\til{\Ham}(\C P^n, \om_{FS})$ admits a non-trivial homogeneous quasimorphism, with the additional property that its composition with the natural map $\til{\Ham}_c(U,\om_{FS}|_U) \to \til{\Ham}(\C P^n, \om_{FS})$ coincides, for each displaceable open subset $U \subset \C P^n,$ with a non-zero constant multiple of the Calabi homomorphism \[\mrm{Cal}:\til{\Ham}_c(U,\om_{FS}|_U) \to \R\]\[ [H] \mapsto \int_0^1 \int_U H(t,x) \,(\om_{FS}|_U)^n \,dt,\] with the Hamiltonian normalized to have $\supp H \subset [0,1] \times U.$  In fact, more is true: this quasimorphism vanishes on $\pi_1(\Ham (\C P^n, \om_{FS})),$ and hence descends to a homogeneous quasimorphism $\Ham(\C P^n, \om_{FS}) \to \R.$

As a corollary of Case \ref{thmBDbounds-case:CPn} of Theorem \ref{thm:slimLagrExamples}, we improve the best known bound $D_{\sigma} \leq \om([\C P^1])$ \cite[Chapter 3]{PolterovichRosen} on the defect of the Calabi quasimorphism on $\tHam(\C P^n).$ However, we do not know whether or not it is optimal.

\begin{cor}
	\label{thmCalabiQMBound}
	Let $M = \C P^n$, and let $\sigma : \tHam(M,\omega) \to \R$ be the Calabi quasimorphism. The defect of $\sigma$ satisfies \[D_{\sigma} \leq \frac{n}{n+1} \omega([\CP^1]).\]
	
\end{cor}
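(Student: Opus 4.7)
The plan is to observe that the existing bound $D_\sigma \leq \omega([\C P^1])$ from \cite[Chapter 3]{PolterovichRosen} is established by controlling the defect of $\sigma$ in terms of the uniform supremum of the spectral norm $\overline{\gamma}(\C P^n) = \sup_\phi \gamma(\phi)$, combined with the classical inequality $\overline{\gamma}(\C P^n) \leq \omega([\C P^1])$. Substituting into that same defect argument our sharper value
\[\overline{\gamma}(\C P^n) = \tfrac{n}{n+1}\, \omega([\C P^1])\]
supplied by Case \ref{thmBDbounds-case:CPn} of Theorem \ref{thm:slimLagrExamples} will immediately yield the claimed improved estimate.

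To make this explicit, I would first recall that the Entov--Polterovich Calabi quasimorphism $\sigma$ on $\tHam(\C P^n, \omega_{FS})$ is the homogenization of the partial function $\mu([H]) = -c([M], H)$, defined on mean-normalized Hamiltonians, together with the multiplicative normalization needed to match the Calabi homomorphism on displaceable open subsets. From the triangle inequality for spectral invariants, $c([M], F \# G) \leq c([M], F) + c([M], G)$, one gets super-additivity $\mu([\phi][\psi]) \geq \mu([\phi]) + \mu([\psi])$. Pairing this with the duality identity $c([M], \overline{H}) + c([M], H) = \gamma([H])$ and applying the triangle inequality to the decomposition $[\phi] = [\phi\psi] \cdot [\psi]^{-1}$ gives the opposite one-sided bound $\mu([\phi][\psi]) - \mu([\phi]) - \mu([\psi]) \leq \min(\gamma([\phi]), \gamma([\psi]))$. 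Consequently the defect of $\mu$ itself is bounded by $\overline{\gamma}(\C P^n)$, and the standard homogenization/averaging argument used by Entov--Polterovich transports the bound to the homogeneous quasimorphism, giving $D_\sigma \leq \overline{\gamma}(\C P^n)$.

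Substituting the sharp value $\overline{\gamma}(\C P^n) = \tfrac{n}{n+1}\omega([\C P^1])$ from Case \ref{thmBDbounds-case:CPn} of Theorem \ref{thm:slimLagrExamples} into the bound $D_\sigma \leq \overline{\gamma}(\C P^n)$ then yields the claimed estimate $D_\sigma \leq \tfrac{n}{n+1}\omega([\C P^1])$. The main subtle point to keep track of is that the homogenization must be carried out without degrading the defect constant by a multiplicative factor larger than one; this is precisely the delicate step already present in the classical proof of $D_\sigma \leq \omega([\C P^1])$ and is not affected by our substitution of the improved uniform spectral-norm bound. Thus no new argument beyond the one underlying the classical defect estimate is required, only the insertion of Theorem \ref{thm:slimLagrExamples}, Case \ref{thmBDbounds-case:CPn}.
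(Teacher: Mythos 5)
Your overall strategy matches the paper's: control the defect of $\sigma$ by $\overline{\gamma}(\C P^n)$ and then substitute the sharp value $\overline{\gamma}(\C P^n) = \tfrac{n}{n+1}\omega([\CP^1])$ from Theorem~\ref{thm:slimLagrExamples}. However, there are two genuine gaps in how you execute the first half.

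\textbf{The homogenization step is not just delicate, it fails without conjugation invariance.} You derive the pointwise two-sided bound $0 \leq \mu([\phi][\psi]) - \mu([\phi]) - \mu([\psi]) \leq \min(\gamma([\phi]),\gamma([\psi]))$ for the prenorm $\mu = -c([M],\cdot)$, and then assert that the ``standard homogenization/averaging argument'' carries this over to the homogeneous quasimorphism $\sigma$ with no loss. That assertion is false in the stated generality: the generic homogenization lemma (Calegari's \emph{scl}, Cor.~2.59, which this very paper uses for Corollary~\ref{Cor: quasi}) only gives $D_{\overline{\mu}} \leq 2 D_\mu$, and indeed for the other manifolds treated in Corollary~\ref{Cor: quasi} the paper \emph{does} incur this factor of two. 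What removes the factor for $\C P^n$ is the \emph{conjugation invariance} of $c([M],\cdot)$: Proposition~3.5.3 of \cite{PolterovichRosen} uses $c(gfg^{-1}) = c(f)$ to pass from $\sum_{j=0}^{k-1} c(f^{-j}gf^j)$ to $kc(g)$ in the telescoping decomposition of $(fg)^k$, which yields the pointwise bound $|\sigma(fg) - \sigma(f) - \sigma(g)| \leq \min\{\gamma(f),\gamma(g)\}$ \emph{directly for the homogeneous} $\sigma$. You never invoke conjugation invariance, so the step from a defect bound on $\mu$ to one on $\sigma$ is not justified in your write-up, and that is precisely the nontrivial content here.

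\textbf{The descent to $\Ham$ is also needed and is not addressed.} The bound from \cite[Prop.~3.5.3]{PolterovichRosen} is in terms of $\gamma([F]),\gamma([G])$ on $\tHam$, whereas $\overline{\gamma}(\C P^n) = \sup_\phi \gamma(\phi)$ from Theorem~\ref{thm:slimLagrExamples} is the supremum of the spectral norm on $\Ham$, i.e.\ after taking the infimum over all lifts $[F]$ with $\phi^1_F = \phi$. Since $\pi_1(\Ham(\C P^n))$ is nontrivial, these can genuinely differ. The paper bridges this by noting that $\sigma$ vanishes on $\pi_1(\tHam) \subset Z(\tHam)$ (by \cite[\S4.3]{EntovPolterovichCalabiQM}) and, being homogeneous, depends only on $\phi^1_H$; therefore one may replace $\gamma([F])$ by $\gamma(\phi^1_F)$ before taking the supremum. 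Without this observation the quantity you substitute is not the one that is a priori relevant. Both of these omissions can be filled, and once they are the argument coincides with the paper's, but as written the proposal does not establish the corollary.
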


\begin{proof}
    Since $[H] \to c([M],[H])$ is a conjugation invariant function $\tHam(M,\omega) \to \R,$ by \cite[Proposition 3.5.3]{PolterovichRosen} we obtain 
    \[|\sigma([G] [H]) - \sigma([G]) - \sigma([H]) | \leq \min\{\gamma([G]),\gamma([H])\},\] for all $[G],[H] \in \tHam(M,\omega).$
    Moreover, by \cite[Section 4.3]{EntovPolterovichCalabiQM}, $\sigma$ vanishes on $\pi_1(\tHam(M,\om)) \subset Z(\tHam(M,\om)).$ Hence, since $\sigma$ is a homogeneous quasimorphism, $\sigma([H])$ depends  only on $\phi^1_H.$  Hence for all $[G],[H] \in \tHam(M,\omega),$ \[|\sigma([G] [H]) - \sigma([G]) - \sigma([H]) | \leq \min\{\gamma(\phi^1_G),\gamma(\phi^1_H)\} \leq \frac{n}{n+1} \omega([\mathbb{C}\mathbb{P}^1]),\] by Theorem \ref{thm:slimLagrExamples}.
\end{proof}

In a similar direction, Theorem \ref{thm:slimLagrExamples} yields the following corollary on the existence of quasimorphisms with precise bounds on the defect.

\begin{cor}\label{Cor: quasi}
Let $(M,\om)$ be one of $\C P^{n} \times (\C P^{n})^{-},$ $ Gr(2,2n+2),$ $ Q^n.$ Then the universal cover  $\til{\Ham}(M,\om)$ admits a non-zero  quasimorphism $\sigma$ of defect bounded by $\frac{2n}{n+1} \om([\C P^1]),$ $ \frac{2n}{n+1} \om([\C P^1]),$ $\om([\C P^1])$ respectively. This quasimorphism satisfies the Calabi property: for $H \in \cH$ with $\supp H \subset [0,1] \times U,$ where $U \subset M$ is a displaceable open set, one has \[\sigma([H]) = - \frac{1}{\vol(M,\om)}\int_0^1\int_U H(x,t)\, \om^n \,dt.\] 
\end{cor}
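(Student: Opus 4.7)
The plan is to mimic the proof of Corollary \ref{thmCalabiQMBound}, applied to each of the three manifolds in turn. For each $(M,\om)$ in the list, I would produce a non-zero homogeneous Calabi-type quasimorphism $\sigma:\til{\Ham}(M,\om)\to\R$ via the Entov--Polterovich procedure, and then bound its defect by the argument of Corollary \ref{thmCalabiQMBound} --- applying \cite[Proposition~3.5.3]{PolterovichRosen} to the conjugation-invariant spectral-invariant function $[H]\mapsto c([M],H)$ --- combined with the spectral-norm estimates of Theorem \ref{thm:slimLagrExamples}.

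Concretely, I would define $\sigma$ as the homogenization of $\mu([H])=-c([M],H)$. Since $[M]\ast[M]=[M]$, the triangle inequality for spectral invariants makes $-\mu$ subadditive, so by standard arguments $\sigma$ is a homogeneous quasimorphism whose defect is bounded, via the Polterovich--Rosen estimate, by $\sup_{[G],[H]}\min\{\gamma([G]),\gamma([H])\}\leq\overline\gamma(M)$. The Calabi property for $H$ supported in a displaceable open $U\subset M$, and hence the non-triviality of $\sigma$, follow from the standard argument of \cite{EntovPolterovichCalabiQM}: for such $H$, $c([M],H)$ is controlled by the time-averaged mean of $H$ up to corrections that vanish after homogenization, yielding the identity $\sigma([H])=-\tfrac{1}{\vol(M,\om)}\int_0^1\int_U H(x,t)\,\om^n\,dt$.

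The remaining task is to bound $\overline\gamma(M)$ in each case and match the numerical constants of the statement. I would use the inequality $\gamma(\phi)\leq 2\,\overline\gamma(M,L)$, taking $L$ to be the wide Lagrangian from the relevant case of Theorem \ref{thm:slimLagrExamples} (namely $\Delta_{\C P^n}$, $\bH P^n$, $S^n$). By parts (2), (4), (3) of Theorem \ref{thm:slimLagrExamples}, $\overline\gamma(M,L)$ equals $\tfrac{n}{n+1}\om([\C P^1])$, is at most $\tfrac{n}{n+1}\om([\C P^1])$, and equals $\tfrac{1}{2}\om([\C P^1])$, respectively, producing the stated defect bounds $\tfrac{2n}{n+1}\om([\C P^1])$, $\tfrac{2n}{n+1}\om([\C P^1])$, $\om([\C P^1])$.

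The main obstacle is precisely the Lagrangian-to-absolute transfer $\gamma(\phi)\leq 2\,\overline\gamma(M,L)$: Lemma \ref{lma: Hamiltonian geq Lagrangian} only supplies the reverse inequality, so this estimate must be established separately --- for example, via the graph correspondence on $M\times M^-$ in the product case $\C P^n\times(\C P^n)^-$ (where $\gamma(\phi)$ on the product is compared to $\gamma(\Delta_{\C P^n},\phi\cdot \Delta_{\C P^n})+\gamma(\Delta_{\C P^n},\phi^{-1}\cdot \Delta_{\C P^n})$), or via a triangle inequality in $\cO_L$ together with the Poincar\'e-duality relation between absolute and relative spectral invariants in the Grassmannian and quadric cases.
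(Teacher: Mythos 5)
Your proposal takes a genuinely different route, and the difference exposes a real gap. You build $\sigma$ from the absolute spectral invariant $c([M],H)$, which (via the Polterovich--Rosen estimate, as in Corollary~\ref{thmCalabiQMBound}) bounds the defect by the \emph{absolute} uniform spectral norm $\overline\gamma(M)$. But for the three manifolds in this corollary, Theorem~\ref{thm:slimLagrExamples} only controls the \emph{Lagrangian} spectral norm $\overline\gamma(M,L)$; it does not give a finite bound on $\overline\gamma(M)$ (in contrast to the $\C P^n$ case, where part~(\ref{thmBDbounds-case:CPn}) does give $\overline\gamma(\C P^n)$, which is why Corollary~\ref{thmCalabiQMBound} goes through without incident). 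You correctly identify that Lemma~\ref{lma: Hamiltonian geq Lagrangian} supplies only $\gamma(L,H) \leq \gamma(H)$, the wrong direction for the transfer $\overline\gamma(M) \leq 2\,\overline\gamma(M,L)$ you would need, and the repairs you sketch (a graph correspondence, Poincar\'e duality between absolute and relative invariants) are neither established in the paper nor routine consequences of what it provides. As written, the argument does not close.

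The paper avoids this transfer entirely by working with the \emph{Lagrangian} spectral invariant from the start: it defines $\sigma_0([H]) = c([L],H)$ for $H$ of zero mean and uses the Lagrangian triangle inequality in the two-sided form
\[ c([L],F) + c([L],G) - \gamma(L,[G]) \;\leq\; c([L],F\#G) \;\leq\; c([L],F) + c([L],G) \]
to see that $\sigma_0$ is a quasimorphism of defect $D \leq \overline\gamma(M,L)$; homogenization then gives defect at most $2D$, producing the stated constants directly from Theorem~\ref{thm:slimLagrExamples}. (The Polterovich--Rosen argument you invoke is not available in this form, since $c([L],\cdot)$ is not conjugation-invariant on $\til{\Ham}(M,\om)$.) Non-vanishing follows from the Lagrangian control property applied to Hamiltonians constant on $L$, and the Calabi identity follows the Entov--Polterovich displacement argument together with $c([L],\cdot) \leq c([M],\cdot)$, which is the direction Lemma~\ref{lma: Hamiltonian geq Lagrangian} \emph{does} provide --- so the absolute spectral invariant enters only as an upper bound and its uniform spectral norm never needs to be controlled.
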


\begin{proof}
The function $\sigma_0([H]) = c([L],H),$ for $H$ normalized to have zero mean, on $\til{\Ham}(M,\om)$ is a quasimorphism of defect $D$ at most $\frac{n}{n+1} \om([\C P^1]), \frac{n}{n+1} \om([\C P^1]), \frac{1}{2}\om([\C P^1])$ by Theorem \ref{thm:slimLagrExamples}, and the triangle inequality for Lagrangian spectral invariants. Indeed, it is immediate that \[c([L],F \#G) \leq c([L],F) + c([L],G),\] 
\[c([L],F) \leq c([L],F\#G) + c([L],\overline{G}),\] 
so that \[c([L],F) + c([L],G) - \gamma(L,[G]) \leq c([L],F \# G),\] whence the statement follows. Therefore the homogenization $\sigma$ of $\sigma_0$ has defect at most $2D,$ by \cite[Corollary 2.59]{scl}. The quasimorphism $\sigma$ is non-zero. Indeed by the Lagrangian control property of the spectral invariant $c([L],-),$ for a mean-normalized Hamiltonian $H$ that is  identically constant equal to $c$ on $L$ we have $c([L],H) = c,$ and hence $\sigma([H]) = c,$ since the constant property is invariant under iterations. We note that for the same reason $L$ is superheavy with respect to the corresponding symplectic quasi-state (see \cite{EntovPolterovich-rigid,LeclercqZapolsky}).
 
Finally, if $H = F - \left< F\right>,$ for a Hamiltonian $F$ supported in a displaceable open subset $U \subset M,$ and $\left<F\right>(t) = \frac{1}{\vol(M,\om)}\int_M F(x,t) \,\om^n,$ then arguing as in \cite[Proposition 3.3]{EntovPolterovichCalabiQM}, together with Lemma \ref{lma: Hamiltonian geq Lagrangian} we obtain for $K \in \cH$ is such that $\phi^1_K (U) \cap \overline{U} = \emptyset,$ that \[\frac{1}{m}c(L,[L],K \# H^{\# m}) \leq \frac{1}{m} c([M],K \# H^{\# m}) \xrightarrow{m \to \infty} - \left<\left< F\right>\right>,\] where $\left<\left< F\right>\right> = \int_{0}^{1} \left< F\right>(t)\,dt.$ As left hand side converges, by the quasimorphism property, to $\sigma([H]),$ we have \[\sigma([H]) \leq - \left<\left< F\right>\right>.\] Applying the same argument to $\overline{H},$ we have \[\sigma([H]^{-1}) \leq \left<\left< F\right>\right>,\] hence as $\sigma$ is homogeneous, we obtain the reverse inequality \[\sigma([H]) \geq - \left<\left< F\right>\right>.\] \end{proof}

\begin{rmk}
The existence of Calabi quasimorphisms for $\C P^{n} \times (\C P^{n})^{-}, Gr(2,2n+2)$ was known since \cite{EntovPolterovichCalabiQM}, and the one for $Q^n$ can be deduced from \cite[Remark after Theorem 3.1]{EntovPolterovich-semisimp} in a fairly straightforward way. However, the above upper bounds on the defect seem to be new.
\end{rmk}


\section*{Acknowledgements}
We thank Leonid Polterovich for bringing us together to work on this project, and for helpful conversations. We also thank Paul Biran, Octav Cornea, Viktor Ginzburg, Ba\c{s}ak G\"{u}rel, Pazit Haim-Kislev, Vincent Humiliere, Michael Khanevsky, Paul Seidel, Sobhan Seyfaddini, and Lara Simone Su\'{a}rez for their useful input. This work was initiated and was partially carried out during the stay of ES at the Institute for Advanced Study, where he was supported by NSF grant No. DMS-1128155. It was partially written during visits of ES to Tel Aviv University, and to Ruhr-Universit\"{a}t Bochum. He gratefully acknowledges the hospitality of these institutions and of Helmut Hofer,  Leonid Polterovich, and Alberto Abbondandolo. At the University of Montr\'{e}al, ES is supported by an NSERC Discovery Grant and by the Fonds de recherche du Qu\'{e}bec - Nature et technologies. AK is partially supported by European Research Council advanced grant 338809.

\section{Preliminaries}\label{Sect:prelim}

In this section we briefly recall the by-now standard definitions of various Floer complexes associated to Lagrangian submanifolds and Hamiltonian perturbations, maps relating them, basic algebraic structures, and associated numerical invariants: persistence modules and spectral invariants.

\subsection{Floer homology with Hamiltonian perturbations: absolute and relative}

In this paper we consider closed connected symplectic manifolds $(M,\omega).$ Certain arguments could be adapted to a setting of open symplectic manifolds tame at infinity, or compact with convex boundary. We work with Lagrangian submanifolds that we assume to be closed and connected. Moreover we assume that $(M,\omega)$ and $L$ are weakly monotone: there exists a constant $\kappa > 0,$ such that \[\left <[\omega],A\right >= \kappa \cdot \left <\mu_L, A \right >\] for all $A \in H^D_2(M,L; \Z),$ where $\mu_L \in H^2(M,L; \Z)$ is the Maslov class, and $H^D_2(M,L; \Z) \subset H_2(M,L; \Z)$ is the image of the Hurewicz homomorphism $\pi_2(M,L) \to H^D_2(M,L; \Z).$ It is easy to see that in this case $(M,\omega)$ is (spherically) weakly monotone: \[\left <[\omega],A\right > = 2\kappa \cdot \left < c_1(TM,\omega), A \right >\] for all $A \in H^S_2(M; \Z),$ the image of the Hurewicz map $\pi_2(M) \to H_2(M;\Z).$ Moreover, we assume that the positive generator $N_L \in \Z_{>0}$ of $\ima(\mu_L) \subset \Z,$ called the minimal Maslov number, satisfies \[N_L \geq 2.\] We call the respective positive generator $N_M \in \Z_{>0}$ of $\ima(c_1(TM,\omega)) \subset \Z$ the minimal Chern number. If $L \subset M$ is monotone, then $2 N_M = N_L.$ By a suitable rescaling of the symplectic form, we may assume that $\kappa = 1,$ which we shall do freely throughout the paper. Finally, define \[A_L = \kappa N_L >0.\]

If, for all $A \in H^D_2(M,L; \Z),$ \[\left<[\omega],A\right>= 0, \mu_L(A)= 0\]  we call $L$ weakly exact.  Similarly, we call a symplectic manifold for which \[\left <[\omega],A\right >= 0, \left <c_1(TM,\omega), A \right > = 0,\] for all $A \in H^S_2(M; \Z)$ symplectically aspherical. We shall call a Lagrangian, resp. a symplectic manifold, monotone if it is weakly monotone and not weakly exact, resp. monotone and not symplectically aspherical. Moreover, the above definitions apply to define Floer homology in the contractible class of paths in $M$ with boundary on $L$, resp. loops in $M$; if we wish to define Floer homology in a certain non-contractible class of loops, we can adapt those definitions to this class (cf. \cite{PolShe,PolSheSto}).

Fix a base field $\bK.$ We shall assume for this paper that $\bK = \mathbb{F}_2.$ However for part of our examples, including the case of absolute Hamiltonian Floer homology, one could in fact work with an arbitrary base field $\bK$, when a proper system of coherent or canonical (see \cite{Zap:Orient}, \cite{SeidelBook}, \cite{AbouzaidBook} and references therein) orientations has been set up. We shall discuss these cases when necessary.

\bs

{\em Lagragian Floer homology with Hamiltonian term:}

We shall define a few versions of Lagrangian Floer homology that differ primarily by the choice of Novikov rings. This material is quite standard, but since we use a different versions of Floer homology, we present it briefly in this section (we refer to \cite{OhBook},\cite{LeclercqZapolsky},\cite{UsherBD2} and references therein for details). We mostly restrict to the component of contractible chords in the suitable path space, since the fundamental class, when non-zero, is represeted by a Floer chain in this component. We now make our basic set up.

Let $\mathcal{H}_0$ denote the space of Hamiltonians $H \in \mathcal{H} = \sm{[0,1] \times M, \R}$ normalized by $\int_M H \, \omega^n = 0.$ We note that any Hamiltonian path $\{\phi^t\}_{t \in [0,1]}$ can be reparametrized in time to be constant for $t$ near $\{0\}\cup \{1\}.$ In this case its new generating Hamiltonian $H(t,x)$ will vanish for $t$ near $\{0\}\cup \{1\}.$ We use the convention that our Hamiltonian term $H$ is of this form, since it is useful for the construction of product structures.

Consider a free homotopy class $\eta \in \pi_0\cP(L,L)$ of paths in $M$ from $L$ to $L.$ Denote by $\cP_{\eta}(L,L)$ the corresponding connected component of the path space $\cP(L,L)$ in $M$ from $L$ to $L.$ We denote by $\eta = pt$ the component of a constant path in $L.$ Consider the set $\cO_{\eta}(L,H) \subset \cP_{\eta}(L,L)$ of chords $\gamma:[0,1] \to M$ of the Hamiltonian flow of $H$ from $L$ to $L.$ This means that $\gamma \in \cP_{\eta}(L,L)$ and \[\dot{\gamma}(t) = X^t_{H}(\gamma(t))\] for all $t \in [0,1],$ where $X^t_H$ is the time-dependent Hamiltonian vector field of $H$ defined by the Hamiltonian construction: for each $t \in [0,1]$ let $H_t \in \sm{M,\R}$ be defined by $H_t(x) = H(t,x),$ then $X^t_H$ is determined uniquely by \[\iota_{X^t_H}\omega = - dH_t.\] We denote by $\{ \phi^t_H \}_{t \in [0,1]}$ the flow on $M$ generated by $X^t_H,$ with $\phi^0_H = \id.$ Finally, we denote by $[H]$ the class $[\{ \phi^t_H \}_{t \in [0,1]}]$ of $\{ \phi^t_H \}_{t \in [0,1]}$ in the universal cover $\tHam(M,\omega)$ of $\Ham(M,\omega).$

We denote by $\cO(L,H) = \displaystyle{\sqcup_{\eta \in \pi_0\cP(L,L)}} \cO_{\eta}(L,H)$ the set of all Hamiltonian chords of $H$ from $L$ to $L.$ 

Given that all the intersections of $L$ with $\phi^1_H(L)$ are transverse, the Floer complex in each component $\eta$ will be given by a free module over a suitable coefficient ring, with basis isomorphic to $\cO_{\eta}(L,H),$ endowed with a differential given by counting solutions of the Floer equation connecting between different chords in the same free homotopy class. In the non-transverse case, we shall introduce a small additional Hamiltonian perturbation that makes the equation equivalent to the transverse case.

We define a coefficient ring \[\Lambda_{0,\text{univ},\bK} = \left \{ \displaystyle\sum_{j \geq 0} a_j \cdot T^{\lambda_j}\;:\;\;a_j \in \bK,\, 0 \leq \lambda_j \nearrow +\infty \right \}\] for a formal variable $T.$ This is the universal Novikov ring over $\bK.$ Its ring of fractions is the universal Novikov field over $\bK,$ given explicitly by \[\Lambda_{\text{univ},\bK} = \left \{ \displaystyle\sum_{j \geq 0} a_j \cdot T^{\lambda_j}\;:\;\;a_j \in \bK,\, \lambda_j \nearrow +\infty \right \}.\] In our monotone case, we let $t$ be a formal graded variable of degree $\deg(t) = 1$ and set \[\Lambda_{\text{mon},\bK} =\left \{\displaystyle\sum_{j \geq 0} a_j \cdot t^{-l_j}\;:\;\;a_j \in \bK,\, l_j \in \Z, l_j \nearrow +\infty \right\}.\] We note that $t \mapsto T^{-\kappa},$ where $\kappa = {A_L}/{N_L}>0,$ defines an ungraded embedding of fields $\iota_{1,0}: \Lambda_{\text{mon},\bK} \to \Lambda_{\text{univ},\bK}.$ Our different complexes shall be connected by this map. We record another coefficient ring, \[\Lambda_{\text{min},\bK} =\left \{\displaystyle\sum_{j \geq 0} a_j \cdot q^{-l_j}\;:\;\;a_j \in \bK,\, l_j \in \Z, l_j \nearrow +\infty \right\},\] where now $\deg(q) = N_L,$ and $q \mapsto t^{N_L}$ gives a finite graded extension of fields $\iota_{N_L,1}: \Lambda_{\text{min},\bK} \to \Lambda_{\text{mon},\bK}.$ When we wish to underline the fact that $\Lambda_{\text{min},\bK}$ is associated to $L,$ we write $\Lambda_{L,\text{min},\bK}.$

Finally we remark that for a general, not necessarily monotone, Lagrangian submanifold $L$ of a symplectic manifold $M,$ we may, in certain settings, define Floer homology with coefficients in the Novikov field $\Lambda_{L, \Gamma_{\omega},\bK},$ defined identically to the universal Novikov field $\Lambda_{\tuniv},$ with the exception that the exponents are taken in the subgroup $\Gamma_{\omega}$ of $\R$ given by $\Gamma_{\omega} = \ima(\om_L: H_2^D(M,L;\Z) \to \R).$ The Novikov ring $\Lambda_{0,L,\Gamma_{\omega},\bK}$ is defined similarly to $\Lambda_{0,\tuniv},$ and consists of those elements of $\Lambda_{L, \Gamma_{\omega},\bK}$ that have non-zero valuation.

We denote by $\cJ_M$ the space of $\omega$-compatible almost complex structures on $M$ depending smoothly on a parameter $t \in [0,1],$ and by $\cJ'_M$ its subspace of time-independent $\omega$-compatible almost complex structures. Now we fix a Floer perturbation datum $\mathcal{D}^{L,H} = (K^{L,H},J^{L,H})$ where $K^{L,H} \in \cH$ is a time-dependent Hamiltonian perturbation, and $J^{L,H} = \{J^{L,H}_t\}_{t \in [0,1]} \in \cJ_M$ is a time-dependent $\omega$-compatible almost complex structure on $M.$ When we  restrict our attention to the component $\eta,$ then in the case that all the intersections in $L \cap \phi^1_H(L)$ corresponding to endpoints of paths in $\cO_{\eta}(L,H)$ are transverse, we can assume that $K^{L,H} \equiv 0.$ Otherwise, we can take $K^{L,H}$ small in $C^2$-topology, and consider the Hamiltonian $H^{\cD} = H + K^{L,H}$ generating the flow $\{\phi^t_H \circ \phi^t_{\overline{K}^{L,H}} \}_{t \in [0,1]},$ for $\overline{K}^{L,H}(t,x) = K^{L,H} (t, \phi^t_H x)$ instead of $H,$ making all intersections in the class $\eta$ transverse. We denote by $\cO_{\eta}(L,H; \cD)$ the Hamiltonian chords of $H^{\cD}$ from $L$ to $L.$

Given $x_{-},x_{+} \in \cO_{\eta}(L,H; \cD),$ to any smooth map $u:Z \to M,$ where $Z = \R \times [0,1],$ such that $u|_{\{0\}\times \R}$ and $u|_{\{1\}\times \R}$ have values in $L,$ and $u(s,-) \xrightarrow{s \to \pm \infty} x_{\pm}$ uniformly in $C^1$-topology, one can associate an index $\ind(u,x_{-},x_{+}),$ called the Maslov-Viterbo index \cite{ViterboMaslov}. It is now standard that the space $\widehat{\mathcal{M}}(x_{-},x_{+})$ of solutions of the Floer equation \[(du - X^t_{H^{\cD}}(u) \otimes dt)^{(0,1)} = 0, \] or equivalently $\del_s u + J(u)(\del_t u - X^t_{H^{\cD}}(u)) = 0,$ for $(s,t)$ the natural coordinates on $Z = \R \times [0,1],$ with $u(s,-) \xrightarrow{s \to \pm \infty} x_{\pm}$ uniformly and $\ind(u,x_{-},x_{+}) = k$ is, for all $J^{L,H}$ in a co-meager set of $\cJ_M^{reg} \subset \cJ_M,$ a smooth manifold of dimension $k,$ admitting a free action of $\R.$ Moreover, for $k=1,$ the quotient space by the $\R$-action is a compact smooth zero-dimensional manifold ${\mathcal{M}}(x_{-},x_{+}) = \widehat{\mathcal{M}}(x_{-},x_{+})/ {\R}.$

Now we proceed with the definitions of Floer complexes. In our monotone case, when $\eta = pt,$ we consider the cover $\til{\cO}_{pt}(L,H; \cD)$ corresponding to $\ker([\omega]) \cap \ker(\mu_L) = \ker([\omega]),$ where $[\omega]: \pi_1(\cP_{pt}(L,L)) \cong \pi_2(M,L) \to \R$ is given by integrating with respect to the symplectic form, and $\mu_L:  \pi_2(M,L) \to \Z$ is the Maslov class. This means that up to a suitable equivalence relation, we look at pairs $(x,\bar{x})$ where $x \in {\cO_{pt}}(L,H; \cD)$ and $\overline{x}$ is a capping of $x,$ that is to say: a map $\bar{x}: \mathbb{D} \cap \mathbb{H} \to M$ with $\bar{x}|_{\mathbb{D} \cap \R},$ suitably reparametrized, coincides with $x,$ i.e. $\bar{x}(2t-1) = x(t)$ for all $t \in [0,1],$ and $\bar{x}|_{\partial \mathbb{D} \cap \mathbb{H}}$ has values in $L.$ Here $\mathbb{D} = \{z \in \C\,:\, |z| \leq 1\},$ and $\mathbb{H} = \{z \in \C\,:\, \mathrm{Im}(z) \geq 0\}.$ We consider two cappings $\bar{x},\bar{x}'$ equivalent if $v = \bar{x} \overline{\#} \bar{x}': (\mathbb{D},\partial \mathbb{D}) \to (M,L),$ defined by $\bar{x}(z)$ for $z \in \mathbb{D} \cap \mathbb{H},$ and $\bar{x}'(\bar{z})$ for   $z \in \mathbb{D} \cap \overline{\mathbb{H}},$ satisfies $\left < [\omega], [v] \right > = \left < \mu_L , [v] \right > = 0.$

For general $\eta,$ one chooses a reference path $\gamma_{0,\eta} \in \cP_{\eta}(L,L)$ and as cappings of $x \in \cO_{\eta}(L,H;\cD)$ considers equivalence classes of paths in $\cP_{\eta}(L,L)$ from $\gamma_{0,\eta}$ to $x,$ with respect to a similar equivalence relation: $(x,\bar{x}) \sim (x,\bar{x}')$ if and only if $\left < [\omega], [v] \right > = \left < \mu_L , [v] \right > = 0,$ where $v$ is now a map from an annulus to $M,$ with boundary mapped to $L,$ obtained from a loop in $\cP_{\eta}(L,L)$ based at $\gamma_{0,\eta}$ given by composing the path $\bar{x}$ with the path $\bar{x}'$ traversed in the reverse direction.

In fact it is easy to see that $\til{\cO}_{\eta} (L,H;\cD)$ are the critical points of the action functional \[\cA_{H;\cD}(z,\bar{z}) = \int_0^1 H^{\cD}(z(t)) - \int_{\bar{z}}\omega\] on a suitable cover $\til{\cP}_{\eta}(L,L)$ of ${\cP}_{\eta}(L,L)$ (given by the construction with the above equivalence relation). We shall use this functional to induce a filtration on our complex. We also set $\Spec(L,H; \cD) = \ima(\cA_{H;\cD}) \subset \R$ and $\Spec(L,H) = \Spec(L,H; 0).$ These are known \cite{Oh-chain} to be closed nowhere dense subsets of $\R.$

Finally, one can assign to each $(x,\bar{x}) \in \til{\cO}_{\eta} (L,H;\cD)$ an index $\mu_{CZ}(x,\bar{x}) \in \Z,$ by equipping the reference path $\gamma_{0,\eta} \in \cP_{\eta}(L,L)$ with a symplectic trivialization of $\gamma_{0,\eta}^*(TM),$ and a section of the Lagrangian Grassmannian $\mathcal{L}(\gamma_{0,\eta}^*(TM)) \to [0,1],$ connecting $T_{\gamma_{0,\eta}(0)}(L)$ and $T_{\gamma_{0,\eta}(1)}(L).$ This induces a symplectic trivialization of $\bar{x}^*(TM),$ and computing the Maslov index of a loop of Lagrangian subspaces of a now-fixed symplectic vector space, obtained by extending the path $l(H,x,\bar{x};\cD) = \{ D(\phi_{H^{\cD}}^{t})(x(0)) T_{x(0)}L \}$ in a canonical way. One can  describe $\mu_{CZ}$ alternatively as a suitably normalized Conley-Zehnder (or Robbin-Salamon \cite{RobbinSalamonIndex}) index of the path $l(H,x,\bar{x};\cD)$ under the above trivialization. We take the index normalized in such a way that for $H$ being a lift to a Weinstein neighborhood of a $C^2$-small Morse function $f$ on $L,$ and $K^{L,H} \equiv 0,$ we get for $(x,\bar{x})$ a constant path and capping at a critical point $q = x(0)$ of $f,$ $\mu_{CZ}(x,\bar{x}) = \ind_f(q),$ the Morse index of $q$ as a critical point of $f:L \to \R.$ Finally \[\ind(u,x_{-},x_{+}) = \mu_{CZ}(x_{-},\overline{x}_{-}) - \mu_{CZ}(x_{+},\overline{x}_{-} \# u ).\] 

\begin{rmk}\label{rmk:CZ-normalization}
We note that for purposes of computing $\ind(u,x_{-},x_{+})$ as such a difference, we may take {\em any} uniform shift of the above normalization of the Robbin-Salamon index. This will be useful later.
\end{rmk}

In our first definition, fixing $\eta,$ we consider for each $r \in \Z,$ the $\bK$-vector space $CF_r(L,H; \cD)$ freely generated by $(x,\bar{x}) \in \til{\cO}_{\eta} (L,H;\cD)$ that have $\mu_{CZ}(x,\bar{x}) = r.$ It is easy to see that in the monotone case this is a finite-dimensional vector space. This makes $CF(L,H; \cD)$ into a $\Z$-graded vector space over $\bK.$ We then define the differential $d_{L,H,\cD}:CF(L,H; \cD) \to CF(L,H; \cD)[-1]$ by extending by linearity from \[d_{L,H;\cD}(x_{-},\bar{x}_{-}) = \displaystyle \sum_{\substack{[u] \in \cM(x_{-},x_{+}),\\ \ind(u,x_{-},x_{+}) = 1}} (x_{+},\overline{x}_- \# u).\] By, again, a standard Gromov-Floer compactness argument, one obtains $d_{L,H;\cD}^2 = d_{L,H;\cD} \circ d_{L,H;\cD} = 0.$ 

In other words, the differential $d_{L,H;\cD}$ is given by counting the isolated unparametrized solutions to the Floer equation from $x_{-}$  to $x_{+},$ in the zero-dimensional component. In the above first version, the "weight" with which a solution is counted is given essentially by the homotopy class of the solution (up to identification based on the symplectic area, and Maslov index).

We note that $\Lambda_{\text{min},\bK}$ acts on $CF(L,H;\cD)$ by $q^{\mu_L([A])/N_L} \cdot (x,\bar{x}) = (x, \bar{x} \# A),$ turning $(CF(L,H; \cD), d_{L,H;\cD})$ into a finite rank free complex over $\Lambda_{\tmin}.$ Finally we introduce a filtration-level function on this complex by \[\cA(\sum \lambda_j (x_j,\bar{x}_j)) = \max_j\{ \cA_{H;\cD}(x_j,\bar{x}_j) - \nu(\lambda_j)\}\] for $\lambda_j \in \Lambda_{\tmin}$ and a basis $\{(x_j,\bar{x}_j)\} \subset \til{\cO}_{\eta} (L,H;\cD)$ of $CF(L,H; \cD)$ over $\Lambda_{\tmin}.$ Clearly this definition does not depend on the choice of such a basis.

We also record the versions with coefficients extended to $\Lambda_{\tmon},$ and $\Lambda_{\tuniv}$ given respectively by \[CF(L,H; \cD,\Lambda_{\tmon}) = CF(L,H; \cD) \otimes_{\Lambda_{\tmin}} \Lambda_{\tmon},\] \[CF(L,H; \cD,\Lambda_{\tuniv}) = CF(L,H; \cD) \otimes_{\Lambda_{\tmin}} \Lambda_{\tuniv},\] with differential $d_{L,H;\cD}$ extended by linearity.

The homology $HF_*(L,H;\cD),$ $HF_*(L,H;\cD,\Lambda_{\tmon})$ of the complexes $(CF_*(L,H; \cD),d_{L,H;\cD}),$ $(CF_*(L,H; \cD, \Lambda_{\tmon}),d_{L,H;\cD})$ respectively, is a graded module of finite rank over $\Lambda_{\tmin},$ respectively $\Lambda_{\tmon},$ with each graded component a finite-dimensional vector space over $\bK.$

Finally, given two Hamiltonians $H_{-},H_{+} \in \cH,$ and perturbations $\cD_{-},\cD_{+},$ by counting index $0$ isolated solutions for generic $J$ depending on $(s,t) \in Z,$ with $J_{s,t} = J^{L,H^{-}}, s \ll -1,$ and $J_{s,t} = J^{L,H^{+}}, s \gg 1,$ to a Floer equation \[(du - X^t_{K}(u) \otimes dt)^{(0,1)} = 0\] with boundary values on $L,$ and $K(s,t,x) = H_{-}^{\cD_{-}},$ $s \ll -1,$ and $K(s,t,x) = H_{+}^{\cD_{+}},$ $s \gg 1,$  we obtain a canonical graded isomorphism \[\Phi_{(H_{-};\cD_{-}),(H_{+};\cD_{+})}: HF_*(L,H_{-};\cD_{-}) \to HF_*(L,H_{+};\cD_{+})\] called the Floer continuation maps. These isomorphisms satisfy \[\Phi_{(H_{1};\cD_{1}),(H_{2};\cD_{2})} \circ \Phi_{(H_{0};\cD_{0}),(H_{1};\cD_{1})} = \Phi_{(H_{0};\cD_{0}),(H_{2};\cD_{2})},\] for each three pairs $(H_{0};\cD_{0}),(H_{1};\cD_{1}),(H_{2};\cD_{2})$ of Hamiltonians with perturbation data, as above. The colimit of the resulting indiscrete groupoid of graded $\Lambda_{\tmin}$-modules, is called the abstract Floer homology $HF_*(L)$ of $L.$ The same observation holds for coefficients extended to $\Lambda_{\tmon}.$

In our second definition, which is good to keep in mind, even if it is not used per se in our arguments, fixing $\eta,$ and suppressing it from further notation, we consider the set $\cO_{\eta}(L,H;\cD),$ and the free $\Lambda_{0,\tuniv}$-module $CF_{\text{univ}}(L,H;\cD)$ generated by it. We define the differential by \[d_{\text{univ},L,H;\cD} (x_{-}) = \displaystyle \sum_{\substack{[u] \in \cM(x_{-},x_{+}),\\ \ind(u,x_{-},x_{+}) = 1}} T^{E(u)} x_{+},\] where $E(u) > 0$ denotes the energy of the Floer trajectory $u,$ given by   \[E(u) = \int_{Z} |\del_s u|^2 ds\wedge dt, \] where the norm of $\del_s u(s,t)$ is taken with respect to the Riemannian metric given by $\omega$ and $J^{L,H}_t.$ An alternative, more general, description is that $J^{L,H}$ and $\omega$ endow $u^*(TM)$ with the structure of a Hermitian vector bundle $(\mathcal{E},h)$ over $Z,$ and $(du - X_{H^{\cD}} \otimes dt)^{(0,1)}$ gives a section $\phi$ of $\Omega^{(0,1)}(Z) \otimes_{\C} \mathcal{E}.$ Taking an area form $\sigma$ on $Z,$ we obtain by way of the complex structure $j$ on $Z$ a Hermitian metric on $TZ,$ and thus a norm $|\cdot|_{\sigma}$ on $\Omega^{(0,1)}(Z) \otimes_{\C} \mathcal{E}.$ Then \[E(u) = \frac{1}{2} \int_{Z} |du - X_{H^{\cD}}|^2_{\sigma} \,\sigma.\] Finally, the integrand is independent of $\sigma,$ since it can be written as the anti-symmetrization of the tensor on $Z$ obtained by evaluating $h$ on $\phi \otimes \phi.$

The homology $HF_{\text{univ}}(L,H;\cD)$ of $(CF_{\text{univ}}(L,H;\cD),d_{\text{univ},L,H;\cD})$ is a finitely generated $\Lambda_{0,\tuniv}$-module. Moreover, \[HF_{\text{univ}}(L,H;\cD) \otimes_{\Lambda_{0,\tuniv}} \Lambda_{\tuniv} \cong HF(L,H;\cD,\Lambda_{\tuniv}),\] the latter being the homology of $(CF_*(L,H; \cD, \Lambda_{\tuniv}),d_{L,H;\cD})$ from the first version. However, the torsion component of $HF_{\text{univ}}(L,H;\cD)$, as a $\Lambda_{0,\tuniv}$-module, depends on the choice of $(H,\cD).$

\bs
{\em Hamiltonian Floer homology:}

The absolute case of Floer theory with Hamiltonian term is defined similarly to the relative one, with various simplifications. As above, we let $(M,\omega)$ be a closed weakly monotone symplectic manifold, and $H \in \cH.$ We denote by $\cO(H;\cD)$ the contractible $1$-periodic orbits of the Hamiltonian flow $\{\phi^t_{H^{\cD}}\}$ of a perturbation $H^{\cD} = H+K^{H}$ of $H$ that is non-degenerate, in the sense that $D(\phi^1_{H^{\cD}})(x):T_x M \to T_x M$ does not have $1$ as an eigenvalue, for each starting point $x = z(0) \in M$ of $z \in \cO(H; \cD).$ If $H$ itself is non-degenerate in this sense, we take $K^H \equiv 0.$ 

We consider suitable coefficient rings. Let $s$ be a formal graded variable of degree $\deg(s) = 2$ and set \[\Lambda_{M,\text{mon},\bK} =\left \{\displaystyle\sum_{j \geq 0} a_j \cdot s^{-l_j}\;:\;\;a_j \in \bK,\, l_j \in \Z, l_j \nearrow +\infty \right\}.\] We note that $t \mapsto T^{-2\kappa}$ defines an ungraded embedding of fields $\iota_{2,0}: \Lambda_{M,\text{mon},\bK} \to \Lambda_{\text{univ},\bK}.$ The other coefficient ring, \[\Lambda_{M,\text{min},\bK} =\left \{\displaystyle\sum_{j \geq 0} a_j \cdot p^{-l_j}\;:\;\;a_j \in \bK,\, l_j \in \Z, l_j \nearrow +\infty \right\},\] where now $\deg(p) = 2 N_M,$ and $p \mapsto s^{N_M}$ gives a finite graded extension of fields $\iota_{N_M,2}: \Lambda_{M,\text{min},\bK} \to \Lambda_{M,\text{mon},\bK}.$

The Floer complex $CF_*(H;\cD)$ has, as above, as basis over $\Lambda_{M,\tmin}$ the set $\cO(H;\cD),$ and the differential counts isolated, up to $\R$-translation, solutions $u:S \to M,$ $S = \R \times S^1,$ to the Floer equation \[(du - X^t_{H^{\cD}}(u) \otimes dt)^{(0,1)} = 0, \] or in other words \[\del_s u + J_t(u)(\del_t u - X_{H^{\cD}}(u)) = 0,\] with  $u(s,-) \xrightarrow{s \to \pm \infty} x_{\pm}$  uniformly in $C^1$-topology, for $x_{-},x_{+} \in \cO(H; \cD).$ Again, for generic choice of almost complex structure $J=J^H,$ the spaces of solutions of index $k \geq 1$ are smooth manifolds of dimension $k,$ admitting a free action of $\R$ and for index $1,$ the quotient is a compact manifold of dimension $0,$ hence a finite number of points. The differential $d_{H;\cD}:CF_*(H;\cD) \to CF_*(H;\cD)[-1]$ is given by the counts in $\bK$ of the number of points in these moduli spaces: \[d_{H;\cD}(x_{-},\bar{x}_{-}) = \displaystyle \sum_{\substack{[u] \in \cM(x_{-},x_{+}),\\ \ind(u,x_{-},x_{+}) = 1}} (x_{+},\overline{x}_- \# u).\] By considering the Gromov-Floer compactification of the space of solutions of index $2,$ modulo the $\R$-action, one shows that $d_{H;\cD}^2 = 0.$ 

Over $\Lambda_{0,\tuniv},$ the differential $d_{\text{univ},H;\cD}$ on the free $\Lambda_{0,\tuniv}$-module $CF_{\text{univ}}(H;\cD)$  generated by $\cO(H;\cD)$ counts trajectories with weight given by their energy:  \[d_{\text{univ},H;\cD} (x_{-}) = \displaystyle \sum_{\substack{[u] \in \cM(x_{-},x_{+}),\\ \ind(u,x_{-},x_{+}) = 1}} T^{E(u)} x_{+},\] where $E(u) > 0$ denotes the energy of the Floer trajectory $u,$ given by \[E(u) = \int_{Z} |\del_s u|^2 ds\wedge dt, \]  the norm of $\del_s u(s,t)$ being taken with respect to the Riemannian metric given by $\omega$ and $J^{H}_t.$

We obtain $(CF_*(H;\cD),d_{H;\cD}),$ its extension of coefficients $(CF_*(H;\cD,\Lambda_{M,\tmon}),d_{H;\cD}),$ to $\Lambda_{M,\tmon},$ and $(CF_{\tuniv}(H;\cD),d_{\text{univ},H;\cD}),$ whose homologies we denote by \[HF_*(H;\cD),\; HF_*(H;\cD,\Lambda_{M,\tmon}),\; HF_{\tuniv}(H;\cD).\]

We note that when discussing absolute Floer homology in the presence of a Lagrangian submanifold, we shall consider, by default, further extensions of coefficients \[(CF_*(H;\cD,\Lambda_{\tmin}),d_{H;\cD}),\; (CF_*(H;\cD,\Lambda_{\tmon}),d_{H;\cD}),\] given by $\Lambda_{M,\tmin} \to \Lambda_{\tmin},$ $s \mapsto t^2,$ $\Lambda_{M,\tmon} \to \Lambda_{\tmon},$ $q \mapsto p^2$ respectively, with homologies \[HF_*(H;\cD,\Lambda_{\tmin}),\; HF_*(H;\cD,\Lambda_{\tmon}).\]

Finally there exist continuation maps, as above, between various pairs $(H,\cD)$ of Hamiltonians and perturbation data, the colimit along which is the abstract Floer homology of $(M,\omega),$ that is isomorphic to the quantum homology of $(M,\omega)$ by \cite{PSS}.

\subsection{Absolute Floer homology as relative}\label{sect: abs as rel}

We briefly explain how to see the latter case of Hamiltonian Floer homology as a particular case of the former, Lagrangian setting. We refer to \cite[Section 2]{LeclercqZapolsky} for details. Let $H \in \cH$ be a non-degenerate Hamiltonian on weakly monotone symplectic manifold $(M,\omega),$ and $\cD = (K^H, J^H)$ be a regular Floer datum, with $K^H = 0.$ Assume that $J_t=J_t^H$ coincides with a fixed almost complex structure for $t$ near $0,$ near $1/2,$ and for $t \in [1/2,1].$ By reparametrization, assume that $H_t$ vanishes for the same values of $t.$ Consider the weakly monotone symplectic manifold $M \times M^{-} = (M \times M, \om \oplus - \om),$ and its weakly monotone diagonal Lagrangian submanifold $\Delta_M \subset M \times M^{-}.$ Thenfor $(x,y) \in M \times M^{-},$ $\hat{H}(t,x,y) = \frac{1}{2} H_{t/2}(x) \in \cH_{M \times M^{-}}$ with perturbation $\hat{\cD} = (\hat{J},\hat{K}),$ for $\hat{J}_t = J_{t/2} \oplus - J_{0},$ and $\hat{K} \equiv 0,$ is non-degenerate, and there is a canonical chain-isomorphism of filtered complexes over $\Lambda_{M,\tmin} = \Lambda_{\Delta,\tmin},$ \[(CF(H;\cD),d_{H;\cD}) \to (CF(\Delta,\hat{H}; \cD),d_{\Delta,\hat{H}; \hat{\cD}}).\]

Similarly, we observe that the Lagrangian quantum homology $QH(L)$ for $L=\Delta_M$ is isomorphic to the quantum homology $QH(M).$ Moreover, an element $\eta_M \in \pi_1(\Ham(M,\om))$ and section class $\sigma_M$ define, via the map $\Ham(M,\om) \to \Ham(M \times M^{-}), \phi \mapsto \phi \times \id,$ an element $\eta_L \in \cP_L$ and a relative section class $\sigma_L,$ such that their Seidel elements $S_{\eta_M,\sigma_M} \in QH(M)$ and $S_{\eta_L,\sigma_L} \in QH(L)$ (see Section \ref{Sect:Seidel}) are identified by the above isomorphism.

\subsection{Persistence modules and boundary depth}\label{Sect:pmod}

We recall briefly the category $\pemod$ of persistence modules that we work with, together with their relevant properties. For detailed treatment of these topics see \cite{BauLes,Carlsson,CZCG,CrawBo,Ghrist,CarlZom}.

Let $\mathbb{K}$ be a field. A persistence module over $\mathbb{K}$ is a pair $(V,\pi)$ where, $\{V^t\}_{t\in \mathbb{R}}$ is a family of finite dimensional vector spaces over $\mathbb{K}$ and $\pi_{s,t}:V^s \rightarrow V^t$ for $s\leq t,~s,t\in \mathbb{R}$ is a family of linear maps, called {\it structure maps}, which satisfy:
\begin{enumerate}

  \item[1)] $V^t=0$ for $t \ll 0$ and $\pi_{s,t}$ are isomorphisms for all $s,t$ sufficiently large;
  \item[2)] $\pi_{t,r}\circ \pi_{s,t}=\pi_{s,r}$ for all $s\leq t \leq r$; $\pi_{s,s} = \id$ for all $s;$
  \item[3)] For every $r\in \mathbb{R}$ there exists $\varepsilon >0$ such that $\pi_{s,t}$ are isomorphisms for all $r-\varepsilon <s \leq t \leq r$;
  \item[4)] For all but a finite number of points $r\in \mathbb{R}$, there is a neighbourhood $U\ni r$ such that $\pi_{s,t}$ are isomorphisms for all $s\leq t$ with $s,t \in U$.
\end{enumerate}
The set of the exceptional points in 4), i.e. the set of all points $r\in \mathbb{R}$ for which there does not exist a neighbourhood $U\ni r$ such that $\pi_{s,t}$ are isomorphisms for all $s,t \in U$, is called {\it the spectrum} of the persistence module $(V,\pi)$ and is denoted by $\mathcal{S}(V)$. One easily checks that for two consecutive points $a<b$ of the spectrum and $a<s<t \leq b$, $\pi_{s,t}$ is an isomorphism. This means that $V^t$ only changes when $t$ "passes through points in the spectrum".

We define a morphism between two persistence modules $A:(V,\pi) \rightarrow (V',\pi')$ as a family of linear maps $A_t:V^t \rightarrow (V')^t$ for every $t\in \mathbb{R}$ which satisfies $$A_t \pi_{s,t}=\pi_{s,t}'A_s ~ ~ \text{for} ~ s<t.$$ Note that the kernel $\ker A$ and the image $\ima A$ are naturally persistence modules whose families of vector spaces are $\{\ker A_t \subset V^t\}_{t \in \R},$ $\{\ima A_t \subset (V')^t\}_{t \in \R},$ since the structure maps $\pi_{s,t}$ restrict to these systems of subspaces. In fact, it is not difficult to prove that $\pemod$ forms an abelian category, with the direct sum of two persistence modules $(V,\pi)$ and $(V',\pi')$ given by
 $$(V,\pi)\oplus(V',\pi')=(V\oplus V', \pi \oplus \pi').$$

{\exa Let $X$ be a closed manifold and $f$ a Morse function on $X$. For $t\in \mathbb{R}$ define $V^t(f)=H_*(\{ f<t \},\mathbb{K})$ to be homology of sublevel sets of $f$ with coefficients in a field $\mathbb{K},$ and let $\pi_{s,t}:V^s(f) \rightarrow V^t(f)$ be the maps induced by inclusions of sublevel sets. One readily checks that $(V(f),\pi)$ is a persistence module. The spectrum of $V(f)$ consists of critical values of $f$. Similarly fixing a degree $r \in \Z,$ one obtains a persistence module $V^t_r(f)=H_r(\{ f<t \},\mathbb{K}).$ It is easy to see that the spectrum of $V_r(f)$ is contained in the set of critical values of $f$ of critical points of index $r$ or $r+1.$ Finally $V(f) = \oplus V_r(f).$ Hence $V(f)$ has the structure of a persistence module of $\Z$-graded vector spaces.\label{BasicExample}}

\medskip

An important object in our story is the barcode associated to a persistence module. It arises from the structure theorem for persistence modules, which we now recall. Let $I$ be an interval of the form $(a,b]$ or $(a,+\infty)$, $a,b\in \mathbb{R}$ and denote by $Q(I)=(Q(I),\pi)$ the persistence module which satisfies $Q^t(I)=\mathbb{K}$ for $t\in I$ and $Q^t(I)=0$ otherwise and $\pi_{s,t}=id$ for $s,t\in I$ and $\pi_{s,t}=0$ otherwise.

\medskip

 {\thm[The structure theorem for persistence modules] For every persistence module $V$ there is a unique collection of pairwise distinct intervals $I_1,\ldots , I_N$ of the form $(a_i,b_i]$ or $(a_i,+\infty)$ for $a_i,b_i \in \mathcal{S}(V)$ along with the multiplicities $m_1,\ldots,m_N$ such that
 $$V\cong \bigoplus\limits_{i=1}^N(Q(I_i))^{m_i}.$$
 \label{StructureTheorem}}
The multi-set which contains $m_i$ copies of each $I_i$ appearing in the structure theorem is called {\it the barcode} associated to $V$ and is denoted by $\mathcal{B}(V)$. Intervals $I_i$ are called {\it bars}.

\medskip

\begin{rmk}
One feature of Example~\ref{BasicExample} is the existence of additional structure that comes from identifying  $V^\infty := \displaystyle\varinjlim V^t$ with $H_*(X,\mathbb{K}).$ Put $\Psi: V^\infty \to H_*(X,\mathbb{K})$ for the natural isomorphism. Given $a \in H_*(X,\mathbb{K})$ with $a \neq 0,$ we can produce the number $c(a,f) := \inf\{t \in \R \,|\, \Psi^{-1}(a) \in \ima (V^t \to V^\infty)\}.$ This number is called a {\em spectral invariant}, and has many remarkable properties. One can prove that for each $a \neq 0,$ $c(a,f)$ is a starting point of an infinite bar in the barcode of $V(f),$ and each such starting point can be obtained in this way.
\end{rmk}

\medskip

For an interval $I=(a,b]$ or $I=(a,+\infty)$, let $I^{-c}=(a-c,b+c]$ or $I^{-c}=(a-c,+\infty)$, and similarly $I^c=(a+c,b-c]$ or $I^c=(a+c,+\infty),$ when $b-a>2c$. We say that barcodes $\cB_1$ and $\cB_2$ admit a $\delta$-matching if it is possible to delete some of the bars of length $\leq 2\delta$ from $\cB_1$ and $\cB_2$ (and thus obtain $\bar{\cB}_1$ and $\bar{\cB}_2$) such that there exists a bijection $\mu:\bar{\cB}_1 \rightarrow \bar{\cB}_2$ which satisfies
$$\mu(I)=J \Rightarrow I\subset J^{-\delta},~J\subset I^{-\delta}.$$
We define {\it the bottleneck distance} $d_{\mrm{bottle}}(\cB_1,\cB_2)$ between barcodes $\cB_1,\cB_2$ as the infimum over $\delta >0$ such that there exists a $\delta$-matching between them.

For a persistence module $V=(V,\pi)$ denote by $V[\delta]=(V[\delta],\pi[\delta])$ the shifted persistence module given by $V[\delta]^t=V^{t+\delta},\pi_{s,t}=\pi_{s+\delta,t+\delta}$ and by $sh(\delta)_{V}:V\rightarrow V[\delta]$ the canonical shift morphism given by $(sh(\delta)_{V})_t=\pi_{t,t+\delta}:V^t \rightarrow V^{t+\delta}$. Note also that a morphism $f:V\rightarrow W$ induces a morphism of $f[\delta]:V[\delta] \rightarrow W[\delta]$. We say that a pair of morphisms $f:V\rightarrow W[\delta]$ and $g:W\rightarrow V[\delta]$ is a $\delta$-interleaving between $V$ and $W$ if
$$g[\delta]\circ f = sh(2\delta)_{V}\text{ and } f[\delta]\circ g=sh(2\delta)_{W}.$$
Now we can define {\it the interleaving distance } $d_{\mrm{inter}}(V,W)$ between $V$ and $W$ as infimum over all $\delta>0$ such that $V$ and $W$ admit a $\delta$-interleaving. The isometry theorem for persistence modules states that $d_{\mrm{inter}}(V,W)=d_{\mrm{bottle}}(\mathcal{B}(V),\mathcal{B}(W))$ (see \cite{BauLes}).

\subsection{Variants of Floer persistence}\label{Sec:Floer-persistence}

In this section we describe a persistence module that arises from the Floer complex $CF(L,H;\cD)$ (recall that it has coefficients in $\Lambda_{\tmin}$) of a monotone Lagrangian submanifold, and describe two alternative ways of computing the set of its finite bar-lengths.

Fix a degree $m \in \Z,$ and a real number $t \in \R.$ Let $CF(L,H; \cD)^{<t}$ be the subcomplex of $CF(L,H\;\cD)$ spanned by all generators $(z,\bar{z}) \in \til{\cO}_{\eta} (L,H;\cD)$ with $\cA_{H;\cD}(z,\bar{z}) < t.$ (In this paper we work with the contractible orbit class $\eta = pt.$) Now we set $V_m(L,H; \cD)^t = HF_m(L,H; \cD)^{<t} = H_m (CF(L,H; \cD)^{<t}).$ Since there are only a finite number of generators (in class $\eta$) of index $m-1,m,m+1,$ this homology is of finite rank over $\bK$ for all $t \in \R.$ The structure maps are given (in homology) by the inclusions $CF(L,H; \cD)^{<s} \subset CF(L,H; \cD)^{<t}$ of complexes, for $s \leq t.$ Hence we get a persistence module $V_m(L,H\; \cD) \in \pemod_{\bK}.$ We denote its barcode by $\cB_m(L,H\;\cD).$ We note that multiplication by $q$ induces an isomorphism \[V_m(L,H; \cD) \to V_{m + N_L}(L,H; \cD)[-A_L],\] hence, if we wish, for example, to compute the possibly distinct bar-lengths, we can restrict to \[V(L,H;\cD) = \displaystyle\oplus_{0 \leq m < N_L } V_m(L,H; \cD).\]

Finally, as in \cite{PolShe,PolSheSto}, continuation maps corresponding to changing the $J^{\cD}$ component of the perturbation datum induce isomorphisms of persistence modules, hence when $(L,H)$ is non-degenerate, the Hamiltonian term in $\cD$ can be taken to be identically zero, and we have a well-defined persistence modules that we denote $V_m(L,H)$ and $V(L,H)$ in this case. Moreover, Hamiltonian continuation maps induce interleavings between the persistence modules showing that \[d_{\mrm{inter}} (V_m(L,F),V_m(L,G)) \leq d_{\mrm{Hofer}}([F], [G] ).\]

Furthermore, we remark that if we used coefficients in $\Lambda_{\tmon},$ and defined analogously the persistence module $V_m(L,H; \cD,\Lambda_{\tmon}),$ all the distinct bar-lengths would be captured by $V_0(L,H; \cD,\Lambda_{\tmon}),$ since as above, multiplication by $t$ would induce an isomorphism \begin{equation}\label{eq:t mult iso}V_m(L,H; \cD,\Lambda_{\tmon}) \to V_{m + 1}(L,H; \cD,\Lambda_{\tmon})[-\kappa],\end{equation} recalling that $\kappa = A_L/N_L.$ In fact, it follows by an immediate chain-level inspection that \[V_r(L,H; \cD,,\Lambda_{\tmon}) = \displaystyle\oplus_{0 \leq m < N_L } V_{r+m}(L,H; \cD)[-m\kappa].\] We define the {\it bar-length spectrum} of $(L,H;\cD)$ as the tuple $(\beta_1,  \ldots , \beta_K)$ of all lengths of finite bars in $V_0(L,H; \cD,\Lambda_{\tmon}),$ arranged in increasing order: $\beta_1 \leq  \ldots \leq \beta_K.$ The boundary depth is then given by $\beta = \beta_K.$ It is sometimes beneficial to also formally add $\beta_{K+1} = +\infty,\ldots,\beta_{K+B} = +\infty,$ for $B$ equal to the number of infinite bars in the bar-code.

It is useful to keep in mind the following alternative way of describing the bar-length spectrum, due to Fukaya-Oh-Ohta-Ono \cite[Chapter 6]{FO3:book-vol12}, \cite{FOOO-polydiscs}. Via a normal form theorem, which holds for {\em finitely generated} modules over $\Lambda_{\tuniv,0},$ there is a direct sum decomposition \[HF(L,H;\cD,\Lambda_{\tuniv,0}) \cong F \oplus T,\] where $F$ is a free module over $\Lambda_{\tuniv,0}$ with $F \otimes_{\Lambda_{\tuniv,0}} \Lambda_{\tuniv} \cong HF(L,H;\cD,\Lambda_{\tuniv})$ as vector spaces over $\Lambda_{\tuniv},$ and $T$ is a canonical torsion submodule of $HF(L,H;\cD,\Lambda_{\tuniv,0})$ over $\Lambda_{\tuniv,0}.$ In turn $T \cong \displaystyle\oplus_{1\leq j\leq K} \Lambda_{\tuniv,0}/T^{\beta_j}\Lambda_{\tuniv,0}.$ The entries in the bar-length spectrum are called the torsion exponents in \cite{FO3:book-vol12,FOOO-polydiscs}.

Finally, the above two descriptions are equivalent by the work of Usher-Zhang \cite{UsherZhang}. Indeed, it follows from their arguments that there exists a non-Archimedean orthogonal homogeneous (that is each element lies in $CF_m(L,H; \cD)$ for some degree $0 \leq m < N_L$) basis \[\{x_1,...,x_B,y_1,...,y_K,z_1,...,z_K\}\] of $CF(L,H;\cD)$ as a Floer type complex over $\Lambda_{\tmin},$ satisfying \[dx_j = 0,\; dy_k =z_k\] for all $1 \leq j \leq B,\; 1 \leq k \leq K.$ Moreover, they prove that for each such basis, ordered suitably, $\beta_k = \cA(y_k) - \cA(z_k)$ for all $1 \leq k \leq K.$ Note that the number $K=K(C,d)$ is characterized by $K = \dim(\ima(d)).$ Note that the normal form over $\Lambda_{\tuniv,0}$ is obtained by a similar basis \[\{\bar{x}_1,...,\bar{x}_B,\bar{y}_1,...,\bar{y}_K,\bar{z}_1,...,\bar{z}_K\}\] over $\Lambda_{\tuniv,0}$ with the property that $d\bar{x}_1 = 0,...,d\bar{x}_B = 0$ and $d\bar{y}_k = T^{\beta_k} \bar{z}_k.$ This basis immediately gives a basis of the previous kind over $\Lambda_{\tuniv,0},$ whence the two defitions of the bar-length spectrum agree.


\subsection{Product structures on Lagrangian Floer homology}\label{sec:Products}

Above we have described Lagrangian Floer theory as a (filtered) complex. In this section we describe product structures on these complexes, and discuss their associativity. We follow \cite{SeidelBook} and \cite{LeclercqZapolsky,Zap:Orient}, as well as \cite{BiranCorneaS-Fukaya}, for these constructions.

{\em Punctured Riemann surfaces:} we consider the standard closed disk $\D \subset \C,$ or $\CP^1,$ as a Riemann surface $\overline{\Sigma}$ with boundary $\partial \overline{\Sigma} = S^1,$ respectively $\partial \overline{\Sigma} = \emptyset.$ Let $\Gamma:P \to \overline{\Sigma}$ be an embedding, where $P = I_{+} \sqcup I_{-} \sqcup B_{+} \sqcup B_{-},$ for finite sets $I_{\pm}, B_{\pm}.$ The image $\Gamma(P)$ is called the set of punctures, with interior punctures being $\Gamma(I_{\pm}) \subset \inte(\overline{\Sigma}) = \overline{\Sigma} \setminus \partial \overline{\Sigma},$ and boundary punctures being $\Gamma(B_{\pm}) \subset \partial\overline{\Sigma}.$ We call $\Gamma(P_{-}),$ for $P_{-} = I_{-} \sqcup B_{-},$ the input punctures, and $\Gamma(P_{+}),$ for $P_{+} = B_{+}\sqcup I_{+},$ the output punctures. Our object of interest shall be the punctured Riemann surface $\Sigma = \overline{\Sigma} \setminus \Gamma(P).$ In this paper we shall be interested in the case when $|I_{-}| \in \{0,1,2\},$ $|I_{+}| = 0,$ $|B_{-}| \in \{0,1,2,3\}$ and $|B_{+}| = 1.$ 

{\em Cylindrical ends:} we endow each puncture $z \in \Gamma(B_{-})$ with a cylindrical end \[\epsilon_{z}: (-\infty,0) \times [0,1] \to U_{z} \setminus \{z \},\] and similarly for $z \in \Gamma(B_{+})$ and \[\epsilon_{z}: (0, \infty) \times [0,1] \to U_{z} \setminus \{z \},\] $z \in \Gamma(I_{-})$ and \[ \epsilon_{z}: (-\infty, 0 ) \times S^{1} \to U_{z} \setminus \{z\}, \] $z \in \Gamma(I_{+})$ and \[ \epsilon_{z}: (0, \infty) \times S^1 \to U_{z} \setminus \{z\},\] where each $\epsilon_{z}$ is a biholomorphism to a punctured open neighborhood $U_{z} \setminus \{z\}$ of $z$ in $\overline{\Sigma},$ and $S^1 \cong \R/\Z.$  Finally, in the case $z \in \Gamma(I_{\pm})$ we require that $\epsilon_{z}(s,0),$ be asymptotically, as $s \to \pm \infty,$ tangent to a fixed direction $\vartheta \in \mathbb{P}_{+} T_z \overline{\Sigma}$ in the positive projectivization of $T_z \overline{\Sigma},$ and call it an {\em asymptotic marker} at $z.$ We note that the space of boudary cylindrical ends at a given boundary puncture $z \in \Gamma(B)$ or at a given interior puncture $z \in \Gamma(I),$ with a fixed asymptoic marker $\vartheta,$ are contractible (\cite{SeidelBook},\cite[Addendum 2.3]{AbouzaidSeidel}). In the case of interior punctures, we shall specify the asymptotic markers in our definitions.

{\em Cylindrical strips:}  we make use of the following notion to introduce curvature-zero Hamiltonian perturbations to Floer equations on Riemann surfaces. Consider a directed graph $G,$ without multiple edges and loops, and fix a homotopy class $\cG$ of embeddings of $G$ in $\overline{\Sigma},$ with vertices $V(G)$ corresponding to a subset of $\Gamma(P),$ and edges $E(G)$ corresponding to simple curves between points in this subset. For a vertex $z \in V(G),$ let $v(z)$ denote its valency in $G.$ For each edge $e = (z_-,z_+) \in E(G),$ where $z_{\pm} \in \Gamma(P),$ a cylindrical strip corresponding to it is a smooth and proper embedding $\varepsilon_e: \R \times (0,1) \to \Sigma$ of a strip, that satisfies for all $\pm\rho \geq  \rho_{\pm},$ \begin{equation} \label{equation: cyl ends strips}\varepsilon_{z_{-},z_{+}}(\rho,\theta) = \epsilon_{z_{\pm}} \circ \tau_{z_{\pm}}(\rho \mp \rho_{\pm},  \frac{1}{2 v(z_{\pm})}\theta + \theta_{\pm}),\end{equation} for some sufficiently large constants $\rho_{\pm} \in \R_{>0},$ $\theta_{\pm} \in \frac{1}{2v(z_{\pm})} \Z \cap [0,\frac{1}{2}),$ where $(\rho,\theta)$ are the natural coordinates on $\R \times (0,1),$  and $\tau_{z_-}(\rho, \theta) = (-\rho, \frac{1}{2}-\theta)$ if $z_-$ is an output, $\tau_{z_-} = \id$ otherwise, while $\tau_{z_+}(\rho, \theta) = (-\rho, \frac{1}{2}-\theta)$ if $z_+$ is an input, $\tau_{z_+} = \id$ otherwise. Finally, we require that different cylindrical strips have disjoint images, and their longitudinal lines $\{\varepsilon_e(\R \times \{0\})\}_{e \in E(G)},$ suitably compactified by their endpoints, represent the chosen class $\cG$ of embeddings of $G.$

\begin{rmk}
Note that for interior punctures, condition \eqref{equation: cyl ends strips} requires the cylindrial strips to enter into the cylindrical end at $z$ in the positive half-space determined by the asymptotic marker at $z$ and the complex structure on $\Sigma.$
\end{rmk}

{\em Hamiltonian terms, and perturbation data:} to describe the Floer equations on punctured Riemann surfaces with cylindrical ends, we first recall that in this paper we distinguish between fixed Hamiltonian terms in the Floer equation, which can be large, and a small Hamiltonian part of the perturbation data, that is used to render the Floer equations regular. In general, the Floer equation takes the form \[ (du - X_K(u))^{(0,1)} = 0 \] for a Hamiltonian 1-form $K \in \Omega^1(\Sigma, \sm{M}),$ and a domain-dependent almost complex structure $J$ on $M$ (that depends on points of a suitable universal curve $\cS$ with fiber diffeomorphic to $\Sigma$).

We require that $K$ restricted to each component of $\partial \Sigma := \partial {\overline{\Sigma}} \cap \Sigma$ take values in functions that vanish on $L.$ Moreover, we ask that $K$ be a $C^1$-small perturbation of the Hamiltonian 1-form $K_0$ defined as follows: for each edge $e=(z_{-},z_{+}) \in E(G),$ \[\varepsilon^*_{z_-,z_+} K_0 = (H_{z_-,z_+})_t \otimes dt\] for a Hamiltonian $H_{z_-,z_+} \in \cH,$ while ouside the cylindrical strips, $K_0$ is set to be $0.$ Note that this means that for each $z \in \Gamma(P),$ \[\eps_{z}^* K_0 = (H_z)_t \otimes dt\] for a Hamiltonian $H_z \in \cH,$ which is completely determined via \eqref{equation: cyl ends strips} and extension by $0,$ by the collection of Hamiltonians $\{H_{z_-,z_+}\}_{(z_-,z_+) \in E(G)}.$ By a $C^1$-small perturbation we mean that for each cylindrical end,\[\eps_{z}^* K = (F_z)_t \otimes dt,\] where $F_{z} \in \cH$  is a $C^1$-small regular perturbation $F_z = H^{\cD}_z$ of $H_z \in \cH.$  We require that $\eps_z^* J \equiv J_z = J^{L,H_z}$ be $s$-independent on the cylindrical ends, and such that $(L,F_z,J_z)$ yields a well defined Floer complex $CF(L,H_z; \cD).$ If $(L,H_z)$ is regular, then we shall take $H^{\cD}_z = H_z.$

Finally, we require that $\eps_z^* J \equiv J_z = J^{L,H_z}$ be $s$-independent on the cylindrical ends, and such that $(L,F_z,J_z)$ yields a well-defined Floer complex $CF(L,H_z; \cD).$

\medskip

\begin{rmk}
The curvature term $R(K) \in \Om^2(\Sigma, \cH)$ of a Hamiltonian 1-form $K$ is calculated in local coordinates $(s,t)$ on $\Sigma$ as \[R(K)(\del_s,\del_t) = \del_s K(\del_t) - \del_t K(\del_s) + \{K(\del_t), K(\del_s)\},\] where $\{F,G\} = -\omega(X_F,X_G)$ denotes the Poisson bracket of $F,G \in \sm{M,\R}.$ We note that by construction, as the Hamiltonians $H_{z_-,z_+}, H_{z_-}, H_{z_+} \in \cH$ do not depend on the $s$-variable, the curvature term of $K_0$ vanishes \[R(K_0) = 0.\] We could of course consider also the case when these Hamiltonians do depend on $s,$ but then $R(K_0)$ would not vanish in general. Moreover, for a similar reason \[\eps_z^* R(K) = 0\] for all $z \in \Gamma(P).$
\end{rmk}

\medskip

In the list below, we describe the key moduli spaces of punctured Riemann surfaces that we use in this paper, along with their cylindrical data. Our moduli spaces $\cR_{ij:kl}$ will consist of marked Riemann surfaces, where $i = |B_-|, j = |B_+|, k = |I_-|, l = |I_+|,$ and the marked points will be suitably constrained. The underlying Riemann surface is $\overline{\Sigma} = \D,$ unless $i=0$ and $j=0,$ in which case it is $\overline{\Sigma} = \C P^1.$ The embedding classes $\cG$ that we consider are determined by the graph $G,$ hyperbolic geodesic (for $\overline{\Sigma} = \D$) or spherical geodesic (for $\overline{\Sigma} = \C P^1$) segments between the endpoints, and the additional assumption that in the case of $\overline{\Sigma} = \C P^1$ and $k+l \geq 3$ all edges are embedded on the positive side of the circle through the marked points, determined by its direction and the complex structure. For our moduli spaces of curves, this can be made compatible with the compactification, for instance by a suitable adaptation of the uniformization theorem.

\begin{itemize}\label{list:moduli}
\item[$\cR_{11:00}:$] \label{R:1100} here $B_{-} = \{z_{-}\},$ $B_{+}= \{z_+\},$ and the cylindrical strip graph is $E(G) = \{(z_-,z_+)\}.$

\item[$\cR_{01:00}:$] \label{R:0100} here $B_{-} = \{z_{-}\},$ and $E(G) = \emptyset.$

\item[$\cR_{20:00}:$] \label{R:2000} here $B_{-} = \{z_{1, -},z_{2,-}\},$ and $E(G)= \{(z_{1,-},z_{2,-})\}.$

\item[$\cR_{21:00}:$] \label{R:2100} here $B_{-} = \{z_{1, -},z_{2,-}\},$ $B_{+}= \{z_+\},$ with $z_{1, -},z_{2,-},z_{+}$ ordered in clockwise order, and ${E(G)= \{(z_{1,-},z_{+}), \; (z_{2,-},z_{+})\}.}$

\item[$\cR_{31:00}:$]\label{R:3100}  here
$B_{-} = \{z_{1, -},z_{2,-},z_{3,-}\},$ $B_{+}= \{z_+\},$ with $z_{1, -},z_{2,-},z_{3,-}, z_{+}$ ordered in clockwise order, and ${E(G) = \{(z_{1,-},z_{+}), \; (z_{2,-},z_{+}), (z_{3,-},z_+)\}.}$

\item[$\cR_{11:10}:$]\label{R:1110} here $B_{-} = \{z_{-}\},$ $B_{+}= \{z_+\},$ $I_{-} = \{w_{-}\},$ with $w_-$ on the hyperbolic geodesic from $z_-$ to $z_+$ equipped with asymptotic marker $\vartheta$ tangent to this geodesic and agreeing with its direction, ${E(G) = \{(z_{-},w_{-}), \; (w_{-},z_{+})\}.}$

\item[$\cR_{11:20}:$]\label{R:1120} here $B_{-} = \{z_{-}\},$ $B_{+}= \{z_+\},$ $I_{-} = \{w_{1,-}, w_{2,-}\},$ with $w_-, w_+$ on the hyperbolic geodesic from $z_-$ to $z_+,$ {\em in this order}, equipped with asymptotic markers $\vartheta_1, \vartheta_2$ tangent to this geodesic and agreeing with its direction, ${E(G) = \{(z_{-},w_{1,-}), \; (w_{1,-},w_{2,-}),\; (w_{2,-},z_{+})\}.}$

\item[$\cR_{21:10}:$]\label{R:2110} here $B_{-} = \{z_{-},z_{*}\},$ $B_{+}= \{z_+\},$ $I_{-} = \{w_{-},\},$ with $w_-$ on the hyperbolic geodesic from $z_-$ to $z_+,$ {\em in this order}, equipped with asymptotic markers $\vartheta_1, \vartheta_2$ tangent to this geodesic and agreeing with its direction, ${E(G) = \{(z_{-},w_{-}), (w_{-},z_{+})\}.}$ In this case we keep $z_{*}$ as an extra marked point, and do not endow it with cylindrical ends.

\item[$\cR_{00:11}:$] \label{R:0011}  here $I_{-} = \{w_{-}\},$ $I_{+}= \{w_+\},$ with asymptotic markers at $w_{1,-},w_{2,-}$ pointing towards each other along a circle in $\C P^1,$ and the cylindrical strip graph is $E(G) = \{(w_-,w_+)\}.$

\item[$\cR_{00:01}:$] \label{R:0001} here $I_{-} = \{w_{-}\},$ and $E(G) = \emptyset.$

\item[$\cR_{00:20}:$] \label{R:0020} here $I_{-} = \{w_{1, -},w_{2,-}\},$ with asymptotic markers at $w_{1,-},w_{2,-}$ pointing towards each other along a circle in $\C P^1,$ and $E(G)= \{(w_{1,-},w_{2,-})\}.$

\item[$\cR_{00:21}:$] \label{R:0021} here $I_{-} = \{w_{1,-},w_{2,-}\},$ $I_{+} = \{w_+\},$ with $(w_{1,-},w_{2,-},w_+)$ on a circle in $\C P^1,$ {\em oriented in this order,} with asymptotic markers at $w_{1,-},w_{2,-}$ agreeing with the direction of the circle, the one at $w_+$ disagreeing with it, and ${E(G) = \{ (w_{+},w_{1,-}), (w_{1,-},w_{2,-}), (w_{2,-},w_{+}) \}.}$

\item[$\cR_{00:31}:$] \label{R:0031} here $I_{-} = \{w_{1,-},w_{2,-},w_{3,-}\},$ $I_{+} = \{w_+\},$ with $(w_{1,-},w_{2,-},w_{3,-},w_+)$ on a circle in $\C P^1,$ {\em oriented in this order,} with asymptotic markers at $w_{1,-},w_{2,-}, w_{3,-}$ agreeing with the direction of the circle, the one at $w_+$ disagreeing with it, and ${E(G) = \{ (w_{+},w_{1,-}), (w_{1,-},w_{2,-}), (w_{2,-},w_{+}) \}.}$

\end{itemize}

Note that $\cR_{21:00}, \cR_{31:00}, \cR_{11:10}, \cR_{11:20}, \cR_{00:21}$ consist of stable marked punctured Riemann surfaces; $\cR_{21:00},$  $\cR_{11:10}$, $\cR_{00:21}$ each consist of a single point $\ast$; $\cR_{31:00},$ $\cR_{11:20},$ $\cR_{21:10},$ $\cR_{00:31},$ can be equipped with a canonical smooth structure, diffeomorphic to the open interval $(0,1)$. Moreover, by the usual theory of Deligne-Mumford compactifications (cf. \cite[Chapter 9]{SeidelBook}), the latter moduli spaces compactify to smooth manifolds with corners (corresponding to stable nodal surfaces) in this case each diffeomorphic to the closed interval $[0,1].$ These compactifications can be described set-wise as: \[\overline{\cR}_{31:00} = {\cR}_{31:00} \sqcup \cR_{21:00} \times \cR_{21:00} \sqcup \cR_{21:00} \times \cR_{21:00},\] \[\overline{\cR}_{11:20} = {\cR}_{11:20} \sqcup {\cR}_{11:10} \times {\cR}_{11:10} \sqcup {\cR}_{11:10} \times {\cR}_{00:21},\]
\[\overline{\cR}_{21:10} = {\cR}_{21:10} \sqcup {\cR}_{21:00} \times {\cR}_{11:10} \sqcup {\cR}_{11:10} \times {\cR}_{21:00},\]
\[\overline{\cR}_{00:31} = {\cR}_{00:31} \sqcup \cR_{00:21} \times \cR_{00:21} \sqcup \cR_{00:21} \times \cR_{00:21}.\]

Let $\cR = \cR_{ij:kl}$ be one of the moduli spaces above, and $\overline{\cR} = \overline{\cR}_{ij:kl}$ be its compactification. We denote by $\cS = \cS_{ij:kl}$ the universal marked punctured curve over the moduli space $\cR$ as above, and on it we fix a choice of cylindrical ends and cylindrical strips fibered over $\cR$. The space $\cS$ admits a partial compactification $\overline{\cS} = \overline{\cS}_{ij:kl} \to \overline{\cR}.$ 

Following \cite[Chapter 9]{SeidelBook} it is rather easy to see that there exists a choice of cylindrical ends and cylindrical strips for the given fixed graph $G,$ that extends to this partial compactification. The choice is done essentially by a gluing construction for punctured surfaces with cylindrical ends, at a puncture. It is made recursively from the lower-dimensional boundary strata in $\overline{\cR}$ to higher ones: in our case we must simply extend from the zero-dimensional strata to the one-dimensional stratum. Consider a strip-like end $\epsilon_+$ of a boundary (resp. interior) output in one Riemann surface $\Sigma_+$, and a strip-like end $\epsilon_-$ of a boundary (resp. interior) input of another one $\Sigma_-$. 
Choosing a gluing length $l > 0,$ one then glues $\Sigma_{+} \setminus \epsilon_{+}(Z_{l,+}) \sqcup \Sigma_{-} \setminus \epsilon_{-}(Z_{l,-})$ where $Z_{l,+} = (l,\infty) \times [0,1],$ $Z_{l,-} = (-\infty,-l) \times [0,1]$ (resp. $Z_{l,+} = (l,\infty) \times S^1,$ $Z_{l,-} = (-\infty,-l) \times S^1$), along the maps \begin{align} [0,l] \times [0,1] \to \Sigma_{+},\; &(s,t) \mapsto (s,t), \\ [0,l] \times [0,1] \to \Sigma_{-},\; &(s,t) \mapsto (s-l,t) \end{align} (resp. $[0,l] \times S^1 \to \Sigma_{+},\; (s,t) \mapsto (s,t),$ and $[0,l] \times S^1 \to \Sigma_{-},\; (s,t) \mapsto (s-l,t)$). 

We remark the gluing length goes to $0,$ the glued curve converges to the boundary of the moduli space (more precisely, to the disjoint union of the curves $\Sigma_+, \Sigma_-$ which represents a point in the compactification). Assume that we are given graphs $G_+,G_-$ and embedding homotopy classes $\cG_+,\cG_-$ that glue to the class $\cG$ under the above procedure. Then the glued curve, that depends on $l,$ inherits cylindrical ends from the two surfaces (other than $\epsilon_{\pm}$), and this can be done consistently with the initial choice of cylindrical ends on $\cS$ (\cite[Chapter 9]{SeidelBook}) and cylindrical strips. This gives a choice of cylindrical data in the neighborhood of a particular boundary stratum. If we can make the choices of homotopy classes of graph embeddings consistent among the different boundary strata, extending the cylindrical data from the boundary to the interior of the moduli space is elementary (cf. \cite[Section 12.2]{Steenrod-book},\cite[Section 6.4]{GriffithsMorgan-book}). In our particular setting, our choice of graphs and their embedding classes guarantees the required topological consistency.

\begin{rmk}

While in this paper we consider only zero and one-dimensional moduli spaces $\overline{\cR}$ of decorated curves, a more general setting is worth mentioning. Consider the space $\cR_{\ast}$ of stable marked curves with boundary $S^1,$ or without boundary, decorated with asymptotic markers at the interior punctures. A compactification $\overline{\cR}_{\ast}$ of $\cR_{\ast}$ due to Kimura-Stasheff-Voronov \cite{KimuraStasheffVoronov}, and Liu \cite{MelissaLiu}, of this space (see also \cite[Section 5]{SeidelDisjoinable}) combines features of the Deligne-Mumford compactification for dealing with boundary punctures, and involves codimension one boundary strata to deal with interior punctures, by adding an angular parameter at the interior node. We note that as the various strata of $\overline{\cR}_{\ast}$ are not simply-connected, a consistent choice of cylindrical strips is not in general possible. It is enough, for instance, to consider the moduli space of spheres with two marked points with asymptotic markers. This space is identified with $S^1,$ and the monodromy of the universal curve $\cS_{\ast}$ over a generator of $\pi_1(S^1) \cong \Z$ is a non-trivial Dehn twist in the suitable mapping class group of the sphere with asymptotic markers, which, moreover, acts non-trivially on homotopy classes of cylindrical strips (incidentally, this is related to why, in general, the $BV$-operator cannot be defined with zero-curvature perturbation data). However, if we consider a compact submanifold with corners $\overline{\cR}$ of $\overline{\cR}_{\ast}$, with all its strata simply connected, then a consistent choice of cylindrical strips should be possible to achieve along the above lines.

\end{rmk}

{\em Consistent perturbations, Floer moduli spaces:} Consider the universal curve $\cS \to \cR ,$ with partial compactification $\overline{\cS} \to \overline{\cR} ,$ endowed with a fixed choice of strip-like ends and cylindrical strips associated to a fixed embedding $\cG$ of a graph $G,$ extended consistently to the partial compactification.

Floer data for this family of curves consist of a family $\{K_r\}_{r\in \overline{\cR}}$ of Hamiltonian 1-forms on $S_r$ for each $r \in \overline{\cR},$ and of a family $\{J_r\}_{r \in \overline{\cR}}$ of $\omega$-compatible almost complex structures on $M$ depending on $z \in S_r.$ These data are required to be consistent with the compactification, essentially in the sense that they are smooth over the compactified space (see \cite[Chapter 9]{SeidelBook}). Moreover, they are given as above, with a fixed choice of Hamiltonians $\{H_z\}_{z \in V(G)},$ and $\{H_{z_-,z_+}\}_{(z_-,z_+) \in E(G)},$ over $\cR.$ We also fix the choice of perturbations $\{F_z = H^{\cD}_z\}_{z \in \Gamma(P)}$ and $\{J_z\}_{z \in \Gamma(P)}$ on the cylindrical ends.

Finally, considering orbits $\{(x_z,\overline{x}_z)\}_{z \in \Gamma(P_-)}$ and $\{x_z\}$ for ${\{z\} = \Gamma(P_+)}$ (note that in all our cases $|P_+| = 1$), of $F_z$ with boundary on $L,$ if $z \in \Gamma(B),$ or of $F_z$ that are closed (1-periodic) if $z \in \Gamma(I),$ we consider the parametric moduli space \[\cM(\{x_z\}_{z \in \Gamma(P)}; \cR, G,\{K_r\},\{J_r\})\] of all the pairs \[\{(u,r)\;| \; r \in {\cR},\; u:S_r \to M,\; (\ast) \}\] satisfying the following asymptotic-boundary value problem $(\ast)$:
\begin{equation}\label{eq:param-Floer} \begin{aligned}& u(\partial S_r) \subset L, \\ & u\circ \eps_z (s,-) \xrightarrow{s \to -\infty} x_z, z \in \Gamma(P_-) \\ & u \circ \eps_z (s,-) \xrightarrow{s \to +\infty} x_z, z \in \Gamma(P_+) \\ & (du - X_{K_r}(u))^{(0,1)} = 0 \\ & E(u) = \int_{S_r} \lvert du - X_{K_r} \rvert^2_J \,\sigma < \infty,\end{aligned}.\end{equation}

For a generic choice of Floer data, $\cM(\{x_z\}_{z \in \Gamma(P)}; \cR,G, \{K_r\},\{J_r\})$ for $\cR = {\cR}_{ij:kl}$ is a smooth manifold, and its zero dimensional component $\cM_0 = \cM_0(\{x_z\}_{z \in \Gamma(P)}; {\cR}_{ij:kl},G, \{K_r\},\{J_r\})$ is compact, and hence consists of a finite number of points. Counting its elements as matrix coefficients yields an operation \[\mu_{\Gamma,\cR}: \bigotimes_{z \in \Gamma(B_-)} CF(L,H_z; \cD) \otimes \bigotimes_{z \in \Gamma(I_-)} CF(H_z; \cD) \to CF(L,H_{z_+}; \cD),\] if $P_+ = B_+ = \{z_+\}$ or \[\mu_{\Gamma,\cR}: \bigotimes_{z \in \Gamma(B_-)} CF(L,H_z; \cD) \otimes \bigotimes_{z \in \Gamma(I_-)} CF(H_z; \cD) \to CF(H_{z_+}; \cD),\] if $P_+ = I_+ = \{z_+\}.$

More precisely, this operation in Floer homology with coefficients in the Novikov field, $\Lambda_{\tmin},$ or $\Lambda_{\tmon},$ is defined by the sum \[ \mu_{\Gamma,\cR}(\otimes_{{z \in \Gamma(P_-)}} (x_z,\overline{x}_z))  = \sum_{x_{z_+}, (u,r) \in \cM_0} (x_{z_+},\overline{x}_{z_+}),\] where the sum runs over all possible values of $x_{z_+} \in \cO(L,H_{z_+};\cD)$ or $x_{z_+} \in \cO(H_{z_+};\cD)$ depending on the case, as above, and the output capping $\overline{x}_{z_+}$ is chosen in such a way that the connect sum of $\{\overline{x_z}\}_{z \in \Gamma(P)}$ and $u,$ has trivial symplectic area, and Maslov class (or Chern class in the absolute case). Standard index formulas, give, in our normalization, that for $i_z = 2n - |(x_{z},\overline{x_{z}})|,$ $z \in \Gamma(I),$ and $i_z = n - |(x_{z},\overline{x_{z}})|,$ $z \in \Gamma(B),$ we have the relation \[ i_{z_+} = \sum_{z \in \Gamma(P_{-})} i_{z} - \dim(\cR).\] 

This operation in Floer homology with coefficients in the Novikov ring $\Lambda_{0,\tuniv}$ is defined similarly by the sum \[ \mu_{\Gamma,\cR}(\otimes_{{z \in \Gamma(P_-)}}\, x_z)  = \sum_{x_{z_+}, (u,r) \in \cM_0} T^{E(u)} \cdot x_{z_+}.\]

Finally, for each given $\{H_z\}_{z \in \Gamma(P)},$ it is easy to see by the fact that $R(K_{r,0}) = 0$ for all $r \in \cR,$ that for each $\epsilon' >0,$ there exists perturbation data  $\{H^{\cD}_z\},$ and $\{K_r\}$ agreeing with this data at the punctures, with the property that $|K_r|_{C^1} < \epsilon_1$ uniformly for all $r \in \cR,$ where the $C^1$-norm is taken in all directions (see \cite{BiranCorneaS-Fukaya}).

Moreover, by the energy formula \[E(u) = \sum_{z_- \in \Gamma(P_-)} \cA_{H,\cD}(x_{z_-},\overline{x}_{z_-}) - \cA_{H,\cD}(x_{z_+},\overline{x}_{z_+}) + \int_{S_r} R(K)(u),\] for a solution of \eqref{eq:param-Floer}, and the fact that $R(K)$ is supported compactly in $\overline{\cS},$ one deduces that for each $\epsilon > 0,$ one can pick $|K_r|_{C^1} < \epsilon_1$ sufficiently small, so that \begin{equation}\label{eq:energy estimate}E(u) \leq \sum_{z_- \in \Gamma(P_-)} \cA_{H,\cD}(x_{z_-},\overline{x}_{z_-}) - \cA_{H,\cD}(x_{z_+},\overline{x}_{z_+}) + \epsilon,\end{equation}
\[\cA_{H,\cD}(x_{z_+},\overline{x}_{z_+}) \leq \sum_{z_- \in \Gamma(P_-)} \cA_{H,\cD}(x_{z_-},\overline{x}_{z_-})  + \epsilon. \]

We note that the moduli spaces $\cR_{11:00}$ and $\cR_{00:11}$ yield by the above construction the relative and absolute Floer continuation maps (after relaxing, in general, the condition $R(K_0) = 0$), while if we take the perturbation datum invariant with respect to a natural $\R$-action on our Riemann surface given by dilations preserving the marked points and their asymptotic markers, we obtain the absolute and relative Floer differential. Both these maps were described above. Let $\Lambda$ denote Novikov coefficients as discussed above. We summarize the new operations obtained in this way from the moduli spaces listed above:

\begin{itemize}\label{list:operations}

\item[$\cR_{01:00}$] \label{mu:0100} yields a map $U:\Lambda \to CF(L,0;\cD),$ with the property that $u_L = U(1) \in CF(L,0;\cD)$ represents the unit in $H_*(L,0;\cD);$

\item[$\cR_{20:00}$] \label{mu:2000} yields a non-degenerate pairing $CF(L,H;\cD) \otimes CF(L,\overline{H};\cD) \to \Lambda;$
\item[$\cR_{21:00}$] \label{mu:2100} yields the product \[\mu_2 = \mu_{2:}: CF(L,H_1;\cD) \otimes CF(L,H_2;\cD) \to  CF(L,H_1 \# H_2;\cD);\] where $H_1 = H_{z_{1,-}}, H_2 = H_{z_{2,-}},$ and $H_{z_+} = H_1 \# H_2.$ 

\item[$\cR_{31:00}$]\label{mu:3100} yields the associator \[\mu_3 = \mu_{3:}: CF(L,H_1;\cD) \otimes CF(L,H_2;\cD) \otimes CF(L,H_3;\cD) \to  CF(L,H_1 \# H_2 \# H_3;\cD),\] where $H_1 = H_{z_{1,-}}, H_2 = H_{z_{2,-}},  H_3 = H_{z_{3,-}}$ and $H_{z_+} = H_1 \# H_2 \# H_3.$ 

\item[$\cR_{11:10}$]\label{mu:1110} yields the module action \[\mu_{1:1}: CF(L,H_1;\cD) \otimes CF(H_2;\cD) \to  CF(L,H_1 \# H_2;\cD);\]

\item[$\cR_{11:20}$]\label{mu:1120} yields the right associator for the module action \[\mu_{1:2}: CF(L,H_1;\cD) \otimes CF(H_2;\cD) \otimes CF(H_3;\cD) \to  CF(L,H_1 \# H_2 \# H_3;\cD);\]

\item[$\cR_{21:10}$]\label{mu:2110}  yields the left associator for the module action \[\mu_{2:1}: CF(L,H_1;\cD) \otimes CF(L,H_2;\cD) \otimes CF(H_3;\cD) \to  CF(L,H_1 \# H_2 \# H_3;\cD);\] below we will use the version where the marked point $z_{\ast}$ is not considered as an edge of the graph, work in a restricted setting preventing bubbling, and look at configurations of trajectories determined by the Floer equation and negative gradient flow trajectories of a Morse function $f$ on $L,$ suitably incident at the image of $z_{\ast};$ this will also give a left associator \[\mu'_{2:1}: C(L,f) \otimes CF^{I_1}(L,H_2;\cD) \otimes CF(H_3;\cD) \to  CF^{I_2}(L, H_2 \# H_3;\cD)\] for suitable action windows $I_1, I_2,$ where $C(L,f)$ is the Morse complex of $f:L \to \R,$ with respect to a suitable auxiliary Riemannian metric;

\item[$\cR_{00:01}$] \label{mu:0001} yields a map $U:\Lambda_{\tmin} \to CF(0;\cD),$ with the property that $u_L = U(1) \in CF(0;\cD)$ represents the unit in $H_*(0;\cD) \cong H_*(M;\Lambda);$

\item[$\cR_{00:20}$] \label{mu:0020}  yields a non-degenerate pairing $CF(L,H;\cD) \otimes CF(L,\overline{H};\cD) \to \Lambda;$

\item[$\cR_{00:21}$] \label{mu:0021} yields the product \[\mu_{:2}: CF(H_1;\cD) \otimes CF(H_2;\cD) \to  CF(H_1 \# H_2;\cD);\] 

\item[$\cR_{00:31}$] \label{mu:0031} yields the associator \[\mu_{:3}: CF(H_1;\cD) \otimes CF(H_2;\cD) \otimes CF(H_3;\cD) \to  CF(H_1 \# H_2 \# H_3;\cD).\]

\end{itemize}

Considering compactness and gluing of Floer trajectories of the above parametric equations, one obtains, as usual, the following few identities (we work over $\mathbb{F}_2,$ and refer to \cite{SeidelBook,Zap:Orient} for a treatment of signs). The compactification of $\cR_{31:00}$ gives us
\begin{equation}\label{eq:mu_2 assoc}\begin{aligned}&\mu_{1:} \mu_{3:} (x,y,z) + \mu_{3:}(\mu_{1:}(x),y,z) + \mu_{3:}(x,\mu_{1:}(y),z) + \mu_{3:}(x,y,\mu_{1:}(z)) = \\ &= \mu_{2:} (\mu_{2:}(x,y),z) + \mu_{2:} (x,\mu_{2:}(y,z)),\end{aligned}\end{equation} where $\mu_{1:}$ is the differential in the Lagrangian Floer complex, and $x,y,z$ are arbitrary chains in $CF(L,H_1; \cD), CF(L,H_2; \cD), CF(L,H_3; \cD).$ This is one term of the usual $A_{\infty}$-relations in Lagrangian Floer homology. Meanwhile, the compactification of $\cR_{11:20}$ gives us
\begin{equation}\label{eq:mu11:1 assoc}\begin{aligned}&\mu_{1:} \mu_{1:2}(x:y,z) +  \mu_{1:2}(\mu_{1:}(x):y,z) + \mu_{1:2}(x:\mu_{:1}(y),z) + \mu_{1:} \mu_{1:2}(x:y,\mu_{:1}(z)) =\\ & =  \mu_{1:1}(\mu_{1:1}(x:y),z) + \mu_{1:1}(x:\mu_{:2}(y,z)),\end{aligned}\end{equation}
where now, in addition, $\mu_{:1}$ is the differential in the absolute Floer homology, and $x,y,z$ are arbitrary chains in $CF(L,H_1; \cD), CF(H_2; \cD), CF(H_3; \cD).$

\subsection{Quantum homology of a Lagrangian submanifold}\label{subsecQuantum}

We briefly recall the construction of the Lagrangian quantum homology (see \cite{BiranCorneaRigidityUniruling,BiranCorneaLagrangianQuantumHomology,Bi-Co:qrel-long} and \cite{Zap:Orient}). Fix pearl data $\cD = (f,\rho,J)$, where $f : L \to \R$ is a Morse function, $\rho$ is a Riemannian metric on $L$, and $J \in \cJ'_M$ is an almost complex structure compatible with $\omega$. Set $\bK = \bF_2.$ Let $C(L; \cD) := \bK \left< \text{Crit}(f)\right> \otimes \Lambda$, be the complex generated by critical points of $f$, where $\Lambda = \Lambda_{\tmin}$. We then grade $C(L;\cD)$ by the Morse indices of $f$ and the grading of $\Lambda$. Given two points $x,y \in \text{Crit}(f)$, and a class $A \in \pi_2(M,L)$, consider the space of sequences $(u_1,\ldots,u_l)$, where:

\begin{itemize}
\item $u_i : (D,\partial D) \to (M,L)$ is a non-constant $J$-holomorphic disc.
\item $u_1(-1) \in W^u(x)$.
\item For each $i \in \{1,\ldots, l-1\}$, $u_{i+1}(-1) \in W^u(u_i(1))$.
\item $y \in W^u(u_l(1))$.
\item $[u_1] + \ldots + [u_l] = A.$
\end{itemize}

Two sequences $(u_1,\ldots,u_l)$ and $(u'_1,\ldots,u'_{l'})$ are considered equivalent if $l=l'$ and for each $i=1,\ldots,l$ there exists $\sigma_i \in Aut(D)$ with $\sigma_i(-1) = -1, \sigma_i(1) = 1$, and $u_i = u'_i \circ \sigma_i$.
Let $\cM(x,y;A;\cD)$ be the quotient space with respect to this equivalence relation. We call the elements in $\cM(x,y;A;\cD)$ pearly trajectories. The virtual dimension of $\cM(x,y;A;\cD)$ is $ind(x) - ind(y) + \mu(A) - 1$. For generic $J$, $\cM(x,y;A;\cD)$ is a smooth compact $0$-dimensional manifold. Denote \[d(x) = \sum_{y,A} \#_2 \cM(x,y;A;\cD) y q^{-\mu(A)},\] and extend $d$ by linearity to $C(L;\cD)$. Denote $QH(L)$ to be the homology of $(C(L;\cD),d)$. We remark that $QH(L)$ has an associative ring structure as well and refer to \cite{BiranCorneaRigidityUniruling,BiranCorneaLagrangianQuantumHomology,Bi-Co:qrel-long, Zap:Orient} for the details.

Let us define the natural action filtration $\cA : C(L;\cD) \to \R$. Take the valuation on $\Lambda_{\text{univ}}$ and consider its pullback $\nu$ under the map $\Lambda_{\text{min}} \to \Lambda_{\text{univ}}$ given by $q \mapsto T^{\kappa N_L}$. Recall that $A_L = \kappa N_L$ is the minimal positive area of a disc with boundary on $L$. For each critical point $x$ of the Morse function $f$, set $\cA(x) = 0$, and extend this to $C(L;\cD)$ by
\[ \cA( \lambda_1 x_1 + \ldots + \lambda_k x_k) = \max_{1 \leq j \leq k}\{\cA(x_j) - \nu(\lambda_j)\}.\]

We consider the quantum homology $QH_m(L)$ of a monotone Lagrangian submanifold $L$ in $M,$ with $N_L \geq 2,$ in a fixed degree $m$ as a persistence module. This is done by the action filtration. It is then easy to see, by the usual proof of the independence of $QH(L)$ on the pearl data, that the persistence module $V_m(L)^t = QH_m(\{\cA \leq t\})$ does not depend on the pearl data.

We require the following statement. Recall that $L$ is called {\em wide} if $QH(L) \cong H(L) \otimes \Lambda.$ Moreover, we call a persistence module $V$ free if it contains no finite bars; equivalently its boundary depth satisfies $\beta(V) = 0.$

\medskip

\begin{prop}\label{prop: beta 0 for wide}
Let $L$ be a wide Lagrangian submanifold of $M$. For each $m \in \Z,$ the persistence module $V_m(L)$ is free.
\end{prop}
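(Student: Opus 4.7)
The plan is to use the pearl-data independence of $V_m(L)$ stated just before the proposition, so as to compute the barcode for a convenient chain model in which finite bars are manifestly absent. Since $L$ is wide, $H_*(C(L;\cD), d_{\cD}) \cong H_*(L;\bK) \otimes \Lambda_{\tmin}$ for every choice of pearl data $\cD = (f,\rho,J)$. In the Biran--Cornea framework this rigidity is strong enough to ensure that for a suitable choice of pearl data $\cD_0$, the pearl differential $d_{\cD_0}$ on $C(L;\cD_0) = C^{\mrm{Morse}}(L,f) \otimes \Lambda_{\tmin}$ coincides on the nose with the Morse differential $d^{\mrm{Morse}}$ extended $\Lambda_{\tmin}$-linearly; equivalently, no pearly trajectory containing at least one non-constant $J$-holomorphic disk contributes to $d_{\cD_0}$. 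This reduction is essentially the chain-level content of the degeneration at $E_1$ of the Biran--Cornea spectral sequence from $H_*(L;\bK) \otimes \Lambda_{\tmin}$ to $QH_*(L)$, which is equivalent to wideness.

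With this model in hand, observe that $\cA$ is pinned to $0$ on Morse critical points, and that $d^{\mrm{Morse}}$, being $\Lambda_{\tmin}$-linear, sends $q^{-k}\cdot x$ to $q^{-k}\cdot d^{\mrm{Morse}}(x)$ while staying at action $-k A_L$. Pick an Usher--Zhang-type orthogonal basis $\{x_i, y_j, z_j\}$ of the small $\bK$-complex $(C^{\mrm{Morse}}(L,f), d^{\mrm{Morse}})$, with $d^{\mrm{Morse}} x_i = 0$ and $d^{\mrm{Morse}} y_j = z_j$; all basis elements sit at action $0$. Tensoring with $\Lambda_{\tmin}$ produces a non-Archimedean orthogonal basis of $(C(L;\cD_0), d_{\cD_0})$ in the sense recalled in the excerpt, whose finite bar lengths are $\beta_j = \cA(y_j) - \cA(z_j) = 0$. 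Hence the barcode of $V_m(L)$ in degree $m$ consists only of infinite bars, with multiplicity $\dim_\bK H_{m + k N_L}(L;\bK)$ at the starting level $-k A_L$ for each integer $k$ with $m + k N_L \in [0,n]$. In particular $\beta(V_m(L)) = 0$, as required.

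The main obstacle is the chain-level reduction of the first paragraph, from wideness to pearl data with vanishing quantum part. For Lagrangians admitting a $\bK$-perfect Morse function $f$ (so that $d^{\mrm{Morse}} = 0$), a simple rank count forces $d_Q = 0$ whenever $L$ is wide, and the argument goes through verbatim. In the general wide case one needs the Biran--Cornea construction in its full form, or alternatively the upgrade of the $E_1$-degeneration to a strictly filtered chain homotopy equivalence between $(C(L;\cD), d_{\cD})$ and $(H_*(L;\bK) \otimes \Lambda_{\tmin}, 0)$ via a filtered homological perturbation lemma; the isometry theorem for persistence modules then identifies the two barcodes and concludes the proof.
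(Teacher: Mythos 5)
Your first paragraph claims that wideness alone permits a choice of pearl data $\cD_0$ for which the quantum part $d_Q$ of the pearl differential vanishes identically at chain level. This is only true when $L$ admits a $\bK$-perfect Morse function (so that $d^{\mathrm{Morse}}=0$ and a rank count on $QH(L)\subset C(L;\cD_0)$ forces $d_Q=0$), which is not automatic; and the parenthetical assertion that this chain-level vanishing ``is essentially the chain-level content of'' $E_1$-degeneration conflates two statements of different strength. Degeneration at $E_1$ is a statement about the spectral sequence of the filtration (a homological statement), while $d_Q=0$ is a far stronger chain-level statement. Wideness does imply $E_1$-degeneration, by the standard rank argument; it does not imply that one can choose regular pearl data without quantum contributions. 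Your third paragraph acknowledges the gap, but then leaves the general case resting on a filtered homological perturbation lemma that you do not carry out. As stated, your argument proves the proposition only for wide $L$ admitting a $\bK$-perfect Morse function.

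The gap is fixable along the two routes you gesture at, and it is worth spelling out what each requires. The paper's own proof is a citation to Usher \cite[Theorem~1.7\,(ii)]{UsherBD2}, with the remark that the statement follows quickly from the Oh spectral sequence. That route is the cleanest: the pearl complex, filtered by action (equivalently by powers of $q$), has spectral sequence with $E_1 \cong H_*(L;\bK)\otimes\Lambda_{\tmin}$ abutting to $QH_*(L)$; wideness forces the ranks of $E_1$ and $E_\infty$ to agree degree by degree, hence all higher differentials $d_r$, $r\ge 1$, vanish; and the finite bar lengths of $V_m(L)$ are precisely the filtration gaps (multiples of $A_L$) across which nonzero $d_r$'s would act, so degeneration means every finite bar has length zero, i.e.\ no finite bars. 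Your HPL route can be made to work but needs three explicit steps: (i) construct a filtered deformation retract $(C(L;\cD),d_{\cD}) \to (H_*(L;\bK)\otimes\Lambda_{\tmin},\delta)$ via perturbation of the Morse retraction, with all maps and the homotopy action-nonincreasing and with convergence ensured since $d_Q$ strictly drops action by $\ge A_L$; (ii) use wideness to show $\delta=0$ (a rank count, exactly as in your perfect-Morse case, now applied to the small model); and (iii) observe that a filtered deformation retract restricts to every sublevel complex and hence gives a $0$-interleaving, so the barcode of $V_m(L)$ equals that of $(H_*(L;\bK)\otimes\Lambda_{\tmin},0)$, which has only infinite bars. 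Without step (ii) done on the small model rather than on the pearl complex itself, the argument does not close.
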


This proposition is due to Usher \cite[Theorem 1.7 (ii)]{UsherBD2}. It can alternatively be rather quickly deduced from the Oh spectral sequence \cite{Oh:SpectralSequence} (cf. \cite{Biran:Nonintersections,BiranCorneaRigidityUniruling,Zap:Orient}, and \cite[Appendix A]{BiranMembrez-Cubic}.

\subsection{Piunikhin-Salamon-Schwarz isomorphism}

Fix a regular Floer datum $(H,\cD)$ and a regular quantum datum $\cD^{\text{pearl}}$ for $L$.
There are well-defined ring isomorphisms ${PSS} : QH(L;\cD^{\text{pearl}}) \to HF(L,H; \cD)$ which also commute with the Floer continuation maps. 

Let us briefly recall their construction (for the details see e.g. \cite{PSS,albers08,BiranCorneaLagrangianQuantumHomology,BiranCorneaRigidityUniruling,Kat-Mil:PSS,Bi-Co:qrel-long,Oh-Zhu:PSS}).
Given $x \in \text{Crit}(f)$, $(\gamma,[\overline{\gamma}]) \in \text{Crit}(\cA_H)$, and $A \in \pi_2(M,L)$.
Consider the sequence $(u_2,\ldots,u_l)$ of pseudo-holomorphic discs like in the definition of the differential (see Section \ref{subsecQuantum}), where $x \in W^u(u_l(1))$, and define $u_1 : \R \times [0,1] \to M$ where $u_1(\R,\{0,1\}) \in L$, $u_1(-\infty,t) = \gamma(t)$, and $u_1(\infty,t) = q$ with $u_2(-1) \in W^u(q)$. In addition $u_1$ satisfies the equation
\[ \partial_s u_1 + J(u_1) (\partial_t u_1 - \beta(s) X^t_{H}(u_1)) = 0,\]
where $\beta$ is a cutoff function which decreases and vanishes for $s \geq \frac{1}{2}$, and equals $1$ for $s \leq 0$. We require that $[u_1 \# \overline{\gamma}] + [u_2] + \ldots + [u_l] = A.$ The virtual dimension of the relevant moduli space is $\mu(\overline{\gamma}) + \text{ind}(x) + \mu(A) - n$, where $\text{ind}(x)$ is the Morse index of $x$.

Let $n(x,(\gamma,\overline{\gamma}))$ be the number modulo 2 of the elements in this moduli space if the dimension is zero, and zero otherwise.
Then
\[ \Phi_{PSS}(x) = \sum n(x,(\gamma,\overline{\gamma})) (\gamma,\overline{\gamma}). \]
Extend this map by linearity.

\subsection{Lagrangian spectral invariants}

Spectral invariants were introduced into symplectic topology by Viterbo \cite{Viterbo-specGF} in the context of generating functions, and by Schwarz \cite{Schwarz:action-spectrum} and Oh \cite{Oh-construction} in Hamiltonian Floer theory (see also \cite{Usher-spec}). Here we use the version of spectral invariants for Lagrangian Floer homology, in the monotone case, as developed in \cite{LeclercqZapolsky}; we refer there for a further review of the literature. In this section we work with Novikov coefficients $\Lambda = \Lambda_{\min,R},$ $\Lambda_{\mrm{mon},R}$ or $\Lambda_{\text{univ},R},$ over an arbitrary commutative ground ring $R,$ for the Lagrangian quantum homology, and Floer homology. 

Let $i^{t,\infty}_m :  HF_m(L;H,J)^{<t} \to HF_m(L;H,J)$ be the structure maps of the Floer persistence modules as described in Section \ref{Sec:Floer-persistence}.
Assuming that $(H,L)$ is regular, we define the spectral invariant $c(L;-,H) : QH(L)\setminus \{0\}  \to \R$ to be
\[ c(L;a,H) = c(a,H) = \inf \{ t : {PSS}(a) \in \text{Im}(i^{t,\infty}_*)\} .\] Note that $c(a,H)$ is a left end-point of an infinite interval in the barcode associated to the Floer persistence module. This invariant satisfies a continuity property with respect to the $H$ variable, that allows one to extend it to a map \[c: QH(L) \setminus \{0\} \times C^\infty \big(M\times [0,1] \big) \to \R.\] We formally set $c(0,H) = - \infty,$ and denote \[c_+(H) := c([L],H).\] The following proposition summarizing the properties of the resulting function is proved in \cite{LeclercqZapolsky}.

\medskip

\begin{prop}\label{prop:main_properties_Lagr_sp_invts}
  Let $L$ be a closed monotone Lagrangian of $(M,\omega)$ with minimal Maslov number $N_L \geq 2$. The function
$$c : QH_*(L) \setminus \{0\} \times C^\infty \big(M\times [0,1] \big) \rightarrow \R$$
is well-defined, and satisfies the following properties. 
\begin{Properties} 	
   \item[Spectrality] For $H \in C^\infty \big(M \times [0,1]\big)$, $c(a;H) \in  \Spec(H,L)$.
   \item[Ring action] For $r \in R$, $c(r \cdot a;H) \leq c(a;H)$. In particular, if $r$ is invertible, then $c(r \cdot a;H) = c(a;H)$. If $R = \bK$ is a field, $c(\lambda \cdot a; H) = c(a;H) - \nu(\lambda),$ for all $\lambda \in \Lambda.$
   \item[Symplectic invariance] Let $\psi \in \Symp(M,\omega)$ and $L'=\psi(L)$. Let
   $$c' : QH_*(L') \times C^0\big(M\times [0,1] \big) \to \R$$
   be the corresponding spectral invariant. Then $c(a;H) = c'(\psi_*(a);H \circ \psi^{-1})$. 
   \item[Normalization] If $\alpha$ is a function of time then $$c(a;H+\alpha)=c(a;H) + \int_0^1 \alpha(t) \,dt\,.$$ We have $c(a;0) = \cA(a)$ and $c_+(0)=0$.
   \item[Continuity] For any $H$ and $K$, and $a \neq 0$:
   $$\int_0^1 \min_M (K_t - H_t) \,dt \leq c(a;K) - c(a;H) \leq \int_0^1 \max_M (K_t - H_t) \,dt \,.$$ 
   \item[Monotonicity] If $H \leq K$, then $c(a;H) \leq c(a;K)$.
   \item[Triangle inequality] For all $a$ and $b$, $c(a \ast b; H \# K) \leq c(b;H) + c(a;K)$.
   
   \item[Lagrangian control] If for all $t$, $H_t|_L = c(t) \in \mathbb{R}$ (respectively $\leq$, $\geq$), then
   $$c_{+}(H)=\int_0^1 c(t) \,dt\quad\text{(respectively } \leq, \geq)\,.$$
   Thus for all $H \in \cH$:
   $$\int_0^1 \min_L H_t \,dt \leq c_{+}(H) \leq \int_0^1 \max_L H_t \,dt \,.$$
   
   \item[Duality] Let $ a^\vee \in QH^{n-k}(L)$ be the element corresponding to $a \in QH_k(L)$  under the natural duality isomorphism\footnote{This works for coefficients in $\mathbb{F}_2$ in the relative case, and for arbitrary coefficients in the absolute case. For arbitrary coefficients in the relative case one should take $a^\vee \in QH^{n-*}(L;\cL)$ in the quantum cohomology of $L$ twisted by a suitable orientation local system $\cL$ - see \cite{LeclercqZapolsky}. We refer {\em ibid.} for the definition of the duality isomorphism.}. Then we have
   $$c(a;\overline{H}) = - \inf \big\{c(b;H)\,|\, b \in QH_{n-k}(L;\cL) : \langle a^\vee, b\rangle \neq 0 \big\}\,.$$
   
   \item[Non-negativity] $c_+(H) + c_+(\overline{H}) \geq 0$.
   \item[Maximum] $c(a;H) \leq c_+(H)+\cA(a)$. 
\end{Properties}
\end{prop}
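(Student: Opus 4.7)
The plan is to establish each property by leveraging the PSS framework together with the filtered operations described in Section \ref{sec:Products}. First, set up the persistence module structure on $CF(L,H;\cD)$ from Section \ref{Sec:Floer-persistence}, and recall that $c(a;H)$ is the infimum of $t$ with $PSS(a)$ in the image of $V^t_m(L,H) \to V^\infty_m(L,H) \cong QH_m(L)$; equivalently, $c(a;H)$ is the left endpoint of the infinite bar in $\cB(L,H)$ labeled by $a$. Spectrality then follows because $\Spec(L,H) \subset \R$ is closed nowhere dense (by Sard and transversality of chords), so the infimum is attained at an action value of some capped chord representing $PSS(a)$.

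Continuity is the central analytic ingredient. Construct a filtered Floer continuation map $CF(L,H;\cD) \to CF(L,K;\cD)$ via the moduli space $\cR_{11:00}$ with a non-constant Hamiltonian $1$-form $K_r$ interpolating between $H$ and $K$; the curvature estimate $E(u) \leq \cA_H(x_-) - \cA_K(x_+) + \int_0^1 \max_M(K_t - H_t)\,dt$ (cf. Equation \eqref{eq:energy estimate} with nonzero interpolation), together with the fact that the induced map is chain homotopic to $PSS^{-1}_K \circ PSS_H$, shifts filtration by $\int_0^1 \max_M(K_t - H_t)\,dt$. Swapping the roles of $H$ and $K$ gives the lower bound. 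Normalization follows since adding $\alpha(t)$ to $H$ shifts $\cA$ uniformly by $\int_0^1 \alpha(t)\,dt$ on every generator. Monotonicity is immediate from Continuity, and Symplectic invariance follows from naturality of the whole construction under pullback of $(H,\cD)$ by $\psi$.

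The algebraic properties exploit the filtered $\mu_2$ product from $\cR_{21:00}$: for cycles $x \in CF(L,H;\cD),\; y \in CF(L,K;\cD)$ representing $PSS_H(b), PSS_K(a)$, the energy estimate \eqref{eq:energy estimate} yields $\cA(\mu_2(y,x)) \leq \cA(y) + \cA(x) + \eps$, and taking $\eps \to 0$ together with $PSS$ being a ring map gives the Triangle inequality $c(a*b; H\#K) \leq c(a;K) + c(b;H)$. Ring action then follows from $c(r\cdot a; H) = c(r*a; H) \leq c(r; 0) + c(a;H)$, using the Normalization and the valuation computation of $c(\lambda; 0) = -\nu(\lambda)$ on the Morse-theoretic side. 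The Maximum property is the special case $b=[L]$ combined with $c_+(0)=0$. For Lagrangian control, when $H_t|_L = c(t)$, the constant chords $x(t) = p \in L$ are among the Hamiltonian chords of $H$, with action $\int_0^1 c(t)\,dt$ on the trivial capping, and one argues by comparing with a $C^2$-small Morse perturbation whose PSS image of $[L]$ is realized by a maximum critical point.

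Duality and Non-negativity rely on the non-degenerate pairing from $\cR_{20:00}$, giving $\langle -,-\rangle: CF(L,H;\cD) \otimes CF(L,\overline H;\cD) \to \Lambda$, which at the filtration level satisfies $\langle x, y\rangle \neq 0 \Rightarrow \cA_{\overline H}(y) + \cA_H(x) \geq 0$ (the image capping at $z_+$ in $\cR_{20:00}$ is the empty disk with zero action). Passing to homology, this pairs $PSS_H$ with $PSS_{\overline H}$ under Poincaré duality $QH_k(L) \cong QH^{n-k}(L;\cL)$, and an elementary non-Archimedean linear algebra argument (as in \cite{UsherZhang}) converts this into the stated duality formula $c(a;\overline H) = -\inf\{c(b;H): \langle a^\vee, b\rangle \neq 0\}$. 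Non-negativity then follows by taking $a = [L]$ and $b = [L]$ (since $\langle [L]^\vee, [L]\rangle \neq 0$), giving $c_+(\overline H) \geq - c_+(H)$. The main obstacle is the precise chain-level construction of the filtered Poincaré pairing — particularly orienting the moduli spaces $\cR_{20:00}$ — which is why we restrict to $\bK = \mathbb{F}_2$; otherwise one must carry the orientation local system $\cL$ through all of the above, as carried out in \cite{LeclercqZapolsky,Zap:Orient}.
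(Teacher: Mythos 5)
The paper does not prove this proposition; it is stated as a summary of \cite{LeclercqZapolsky}, with the sentence preceding it reading ``The following proposition summarizing the properties of the resulting function is proved in \cite{LeclercqZapolsky}.'' So you are reconstructing a result from the literature rather than reproving something the paper establishes. That said, your reconstruction tracks the standard route quite faithfully: spectrality from closedness of the action spectrum, continuity and monotonicity from filtered continuation maps with the curvature/energy estimate, the algebraic properties (triangle inequality, ring action, maximum) from the filtered $\mu_2$-product, and duality/non-negativity from the filtered Poincar\'e pairing. The derivation of ring action as $c(r\cdot a;H)\leq c(r\cdot[L];0)+c(a;H)=\cA(r\cdot[L])+c(a;H)$ and the deduction of non-negativity by pairing $[L]^\vee$ against $[L]$ are both exactly the right moves.

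Two points to tighten. First, in the Lagrangian control sketch you assert that if $H_t|_L=c(t)$ then the constant chords $x(t)=p\in L$ are Hamiltonian chords of $H$; this is false in general. The condition $H_t|_L=c(t)$ only forces $dH_t|_{T_pL}=0$, hence $X_{H_t}(p)\in(T_pL)^{\omega}=T_pL$, so $X_{H_t}$ is tangent to $L$ but need not vanish. The chords that do exist are trajectories of the restricted flow $X_{H_t}|_L$, which are paths in $L$ (and therefore still have action $\int_0^1 c(t)\,dt$ with an $L$-contained capping), but the argument cannot be phrased in terms of fixed points. The cleaner route, which is what the reference uses, is to compare $H$ with the time-dependent constant Hamiltonian $c(t)\cdot\mathbf{1}_M$ via the continuity and monotonicity properties already established. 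Second, in the duality discussion you refer to ``the image capping at $z_+$ in $\cR_{20:00}$,'' but $\cR_{20:00}$ has $B_+=\emptyset$ and no output puncture; the pairing is $\Lambda$-valued, and the relevant filtration estimate comes directly from the non-negativity of the energy of a solution over the pair-of-inputs disk, not from any output capping. Neither issue affects the overall correctness of the approach, but both should be stated precisely if this were to serve as an actual proof.
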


In view of non-negativity we define the spectral norm $\gamma(L;H) := c_+(H) + c_+(\overline{H}) $.
If $N_L > n$, one can use the duality property to get $c_+(\overline{H}) = -c([pt],H) $, and hence $\gamma = c_+(H) - c_-(H)$ where $c_-(H) := c([pt],H)$.

\subsection{Lagrangian Seidel representation} \label{Sect:Seidel}

Here we give a brief description of the Seidel automorphsim $S_{\eta,\sigma} : QH(L) \to QH(L)$ (see \cite{huLalondeLeclercq,huLalonde}, and \cite{charetteCornea} for alternative descriptions).

Denote by $\Ham_L$ the group of Hamiltonian diffeomorphisms $f$ of $M$ that satisfy $f(L) = L$.
Let $\Gamma_L$ be the space of paths $\eta : [0,1] \to \Ham(M)$ such that $\eta^0 = \Id$ and $\eta^1 \in \Ham_L$. Let $\mathcal{P}_L = \pi_0(\Gamma_L)$.

An element $[\eta] \in \cP_L$ defines a canonical up to isomorphism Hamiltonian fibration over $\D$ as follows.
Let $\D_\pm := \D \cap (\HH_{\pm}),$ where $\HH_{+} = \HH \subset \C$ is the upper half plane, and $\HH_{-} = \overline{\HH} \subset \C$ is the lower half plane.
The fibration induced from $\eta$ is given by
\[ P_\eta := M \times \D_+ \cup M \times \D_- / \sim,\]
where
\[ (x,(1-2t,0)) \sim (\eta_t(x),(1-2t,0)) .\]
In other words we glue the two half-discs along $\D \cap \R$ where the $M$-coordinate is glued by $\eta$.

We define a symplectic form $\tau + C \pi^* \omega_0$ on $P_\eta$, where $\tau$ is the coupling form of the Hamiltonian generating $\eta$, $C$ is a large constant, and $\omega_0$ is the area form on $\D$.  Along the $S^1$-boundary we have the Lagrangian submanifold $N := \cup_{t \in S^1} (L,t)$.
Let $\pi : (P_\eta,N) \to (\D,S^1)$ denote the projection.

Let $F : N \to \R$ be a Morse function which satisfies that $F|_{(L,-1)},F|_{(L,1)}$ are Morse functions on $L$, $\text{Crit}(F) = \text{Crit}(F|_{(L,-1)}) \cup \text{Crit}(F|_{(L,1)})$, and $\max(F|_{(L,-1)})+1 < \min(F|_{(L,1)})$.

Let $\rho$ be a generic metric on $P_\eta$. Let $J$ be a compatible almost complex structure on $P_\eta$ which is compatible with the fibration, i.e. $J$ restricted to a fiber is an $\omega$-compatible almost complex structure, and the projection $\pi$ is $(J,j)$-holomorphic with respect to the standard complex structure $j$ on $\D$.
We call a class $B \in \pi_2(P_\eta,N)$ a section class if $\pi_* B \in \pi_2(\D,S^1)$ is the positive generator. We say that $B$ is a fiber class if $B$ is in the image of the map $\pi_2(M,L) \to \pi_2(P_\eta,N)$ induced from the inclusion of a fiber.
Let $x_- \in \text{Crit}(F|_{(L,-1)})$ and $y_+ \in \text{Crit}(F|_{(L,1)})$. We consider pearly trajectories in $(P_\eta,N)$, where exactly one of the $J$-holomorphic discs in the configuration represents a section class, and all other discs represent fiber classes.
This amounts to consider the moduli space $\cM(P_\eta,N;\sigma; F,\rho,J;x_-,y_+)$, where $\sigma \in \pi_2(P_\eta,N)$ is a section class.

One can check that
\[ \dim \cM(P_\eta,N;\sigma; F,\rho,J;x_-,y_+) = ind(x_-) - ind(y_+) + \mu^v(\sigma) ,\]
where $\mu^v$ is the vertical Maslov index.

Let $\sigma \in \pi_2(P_\eta,N)$ denote a particular choice of reference section class.
Denote $f_- := F|_{(L,-1)} :L \to L$ and $f_+ := F|_{(L,1)}: L \to L$.
The chain level Seidel automorphism is
\[ S_{\eta,\sigma} : \text{Crit}(f_-) \otimes \Lambda \to \text{Crit}(f_+) \otimes \Lambda ,\]
defined by
\[ S_{\eta,\sigma}(x_-) = \sum_{B,y_+} \#_2 \cM(P_\eta,N;\sigma; F,\rho,J;x_-,y_+) q^{\nu(B)} y_+ .\]

Let $\eta \in \Gamma_L$ and $\bar{\eta}$ be a capping of $\{\eta^t(x_0)\}_{t \in [0,1]}$ for some $x_0 \in L$.
Let us consider the action of $(\eta,\bar{\eta})$ on $HF(L;H)$:
\[ P_{\eta,\bar{\eta}} : CF(L;H) \to CF(L,K \# H),\]
where $K$ is the Hamiltonian generating $\eta$. For the details of the construction see for example \cite{huLalondeLeclercq}. Figure \ref{PdefFig} shows this construction.

\begin{figure}[h]
	
	\includegraphics[width=\textwidth]{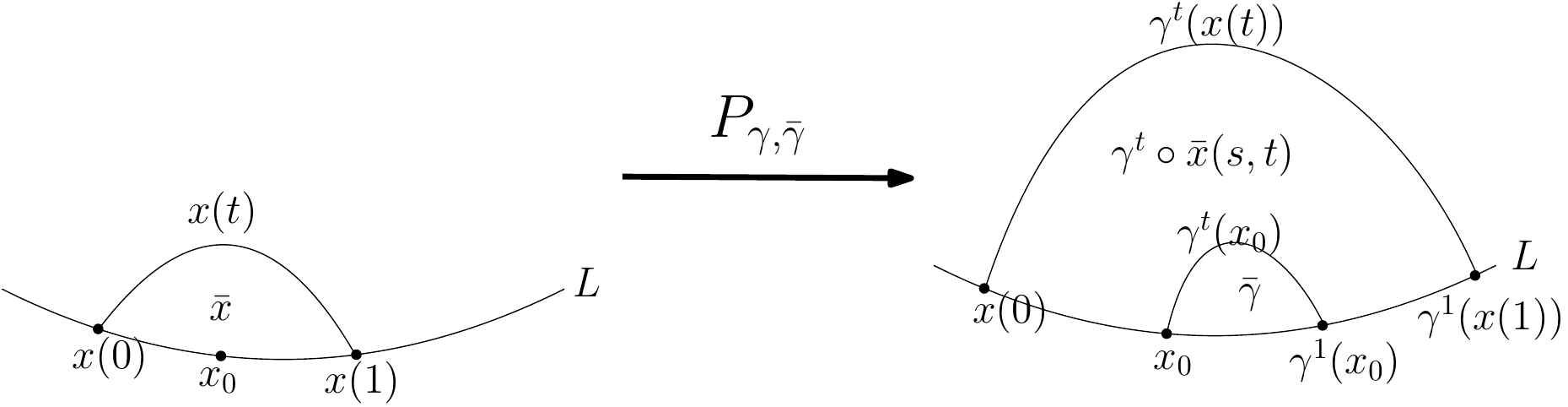}
	\caption{Definition of $P_{\eta,\bar{\eta}}$: For $x$ a path from $L$ to itself, and $\bar{x}$ a capping, we think of $\bar{x}$ as a homotopy from the constant path $x_0$ to $x$. We then get a new path and capping as described in the figure.}
	\label{PdefFig}
	
\end{figure}

In \cite{huLalondeLeclercq} it is proven that for any $\eta$ and any $\sigma$, there is a capping $\bar{\eta}$ so that the following diagram is commutative. In fact there is a bijection between the section classes $\sigma$ and cappings $\bar{\eta}$ with this property.
\[ \begin{matrix} HF(L;H) & \xrightarrow{P_{\eta,\bar{\eta}}} & HF(L;K \# H) \\
                    \downarrow \text{\tiny{PSS}} & & \downarrow \text{\tiny{PSS}} \\
                    QH(L) & \xrightarrow{S_{\eta,\sigma}} & QH(L)
                    \end{matrix}
                    \]
In other words, $\bar{\eta}$ satisfies
\[ PSS \circ P_{\eta,\bar{\eta}}  = S_{\eta,\sigma} \circ PSS.\]

In order to calculate $c(L;[L],\eta \phi)$ one needs to understand how the action changes after applying $P_{\eta,\bar{\eta}}$, and how the class in $QH(L)$ changes after applying $ S_{\eta,\sigma}$.

\medskip

\begin{prop}
	\label{propActionFormula}
	We have the following relation on the action after applying $P_{\eta,\bar{\eta}}$.
	\[(P_{\eta,\bar{\eta}}^* \cA_{K\# H})(x,\bar{x}) = \cA_H(x,\bar{x}) + \cA_K(\eta,\bar{\eta}).\]
\end{prop}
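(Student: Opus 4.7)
The plan is to compute $\cA_{K\# H}(P_{\eta,\bar\eta}(x,\bar x))$ directly, by unpacking the concatenation construction of $P_{\eta,\bar\eta}$ and exploiting the symplectic invariance of the flow $\eta^t = \phi^t_K$. Writing $(x',\bar x') := P_{\eta,\bar\eta}(x,\bar x)$, one has $x'(t) = \eta^t(x(t))$, which is indeed a chord of $K\# H$ since $X^t_{K\# H} = X^t_K + (\eta^t)_* X^t_H$. The capping $\bar x'$ is, by the construction recalled in Figure \ref{PdefFig}, the concatenation of $\bar\eta$ (a homotopy from the constant path at some $x_0 \in L$ to $t \mapsto \eta^t(x_0)$) with the family $F(s,t) := \eta^t(\bar x(s,t))$ (a homotopy from $t \mapsto \eta^t(x_0)$ to $x'$), where $\bar x$ is parametrized so that $\bar x(0,t) \equiv x_0$ and $\bar x(1,t) = x(t)$.

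The Hamiltonian part of the action separates easily. Using the standard identity $(K\# H)(t,p) = K(t,p) + H(t,(\eta^t)^{-1}(p))$ evaluated at $p = x'(t)$,
\[ \int_0^1 (K\# H)_t(x'(t))\,dt = \int_0^1 K_t(\eta^t(x(t)))\,dt + \int_0^1 H_t(x(t))\,dt. \]
The main step is the computation of $\int F^*\omega$. Differentiating gives $\partial_s F = d\eta^t(\partial_s \bar x)$ and $\partial_t F = d\eta^t(\partial_t \bar x) + X^t_K(F)$, and combining $(\eta^t)^*\omega = \omega$ with $\iota_{X^t_K}\omega = -dK_t$ yields
\[ \omega(\partial_s F,\partial_t F) = \omega(\partial_s \bar x,\partial_t \bar x) + \partial_s\bigl[K_t(\eta^t(\bar x(s,t)))\bigr]. \]
Integrating in $s$ from $0$ to $1$ and then in $t$, the first term produces $\int_{\bar x}\omega$, while the second, via the fundamental theorem of calculus together with $\bar x(0,t) = x_0$ and $\bar x(1,t) = x(t)$, produces the boundary contribution $\int_0^1 K_t(\eta^t(x(t)))\,dt - \int_0^1 K_t(\eta^t(x_0))\,dt$.

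Putting the pieces together with $\int_{\bar x'}\omega = \int_{\bar\eta}\omega + \int F^*\omega$, the two occurrences of $\int_0^1 K_t(\eta^t(x(t)))\,dt$ cancel between the Hamiltonian integral and the area contribution, leaving
\[ \cA_{K\# H}(x',\bar x') = \cA_H(x,\bar x) + \Bigl(\int_0^1 K_t(\eta^t(x_0))\,dt - \int_{\bar\eta}\omega\Bigr) = \cA_H(x,\bar x) + \cA_K(\eta,\bar\eta). \]
There is no deep obstacle; the argument is essentially the verification that the $\int_0^1 K_t(\eta^t(x(t)))\,dt$ term generated by the time-dependence of $\eta^t$ is exactly compensated by the $K$-part of the composed Hamiltonian. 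The only care required is in bookkeeping the parametrizations so that the concatenation $\bar\eta\ast(\eta\cdot\bar x)$ is a well-defined capping of $x'$ with boundary on $L$, which follows from $x_0 \in L$, from $\eta^t(L)$ lying on one Lagrangian family boundary of $F$, and from $\eta^1(L) = L$.
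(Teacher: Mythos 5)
Your proof is correct and follows essentially the same route as the paper's: both unpack the action into Hamiltonian and area parts, use symplectic invariance of $\eta^t$ together with $\iota_{X^t_K}\omega = -dK_t$ to reduce the area cross term to $\partial_s\bigl[K_t(\eta^t(\bar x(s,t)))\bigr]$, and apply the fundamental theorem of calculus in $s$ to obtain the cancellation against the $K$-part of the Hamiltonian integral. The paper presents this as a single chain of equalities while you organize it into a Hamiltonian step and an area step, but the computation and the key identity are the same.
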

\begin{proof}
	\begin{align*}
	(P_{\eta,\bar{\eta}}^* \cA_{K\# H})(x,\bar{x}) &= \int_{0}^1 K_t(\eta_t(x(t))) + H_t (\eta_t^{-1} \circ \eta_t (x(t))) dt - \int_{D^2} (\eta_t \circ \bar{x}(s,t))^* \omega - \int_{D^2} \bar{\eta}^* \omega \\
	&= \int_0^1 H_t(x(t)) dt + \int_{0}^1 K_t(\eta_t(x(t))) - \int_0^1 \int_0^1 \omega(\eta_{t*} \frac{\partial \bar{x}}{\partial t} + \frac{d\eta_t}{dt}(\bar{x}(s,t)),\eta_{t*} \frac{\partial \bar{x}}{\partial s}) ds dt - \int_{D^2} \bar{\eta}^* \omega \\
	&= \cA_H(x,\bar{x}) + \int_{0}^1 K_t(\eta_t(x(t))) - \int_0^1 \int_0^1 d(K_t \circ \eta_t) \frac{\partial \bar{x}}{\partial s} ds dt - \int_{D^2} \bar{\eta}^* \omega \\
	&= \cA_H(x,\bar{x}) + \int_{0}^1 K_t(\eta_t(x(t))) - \int_0^1 K_t(\eta_t(x(t))) - K_t(\eta_t(x_0)) dt - \int_{D^2} \bar{\eta}^* \omega \\
	&= \cA_H(x,\bar{x}) + \int_0^1 K_t(\eta_t(x_0)) dt - \int_{D^2} \bar{\eta}^* \omega \\
	&= \cA_H(x,\bar{x}) + \cA_K(\eta,\bar{\eta}).
	\end{align*}
\end{proof}

\begin{cor}
	\label{corSpecInv}
	For any class $a \in QH(L)$, and $H \in \cH$ we have
	\[c(L;a,H) = c(L;S_{\eta,\sigma}(a),K \# H) - \cA_K(\bar{\eta}).\]
\end{cor}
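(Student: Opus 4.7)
The plan is to deduce the corollary directly from Proposition \ref{propActionFormula} together with the intertwining relation $\mathrm{PSS}\circ P_{\eta,\bar\eta} = S_{\eta,\sigma}\circ \mathrm{PSS}$, by showing that $P_{\eta,\bar\eta}$ induces a shift-isomorphism of the associated Floer persistence modules.

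First, I would observe that the map $P_{\eta,\bar\eta}:CF(L;H)\to CF(L;K\#H)$ is a chain isomorphism on the underlying $\Lambda$-modules: at the level of generators it sends a capped chord $(x,\bar x)\in \til{\cO}_{pt}(L,H)$ to a capped chord $(\eta\cdot x,\bar\eta\#\bar x)\in \til{\cO}_{pt}(L,K\#H)$ (in the notation of Figure~\ref{PdefFig}), and this assignment is a bijection with inverse given by the analogous construction for $\eta^{-1}$. Proposition \ref{propActionFormula} then gives the key filtration identity
\[\cA_{K\#H}\bigl(P_{\eta,\bar\eta}(x,\bar x)\bigr)=\cA_H(x,\bar x)+\cA_K(\eta,\bar\eta),\]
so that $P_{\eta,\bar\eta}$ restricts to isomorphisms of subcomplexes
\[P_{\eta,\bar\eta}\colon CF^{<t}(L;H)\xrightarrow{\;\cong\;}CF^{<t+\cA_K(\bar\eta)}(L;K\#H)\]
for every $t\in\R$, compatibly with inclusions. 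Passing to homology yields an isomorphism of persistence modules $V_*(L,H)\cong V_*(L,K\#H)[\cA_K(\bar\eta)]$.

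Next, I would combine this with the commutative square
\[\begin{matrix} HF(L;H) & \xrightarrow{P_{\eta,\bar{\eta}}} & HF(L;K \# H) \\ \downarrow\mathrm{PSS} & & \downarrow\mathrm{PSS} \\ QH(L) & \xrightarrow{S_{\eta,\sigma}} & QH(L) \end{matrix}\]
Unwinding the definition of the spectral invariant, $c(L;a,H)$ is the infimum of those $t$ for which $\mathrm{PSS}(a)$ lies in the image of the structure map $i^{t,\infty}:HF^{<t}(L;H)\to HF(L;H)$. By the previous step $\mathrm{PSS}(a)\in \mathrm{Im}(i^{t,\infty})$ if and only if $P_{\eta,\bar\eta}(\mathrm{PSS}(a))\in \mathrm{Im}(i^{t+\cA_K(\bar\eta),\infty})$ in $HF(L;K\#H)$, and by commutativity of the square the left-hand side equals $\mathrm{PSS}(S_{\eta,\sigma}(a))$. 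Taking infima over $t$ gives
\[c(L;a,H)+\cA_K(\bar\eta)=c(L;S_{\eta,\sigma}(a),K\#H),\]
which is the claimed identity after rearrangement.

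I do not anticipate a serious obstacle: the content is really Proposition \ref{propActionFormula} plus the fact that $P_{\eta,\bar\eta}$ is a filtered chain isomorphism. The only small point to be careful about is checking that $P_{\eta,\bar\eta}$ is a genuine filtered isomorphism and not merely a filtered quasi-isomorphism — but this is transparent from the explicit bijection on capped chords described above, so no further compactness or transversality arguments are required beyond those already used in \cite{huLalondeLeclercq} to establish the commutative diagram.
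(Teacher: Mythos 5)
Your proof is correct and rests on the same two ingredients as the paper's: the action formula of Proposition~\ref{propActionFormula} and the commutative square intertwining $P_{\eta,\bar\eta}$ with $S_{\eta,\sigma}$. The paper's proof is slightly more economical on presentation — it picks a chain representative $p$ of $\mathrm{PSS}^{-1}(a)$ with action within $\epsilon$ of $c(L;a,H)$, applies Proposition~\ref{propActionFormula} to get the inequality $c(L;S_{\eta,\sigma}(a),K\#H)\le c(L;a,H)+\cA_K(\bar\eta)$, and notes the reverse inequality is similar — whereas you first establish that $P_{\eta,\bar\eta}$ is a filtered chain \emph{isomorphism} (via the bijection on capped chords and the action formula), obtain a shift-isomorphism of persistence modules, and then read off both inequalities at once. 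Your packaging is a touch more structural and saves the separate ``similar'' argument, but there is no genuinely different idea involved; the content is the same.
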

\begin{proof}
	Let $p \in HF(L;H)$. From the fact that $PSS \circ P_{\eta,\bar{\eta}}  = S_{\eta,\sigma} \circ PSS$ we get
	\[  PSS(p) = a \iff PSS(P_{\eta,\bar{\eta}} p) = S_{\eta,\sigma} a . \]
	Let $\epsilon > 0$. There exists $p \in HF(L;H)$ with $PSS(p) = a$ and $\cA_H(p) \leq c(a;L,H) + \epsilon$.
	We get that $PSS(P_{\eta,\bar{\eta}} p) = S_{\eta,\sigma} a$ so
	\[ \cA_{K \# H}(P_{\eta,\bar{\eta}} p) \geq c(L;S_{\eta,\sigma}(a),K \# H). \]
	 However from Proposition \ref{propActionFormula} we have $\cA_{K \# H}(P_{\eta,\bar{\eta}} p) = \cA_H(p) + \cA_K(\bar{\eta})$ so we get
	 \[ c(L;S_{\eta,\sigma}(a),K \# H) \leq c(L;a,H) + \cA_K(\bar{\eta}). \]
	 The argument for $\geq$ is similar.
\end{proof}


\section{Lagrangian circle actions}

In this section we prove the upper bounds in Theorem \ref{thm:slimLagrExamples}. It consists of verifying that the pairs $(M,L),$ with $L \subset M$ a Lagrangian submanifold, that appear in Theorem \ref{thm:slimLagrExamples} satisfy the conditions of Proposition \ref{Prop:slimLagr}.

For use in this section, we recall that an $S^1$-action on a space $X$ is called semi-free if the stabilizer of each point $x \in X$ is either $0$ or $S^1.$ The main references for this section are \cite{McDuffTolman} and \cite{hyvrier}.

\subsection{$\C P^n$}

We start with Case (\ref{thmBDbounds-case:CPn}) of Theorem \ref{thm:slimLagrExamples}, and show that it satisfies the conditions of Proposition \ref{Prop:slimLagr}.

\begin{lma}
	\label{lemSeidelCPn-abs}
	Let $M = \CP^n.$ In the construction of $QH(M)$, choose the quantum variable $q$ to be with action $\frac{1}{n+1}$ and degree $2$. Then there exists a subgroup of $\pi_1(\Ham(M))$ isomorphic to $\Z / (n+1)\Z$, with generator $\eta$ such that there exists a choice of section class $\sigma$ for which we have
	\[S_{\eta,\sigma}([pt]) = q^{-n} [M].\]
\end{lma}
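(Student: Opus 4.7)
The plan is to exhibit an explicit Hamiltonian $S^1$-loop on $\CP^n$ that generates a cyclic subgroup of $\pi_1(\Ham(\CP^n,\om_{FS}))$ of order $n+1$, and then to compute its Seidel element by the standard fixed-point/localization machinery, adjusting the section class to realize the claimed formula.

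Concretely, I would take the loop
\[\eta^t([z_0:z_1:\cdots:z_n])=[e^{-2\pi it}z_0:z_1:\cdots:z_n],\]
generated, after mean-zero normalization, by the Hamiltonian $H([z])=|z_0|^2/|z|^2-1/(n+1)$. This is the image in $PU(n+1)\subset\Ham(\CP^n,\om_{FS})$ of the loop in $U(n+1)$ rotating the first coordinate. The long exact sequence of $U(1)\to U(n+1)\to PU(n+1)$ gives $\pi_1(PU(n+1))\cong\Z/(n+1)\Z$, and by Seidel's theorem \cite{seidelInvertibles} the inclusion $PU(n+1)\hookrightarrow\Ham(\CP^n,\om_{FS})$ induces an isomorphism on $\pi_1$, so $[\eta]$ generates the desired subgroup.

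I would then set up the Hamiltonian fibration $P_\eta\to\CP^1$ from Section \ref{Sect:Seidel} and analyze its sections. The $S^1$-action has an isolated minimum $p_{\min}=[1:0:\cdots:0]$ (moment-map value $-n/(n+1)$) and a maximum stratum $\CP^{n-1}=\{z_0=0\}$. In the affine chart $w_i=z_i/z_0$ at $p_{\min}$, all isotropy weights equal $+1$, so the constant section $C_{\min}$ at $p_{\min}$ is $J_0$-holomorphic for any fibered $\om$-compatible $J_0$, and by positivity of the weights is the unique such section in its section class $\sigma_0$. Seidel's original localization computation \cite{seidelInvertibles}, reproducible via the McDuff--Tolman formula \cite{McDuffTolman} (or Hyvrier \cite{hyvrier} in the Lagrangian setting), then gives
\[ S_{\eta,\sigma_0}([M])=q^{a_0}\cdot[\CP^{n-1}]\in QH(\CP^n;\Lambda),\]
for an integer $a_0$ determined by the coupling-form integral on $C_{\min}$ and the action shift from the mean-zero normalization of $H$; a careful bookkeeping of these contributions shows $a_0\equiv 1\pmod{n+1}$.

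Using the presentation $QH(\CP^n;\Lambda)\cong\Lambda[H]/(H^{n+1}-q^{n+1})$ and the quantum product relation $[\CP^{n-1}]\ast[pt]=q^{n+1}[M]$ (from the standard count of degree-one lines through two points meeting a hyperplane), the absolute Seidel automorphism, being multiplication by $S_{\eta,\sigma_0}([M])$, sends $[pt]$ to $q^{a_0+n+1}\cdot[M]$. Any other section class differs from $\sigma_0$ by an integer multiple of the fiber class $[\CP^1]$, each unit of shift multiplying the Seidel element by $q^{\pm(n+1)}$. Since $a_0\equiv 1\pmod{n+1}$, I can choose the shift so that the final exponent equals $-n$, yielding the section class $\sigma$ with $S_{\eta,\sigma}([pt])=q^{-n}[M]$. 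The main obstacle will be verifying the congruence $a_0\equiv 1\pmod{n+1}$, which reduces to an explicit area computation for the minimum constant section in $P_\eta$ using the mean-zero Hamiltonian and the identification $q\leftrightarrow T^{-1/(n+1)}$ from Section \ref{Sect:prelim}; once this is done, the rest of the argument is the linear-algebraic shift above.
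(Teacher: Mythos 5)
Your proposal follows exactly the same route as the paper: you exhibit the same loop $\eta(t)\colon [z_0:\cdots:z_n]\mapsto [e^{-2\pi it}z_0:\cdots:z_n]$, invoke Seidel's original computation \cite{seidelInvertibles} and McDuff--Tolman \cite{McDuffTolman} for the semi-free action, and observe that the section class can be adjusted by fiber-class shifts to realize the stated exponent. The paper's actual proof is a one-line citation to these same references, so you are fleshing out precisely the intended argument.

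Two points in your elaboration should be tightened if you pursue the bookkeeping. First, there is a sign slip: you state $H([z])=|z_0|^2/|z|^2-1/(n+1)$ but then assign $p_{\min}=[1:0:\cdots:0]$ moment-map value $-n/(n+1)$; with the convention $\iota_{X_H}\omega=-dH$ and the loop $z_0\mapsto e^{-2\pi it}z_0$, the normalized Hamiltonian is actually $-|z_0|^2/|z|^2+1/(n+1)$, consistent with your stated value but not your stated formula. Second, and more substantively, you assert that the constant section at $p_{\min}$ is the \emph{unique} holomorphic section in its class and that this gives $S_{\eta,\sigma_0}([M])=q^{a_0}[\CP^{n-1}]$. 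That is inconsistent: a unique rigid section passing only through $p_{\min}$ contributes a multiple of $[pt]$, not $[\CP^{n-1}]$. In the McDuff--Tolman picture the leading term $[\CP^{n-1}]$ arises from the $(n-1)$-parameter family of constant sections over the \emph{maximal} fixed stratum $F_{\max}=\{z_0=0\}$, where the action is trivial and the subfibration is the trivial $\CP^{n-1}$-bundle; the rigid section at $p_{\min}$ contributes a strictly higher-order term. Your deferred congruence $a_0\equiv 1\pmod{n+1}$ is what the cited references compute, and it is also forced by a degree count: $S_{\eta,\sigma}$ must send $[pt]$ (degree $0$) to a degree-$0$ class, forcing $S_{\eta,\sigma}([M])=q^1[\CP^{n-1}]$ for the appropriate $\sigma$, whence $S_{\eta,\sigma}([pt])=q\cdot H^{n+1}=q\cdot q^{-(n+1)}[M]=q^{-n}[M]$. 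If you patch the max/min confusion, your argument is essentially the paper's, made explicit.
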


\begin{proof}
The suitable loop is $\eta(t): [z_0:z_1:\ldots:z_n] \mapsto [e^{-2 \pi i t}z_0:z_1:\ldots:z_n]$ for $t \in [0,1].$ This follows by the original computation of Seidel element \cite{seidelInvertibles} (see also \cite{McDuffTolman,EntovPolterovichCalabiQM}, and note that this action is semi-free).
\end{proof}

This statement implies an analogous one for $L=\Delta_{\C P^n}$ in $M=\C P^n \times (\C P^{n})^{-},$ by Section \ref{sect: abs as rel}. This shows that $L \subset M$ satisfies the conditions of Proposition \ref{Prop:slimLagr}.

\subsection{$\R P^n$}

We continue with Case (\ref{thmBDbounds-case:RPn}) of Theorem \ref{thm:slimLagrExamples}.

\begin{lma}
	\label{lemSeidelCPn}
	Let $M = \CP^n$ and $L = \RP^n$. In the construction of $QH(L)$, choose the quantum variable $t$ to be with action $\frac{1}{2n+2}$ and degree $1$. Then there exists a subgroup of $\mathcal{P}_L$ isomorphic to $\Z / (n+1)\Z$, with generator $\eta$ for which there exists a choice of $\sigma$ such that we have
	\[S_{\eta,\sigma}([pt]) = t^{- n} [L].\]
\end{lma}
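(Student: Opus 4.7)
The plan is to construct $\eta$ as a ``half'' of the classical rotation loop on $\mathbb{C}P^n$ and to compute the associated Lagrangian Seidel element by adapting the McDuff--Tolman fixed-point localization (as done for Lagrangian Seidel elements in \cite{hyvrier}). Define
\[
\eta_t : [z_0:z_1:\ldots:z_n] \mapsto [e^{-i\pi t} z_0 : z_1 : \ldots : z_n], \quad t \in [0,1].
\]
Since $\eta_1$ acts by $[z_0:z_1:\ldots:z_n]\mapsto[-z_0:z_1:\ldots:z_n]$ it is the image in $\mathrm{PGL}_{n+1}(\mathbb{C})$ of a real linear map, and thus preserves $L=\mathbb{R}P^n$ setwise; hence $\eta \in \Gamma_L$. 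The concatenation $\eta^{\# 2}$ coincides with the full rotation loop appearing in Lemma~\ref{lemSeidelCPn-abs}, so the subgroup of $\mathcal{P}_L$ generated by $\eta$ maps onto the order-$(n+1)$ cyclic subgroup of $\pi_1(\Ham(\mathbb{C}P^n))$ produced by that rotation. Using that the full $S^1$-action generating $\eta$ lifts to a projective representation, one verifies by an explicit homotopy in $\Gamma_L$ that $\eta^{\# (n+1)}$ is trivial in $\mathcal{P}_L$, giving the claimed cyclic subgroup of order exactly $n+1$.

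To compute the Seidel element, consider the Hamiltonian fibration $\pi:P_\eta \to \mathbb{D}$ with Lagrangian boundary $N=\bigcup_{t\in S^1}(L,t)$ as in Section \ref{Sect:Seidel}. The moment map of the underlying $S^1$-action is $\mu_0[z_0:\ldots:z_n] = |z_0|^2/\sum |z_i|^2$, whose fixed-point set consists of an isolated maximum $p_+=[1:0:\ldots:0]\in L$ and the hyperplane fixed set $\{z_0=0\}$ corresponding to the minimum. The constant ``section'' at $p_+$ extends canonically to a holomorphic section of $P_\eta$ whose class I take as the reference section class $\sigma$; it lies in $N$ along $\partial \mathbb{D}$ since $p_+\in L$. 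Choosing an $S^1$-invariant compatible complex structure and working over $\bF_2$, the vertical Maslov index of $\sigma$ equals half the vertical Chern number of the analogous section class in the absolute fibration of $\eta^{\# 2}$; that absolute index is $2n$ (see \cite{McDuffTolman,seidelInvertibles} and the proof of Lemma~\ref{lemSeidelCPn-abs}), so $\mu^v(\sigma)=n$.

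The counting step follows the Morse-Bott/localization scheme of \cite{hyvrier}: pearly trajectories in $(P_\eta,N)$ representing $\sigma$ with input close to $p_+$ and output anywhere on $L$ are, for $J$ chosen $S^1$-invariant near $p_+$, in bijection with the downward gradient flow of a perfect Morse function on $L$ from a maximum, yielding exactly one trajectory (mod $2$) to the class of the fundamental cycle. Packaging this count, together with the action shift coming from $\mu^v(\sigma)=n$ under the normalization $\deg t=1$, $\cA(t)=\tfrac{1}{2n+2}$, gives
\[
S_{\eta,\sigma}([pt]) = t^{-n}[L],
\]
as claimed.

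The principal obstacle is the section count with Lagrangian boundary, because the standard absolute McDuff--Tolman argument must be adapted to allow the boundary of holomorphic sections to move in $N$; this is precisely where one invokes the Lagrangian semi-free circle action machinery of \cite{hyvrier}. A robust way to double-check the answer is the compatibility $S_{\eta,\sigma}^2 = S_{\eta^{\#2},\,\sigma\#\sigma}$ combined with the $QH(M)$-module structure of $QH(L)$: the right-hand side acts on $[pt]\in QH(L)$ as multiplication by $q^{-n}$ by Lemma~\ref{lemSeidelCPn-abs}, which under $q=t^{n+1}$ and the formula above reads $t^{-2n}[L] \ast \text{(known unit issue)}$, providing an internal consistency check that pins down both the Maslov index and the sign/multiplicity of the count.
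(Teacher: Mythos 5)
Your proposal takes a genuinely different route from the paper, but it contains a substantive error and a serious unjustified step, so as written it has a gap.

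First, the sign issue. With the paper's convention $\iota_{X^t_H}\omega = -dH_t$, the generator of the rotation $z_0 \mapsto e^{-2\pi i t}z_0$ is proportional to $-|z_0|^2/\sum|z_i|^2$, so the maximum of the moment map is the hyperplane $F_{\max} = \{z_0 = 0\} = \CP^{n-1}$ and the minimum is the isolated point $[1:0:\dots:0]$. You have these swapped: you take $\mu_0 = +|z_0|^2/\sum|z_i|^2$ and place the maximum at $[1:0:\dots:0]$. This is not merely cosmetic. The Lagrangian Seidel localization machinery of Hyvrier, which you explicitly invoke, is stated at $F_{\max}$, where one has the Lagrangian $F^L_{\max} = F_{\max} \cap L$ sitting inside $F_{\max}$. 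The paper exploits exactly this: it identifies $F^L_{\max} = \RP^{n-1} \subset \CP^{n-1}$, reads off $S_{\eta,\sigma}([L]) = [\RP^{n-1}]\,t$ from the weight $w_{\max} = -1$, and then obtains $S_{\eta,\sigma}([pt])$ by multiplying in $QH(L)$, using that the Seidel automorphism is determined by its value on the unit. Your proposal never passes through $S_{\eta,\sigma}([L])$ or the ring structure; instead you try to count sections directly with an input $[pt]$ localized at what you (incorrectly) call $p_+$.

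Second, the direct section count at the isolated fixed point is left essentially unjustified. A direct computation at an isolated fixed point with Lagrangian boundary requires a regularity argument for the constant (pseudo)-holomorphic section, typically via a Riemann--Hilbert problem; the paper does carry out precisely such an argument in the quadric case (Lemma \ref{lemSeidelQn}, case (3)), computing all partial indices. Your appeal to a bijection with downward gradient flow of a perfect Morse function is too schematic to establish that the mod-2 count is $1$, and your determination $\mu^v(\sigma)=n$ is asserted rather than derived. Finally, the closing ``consistency check'' against $S_{\eta^{\#2},\sigma\#\sigma}$ and $q=t^{n+1}$ is a plausibility argument, not a proof, and cannot by itself pin down the sign of $w_{\max}$, the section class $\sigma$, or the mod-2 count. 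To fix the proposal with minimal changes you could: correct the moment-map sign so that $F_{\max}=\CP^{n-1}$, replace the direct $[pt]$ count by the computation of $S_{\eta,\sigma}([L])$ at $F^L_{\max} = \RP^{n-1}$ via Hyvrier's theorem (checking semi-freeness and the weight $w_{\max}=-1$), and then finish, as the paper does, with $S_{\eta,\sigma}([pt]) = [\RP^{n-1}]\ast[pt]\,t = t^{-n}[L]$ using the quantum module structure.
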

\begin{proof}
    Let $\eta(t): [z_0:z_1:\ldots:z_n] \mapsto [e^{-\pi i t}z_0:z_1:\ldots:z_n]$ for $t \in [0,1]$. Note that $\eta$ concatenated with $\eta\circ \eta(1),$ is a circle action $\eta^2$ that generates a subgroup $\Z / (n+1)\Z \subset \pi_1(\Ham(\CP^n))$ (see \cite{McDuffTolman}). In \cite{hyvrier} it is shown how to calculate the Seidel element of a curve $\eta \in \mathcal{P}_L$, with $\eta^2 \in \pi_1(\Ham)$ a circle action, for a particular choice of $\sigma.$  Following the same notations as in \cite{hyvrier}, we note that the action is semi-free, the fixed point set of $\eta^2$ where the moment map takes its maximal value, is $F_{\max} = \CP^{n-1} \subset \CP^n.$ (The other fixed point set is $F_{\min} = [1:0:\ldots:0].$) Denote $F^L_{\max} := F_{\max} \cap L = \RP^{n-1}$ and observe that it is a Lagrangian submanifold of $F_{\max}.$ The sum of weights of $\eta^2$ in $F_{\max}$ is $w_{\max} = -1$, so we get $S_{\eta,\sigma}([L]) = [\RP^{n-1}] t,$ and the section class we consider is determined by the constant sections at points of $F^L_{\max}$ Hence, by \cite{hyvrier}, we obtain
    \[ S_{\eta,\sigma}([pt]) = [\mathbb{R}\mathbb{P}^{n-1}] \ast [pt] t = t^{-n} [L].\]
\end{proof}

\subsection{$S^n \subset Q^n$ - the sphere in the quadric}

Now we prove that Case (\ref{thmBDbounds-case:Sn}) of Theorem \ref{thm:slimLagrExamples} satisfies the requirements of Proposition \ref{Prop:slimLagr}. We first describe the geometric and topological setting following \cite{BiranCorneaRigidityUniruling},\cite{SmithPencils}, and \cite{SeidelGraded}.

Let $n \geq 2.$ Consider the quadric $Q^n \subset \C P^{n+1}$ defined by \[Q^n = Q^n(q) = \{ [z_0,\ldots,z_{n+1}]\,|\, q(z) = 0 \},\] where $q: \C^{n+1} \to \C$ is a non-degenerate quadratic form. This is a symplectic manifold, endowed with the Kahler form $\om_{Q^n} = \om_{FS}|_{Q^n}.$ By Moser's argument, up to symplectomorphism $(Q^n, \om_{Q^n})$ does not depend on the choice of $q.$ Therefore we will freely choose suitable $q$ at different points. For $n=2,$ by a suitable choice of $q,$ $Q^n$ takes the form of the image of the Segre embedding $\C P^1 \times \C P^1 \hookrightarrow \C P^2,$ and is hence symplectomorphic to $(\C P^1 \times \C P^1, \om_{FS} \oplus \om_{FS}).$ The symplectic manifold $Q_n$ is monotone, with monotonicity constant $2\kappa = \frac{1}{n},$ and minimal Chern number $N_{Q^n} = n.$

The choice $q = \sum_{j=0}^n z^2_j - z^2_{n+1}$ allows us to observe that $Q^n$ admits a Lagrangian sphere $L \cong S^n$, given for this choice by $L = Q^n \cap \R P^{n+1}$ (we refer to \cite[Section 4.1]{SmithPencils} for an alternative interpretation via Lefschetz pencils). This Lagrangian submanifold is monotone, with monotonicity constant $\kappa = \frac{1}{2n},$ and minimal Maslov number $N_L = 2n.$ From now on, we separate the following two cases: $n = 2k$ - even, and $n = 2k+1$ - odd.

In the case when {\em $n = 2k$ is even,} considering $\C P^{n+1}$ with homogeneous coordinates \[ [z_0,\ldots,z_k,w_0,\ldots,w_k],\]  we can describe the quadric as the zero locus $Q^n =\{\sum_{j=0}^{k} z_j w_j = 0\}.$ Our Lagrangian sphere will correspond to \[L = \{w_j = \overline{z_j},\;  1 \leq j \leq k-1,\; z_k \in \R, w_k \in \R\}.\] 

In these new coordinates, $Q^n$ inherits the Hamiltonian $S^1$-action \[\lambda \cdot [z_0,\ldots,z_k,w_0,\ldots,w_k] = [\lambda^{-1}\cdot z_0,\ldots,\lambda^{-1}\cdot z_k,w_0,\ldots,w_k]\] for $\lambda \in S^1,$ considered as a subgroup of $\C^{\times}.$ This action is semi-free. Consider the Hamiltonian loop $\eta_n = \{(e^{2\pi i t}\cdot - )\}_{t \in [0,1]}.$ It tautologically defines a Lagrangian circle action for $L.$ 

In the case when {\em $n = 2k+1$ is odd,} write the homogeneous coordinates on $\C P^{n+1}$ as \[ [u,z_0,\ldots,z_{k},w_0,\ldots,w_{k}],\] and write $Q^n =\{\sum_{j=0}^k z_j w_j + u^2 = 0\}.$ The Lagrangian sphere corresponds to \[L = \{w_j = \overline{z_j},\;  1 \leq j \leq k-1,\; z_k \in \R, w_k \in \R, u \in \R\}.\] 

The quadric $Q^n$ inherits the Hamiltonian $S^1$-action \[\lambda \cdot [u,z_0,\ldots,z_{k},w_0,\ldots,w_{k}] = [\lambda^{-1} \cdot u,\lambda^{-2}\cdot z_0,\ldots,\lambda^{-2}\cdot z_{k},w_0,\ldots,w_{k}]\] for $\lambda \in S^1.$ This action is not semi-free, since all points with $u=0,$ outside the maximum and minimum components, have two-fold isotropy. Nevertheless, it is easy to see that the path $\eta_n = \{(e^{2\pi i t}\cdot - )\}_{t \in [0,1/2]}$ defines a Lagrangian circle action for $L,$ and we shall compute its Lagrangian Seidel element by a direct argument.

\begin{lma}
	\label{lemSeidelQn}
	Let $M = Q^n$ and $L \cong S^n$ as above. Take coefficients in $\bK = \mathbb{F}_2.$ Then 
	
	\begin{enumerate}
	
\item\label{quadric2k:absolute} If $n = 2k$ is even, the absolute Seidel invariant of $\eta_n,$ with respect to a suitable section class $\sigma,$ satisfies \[S^{abs}_{\eta_n,\sigma} = t^n [F],\] for the homology class $[F]$ of the $k$-plane \[F = \{[z_0,\ldots,z_k,w_0,\ldots,w_k]\,|\, z_j =0,\; \forall j \}.\]

\item\label{quadric2k:relative} If $n$ is even, the relative Seidel invariant of $\eta_n,$ with respect to a suitable relative section class $\sigma,$ satisfies \[S^{rel}_{\eta_n,\sigma} = t^{n} [pt], \; S_{\eta_n,\sigma}^2 = [L].\]

\item\label{quadric2k+1:relative} If $n$ is odd, the relative Seidel invariant of $\eta_n,$ with respect to a suitable relative section class $\sigma,$ satisfies \[S^{rel}_{\eta_n,\sigma} = t^{n} [pt], \; S_{\eta_n,\sigma}^2 = [L].\]

	\end{enumerate}
\end{lma}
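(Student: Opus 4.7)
The plan is to treat the three statements in parallel by analyzing the geometry of the Hamiltonian $S^1$-action on $Q^n$ at its fixed components, and then either invoke Hyvrier's formula \cite{hyvrier} directly or mimic its derivation by hand. In both the even case $n=2k$ and (after squaring) the odd case $n=2k+1$, the fixed locus of the relevant $S^1$-action on $Q^n$ consists of two components: $F_{\max}$, the linear subspace cut out by $z_0=\cdots=z_k=0$ (together with $u=0$ if $n$ is odd), and a corresponding $F_{\min}$ cut out by the vanishing of the $w$-coordinates. For the Lagrangian sphere $L$, we will check directly that $F^L_{\max}:=L\cap F_{\max}$ is a single point, which is the key geometric input needed for the computation.

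First I would establish Part (\ref{quadric2k:absolute}). For $n=2k$ even, the action $\eta_n$ is a genuine Hamiltonian loop in $\mathrm{Ham}(Q^n)$, and one checks directly from the formula $\lambda\cdot[z,w]=[\lambda^{-1}z,w]$ that it is semi-free, with normal weights at $F_{\max}$ equal to $-1$ with multiplicity $k$ (the $k+1$ directions $\delta z_j$ cut down by the single linearized constraint $\sum\delta z_j\,w_j=0$). By the Seidel--McDuff--Tolman formula for semi-free circle actions, the lowest-order term of $S^{abs}_{\eta_n,\sigma}$ with respect to the reference section class is exactly $[F_{\max}]\,t^{n}$ (up to the choice of section), and a standard positivity-of-intersection argument shows that no lower-area sections contribute, yielding the stated formula.

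Next I would turn to Part (\ref{quadric2k:relative}), for $n=2k$ even. Here I would apply Hyvrier's formula \cite{hyvrier} to compute $S^{rel}_{\eta_n,\sigma}([L])$ as the class of $F^L_{\max}$ weighted by the sum of weights on the normal bundle of $F^L_{\max}$ in $L$. The direct computation, using the explicit real equations of $L$, shows that $F^L_{\max}$ is a single point, so $[F^L_{\max}]=[pt]$; combined with the weight count (which must match the absolute calculation via the Hyvrier intertwining between the relative and absolute semi-free formulas), this yields $S^{rel}_{\eta_n,\sigma}=[pt]\,t^{n}$. The identity $S^2=[L]$ then follows from the quantum product structure on $QH(L)$: since $L\simeq S^n$ is wide and $N_L=2n>n$, one has $[pt]\ast[pt]=t^{-n}[L]$ in $QH_0(L;\Lambda_{\tmin})$ by the standard computation of the Lagrangian quantum product for spheres, and combining this with $S=[pt]\,t^n$ gives the claim.

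Finally, for Part (\ref{quadric2k+1:relative}), the action is no longer semi-free, so Hyvrier's formula does not apply off-the-shelf; this is the main obstacle. My plan is to argue directly by analyzing the moduli space of section-class pseudo-holomorphic discs in the associated Lagrangian fibration $(P_{\eta_n},N)\to(\D,S^1)$. The observation is that the nontrivial isotropy along $\{u=0\}$ only affects curves whose image lies entirely in that locus, while genuine section-class discs intersect $\{u\ne 0\}$ transversely, so the contributing moduli space is still cut out regularly and can be enumerated by the same weight bookkeeping. Equivalently, one may deform the non-semi-free action to a nearby semi-free one on a blow-up, or degenerate $Q^{2k+1}$ to a hyperplane section of $Q^{2k+2}$, and then apply the even-case computation together with an invariance-of-the-Seidel-element argument. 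Either route produces $S^{rel}_{\eta_n,\sigma}=[pt]\,t^n$, and $S^2=[L]$ follows as in the even case from the quantum product formula $[pt]\ast[pt]=t^{-n}[L]$ in $QH(S^n)$.
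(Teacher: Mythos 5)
Parts (\ref{quadric2k:absolute}) and (\ref{quadric2k:relative}) of your plan are essentially the paper's approach: (\ref{quadric2k:absolute}) is an application of McDuff--Tolman (the paper cites Theorem 1.10, using that $\codim F < 2N_M$), and for (\ref{quadric2k:relative}) the paper offers both the direct Hyvrier computation you propose and a shortcut via the Hu--Lalonde identity $S^{rel}_{\eta_n,\sigma^{rel}} = S^{abs}_{\eta_n,\sigma^{abs}} \ast [L]$, which by degree reasons collapses to $t^n[F]\circ[L] = t^n[pt]$.

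The genuine gap is in Part (\ref{quadric2k+1:relative}). You correctly observe that semi-freeness fails, but you miss the second reason Hyvrier's hypotheses break down, which the paper explicitly flags: $F^L_{\max}=\{p\}$ is an isolated point that is \emph{not} a Lagrangian submanifold of $F_{\max}$ (failure of Hyvrier's assumptions (A1),(A2)). More seriously, the step you state as an ``observation'' --- that the section-class moduli space ``is still cut out regularly and can be enumerated by the same weight bookkeeping'' --- is precisely what must be proved, and it is the technical heart of the case. The paper establishes it by reducing to a Riemann--Hilbert problem on a trivial bundle $T_p Q\cong\C^n$ over $\D$: in the affine chart $w_k=1$ it writes down the loop of linear Lagrangian boundary conditions $\tau(\theta)=A(\theta)\cdot\R^n$, computes $A(\theta)\overline{A}(\theta)^{-1}=e^{-i\theta}\Id_{\C^n}$, and then invokes Oh's partial-index theorem to conclude all partial indices are $-1$, hence Fredholm regularity of the constant section at $p$. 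Your proposed alternatives (resolve the isotropy by a blow-up, or degenerate $Q^{2k+1}$ to a hyperplane section of $Q^{2k+2}$ and invoke invariance of the Lagrangian Seidel element) are not developed and are far from automatic: the Lagrangian Seidel element is not manifestly invariant under such surgeries or degenerations, and a rigorous version of either route would require at least as much work as the Riemann--Hilbert argument.

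One further circularity to flag: for $S^2 = [L]$ in the odd case over $\F_2$, you appeal to ``the standard computation of the Lagrangian quantum product for spheres.'' But the remark following the Lemma in the paper notes that the ring identity $QH(L,\Lambda_{\tmin})\cong\Lambda_{\tmin}[X]/\langle X^2 = q\cdot 1\rangle$ over $\F_2$ in the odd case is a \emph{consequence} of precisely this Lemma (the prior literature only covers $n$ even over $\F_2$, and $\C$-coefficients via Smith's work), so deriving $S^2=[L]$ from that product formula would be circular here.
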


\medskip

\begin{rmk}
We note that Case \eqref{quadric2k+1:relative} implies that the quantum homology $QH(L,\Lambda_{\tmin})$ with $\bK = \mathbb{F}_2$ is isomorphic as a ring to \begin{equation}\label{eq:sphere QH}\Lambda_{\tmin} [X]/ \langle X^2 = q \cdot 1\rangle,\end{equation} with $1$ corresponding to $[L],$ and $X$ corresponding to $[pt].$ This is known by \cite{BiranCorneaRigidityUniruling} in the case $n$-even. For $\bK = \C,$ the identity \begin{equation}
QH(L,\Lambda_{\tmin}) \cong \Lambda_{\tmin} [X]/ \langle X^2 = c_n\, q \cdot 1\rangle\end{equation} for $c_n \in \C \setminus \{0\}$ holds by \cite{SmithPencils}. We expect that suitably extending Cases \eqref{quadric2k:relative} and \eqref{quadric2k+1:relative} to Novikov coefficients $\Lambda_{\Z,\mathrm{min}}$ with ground ring $\Z,$ for example along the lines of \cite{charetteCornea}, would extend \eqref{eq:sphere QH} to these coefficients, and hence to coefficients in every commutative ground ring. Since this would require a digression on orientations in Lagrangian Floer theory in general, and in the Lagrangian Seidel invariant in particular, this shall appear elsewhere.
\end{rmk}

\medskip

\begin{proof}[Proof of Lemma \ref{lemSeidelQn}]
Case \eqref{quadric2k:absolute} is again a direct consequence of \cite[Theorem 1.10]{McDuffTolman}, since \[\codim F = 2k+2 < 2 N_M = 2n =4k.\] 

Case \eqref{quadric2k:relative} can either be computed directly, along the lines of \cite{hyvrier}, similarly to Case \eqref{quadric2k+1:relative} below, or one can argue as follows. By \cite[Corollary 3.16]{huLalonde}, the relative and absolute Seidel elements of $\eta_n$ are related by \[S^{rel}_{\eta_n,\sigma^{rel}} = S^{abs}_{\eta_n,\sigma^{abs}} \ast [L],\] where $\ast$ denote the structure of $QH(L,\Lambda_{\tmin})$ as a module over $QH(M,\Lambda_{\tmin})$ (see \cite{BiranCorneaLagrangianQuantumHomology,Bi-Co:qrel-long}), and $\sigma^{rel}$ is the relative section class obtained from the absolute section class $\sigma^{abs}$ (see \cite[Lemma 3.13]{huLalonde}). From the latter references, and Case \eqref{quadric2k:absolute}, it is now direct to see that \[S^{abs}_{\eta_n,\sigma^{abs}} \ast [L] = t^n [F] \ast [L] = t^n [F] \circ [L] = t^n [pt].\] Indeed, as $N_L = 2n > n = \dim(L) + 2n - \dim(L) - \dim(F),$ there are no quantum correction terms in this module product.

We turn to Case \eqref{quadric2k+1:relative}. While the Hamiltonian $S^1$-action resulting from doubling $\eta_n$ is not semi-free, and $F^L_{\max} = F_{\max} \cap L = \{p =[0,0,\ldots,0,1]\}$ is not a Lagrangian submanifold of \[F_{\max} = \{[u,z_0,\ldots,z_k,w_0,\ldots,w_k]\,|\, u=0,\, z_j =0\; \forall j \}\] (these are assumptions (A1),(A2) in \cite[Theorem 1.3]{hyvrier}), we will still compute the relative Seidel by modifying the argument in \cite{hyvrier}. Indeed, following the same steps verbatim, with the addition that only terms appearing from the maximal section class may contribute to $S(\eta_n,\sigma_{\max}),$ since $N_L = 2n > n = \codim (p),$ where $p$ is considered as a $0$-dimensional submanifold of $L$ (compare \cite[Proposition 4.3]{hyvrier}), the answer readily follows, given that we prove the regularity of the constant $J$-holomorphic section at the fixed point $p \in L$ of the action. As in \cite{hyvrier}, this regularity follows from the Fredholm regularity of the following Riemann-Hilbert problem in a trivial complex vector bundle over the standard unit disk $\D \subset \C,$ with boundary conditions in a loop of linear Lagrangian subspaces: the trivial complex vector bundle is given by $T_p Q,$ with the standard complex structure, and the linear loop of Lagrangian boundary conditions is given by $L_{e^{i \theta}} = D_p\eta_n(\theta/4 \pi) T_p L,$ where $\theta \in [0,2\pi],$ and $e^{i \theta} \in \partial \D.$

Since $p$ lies in the affine chart given by $w_k = 1,$ we work therein, and see that \[(u,z_0,\ldots,z_{k-1},w_0,\ldots,w_{k-1})\] provide holomorphic coordinates on $Q$ near $p.$ Using these coordinates, we identify $T_p Q$ with $\C^n = \C \times \C^{k} \times \C^{k}$ (recalling that $n = 2k +1$), and $T_p L$ with the Lagrangian subspace $A \cdot \R^n,$ where $A \in GL(n,\C)$ is given by \[A = \id_{\C} \times A',\] \[A' = \begin{pmatrix} \id_{\C^k} & i \cdot \id_{\C^k} \\ \id_{\C^k} & -i \cdot \id_{\C^k} \end{pmatrix} \] In the above coordinates, the Lagrangian loop $\{L_{e^{i\theta}}\} _{\theta \in [0,2\pi]}$ corresponds to \[\tau(\theta) = A(\theta) \cdot \R^n,\] \[A(\theta) = e^{-i\theta/2} \cdot \id_{\C} \times e^{-i\theta} \cdot A'.\] Now we calculate \[A(\theta) \cdot \overline{A}(\theta)^{-1} = e^{-i\theta} \cdot \id_{\C^n},\] whence according to \cite[Theorem 2]{Oh-RiemannHilbert}, the partial indices of our Riemann-Hilbert problem on the trivial bundle $\C^n \times \D$ over the standard disk $\D,$ with boundary conditions on $\tau \subset \C^n \times \partial \D,$ $\tau_{e^{i\theta}} = \tau(\theta) \subset \C^n \times \{e^{i\theta}\},$ are all equal to $-1,$ and this Fredholm problem is regular.

\end{proof}

\subsection{$\bH P^n \subset Gr(2,2n+2)$} Finally, we prove that Case (\ref{thmBDbounds-case:HPn}) of Theorem \ref{thm:slimLagrExamples} satisfies the requirements of Proposition \ref{Prop:slimLagr}.

We recall that $M = Gr(2,2n+2)$ has $N_M = 2n+2,$ and $L = \bH P^n$ is simply connected, whence $N_L = 2 N_M = 4n+4.$ Since $N_L > n_L +1,$ where $n_L = \dim L = 4n,$ $L$ is wide by degree reasons. It is useful to observe that $L$ is the fixed point set of the anti-symplectic involution on $M$ given by right multiplication by the quaternion $j,$ viewing $\C^{2n+2}$ as $\bH^{n+1}.$ Now consider the Hamiltonian $S^1$-action $\eta_n$ on $M$ induced by the following Hamiltonian $S^1$-action on $\C^{2n+2}:$ \[\lambda\cdot (z_1,z_2, \ldots, z_{2n+2}) = (\lambda^{-1}z_1,z_2,\ldots, z_{2n+2}).\] This action is semi-free. Denote by $e \in \C^{2n+2}$ the standard basis vector $(1,0,\ldots,0).$ The maximum set of the action $\eta_n$ is \[F = F_{\max} = \{ E \in Gr(2,2n+2) \,: \, E \perp e \} \cong Gr(2, 2n+1).\] Its minimum set is $F_{\min} = \{E \in Gr(2,2n+2) \,: \, E \ni e \},$ and it has no other fixed points. Applying the McDuff-Tolman theorem \cite{McDuffTolman}, we obtain \[S^{abs}_{\eta_n,\sigma} = [F] t^2,\] where $|t|=1,$ and $\sigma$ is the constant section class corresponding to fixed points in $F.$ Now we compute \[S^{rel}_{\eta_n,\sigma} = S^{abs}_{\eta_n,\sigma} \ast [L] = t^2 [F] \ast [L] = t^2 [H],\] where $H \subset L$ is the subspace $H = \{[h_0,\ldots,h_n] \in \bH P^n\,:\, h_0 = 0\} \cong \bH P^{n-1}.$ Here $\ast$ denotes the module action of $QH(M)$ on $QH(L),$ and $[F] \ast [L] = [F] \circ [L] = [H]$ since by degree reasons there are no quantum corrections to the module action.

Finally, we observe that by degree reasons again, since $N_L = 4n+4 > n_L = 4n,$ we have for all $1 \leq k \leq n,$ $[H]^{\ast k} = [H] ^{\circ k}= [H_k],$ for $H = \{[h_0,\ldots,h_n] \in \bH P^n\,:\, h_j = 0,\, \forall 1 \leq j \leq k\} \cong \bH P^{n-k}.$ Note that $H_1 = H$ and \[[H]^{\ast n} = [pt].\] Moreover, \[[H]^{\ast (n+1)} = q^{-1} [L] = t^{-N_L} [L],\] where we used a degree calculation, and the fact that $[H] = t^{-2} S^{rel}_{\eta_n,\sigma}$ is invertible.

\section{Interleavings and continuation elements} \label{sec:filteredYoneda}

In this section we prove Proposition \ref{prop: Lipschitz general coarse}, by applying the philosophy of the filtered Yoneda lemma introduced in \cite{BiranCorneaS-Fukaya} for somewhat different purposes (see also \cite{FukayaImmersed}). The basic idea is that each continuation map in Floer homology is a part of an isomorphism of suitable $A_{\infty}$-modules, and as such, whenever the modules are homologically unital, it can be given, in homology, by the operator of multiplication by a suitable homology class. This class is called a {\em continuation element} in the literature (cf. \cite{AK-simplehomotopy,AS-KhovanovFloer,LekiliPascaleff-equivariant,GPS-covariantly}). Finally, considering the chain level and filtrations, it turns out beneficial to replace the continuation map by the operator of multiplication with the cycle of least action that represents the continuation element. This yields sharper bounds on the distance between Floer persistence modules, and in particular the following statement, whose proof occupies this section.

\begin{prop}\label{prop: Lipschitz general coarse}
	Let $L$ in $M$ be weakly monotone with $QH(L) \neq 0.$ Then for each $r \in \Z,$ $F\in \cH, \;G \in \cH$ there exists $c \in \R,$ such that
	\[d_{\mrm{inter}}(V_r(L,F),V(L,G)[c]) \leq \frac{1}{2}(\gamma(L,G \# \overline{F}) + \beta(L,0)),\] where $\beta(L,0)$ is the boundary depth of the pearl complex $(C(L;\cD),d_{(L;\cD)})$ computing $QH(L).$ In fact $c = - \frac{1}{2} (\beta(L,0) - c([L],G \# \overline{F}) + c([L], F \# \overline{G})).$
\end{prop}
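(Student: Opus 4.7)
The plan is to implement the method of filtered continuation elements discussed in Section~\ref{sec:intro-continuation}. The key observation is that the Floer continuation map $CF(L,F) \to CF(L,G)$ may be replaced, up to chain homotopy, by the operator of multiplication, via the Lagrangian product $\mu_{2:}$, with a suitably chosen cycle in $CF(L, G \# \overline{F})$ representing $\mathrm{PSS}([L])$. The filtration shift of such an operator is controlled by the action of the chosen cycle, which can be pushed down arbitrarily close to $c([L], G\#\overline{F})$, rather than by the Hofer norm of the continuation.

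Fix $\epsilon > 0$. By the very definition of Lagrangian spectral invariants, one selects cycles $x_- \in CF(L, G \# \overline{F}; \cD)$ and $x_+ \in CF(L, F \# \overline{G}; \cD)$ representing $\mathrm{PSS}([L])$ with actions satisfying $\cA(x_-) \leq c([L], G \# \overline{F}) + \epsilon$ and $\cA(x_+) \leq c([L], F \# \overline{G}) + \epsilon$. Using the product $\mu_{2:}$ together with the natural continuation identifications $CF(L, G \# \overline{F} \# F; \cD) \simeq CF(L, G; \cD)$ and $CF(L, F \# \overline{G} \# G; \cD) \simeq CF(L, F; \cD)$, which may be arranged to shift filtration by at most $\epsilon$ since the concatenations define the same elements of $\til{\cO}_L$ as their respective targets, I would form chain maps
\[ \Phi: CF(L, F; \cD) \to CF(L, G; \cD), \qquad \Psi: CF(L, G; \cD) \to CF(L, F; \cD), \]
whose filtration shifts are bounded by $\delta_1 := c([L], G \# \overline{F}) + 2\epsilon$ and $\delta_2 := c([L], F \# \overline{G}) + 2\epsilon$, respectively.

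Next, I would analyze the compositions $\Psi \circ \Phi$ and $\Phi \circ \Psi$ via the $A_\infty$-relation \eqref{eq:mu_2 assoc}. Each composition is chain homotopic, with action shift at most $O(\epsilon)$, to the operator of multiplication by the cycle $\mu_{2:}(x_+, x_-) \in CF(L, F \# \overline{G} \# G \# \overline{F}; \cD) \simeq CF(L, 0; \cD)$, which represents $\mathrm{PSS}([L] \ast [L]) = \mathrm{PSS}([L])$. In the pearl complex $(C(L; \cD), d_{(L;\cD)})$ computing $QH(L)$, the canonical unit cycle has action $0$, and by the Usher--Zhang normal form any other cycle representing $[L]$ is homologous to it via a chain whose action shift is bounded by the boundary depth $\beta(L, 0)$. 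Consequently, $\Psi \circ \Phi$ is chain homotopic to the tautological shift map of magnitude $\delta_1 + \delta_2 + \beta(L, 0) + O(\epsilon)$ on $CF(L, F; \cD)$, and symmetrically for $\Phi \circ \Psi$.

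Finally, passing to the Floer persistence modules $V_r(L, F)$ and $V_r(L, G)$ in each degree $r$, the pair $(\Phi_*, \Psi_*)$ yields an asymmetric interleaving with shifts $\delta_1, \delta_2$ and composition error $\beta(L, 0)$. Reinterpreting this as an interleaving between $V_r(L, F)$ and $V_r(L, G)[c]$ and symmetrizing, the optimal choice is
\[ c = -\tfrac{1}{2}\bigl(\beta(L, 0) - c([L], G\#\overline{F}) + c([L], F\#\overline{G})\bigr), \]
with interleaving constant $\delta = \tfrac{1}{2}(\delta_1 + \delta_2 + \beta(L, 0)) + O(\epsilon)$; letting $\epsilon \to 0$ gives the stated bound $\tfrac{1}{2}(\gamma(L, G\#\overline{F}) + \beta(L, 0))$. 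The main obstacle will be the precise accounting of action losses in all chain homotopies invoked above: showing that the $A_\infty$-relation \eqref{eq:mu_2 assoc}, the continuation identifications, and the chain homotopy between the two representatives of $\mathrm{PSS}([L])$ in the pearl complex can all be realized with action shifts that sum to exactly the advertised bound, rather than producing a Hofer-type defect. This is where the small-curvature construction of the perturbation data on the moduli spaces $\cR_{11:20}, \cR_{31:00}$ from Section~\ref{sec:Products}, coupled with the Usher--Zhang normal form on the pearl complex, plays the essential role.
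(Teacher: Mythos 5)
Your proposal is correct and takes essentially the same route as the paper's proof in Section~\ref{sec:filteredYoneda}: multiply by PSS representatives of $[L]$ of near-minimal action in $CF(L, G\#\overline{F})$ and $CF(L, F\#\overline{G})$, use the associativity relation from $\cR_{31:00}$ to identify the two-fold composition with multiplication by $\mu_2(y,x)$, and then absorb the defect from $\mu_2(y,x)$ not being the minimal-action unit representative $z$ into the boundary depth $\beta(L,0)$ of the pearl complex (transferred to $CF(L,0;\cD)$ via the filtered PSS map). The only point to tighten is that the bound $\cA(b_{y,x}) \leq a + b + \beta(L,0) + O(\epsilon)$ must be phrased for the complex $CF(L,0;\cD)$ rather than the pearl complex directly, which the paper handles via the $\epsilon'/2$-interleaving $\beta(L,0;\cD) \leq \beta(L,0) + \epsilon'$; you gesture at this but should make it explicit.
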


\begin{rmk}
	In fact, a closer look at the proof shows that $\beta(L,0)$ in Proposition \ref{prop: Lipschitz general coarse} can be replaced by $\beta_n(L,0),$ the maximal length of a finite bar obtained from the degree $n$ persistence module. 
\end{rmk}

\begin{rmk}
Note that in particular, Proposition \ref{prop: Lipschitz general coarse} answers Usher's question \cite{Usher-private} for all weakly monotone $L \subset M,$ with $QH(L) \neq 0,$ with, perhaps, the weaker bound $\beta(L,H) \leq A + \beta(L,0),$ for all $H \in \cH.$
\end{rmk}

Proposition \ref{prop: Lipschitz general coarse} combined with Proposition \ref{prop: beta 0 for wide}, immediately gives Theorem \ref{theorem: Lipschitz wide}.

\subsection{Filtered multiplication operators}

For $F,G \in \cH,$ set $\Delta_{F,G} := G \# \overline{F}.$ Fix $\epsilon > 0.$ 
Now take three functions $F,G,H \in \cH.$ It is clear that there is Floer perturbation data $\cD,$ such that $F^{\cD},J^{F,\cD},$  $G^{\cD},J^{G,\cD},$  $H^{\cD},J^{H,\cD},$ $\Delta_{F,G}^{\cD},J^{\Delta_{F,G},\cD},$ $\Delta_{G,H}^{\cD},J^{\Delta_{G,H},\cD}$ are regular, and the Hamiltonian terms $K^{F,\cD},K^{G,\cD},K^{H,\cD},K^{\Delta_{F,G},\cD},K^{\Delta_{G,H},\cD}$ are of $C^2$-norm $\ll \epsilon.$ We observe that if $\cD$ is suitably chosen on the moduli space of discs with $3$ and $4$ boundary punctures, with cylindrical ends marked by the above Floer data, and boundary conditions on $L,$ then, for homogeneous cycles $x \in CF_k(L,\Delta_{F,G}; \cD),$ $y \in CF_l(L,\Delta_{G,H}; \cD),$ with $a =\cA(x), \, b = \cA(y)$ and for $\eps' \ll \eps$ the maps \[\mu_2(x,-): CF_*(L,F; \cD) \to CF_{*+k -n}(L,G; \cD)[a+\eps']\]
\[\mu_2(y,-): CF_*(L,G; \cD) \to CF_{*+l - n}(L,H; \cD)[b+\eps']\] \[\mu_2(y,-): CF_{*}(L,\Delta_{F,G}; \cD) \to CF_{*+l - n}(L,\Delta_{F,H}; \cD)[b + \eps']\]
are well-defined and filtered. Moreover, by \eqref{eq:mu_2 assoc} and the discussion in Section \ref{sec:Products}, the following {\em associativity} relation holds.

\begin{lma}\label{lem: assoc} The following chain maps, obtained by post-composing with suitable inclusions,
\[\mu_2(y,\mu_2(x,-)): CF_{*}(L,F; \cD) \to CF_{*+k+l - 2n}(L,H; \cD)[a+b + 3\eps']\]
\[\mu_2(\mu_2(y,x),-): CF_{*}(L,F; \cD) \to CF_{*+k+l - 2n}(L,H; \cD)[a+b + 3\eps']\]
are well-defined, filtered, and filtered-chain-homotopic.
\end{lma}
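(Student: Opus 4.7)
The plan is to deduce the statement directly from the Floer-theoretic associativity relation \eqref{eq:mu_2 assoc}, arising from the compactification of the moduli space $\cR_{31:00}$, after choosing perturbation data whose Hamiltonian part is $C^1$-small enough that all induced energy losses are bounded by $\eps'$. The operator $T := \mu_3(y,x,-)$ will provide the filtered chain homotopy between the two compositions.

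First I would fix consistent regular perturbation data on the universal curves over $\cR_{21:00}$, $\cR_{31:00}$ and their partial compactifications, agreeing along the cylindrical ends with the already chosen data for $F, G, H, \Delta_{F,G}, \Delta_{G,H}, \Delta_{G,H} \# \Delta_{F,G}$ and $\Delta_{F,H}$, and of $C^1$-norm uniformly bounded by some $\eps_0 \ll \eps'$. With this choice, the energy estimate \eqref{eq:energy estimate} gives, for every parametric Floer trajectory contributing to $\mu_2$ or $\mu_3$, the inequality $\cA(\text{output}) \leq \sum \cA(\text{inputs}) + \eps_0$. Applying this twice yields $\cA(\mu_2(y, \mu_2(x,z))) \leq a + b + \cA(z) + 2\eps_0$. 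Similarly $\cA(\mu_2(y,x)) \leq a + b + \eps_0$ and hence $\cA(\mu_2(\mu_2(y,x), z)) \leq a + b + \cA(z) + 2\eps_0$. Thus both compositions are well-defined and filtered with shift at most $a + b + 3\eps'$, establishing the first two claims.

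The main step is to construct the filtered chain homotopy. Since $dx = 0$ and $dy = 0$, the $A_\infty$ relation \eqref{eq:mu_2 assoc}, evaluated on $(y,x,z)$ for arbitrary $z \in CF_{*}(L,F;\cD)$, collapses to
\[ \mu_1\bigl(\mu_3(y,x,z)\bigr) + \mu_3\bigl(y,x,\mu_1(z)\bigr) = \mu_2\bigl(\mu_2(y,x), z\bigr) + \mu_2\bigl(y, \mu_2(x,z)\bigr). \]
Hence $T := \mu_3(y,x,-)$ is a chain homotopy between the two maps in question. Applying the energy estimate \eqref{eq:energy estimate} once more, now to the parametric moduli space over $\cR_{31:00}$, we obtain $\cA(T(z)) \leq a + b + \cA(z) + \eps_0 < a + b + \cA(z) + 3\eps'$, so that $T$ is indeed a filtered chain homotopy of the required shift.

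The principal technical point, which I would flag as the main obstacle, is the simultaneous construction of consistent regular perturbation data on $\cR_{21:00}$, $\cR_{31:00}$ and their shared boundary strata, compatible with the cylindrical structures at the punctures and of sufficiently small $C^1$-norm to satisfy the uniform bound $\eps_0 \ll \eps'$. This is carried out recursively along the stratification, starting from the cylindrical ends and propagating via the gluing procedure described in Section \ref{sec:Products}, exploiting the fact that the model perturbation has vanishing curvature $R(K_0) = 0$ and invoking the standard construction of \cite[Chapter 9]{SeidelBook}. Once this is in place, the associativity identity holds on the nose at the chain level, and the filtered chain homotopy property of $T$ is automatic from the energy estimate.
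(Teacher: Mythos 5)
Your proposal is correct and is essentially the paper's own argument: the lemma is stated in the text as a direct consequence of the $A_\infty$-relation \eqref{eq:mu_2 assoc} (collapsed to $\mu_1\mu_3(y,x,-) + \mu_3(y,x,\mu_1(-)) = \mu_2(\mu_2(y,x),-) + \mu_2(y,\mu_2(x,-))$ when $x,y$ are cycles) combined with the energy/filtration estimate \eqref{eq:energy estimate} for $C^1$-small perturbation data over $\overline{\cR}_{21:00}$ and $\overline{\cR}_{31:00}$. Your identification of $T = \mu_3(y,x,-)$ as the filtered chain homotopy and your bookkeeping of the action shifts match the paper's discussion in Section \ref{sec:Products}.
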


\subsection{Filtered continuation elements}

First, since $QH(L)$ is a unital algebra (over the Novikov field $\Lambda_{\mathrm{mon},\bK}$), the condition $QH(L) \neq 0$ holds if and only if its unit $[L]$ does not vanish. 

Consider two homogeneous cycles $x \in CF_n(L,\Delta_{F,G}; \cD),$ $y \in CF_n(L,\Delta_{G,F}; \cD),$ with \[[x] = PSS_{\Delta_{F,G},\cD}([L]), [y] = PSS_{\Delta_{G,F},\cD}([L]),\] and action $a =\cA(x), \, b = \cA(y)$ satisfying \[c([L],\Delta_{F,G}; \cD) \leq a < c([L],\Delta_{F,G}; \cD) + \eps',\] \[c([L],\Delta_{G,F}; \cD) \leq b < c([L],\Delta_{G,F}; \cD) + \eps',\] and $\eps' \ll \eps.$ All these numbers are finite, since $[L] \neq 0.$

Now we note that $[\mu_2(y,x)] = PSS_{0,\cD}([L])$ and $[\mu_2(x,y)] = PSS_{0,\cD}([L]).$ It is easy to see that for a choice of perturbation that is sufficiently small in $C^1$-norm, $PSS_{0,\cD}([L])$ is represented by a cycle $z \in CF(L,0; \cD)$ with $\cA(z) < \epsilon'/2 \ll \epsilon.$ Indeed, $z$ can be taken to be the image under the chain-level PSS map $\Phi_{PSS}: C(L;\cD^{\text{pearl}}) \to CF(L,0; \cD)$ of a chain $w$ in the pearl complex of $L$ of filtration level $\cA(w) = 0,$ that represents $[L] \in QH_n(L).$ Moreover the persistence module $V_*(L,0; \cD)$ is $\epsilon'/2$-interleaved, by the PSS map and the PSS map in the reverse direction, with the persistence module $V_*(L;\cD^{\text{pearl}})$ determined by the pearl complex for Lagrangian quantum homology, whence \[\beta(L,0; \cD) \leq \beta(L,0) + \epsilon'.\] The interleaving bounds, as well as the bound on $\cA(z),$ follow from standard action estimates (cf. \cite[Lemma 3.11]{CorneaS} and references therein).

Finally, by the same argument as for associativity, together with standard action estimates for the PSS map, we note that the multiplication maps \[m_2(z,-): CF_*(L,F; \cD) \to CF_*(L,F; \cD)[\eps'], \; m_2(z,-): CF_*(L,G; \cD) \to CF_*(L,G; \cD)[\eps'],\] are filtered chain-homotopic to the standard inclusions, and hence induce the $\eps'$-shift maps on the respective persistence modules.
Now, by Lemma \ref{lem: assoc}, the compositions (post-composed with suitable inclusion maps) \[\mu_2(y,\mu_2(x,-)): CF_{*}(L,F; \cD) \to CF_{*}(L,F; \cD)[a+b + 3\eps'],\] \[\mu_2(y,\mu_2(x,-)): CF_{*}(L,G; \cD) \to CF_{*}(L,G; \cD)[a+b + 3\eps']\] of the maps \[\mu_2(x,-): CF_*(L,F; \cD) \to CF_{*}(L,G; \cD)[a+\eps'],\] \[\mu_2(y,-): CF_*(L,G; \cD) \to CF_{*}(L,F; \cD)[b+\eps']\] are filtered chain-homotopic to the multiplication operators \[\mu_2(\mu_2(y,x),-): CF_{*}(L,F; \cD) \to CF_{*}(L,F; \cD)[a+b + 3\eps'],\] \[\mu_2(\mu_2(y,x),-): CF_{*}(L,G; \cD) \to CF_{*}(L,G; \cD)[a+b + 3\eps'].\]

However $\mu_2(y,x) \in CF_{n}(L,0; \cD)$ satisfies $\mu_2(y,x) = z + d b_{y,x},$ for a chain $b_{y,x}$ with $\cA(b_{x,y}) \leq a+b+ \beta(L,0) + 2\epsilon',$ and similarly $\mu_2(x,y) \in CF_{n}(L,0; \cD)$ satisfies $\mu_2(x,y) = z + d b_{x,y},$ for a chain $b_{x,y}$ with $A(b_{x,y}) \leq a+b+ \beta(L,0) + 2\epsilon'.$ Therefore, composed with suitable inclusions, we obtain the fact that the compositions \[\mu_2(y,\mu_2(x,-)): CF_{*}(L,F; \cD) \to CF_{*}(L,F; \cD)[a+b + \beta(L,0) + 5\eps'],\] \[\mu_2(y,\mu_2(x,-)): CF_{*}(L,G; \cD) \to CF_{*}(L,G; \cD)[a+b + \beta(L,0) + 5\eps']\] are filtered chain-homotopic to the multiplication operators \[\mu_2(z,-): CF_{*}(L,F; \cD) \to CF_{*}(L,F; \cD)[a+b + \beta(L,0) + 5\eps'],\] \[\mu_2(z,-): CF_{*}(L,G; \cD) \to CF_{*}(L,G; \cD)[a+b + \beta(L,0) + 5\eps'],\] with the last two chain-homotopies being given by $\mu_2(b_{y,x},-), \; \mu_2(b_{x,y},-).$ This implies immediately that on the level of filtered homology, we obtain the identity of persistence module morphisms:
\[[\mu_2(y,-)] \circ [\mu_2(x,-)] = [\mu_2(z,-)] = sh_{\rho_{a,b,L} + 6\eps'} : V_{*}(L,F; \cD) \to V_{*}(L,F; \cD)[\rho_{a,b,L} + 6\eps'], \]
\[[\mu_2(x,-)] \circ [\mu_2(y,-)] = [\mu_2(z,-)] = sh_{\rho_{a,b,L} + 6\eps'} : V_{*}(L,G; \cD) \to V_{*}(L,G; \cD)[\rho_{a,b,L} + 6\eps']\] for shift \[\rho_{a,b,L} = a+b+ \beta(L,0).\]

Finally, we recall that if $(L,F)$ and $(L,G)$ are regular, then the Hamiltonian terms $K^F, K^G$ in the perturbation data can be chosen to be identically zero. By continuity of spectral invariants, we can assume that \[a < c([L],\Delta_{F,G}) + 2\eps',\] \[b < c([L],\Delta_{G,F}) + 2\eps'.\] Hence, as \[c([L],\Delta_{F,G}) + c([L],\Delta_{G,F}) = \gamma([G] [F]^{-1}),\] denoting \[\rho_{F,G,L} = \frac{1}{2} {\left( \beta(L,0) + \gamma([G] [F]^{-1}) \right)} \geq 0,\] we finally obtain the identity of persistence module morphisms:
\[[\mu_2(y,-)] \circ [\mu_2(x,-)]  = sh_{2\rho_{F,G} + 8\eps'} : V_{*}(L,F; \cD) \to V_{*}(L,F; \cD)[2\rho_{F,G,L} + 8\eps'], \]
\[[\mu_2(x,-)] \circ [\mu_2(y,-)]  = sh_{2\rho_{F,G} + 8\eps'} : V_{*}(L,G; \cD) \to V_{*}(L,G; \cD)[2\rho_{F,G,L} + 8\eps'].\]  

The multiplication operators can now be defined as maps (composed with suitable inclusions): \[\mu_2(x,-): V_*(L,F; \cD) \to V_{*}(L,G; \cD)[c([L],\Delta_{F,G}) + 4\eps']\] \[\mu_2(y,-): V_*(L,G; \cD) \to V_{*}(L,F; \cD)[c([L],\Delta_{G,F}) + 4\eps']\] 

This means that $V_*(L,F; \cD)$ and $V_*(L,G; \cD)[\sigma_{F,G,L}]$ are $(\rho_{F,G,L} + 4\eps')$-interleaved, where \[\sigma_{F,G,L} = -\frac{1}{2} \left( \beta(L,0) - c([L],\Delta_{F,G}) + c([L],\Delta_{G,F}) \right).\]

Now, when $(L,F),(L,G)$ are non-degenerate, choosing $\cD$ with $K^{(L,F)} \equiv 0, K^{(L,G)} \equiv 0,$ the persistence modules $V_*(L,F; \cD), V_*(L,G; \cD) [\sigma_{F,G,L}]$ depend only of $F,G$ (hence we omit $\cD$ from the notation), and are $(\rho_{F,G,L} + 4\eps')$-interleaved for all $\eps' > 0$ sufficiently small. We conclude that $V_*(L,F)$ and $V_*(L,G) [\sigma_{F,G,L}]$ are $\rho_{F,G,L}$-interleaved, and remind the reader that \[\rho_{F,G,L} = \frac{1}{2} {\left( \beta(L,0) + \gamma([G] [F]^{-1}) \right)}.\]  This finishes the proof.

\section{A Chekanov type theorem for the spectral norm} \label{Sect:Chekanov}

\subsection{Non-displacement}\label{subsec:non-displ}

In this section we prove Theorem \ref{thm:Chekanov}, using notions from Section \ref{sec:filteredYoneda} and the approach in \cite{Chekanov,Char,BarraudCorneaSerre,CorneaS}. Since these methods are by now quite standard, we outline the main points, leaving the details to the interested reader. We start with a slight generalization of $\hbar(J,L),$ which we defined for $J \in \cJ'_M$ so far.

\begin{df}\label{def: hbart}
Let $(M,\omega)$ be a closed symplectic manifold and $L \subset M$ a closed connected Lagrangian submanifold. For $J = \{J_t\}_{t \in [0,1]} \in \cJ_M$ we set $\hbar(J,L) > 0$ to be the minimal area of a non-constant $J_0$ or $J_1$ holomorphic disk on $L,$ or a non-constant $J_t$-holomorphic sphere in $M$ for some $t \in [0,1].$ If no such disks or curves exist, set $\hbar(J,L) = +\infty.$
\end{df}

\begin{rmk}\label{rmk: hbar and hbart}
For $J' \in \cJ'_M$ and $F \in \cH,$ consider $J = \{(\phi^t_F)_* J'\} \in \cJ_M$ It is immediate that in this case $\hbar(J,L) = \min\{\hbar(J',L),\hbar(J',(\phi^1_F)^{-1}(L))\}.$
\end{rmk}

\begin{rmk}\label{rmk: hbar is continuous}
By a simple application of Gromov compactness, $\hbar(J,L)$ is lower semi-continuous in the $C^{\infty}$-topology in the variable $J \in \cJ_M.$
\end{rmk}

We continue by observing that given $F \in \cH$ and $J' \in \cJ'_M,$ for each interval $[v,w)$ of length $0 < w-v < \hbar = \hbar(J',L),$ with $v,w \notin \Spec(L,F) \cup \Spec(L,0)$ the Floer complexes $CF(L,0;\cD)^{[v,w)},$ $CF(L,F;\cD)^{[v,w)}$ generated by capped orbits in $\til{\cO}_{\eta}(L,F; \cD)$ with actions in $[v,w)$ are well-defined, for perturbation data $\cD$ with Hamiltonian term sufficiently $C^2$-small, and almost complex structure $J^{\cD}$ being a small perturbation of $J'.$ This follows by Gromov compactness, and associated standard bubbling analysis (see \cite[Section 3.5]{CorneaS}). We denote their homologies by $HF(L,0;\cD)^{[v,w)},$ $HF(L,F;\cD)^{[v,w)}.$ We focus on the contractible class $\eta = pt$ of chords in this section. We remark that $CF(L,F;\cD)^{[v,w)}$ and $HF(L,F;\cD)^{[v,w)}$ are naturally modules over $\Lambda_{L,\Gamma_{\om},\bK}.$

Let $\phi \in \Ham(M,\om)$ be a Hamiltonian diffeomorphism with $\gamma(\phi) < \hbar(J',L)$ for $J' \in \cJ'_M.$ For the rest of this section, fix a constant $\delta > 0,$ such that $\delta \ll \hbar(J',L) - \gamma(\phi).$

There exists $H \in \cH$ with $\phi = \phi^1_H,$ and perturbation data $\cD,$ with complex structure $J^{\cD}$ being a small perturbation of $J',$  such that \[c([M], H; \cD) + c([M], \overline{H}; \cD) < \hbar(J^{\cD},L).\] Note that in view of Remark \ref{rmk: hbar is continuous}, $\hbar(J^{\cD},L)$ can be made arbitrarily close to $\hbar(J,L),$ by suitably choosing $\cD,$ and hence we may assume that $\hbar(J^{\cD},L) > \hbar(J,L) - \delta,$ and use the continuity property of spectral invariants.

We pick representatives $x \in CF(H; \cD), \; y \in CF(\overline{H}; \cD)$ with $[x] = PSS([M]),\; [y] = PSS([M])$ and $\cA_{H;\cD}(x) = c([M], H; \cD) = a,\; \cA_{H;\cD}(y) = c([M], \overline{H}; \cD) = b.$ It is convenient for the next section to choose $F = \overline{H}$ to compute Floer homology in action windows. Note that $\gamma([H]) = \gamma([H]^{-1}) = \gamma([\overline{H}]),$ and the same identity holds for the relative spectral pseudo-norm $\gamma(L,-).$

Now, applying Section \ref{sec:Products}, for suitable consistent choices of perturbation data $\cD$ for the various functions involved (see Section \ref{sec:filteredYoneda}) from the moduli space $\cR_{11:10}$ we obtain multiplication operators \[\mu_{1:1}(-:x): CF(L,0;\cD)^{[-a-b-4\delta,4\delta)} \to CF(L,\overline{H};\cD)^{[-b-3\delta,a+5\delta)},\] \[\mu_{1:1}(-:y): CF(L,\overline{H};\cD)^{[-b-3\delta,a+ 5\delta)} \to CF(L,0;\cD)^{[-2\delta,a+b+6\delta)},\] where $\delta > 0$ is sufficiently small so that \begin{equation}\label{eq:threshold} a+b + 10\delta < \hbar(J^{\cD},L).\end{equation} 
Note that these operators induce chain maps, because bubbling is prohibited by \eqref{eq:threshold}, and hence induce multiplication maps
\begin{equation}\label{eq:ext mult operators}
\begin{aligned}
 & [\mu_{1:1}(-:x)]: HF(L,0;\cD)^{[-a-b-4\delta,4\delta)} \to HF(L,\overline{H};\cD)^{[-b-3\delta,a+5\delta)},\\
 & [\mu_{1:1}(-:y)]: HF(L,\overline{H};\cD)^{[-b-3\delta,a+ 5\delta)} \to HF(L,0;\cD)^{[-2\delta,a+b+6\delta)}.
\end{aligned}
\end{equation}

Moreover, choosing pertrubation data $\cD$ with Hamiltonian terms sufficiently small relative to $\delta,$ we obtain from \eqref{eq:mu11:1 assoc}, and the bubbling threshold condition \eqref{eq:threshold}, the associativity relation: \begin{equation}\label{eq:assoc for mu_11}[\mu_{1:1}(-:y)] \circ [\mu_{1:1}(-:x)] = [\mu_{1:1}(-: \mu_{:2}(y,x))]: HF(L,0;\cD)^{[-a-b-4\delta,4\delta)} \to   HF(L,0;\cD)^{[-2\delta,a+b+6\delta)}.\end{equation}

In turn, $\mu_{:2}(y,x) \in CF(0;\cD)$ represents the class $[M]$ of the unit. Hence, if $\cD$ is chosen sufficiently small, $\mu_{:2}(y,x) = z + db_{y,x},$ where $\A_{0;\cD}(z) \ll \delta,\; \cA_{0; \cD}(b_{y,x}) \ll \delta,$ and $z = PSS_{0;\cD}(m),$ where $m$ is the unique maximum of a Morse function $f$ on $M.$ It is now clear that $[\mu_{1:1}(-: \mu_{:2}(y,x))]$ induces the interval shift map \[[\mu_{1:1}(-:z)]: HF(L,0;\cD)^{[-a-b-4\delta,4\delta)} \to   HF(L,0;\cD)^{[-\delta,a+b+7\delta)}.\]

This map is non-zero, and in fact has rank at least $\dim_{\bK} H_*(L;\bK),$ since it commutes with the PSS-type injections \begin{equation}\label{eq:pss}\begin{aligned} & pss_{L,0,\cD, [-a-b-4\delta,4\delta)}: H_*(L;\bK) \hookrightarrow HF(L,0;\cD)^{[-a-b-4\delta,4\delta)},\\ & pss_{L,0,\cD,[-\delta,a+b+7\delta)}: H_*(L;\bK) \hookrightarrow HF(L,0;\cD)^{[-\delta,a+b+7\delta)} \end{aligned}\end{equation} that are well defined because of \eqref{eq:threshold}. Moreover, these injections admit left inverses  \begin{equation}\label{eq:pss overline}\begin{aligned} & \overline{pss}_{L,0,\cD, [-a-b-4\delta,4\delta)}: HF(L,0;\cD)^{[-a-b-4\delta,4\delta)} \to  H_*(L;\bK),\\ & \overline{pss}_{L,0,\cD,[-\delta,a+b+7\delta)}: HF(L,0;\cD)^{[-\delta,a+b+7\delta)} \to  H_*(L;\bK).\end{aligned}\end{equation}

Finally by \eqref{eq:assoc for mu_11} we obtain that $\dim_{\bK} HF(L,\overline{H};\cD)^{[-b-3\delta,a+5\delta)} \geq \dim_{\bK} H_*(L;\bK).$ If $\phi^1_H(L) \cap L = \emptyset,$ then $L \cap \phi^1_{\overline{H}} L = L \cap (\phi^{1}_H)^{-1} (L) = \emptyset,$ and for sufficiently small perturbation data $\cD,$ we have $L \cap \phi^1_{{\overline{H}}^{\cD}}(L)  = \emptyset,$ in contradiction with this estimate. 

If $L \pitchfork \phi(L) = L \pitchfork \phi^1_H(L),$ and $k:=\# L \cap \phi(L) = \# L \cap \phi^{-1} (L)$ satisfies $k < l := \dim_{\bK} H_*(L,\bK),$ then one can choose perturbation data $\cD = (J^{L,\overline{H}},K^{L,\overline{H}})$ with $K^{L,\overline{H}} \equiv 0.$ Then $CF(L,\overline{H};\cD)^{[v,w)}$ will be generated over $\bK$ by the capped orbits $(x,\overline{x})\in \til{\cO}_{pt}(L,\overline{H})$ with $x \in \cO_{pt}(L,\overline{H})$ and $v \leq \A_{L:\overline{H}}(x,\overline{x}) < w.$ We set $v = - b - 3\delta,$ $w = a+ 5\delta.$

Now consider a basis $(v_1,\ldots,v_l)$ of $H_*(L,\bK),$ and let $(v'_1,\ldots,v'_l),$ respectively $(y_1,\ldots,y_l),$ be its image under $pss_{L,0,\cD, [-a-b-4\delta,4\delta)},$ respectively $[\mu_{1:1}(-: x)] \circ pss_{L,0,\cD, [-a-b-4\delta,4\delta)}.$ Consider chain representatives $\til{Y} = (\til{y}_1,\ldots,\til{y}_l)$ of $(y_1,\ldots,y_l)$ in $CF(L,\overline{H};\cD)^{[-b-3\delta,a+5\delta)}.$  

It is straightforward to deduce from $k < l$ that the chains $\til{Y}$ are contained in a $\Lambda_{0,L,\Gamma_{\om},\bK}$-submodule of $CF(L,\overline{H};\cD)^{[-b-3\delta,a+5\delta)}$ with $k$ generators over $\Lambda_{0,L,\Gamma_{\om},\bK}.$ Representing $\til{Y}$ in terms of the $k$ generators, and applying the Smith normal form\footnote{While, similarly to $\Lambda_{0,\tuniv},$ the ring $\Lambda_{0,L,\Gamma_{\om},\bK}$ is not in general a principal ideal domain, it is easy to see that each of their {\em finitely generated} ideals is principal, and that the theory of the Smith normal form of matrices applies in this setting.} over $\Lambda_{0,L,\Gamma_{\om},\bK},$ shows that there are elements $c_1,\ldots, c_l \in \Lambda_{0,L,\Gamma_{\om},\bK}$ such that \begin{equation} \label{eq:val0} \sum_{j} c_j \cdot y_j = 0,\end{equation} and moreover, at least one of $c_1,\ldots,c_l$ has valuation $0,$ so that the vector $(c_{1,0},\ldots,c_{l,0}) \in \bK^l,$ of valuation $0$ parts of $c_{1},\ldots,c_l,$ is non-zero in $\bK^l.$ Applying $\overline{pss}_{L,0,\cD,[-\delta,a+b+7\delta)} \circ [\mu_{1:1}(-:y)]$ to \eqref{eq:val0}, we obtain the relation $ \sum_{j} c_{j,0} \cdot v_j = 0,$ in contradiction to $(v_1,\ldots,v_l)$ being a basis of $H_*(L,\bK).$ Indeed it is easy to see that for each $y_j$ and each element $c_{+} \in \Lambda_{0,L,\Gamma_{\om},\bK}$ with $\val(c_{+}) > 0,$ \[\overline{pss}_{L,0,\cD,[-\delta,a+b+7\delta)} \circ [\mu_{1:1}(-:y)] (c_{+} \cdot y_j) =\overline{pss}_{L,0,\cD,[-\delta,a+b+7\delta)} (c_{+} \cdot [\mu_{1:1}(-:z)](v'_j))= 0.\]

\begin{rmk}
Another proof, using Floer homology with coefficients in $\Lambda_{0,\tuniv},$ is also possible.
\end{rmk}

\subsection{Non-degeneracy}\label{subsec:non-deg}

We continue with the setup of Section \ref{subsec:non-displ} to prove Theorem \ref{thm:spectral_norm non-deg}. As in \cite[Sections 3.8, 3.9]{CorneaS}, this statement relies on $HF(L,0; \cD)^{[v,w)}$ and $HF(L,\overline{H};\cD)^{[v,w)}$ where $v < 0 < w,$ and $w - v < \hbar(J,L),$ and $\cD$ is a perturbation datum with sufficiently small Hamiltonian part, being modules over $HF_*(L; \bK).$ This module structure is given by requiring in the Floer equation that the image one boundary marked point in $\cS_{21:00}$ over $\cR_{21:00}$ be incident with a negative gradient trajectory defined for times $t \in (-\infty,0]$  of a given Morse function $f$ on $L,$ at $t=0.$ Similarly $HF_*(L; \bK)$ is a module over itself, given by the intersection product, which can be described in Morse-theoretical terms by requiring the negative gradient flow-lines of another given Morse function $g$ to be incident at a given interior point with negative gradient half-flow-lines of $f.$  Moreover, the maps $[\mu_{1:1}(-: x)],$ $ [\mu_{1:1}(-: y)],$ $ pss_{L,0,\cD, [-a-b-4\delta,4\delta)},$ $ pss_{L,0,\cD,[-\delta,a+b+7\delta)},$ $ \overline{pss}_{L,0,\cD, [-a-b-4\delta,4\delta)},$ $ \overline{pss}_{L,0,\cD,[-\delta,a+b+7\delta)}$ from \eqref{eq:ext mult operators},\eqref{eq:pss}, and \eqref{eq:pss overline}, are module morphisms with respect to these module structures. This is shown analogously to the associativity properties of the various maps we have considered, notably using the compactified moduli space $\overline{\cR}_{21:10}$ for the case of $[\mu_{1:1}(-: x)],$ $ [\mu_{1:1}(-: y)].$

Now given an embedding $e:B_r \to M,$ as in Definition \ref{def: relative Gromov width} of $w(L;L')$ for $L' = \phi^1_H(L) \neq L,$ with $\frac{\pi r^2}{2} > w(L;L') - \epsilon',$ for $\epsilon' \ll \epsilon,$ we choose an almost complex structure $J_\epsilon \in \cJ'_M$  that coincides with $e_*(J_{st})$ on $e(B_{r'})$ where \[\frac{\pi (r')^2}{2} > w(L;L') - 2\epsilon'> w(L;L') - \epsilon.\] Note that by the formulation of the statement, we can assume that \[\gamma(H) < \min\{\hbar(J_{\epsilon},L), \hbar(J_{\epsilon}, \phi^1_H(L))\} = \hbar(J,L)\] for $J = \{J_t\},\; J_t = (\phi^t_{\overline{H}})_* J_\epsilon$ and prove that in this case $\gamma(H) \geq w(L;L') - \epsilon.$

Now we choose $f$ to have a unique minimum at $R = e(0),$ and note that by the module-morphism property above, and the obvious relation $[pt] \circ [L] = [pt] \neq 0$ in $H_*(L;\bK),$ we obtain that there exists a solution $u:\R \times [0,1] \to M$ with boundary on $L$ of energy $E(u) < a+b + 8\delta < \hbar(J^{\cD},L)$ of the Floer equation $\del_s u + J^{\cD}(u)(\del_t u - X_{H^{\cD}})(u) = 0,$ with the property that $u(0,0) = R,$ where $J^{\cD}$ can be chosen arbitrarily close to $\{(\phi^t_{\overline{H}^{\cD}})_* J_{\eps}\}.$  By the standard naturality transformation $v(s,t) = (\phi^t_{\overline{H}^{\cD}})^{-1}(u(s,t)),$ we get a $(\phi^t_{\overline{H}^{\cD}})^{-1}_* J^{\cD}$-holomorphic curve $v:\R \times [0,1] \to M$ with $u(\R \times \{0\}) \subset L,$ $u(\R \times \{1\}) \subset L'',$ with energy $E(v) = E(u) < a+b + 8 \delta < \gamma(\overline{H}) + 10 \delta,$ and constraint $v(0,0) = R,$ where $L'' = (\phi^1_{\overline{H}^{\cD}})^{-1}(L)$ can be chosen sufficiently $C^1$-close to $L'$ so that $L'' \cap e(B_{r'}) = \emptyset.$  Note that $J_{\epsilon,\cD} = \{(\phi^t_{\overline{H}^{\cD}})^{-1}_* J^{\cD}\}$ can be made arbitrarily close to $J_{\epsilon}.$ 

Choosing a sequence $\cD_k$ of perturbation data with $J_{\epsilon,\cD_k}$ converging to $J_{\epsilon},$ we obtain a sequence $v_k$ of $J_{\epsilon,\cD_k}$-holomorphic curves of energy $E(v_k) < \gamma(\overline{H}) + 10 \delta,$ with constraint $v_k(0,0) = R,$ and boundary conditions as before. Applying Gromov compactness to the sequence $\{v_k\}$ we obtain a $J_{\epsilon}$-holomorphic curve $v_{\infty}$ with boundary conditions as above, and energy \[E(v_{\infty}) \leq \gamma(\overline{H}) + 10 \delta.\]

Now, by a standard Lelong inequality argument we obtain that \[E(v_{\infty}) \geq \frac{\pi (r')^2}{2} > w(L;L') - \epsilon.\] Combining this with the bound on the energy of $v_{\infty},$ we obtain \[\gamma(\overline{H}) + 10 \delta \geq \gamma({\overline{H}}^{\cD}) + 8 \delta > w(L;L') - \epsilon.\] In the limit $\delta \to 0,$ we obtain the required inequality \[\gamma(H) = \gamma(\overline{H}) \geq w(L;L') -\epsilon.\]

The second part of Theorem \ref{thm:spectral_norm non-deg} on the non-degeneracy of $d_{L,\gamma,int}$ is proven essentially identically, with the only difference being replacing the maps $[\mu_{1:1}(-: x)],$ $ [\mu_{1:1}(-: y)]$ by the maps $[\mu_{2}(x,-)],$ $ [\mu_{2}(y,-)],$ as in Section \ref{sec:filteredYoneda}, and we leave its proof to the interested reader.

\begin{rmk}
We note, that fixing an $\om$-compatible almost complex structure $J$ on $M,$ and fixing $\hbar = \hbar(J,L)$ we could, following Chekanov \cite{ChekanovFinsler}, prove the non-degeneracy of $d_{L,\gamma,ext}$ that is implied by the first part of that theorem by considering the split product $L' = L \times L_{\hbar} \subset (P,\Om) = (M \times T^2, \om \oplus \sigma)$ of $L \subset (M,\omega)$ with a displaceable circle $L_{\hbar} \subset T^2$ in $T^2$ bounding a disk of area $\hbar,$ where $T^2$ is endowed with a symplectic form $\sigma$ of large area $A > 2\hbar.$ In this case $\gamma(H \oplus 0) \leq \gamma(H),$ for all $H \in \cH,$ by the K\"{u}nneth theorem for spectral invariants (see \cite[Theorem 5.1]{EntovPolterovich-rigid}), whence $\gamma(\phi \times \id_{T^2}) \leq \gamma(\phi)$ and consequently $d_{L',\gamma,ext}(L', (\phi \times \id_{T^2}) L') \leq d_{L,\gamma,ext}(L,\phi(L))$ for all $\phi \in \Ham(M,\om).$ Now since by definition $d_{L',\gamma,ext}(\phi(L'_1),\phi(L'_2)) = d_{L',\gamma,ext}(L'_1,L'_2)$ for all $\phi \in \Ham(P,\Om)$ and $L'_1,L'_2 \in \cO_{L'}$ the argumentation in \cite[Theorem 2, Lemma 9]{ChekanovFinsler} shows that $d_{L',\gamma,ext}$ is either non-degenerate, in which case so is $d_{L,\gamma,ext},$ or identically zero. However, the latter option does not hold, as by Theorem \ref{thm:Chekanov}, if $h \in \Ham(T^2,\sigma)$ displaces $L_{\hbar},$ then $d_{L',\gamma,ext}(L',(\id_M \times h)L') \geq \hbar = \hbar(J\times j_{T^2}, L'),$ for $j_{T^2}$ the standard complex structure on $T^2.$
\end{rmk}


\section{Sharpness in Theorem \ref{thm:slimLagrExamples}}

\subsection{Example using radially symmetric Hamiltonian functions}

The goal of this section is to give examples of Lagrangian submanifolds $L \subset M$ with large boundary depths $\overline{\beta}(M,L).$ 
The main reference for this section is \cite{stevenson}, where examples for radially symmetric Hamiltonian diffeomorphisms with large boundary depths are constructed. Here we work out variants of these examples in the relative case. By way of terminology, it will be more convenient to think of Hamiltonian chords of $H \in \cH$ from $L$ to $L$ as intersections points $L \cap (\phi^1_H)^{-1}(L);$ indeed the two relevant sets are in a bijective correspondence.

Suppose that $L \subset M$ is a weakly monotone closed Lagrangian submanifold. Let $B(2 \pi R) \subset M$ be a symplectically embedded ball of radius $\sqrt{2 R}$, so that $B \cap L$ coincides with the real part $\R^n \cap B \subset B \subset \R^{2n}$ of $B$.
Let $f : [0,R] \to \R$ be a smooth function so that its derivative of all orders vanish at $r = R$, and $f'(0)$ is not an integer multiple of $\pi$. In addition, suppose that the points $r_i$ in which $f'$ is an integer multiple of $\pi$ are isolated, and moreover $f''(r_i) \neq 0$.
Let $F: M \to \R$ be defined as follows. For $x \in \partial B(2 \pi r) \subset B$, $F(x) = f(r)$.
For $x \not\in B$, $F(x) = G(x) + f(R)$, where $G$ is a smooth extension of a $C^2$-small Morse function defined on $L$ to a small Weinstein neighborhood of $L$.
The flow of $F$ inside $B$ is $ x \mapsto e^{\sqrt{-1}f'(\frac{|x|^2}{2}) t} x$.
Hence for $x \in L \cap B$, $\phi^1_F(x) \in L$ if $f'(\frac{|x|^2}{2}) = l \pi$, where $l \in \Z$.
In every sphere $\frac{|x|^2}{2} = r_i$ there are an $S^{n-1}$ worth of intersection points $\phi^1_F(L) \cap L$. We therefore perturb $F$ there by adding an extension of a perfect Morse function on $S^{n-1}$ to a small tubular neighborhood of this sphere, for all $r_i$.
The perturbation splits each $S^{n-1}$ into two intersection points, whose Maslov indices, if one takes cappings inside $B$, are
$-l n + n, -ln + 1$ if $f''(r_i) < 0$ and $-l n + n - 1, -ln$ if $f''(r_i) > 0$.
One way to calculate the indices is by calculating the Robbin--Salamon index of the Lagrangian path $(\phi^t_F)_* T_x L$ (see a similar computation for the absolute case in \cite[Section 3.3]{oancea}) and then add $\frac{n}{2}$ which is the Robbin--Salamon index of the canonical short path from $  (\phi^t_F)_* T_x L$ to $T_{\phi^1_F(x)}L$ (see \cite{auroux} for the definition of the canonical short path).
The actions of the intersection points are approximately $f(r_i) - l \pi r_i $.
There is an additional intersection point at the origin, with action $f(0)$ and index $-ln$ with the trivial capping, if the slope $s$ of $f$ coming out of $r=0$ satisfies $l \pi < s < (l+1) \pi $. 
For every critical point of $G$ outside of $B$ with Morse index $j$, there is an intersection point of index $j$ with the trivial capping and its action is approximately $f(R)$.

Let us consider a piecewise linear function $f:[0,R] \to \R$, where $f'$ is not a multiple of $\pi,$ when defined. Let $\{r_i\}$ be the points where $f$ is not differentiable.
We follow \cite{stevenson} and consider a sequence of \begin{it} standard perturbations \end{it} $f_j$ of $f$, where we take small neighborhoods around the $r_i$'s, and pick smoothings in these neighborhoods which have strictly monotonic first derivatives.
Let $F_j$ be the Hamiltonian function induced from $f_j$ for any $j \in \N$. 
One can check (or see the proof in \cite[Claim 4.1]{stevenson}) that the barcodes in any degree $d$ of $(L,F_j)$ converge in the bottleneck distance to a unique barcode which we denote by $\cB_d(f)$.
We call numbers that are either left endpoints of bars in ${\cB}_{d}(f)$ or right endpoints in ${\cB}_{d-1}(f)$ the {\em degree $d$ actions of $f$}.
Note that by construction, continuity of barcodes (see e.g. \cite{UsherZhang}) also applies for the case of piecewise linear radially symmetric functions. More precisely,
\[ d_{\mrm{bottle}} (\cB_d(f_1), \cB_d(f_2)) \leq C \max_{r \in [0,R]} |f_1(r) - f_2(r) | ,\] where $C$ is a constant greater than $1$. Recall from Section \ref{sec:intro-continuation} that $w(L)$ is the relative Gromov width of $L.$

\begin{lma}\label{lem:lower bound on beta}
Let $M^{2n}$ be a weakly monotone symplectic manifold that is either closed, or compact with convex boundary, or open tame at infinity, and $L \subset M$ a weakly monotone closed Lagrangian submanifold. Let $A_L$ be the smallest positive symplectic area of a disc with boundary on $L$, and $N_L$ be the minimal Maslov number. Set $B_L = \frac{n}{N_L} A_L$ if $L$ is monotone, and $B_L = +\infty$ if $L$ is weakly exact. If $L$ is monotone, and $n>1,$ suppose that $N_L > n,$ and that the greatest common divisor of $N_L$ and $n$ satisfies $\gcd(N_L,n) > 1.$ Then for each $\varepsilon > 0$, there exists a Hamiltonian diffeomorphism $\phi$ with boundary depth \[\beta(L,\phi) > \min\{w(L), B_L\} - \varepsilon.\] 

\end{lma}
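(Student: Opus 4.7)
The plan is to adapt the construction of \cite{stevenson}, originally in the absolute setting on $\C P^n$, to the Lagrangian setting, using the radial Hamiltonians described just before the lemma. Fix $\varepsilon>0$. By Definition \ref{def: relative Gromov width} one can arrange a symplectic ball embedding $e \colon B_{\sqrt{2R}} \hookrightarrow M$ relative to $L$ with $\pi R > \min\{w(L),B_L\} - \varepsilon/2$, inside which we use a Hamiltonian $F(x)=f\bigl(\tfrac{|x|^2}{2}\bigr)$ extended outside by a $C^2$-small Morse function $G$ on a Weinstein neighborhood of $L$. The aim is to choose a piecewise linear $f\colon[0,R]\to\R$ so that the idealized barcode $\cB(f)$ contains a finite bar of length greater than $\min\{w(L),B_L\}-\varepsilon/2$; the continuity estimate $d_{\mrm{bottle}}(\cB_d(f_1),\cB_d(f_2))\leq C\max|f_1-f_2|$ recalled before the lemma then transfers this bound to the standard smoothings $F_j$, and we take $\phi=\phi^1_{F_j}$ for $j$ large enough.

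Recall from the preamble that a corner $r_i$ of $f$ with $f'(r_i)=l\pi$ splits under standard perturbation into two intersection points of action $f(r_i)-l\pi r_i$ and indices differing by $n-1$, while the origin contributes a single intersection of action $0$ and index $-l_0 n$ when $l_0\pi<s_1<(l_0+1)\pi$; enlarging a capping by a disc class in the monotone case shifts action by $-A_L$ and index by $-N_L$. The arithmetic hypothesis $\gcd(N_L,n)>1$ enters precisely here: it allows one to choose $l_0 = N_L/\gcd(N_L,n) < N_L$, so that $l_0 n$ is a nontrivial multiple of $N_L$. After the corresponding Novikov shift by $t^{l_0 n}$, whose associated action cost is exactly $\tfrac{l_0 n}{N_L}A_L \leq B_L$, the origin-type generator lies in the same $\Z$-degree as a Morse-minimum-type generator near filtration $f(R)$. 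Choosing the slopes of $f$ so that the action gap between these two aligned generators is approximately $\min\{w(L),B_L\}$ is then a direct computation, using $\pi R > \min\{w(L),B_L\} - \varepsilon/2$. In the weakly exact case ($B_L=+\infty$) the Novikov aspect disappears and one simply compares the origin (at action $0$) with a corner at action close to $\pi R < w(L)$.

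The main obstacle will be certifying that the two distinguished generators actually pair up into a finite bar in $\cB(f)$, rather than matching with some auxiliary generator to yield a shorter bar. Following the strategy of \cite[Claim 4.1]{stevenson}, the Morse generators of $G$ can be placed in an action window of width $O(\|G\|_{C^2}) < \varepsilon/2$ near $f(R)$, so amongst themselves they produce only very short bars; similarly any additional corners one has introduced can be made to give only short bars by an appropriate choice of slopes. A careful count of the generators in each degree modulo $N_L$, weighed against the known total homology $HF(L) \cong QH(L)$ (which is $H_*(L)\otimes\Lambda$ in the wide subcase that will be invoked in applications), then forces the two distinguished generators to be matched to one another in the Usher--Zhang normal form recalled in Section \ref{Sec:Floer-persistence}. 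The arithmetic condition $\gcd(N_L,n)>1$ is exactly what makes this bookkeeping come out right, by aligning the degrees of the origin-type and boundary-type generators modulo $N_L$ so that no other generator can absorb them.
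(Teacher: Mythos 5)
The proposal takes the same general Stevenson-style approach as the paper (piecewise linear radial Hamiltonians in a relative ball embedding, continuity of barcodes, standard perturbations), but it diverges in the mechanism by which $\gcd(N_L,n)>1$ is used, and that mechanism as written contains a genuine error.

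Your claim that one can choose $l_0 = N_L/\gcd(N_L,n)$ and that the resulting Novikov shift has ``action cost $\tfrac{l_0 n}{N_L}A_L \leq B_L$'' is false. We have
\[
\frac{l_0 n}{N_L} A_L \;=\; \frac{n}{\gcd(N_L,n)}\, A_L ,
\]
and since $N_L > n$ forces $\gcd(N_L,n) \leq n < N_L$, this quantity is \emph{strictly greater} than $B_L = \tfrac{n}{N_L}A_L$. So the degree-aligned recapping costs more action than the bound you are trying to establish, and the comparison between the origin-type generator and the boundary-type generator cannot give a bar of length close to $B_L$ this way. In the paper the hypothesis $\gcd(N_L,n)>1$ plays a quite different and weaker role: it is used to rule out solvability of the Diophantine equations $-ln + N_L k = 1$ (and to isolate $-ln + N_L k = 2$ for separate treatment), thereby controlling which kinks can contribute a right endpoint in degree $n+1$. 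The long bar is then produced by a homotopy $\{f_t\}$ that ``folds'' the graph of a one-kink profile, and the proof tracks the left and right endpoints of a specific bar across the homotopy, using continuity of barcodes and the Diophantine exclusion to show the right endpoint is the one coming from $r(t)$ throughout and the left endpoint stays at $\pi r_2$. None of that tracking is present in your sketch; the sentence ``a careful count of the generators in each degree modulo $N_L$ ... forces the two distinguished generators to be matched'' is not justified, and I do not see that it can be: there are infinitely many recapped generators, several in each $\Z$-degree, and the matching in the Usher--Zhang normal form is not determined by degree counting alone. Finally, your proposal does not address the case $n=1$, where the arithmetic hypothesis is not available (the lemma explicitly restricts $\gcd(N_L,n)>1$ to $n>1$), and the paper handles $L=\R P^1 \subset S^2$ with an entirely separate radial profile computed by hand.
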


\begin{proof}
	
We first assume that $L$ is monotone, that $n>1,$ and that $M$ is closed. Then we explain how to modify this lemma to the other situations. 

From the definition of $w(L)$, there exists an embedded symplectic ball $B(2 \pi R) \subset M$ of radius $\sqrt{2R}$ with $\pi R$ arbitrarily close to $w(L)$, so that $B \cap L$ coincides with the real part $\R^n \cap B \subset B \subset \R^{2n}$ of $B$. In case $w(L) > A_L \frac{n}{N_L}$ we consider a smaller ball satisfying that $B \cap L$ coincides with $\R^n \cap B$ and so that $\pi R = A_L \frac{n}{N_L}$.

Let $f : [0,R] \to \R$ be a piecewise linear function so that $f'$ is not a multiple of $\pi$. Let $\{r_i\}$ be the points where $f$ is not differentiable. We call $r_i$ a concave up kink if $f'$ increases right after $r_i$, and a concave down kink if $f'$ decreases right after $r_i$.
After small standard perturbations in neighborhoods of the $r_i$'s, there are two intersection points in $\phi^1_F(L) \cap L$ for every $l$ so that $\pi l$ is between the slope of $f$ before $r_i$ and the slope of $f$ after $r_i$. The actions of these intersection points converge to actions in the barcode of $f$.

Let $R-\varepsilon < r_1 < r_2 < r_3 < R$.
Let $f_0$ be the piecewise linear function connecting the points $(0,-m_0 r_2),(r_2,0),(r_3,\pi R), (R, \pi R + m_1 (R - r_3)) $,
where $-1 << m_0 < 0 < m_1 << 1$.

Let $r(t) = (1-t) r_2 + t r_1$.

Let $f_t$ (see Figure \ref{HamiltonianFig1}) be the piecewise linear function connecting
\[(0,-m_0 r(t) + \pi R t) , (r(t),\pi R t), (r_2,0), (r_3,\pi R), (R, \pi R + m_1 (R - r_3)).\]
This is a homotopy between $f_0$ and $f_1$ which is a piecewise linear function connecting the points
\[(0, -m_0 r_1 + \pi R), (r_1,\pi R), (r_2,0), (r_3,\pi R), (R, \pi R + m_1 (R - r_3)).\]

\begin{figure}
	
	\includegraphics[width=\textwidth]{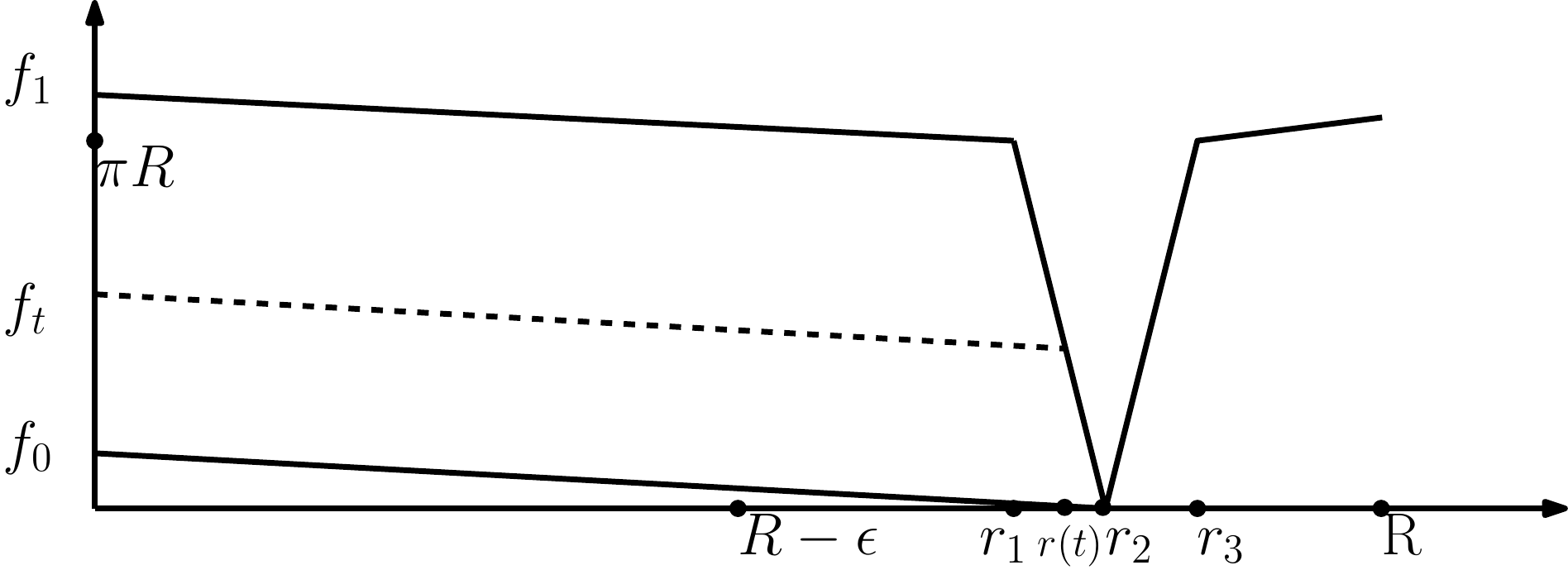}
	\caption{The homotopy $\{f_t\}$}
	\label{HamiltonianFig1}
	
\end{figure}

Let $r = r_2$ be the concave up kink of $f = f_t$. The action $A = f(r) - \pi r l + k A_L$, appears with the degrees \\
$d_1 = -l n + N_L k$,\\
$d_2 = -l n + n - 1 + N_L k$,\\
for any $l \in \Z$ with $f'(r-\epsilon) < \pi l < f'(r+\epsilon)$ (for $\epsilon$ small enough), where $N_L k$ is the Maslov index of the relative disc in the recapping.
Note that a concave up kink cannot have a degree $n+1$ action appearing as a right endpoint of a bar in the barcode.
Indeed, the first possibility is 
\[ -ln + N_L k = n + 1,\]
i.e.,
\[ n (-1 - l) + N_L k = 1,\]
which is impossible since $gcd(N_L,n) > 1$.
The second possibility is
\[ -ln + n -1 + N_L k = n + 1,\]
i.e.,
\[ -ln + N_L k = 2.\]
Denote this degree $n+1$ action by $c^{n+1}$.
Our next goal is to show that for each $t \in [0,1]$, $c^{n+1}$ appears only as a left endpoint of a bar in the barcode. 
For this we follow the proof of Lemma 4.9 in \cite{stevenson}, and we consider the homotopy $h_\tau$, where $h_0$ is a piecewise linear function with only one kink at $r=r_3$, where $h_0(r_3) = f_t(r_3), h_0(r_2) = f_t(r_2),$ and $h_0(R) = f_t(R)$, and the homotopy can be described by taking $h_0$'s graph and folding it along the kinks at $r=r_2$ and at $r=r(t)$ until $f_t$'s graph is created (see Figure \ref{HamiltonianFig2} and Figure \ref{HamiltonianFig3}).
More precisely, let $h_{\frac{1}{2}}$ be a piecewise linear function with two kinks at $r=r_3$, and at $r=r_2$, where $h_{\frac{1}{2}}(R) = f_t(R), h_{\frac{1}{2}}(r_3) = f_t(r_3), h_{\frac{1}{2}}(r_2) = f_t(r_2), h_{\frac{1}{2}}(r(t)) = f_t(r(t))$,
and let $h_1 = f_t$, and define the homotopy
\[ h_\tau = \left\{ \begin{matrix} (1-2\tau) h_0 + 2\tau h_{\frac{1}{2}} &, & 0 \leq \tau \leq \frac{1}{2} \\ (2-2\tau) h_{\frac{1}{2}} + (2\tau - 1) h_1 &, & \frac{1}{2} \leq \tau \leq 1 \end{matrix} \right. \]
\begin{figure}
	
	\includegraphics[width=\textwidth]{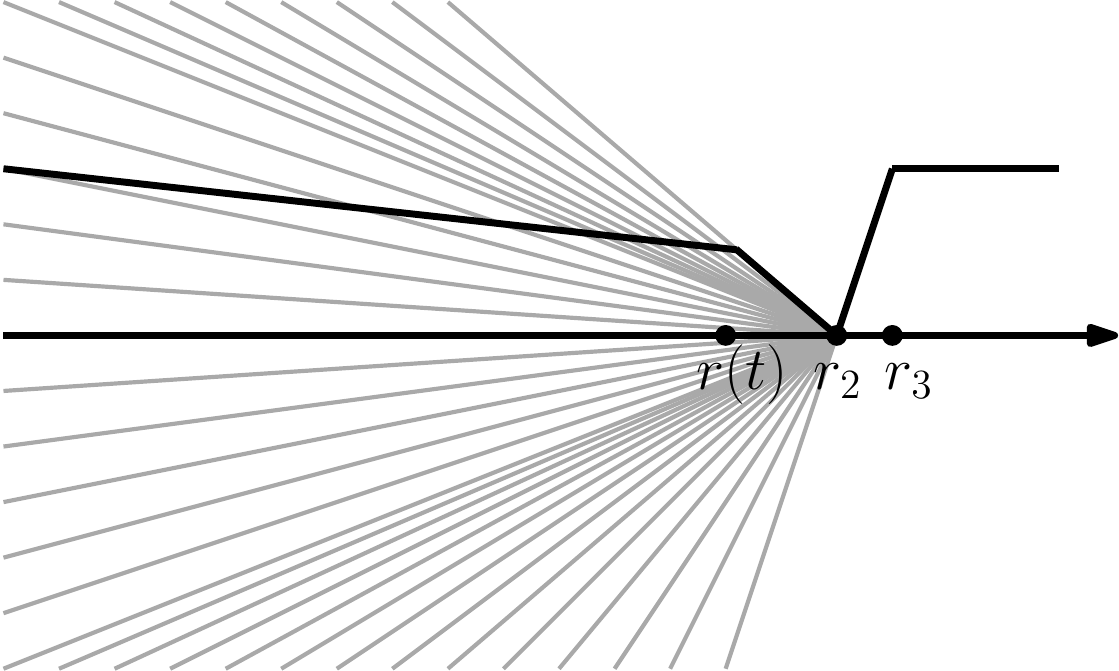}
	\caption{The first part of the homotopy $\{h_\tau\}$, where at $\tau=0$ the only kink is at $r=r_3$}
	\label{HamiltonianFig2}
	
\end{figure}

\begin{figure}
	
	\includegraphics[width=\textwidth]{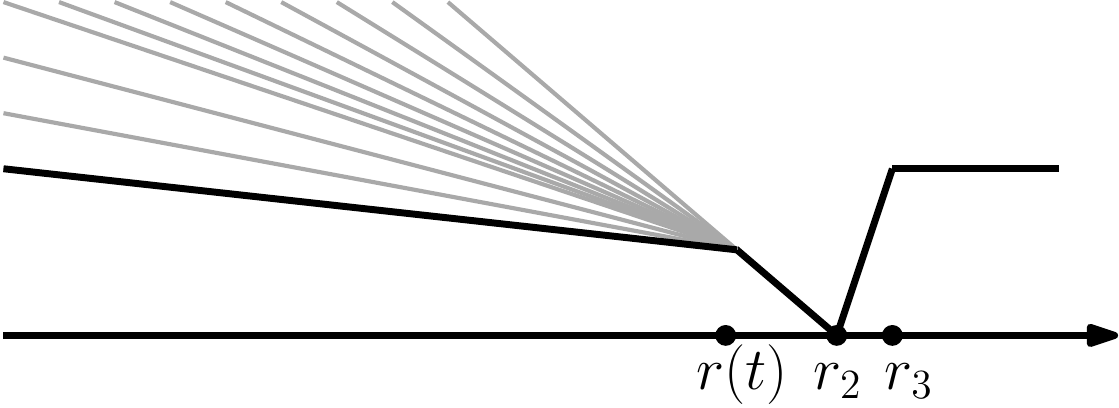}
	\caption{The second part of the homotopy $\{h_\tau\}$ whose end at $\tau=1$ is $f_t$'s graph}
	\label{HamiltonianFig3}
	
\end{figure}

Let $s(\tau)$ denote the slope of the line coming from $r=0$ in $h_\tau$. The time $\tau_0 \in [0,\frac{1}{2}]$ for which $s(\tau_0) = l \pi$, is the first time where our $n+1$ action $c^{n+1}$ exists in the spectrum of $h_\tau$. 
Hence from continuity of barcodes, one gets that right after $\tau=\tau_0$ there is a degree $n$ action or a degree $n+2$ action that converges to $c^{n+1}$ as $\tau \to \tau_0$.
Note that the only actions changing with time are the actions coming from $r=0$. Since $h_\tau(0) = -s(\tau) r_2$, the $n+2$ action coming from $r=0$ right after $\tau_0$, i.e. when $s(\tau) < l \pi$, is equal to $c^{n+1}$.
Hence for $\tau$ right after $\tau_0$, $c^{n+1}$ is a left endpoint of a bar in the barcode.
Since along the entire homotopy $h_\tau$ the only actions changing with time are the ones coming from $r=0$, there is no degree $n+1$ action changing with time. The only way that $c^{n+1}$ stops being a left endpoint, is that a degree $n+2$ action changing with time $c^{n+2}(\tau)$ is part of a bar $[c^{n+1},c^{n+2}(\tau))$ and there in some time $\tau = \tau_1$, one has $c^{n+2}(\tau_1) = c^{n+1}$. If $c^{n+2}(\tau)$ keeps decreasing right after $\tau_1$, then this is impossible because there must be a left or a right endpoint to the bar that includes $c^{n+2}(\tau)$ and hence a new bar is formed, but again from continuity of barcodes, there must be a degree $n+1$ or a degree $n+3$ action that equals $c^{n+1}$, as $\tau \to \tau_1$. 
There are no candidates for these actions and hence we get a contradiction. Thus $c^{n+1}$ remains a left endpoint in the entire homotopy, and hence it is also a left endpoint in the barcode of $f_t$. 

Our next step is to consider the degree $n+1$ actions coming from the concave down kinks.
If $r$ is a concave down kink one has the action $A = f(r) - \pi r l + k A_L$ with the degrees\\
$d_1 = -l n + 1+ N_L k$,\\
$d_2 = -l n + n + N_L k$,\\
for any $l \in \Z$ that satisfies $f'(r-\epsilon) > \pi l > f'(r+\epsilon)$.
The only possibility for a degree $n+1$ action comes from the first option. Indeed, if
\[ -ln + n + N_L = n + 1,\]
then
\[ -ln + N_L k = 1,\]
which is impossible since $gcd(N_L,n) > 1$.

Since the slope of $f_t$ coming out of the origin for each $t$ is negative and close to zero, the action $A = f_t(0) + k A_L$ is the only one coming from the origin with the degree $d = n + N_L k$.

Using the above observations, the degree $n+1$ actions that appear as right endpoints can only come from the concave down kinks at $r=r_3$, and at $r=r(t)$, and from the external points.
We wish to show that all degree $n+1$ actions not coming from $r=r(t)$ satisfy that their actions are greater than $2\pi R$.

In $r = r_3$, the slopes before and after the kink are positive, so one has $l > 0$ for each $l$ satisfying $f'(r_3-\epsilon) > \pi l > f'(r_3 + \epsilon)$. One has a degree $n+1$ action if 
\[ -l n + 1 + N_L k = n + 1,\]
i.e.,
\[ k = \frac{l n+n}{N_L} .\]
After plugging this in the formula for the action one gets
\begin{align*}
 A &=  \pi R - \pi r l +  \frac{l n + n}{ N_L} A_L \\
 &\geq \pi R - \pi r l + \frac{l n + n}{ N_L} \pi R \frac{N_L}{n} \\
 &= \pi R - \pi r l + \pi R (l + 1) \\
 &=  2 \pi R + l \pi (R - r) \\
 &> 2 \pi R .
\end{align*}

For an external point, i.e. a critical point of $G$ with Morse index $j$,
since $j \leq n$, one has $k \geq 1$, and $A > \pi R + k A_L \geq \pi R + k \pi R \frac{N_L}{n} > 2 \pi R$.

Consider the intersection point of degree $n+1$ one gets from $f_t$ in the point $r=r(t)$ with $l=-1$ and the trivial capping.
Its action is $\pi (Rt + r(t))$.

When $t$ tends to $0$, $r(t) \to r_2$, and the action tends to $\pi r_2$.
Note that there are no other degree $n+1$ actions with action close to $\pi r_2$. Since this action does not appear in the barcode of $f_0$, from continuity of barcodes one gets that the action must cancel out with a degree $n+2$ or a degree $n$ action that tends to the same action as $t \to 0$ (to make a bar of length $0$). One can eliminate the possibility that this action does not come from $r=r_2$ by doing a small perturbation.
After maybe another perturbation, one can make sure that $\pi r_2$ and $A_L$ are rationally independent, and hence the only possibility for the action $\pi r_2$ is the action coming from $r=r_2$ of degree $n$, with $l=-1$, and $k=0$. Hence, when $t$ is sufficiently small, there is a bar $(\pi r_2,\pi(Rt + r(t)))$.

Since all other degree $n+1$ actions appearing as right endpoints are greater than $2 \pi R$, the right endpoint of the bar stays $\pi (Rt + r(t))$ throughout the homotopy.

The left endpoint of the bar can change only if a degree $n$ action that changes with time, crosses the value $\pi r_2$. 
The only actions changing with time are those coming from $r=r(t)$ or from $r=0$.
The degree $n$ actions coming from $r=r(t)$ satisfy $-ln + n + N_L k = n$, i.e. $k=l \frac{n}{N_L}$, and since from looking at the slopes one has $l < 0$, their actions satisfy
\begin{align*}
A &= \pi R t - l \pi r(t) + l \frac{n}{N_L} A_L \\
& \leq \pi R t - l \pi r(t) + l \frac{n}{N_L} \pi R \frac{N_L}{n} \\
&= \pi R t + l (\pi R - \pi r(t))\\
&\leq \pi R t - (\pi R - \pi r(t)) \\
&= \pi R (t - 1) + \pi r(t) \leq \pi r(t) \\ 
&< \pi r_2 .
\end{align*} 
Hence the only possibility for a degree $n$ action changing with time that crosses the action $\pi r_2$ is the degree $n$ action coming from $r=0$. Its action is $\pi R t + \varepsilon$.

We get that in $f_1$ one must has a bar of length greater than or equal to
\[ (\pi R + \pi r_1) - (\pi R + \epsilon) = \pi r_1 - \varepsilon \geq \pi R - 2\varepsilon .\]
Hence $\beta \geq \pi R - 2 \varepsilon$.
Since one can choose $\pi R > w(L) - \varepsilon$ if $w(L) < A_L \frac{n}{N_L}$, or $\pi R = A_L \frac{n}{N_L}$ otherwise, one gets an example where $\beta \geq \min(w(L),A_L \frac{n}{N_L}) - 3 \varepsilon$ for any $\varepsilon > 0$.

In the case when $L$ is weakly exact, one proceeds precisely as above, except that the arguments are made considerably easier by the absence of recapping. When the symplectic manifold $M$ has convex boundary, or is tame at infinity, Lagrangian Floer theory of the closed Lagrangian submanifold $L$ is well-defined, and the same arguments for it go through.

Finally, in the case $n=1,$ the Lagrangian submanifold $L$ is a simple closed curve in a surface. In case the surface is symplectically aspherical, the above proof applies verbatim. Otherwise $L$ must be separating, one component of the complement being a disk of area $A_L.$ In this case its minimal Maslov number is $N_L = 2.$ If the other component is not a disk, it is easy to see that the quantum homology of $L$ vanishes, hence the spectral norm is not well-defined, however the boundary depth still is. For the constant Hamiltonian $H \equiv 0,$ it is easy to calculate in this case that $\beta(L,H) = A_L.$ 

This brings us to the case when $L = \R P^1,$ $M = \C P^1 = S^2,$ and $L$ divides $S^2$ into two disks of equal areas $A=A_L.$ We normalize the symplectic form on $S^2$ to have $A=1/2.$ Section \ref{sect:combinatorial} explains a combinatorial approach to the construction of a Hamiltonian deformation $L' = \phi^1_H(L)$ of $L$ with boundary depth $\beta(L,H)> 1/2 - \epsilon.$ However, in this section we explain a different description of this example.

Consider the radial function $f(r)$ as before, but with the profile depicted in Figure \ref{fig:new-radial}.  

\begin{figure}
	
	\includegraphics[width=\textwidth]{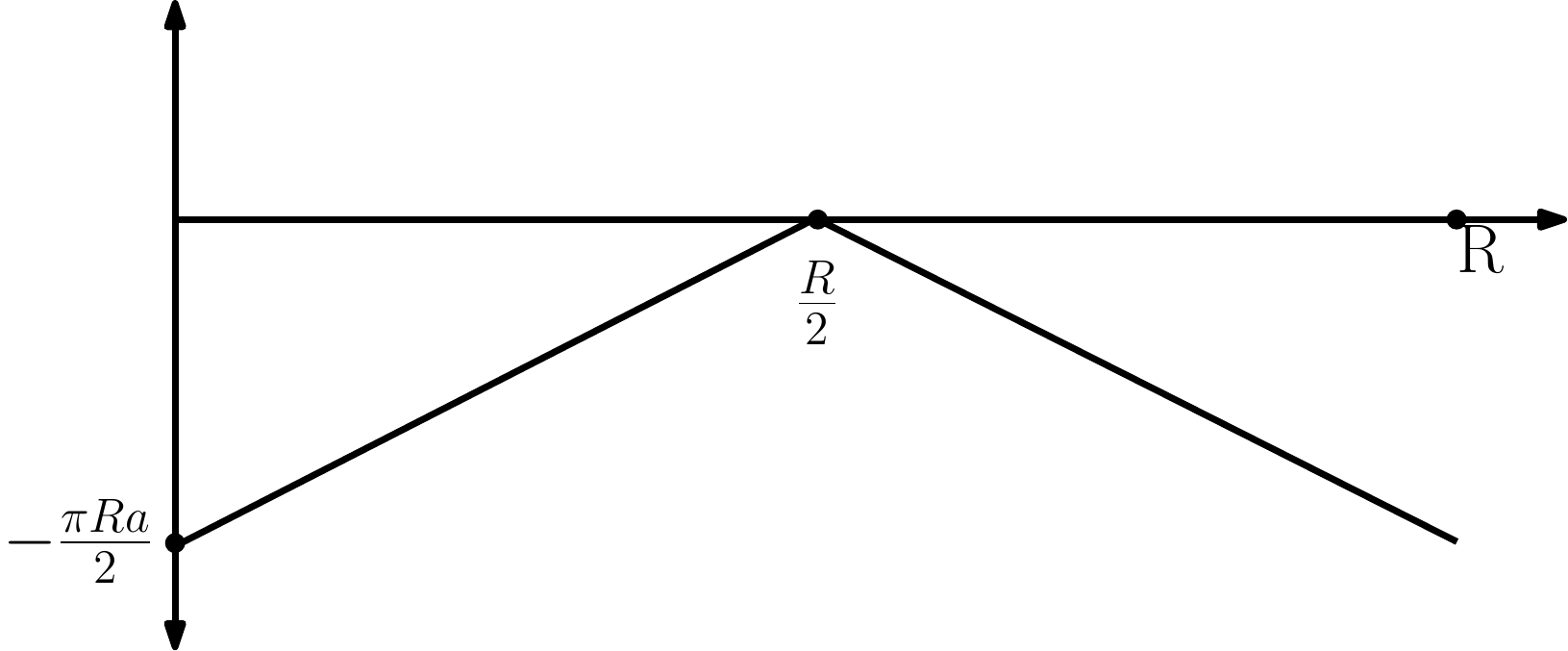}
	\caption{The new profile of the radial function $f(r)$}
	\label{fig:new-radial}
	
\end{figure}

More precisely choose $R>0$ with $\pi R = A.$ Then the graph of $f(r)$ joins the points \[(0,-\pi R a/2),(R/2,0),(R,-\pi Ra/2)\] for $0<a<1.$ We observe that this function, considered on the open ball of capacity $A,$ embedded as described in Section \ref{subsec:equator-sphere} relative to $L,$ extends to a smooth function on $S^2.$ However, a more general approach would be to slowly and monotonically increase the slope to $0$ near $r = R.$

A suitable non-degenerate small perturbation $H$ of $f(r)$ gives one generator of $CF(L,H)$ at $r=0$ with index $0$ and action $-\pi R a/2$ and consequently recapped generators with index $kN_L$ and action $-\pi R a/2 + kA,$ two generators at $r=R/2$ with index $d = 1$ and action $0$ hence recapped generators with index $1 + kN_L$ and action $k A,$ and finally one generator at $r=R,$ of index $0$ and action $-\pi R a/2,$ hence recapped generators of index $kN_L$ and action $-\pi R a/2 + kA.$ Since $N_L = 2$ we obtain that there are precisely two generators in each index $r \in \Z.$ This implies for example that either the barcode $\cB_1(L,H)$ in degree $1$ or $\cB_0(L,H)$ in degree $0$ has finite bars. In either case, one can check that the barcode that has finite bars consists of a unique finite bar, and a unique infinite bar, however this is not important for this argument. The index $1$ generators have action $0,$ the index $2$ generators have action $-\pi Ra/2 + A,$ and the index $0$ generators have action $(-\pi Ra/2).$  If $\cB_0(L,H)$ has finite bars, then its finite bars have length at least $\pi R a/2,$ and if $\cB_1(L,H)$ has finite bars, then its finite bars have length at least $-\pi Ra/2 + A.$ Hence $\beta(L,H) \geq \min \{\pi Ra/2, -\pi Ra/2+A\} = \pi Ra/2,$ by the condition that $\pi R = A.$ Fixing $\epsilon > 0,$ and taking $a$ sufficiently close to $1,$ we obtain an example with $\beta(L,H) > \frac{A}{2} - \epsilon.$

\end{proof}

\subsection{Example using combinatorial Floer homology}\label{sect:combinatorial}

In this section we give another example for a Lagrangian submanifold of $S^2$ with boundary depth arbitrarily close to $\frac{1}{4}$. For this we follow a combinatorial construction of Floer homology on surfaces described, and proven to be equivalent to Floer homology in the framework of Lagrangian intersections, in \cite{CombinatorialFloer}. For a detailed review of Floer theory in the framework of Lagrangian intersections we refer to \cite{OhBook}, and the references therein. In particular, see \cite[Section 14.4]{OhBook} where the equivalence of the Floer theory for $L$ with Hamiltonian perturbation $H \in \cH$ as described in Section \ref{Sect:prelim} and the Floer theory in the framework of Lagrangian intersections of the pair $L,$ $\phi^1_H(L),$ with suitable additional choices, is shown. The boundary depth does not depend on these choices, and hence can equivalently be computed in either framework.
While in \cite{CombinatorialFloer} the equivalence of the combinatorial construction and the Floer construction was not fully proven for the case of equators on $S^2,$ all indications are that this can be carried out in a quite straightforward way\footnote{We thank Michael Khanevsky for explaining this to us.}.
Since in this section we wish to discuss empirically an additional example, obtained directly from computer experiments carried out by AK, we leave the analytic details to the interested reader, and use the combinatorial Floer homology as it is expected to work.

Let $L_{std} \subset S^2$ be the standard equator. Let $L \subset S^2$ be the equator described in Figure \ref{equatorFig}. The order of the intersections of $L$ and $L_{std}$ are described via the permutation $(1,4,3,2)$. From the fact that each equator divide $S^2$ into two components of area $\frac{1}{2}$ we have the following.
\[| A_1 | + |A_2| + |A_3| = \frac{1}{2},\]
\[ |A_4| + |A_5| + |A_6| = \frac{1}{2},\]
\[ |A_2| + |A_5| + |A_6| = \frac{1}{2},\]
\[ |A_1| + |A_3| + |A_4| = \frac{1}{2}.\]
One can check that for every choice of $|A_1|,\ldots,|A_6| \in \R^+$ that satisfy these constraints, one can find an appropriate $L$ with the required areas.

We wish to calculate the boundary depth of $CF(L_{std},L)$. We will work with the combinatorial definition of Floer homology described in \cite{CombinatorialFloer}. Note that for the calculation of the Floer complex we only need to know the permutation of the intersections, and the areas of the components of $S^2 \setminus (L \cup L_{std})$.

\begin{figure}
	
	\includegraphics[width=0.5\textwidth]{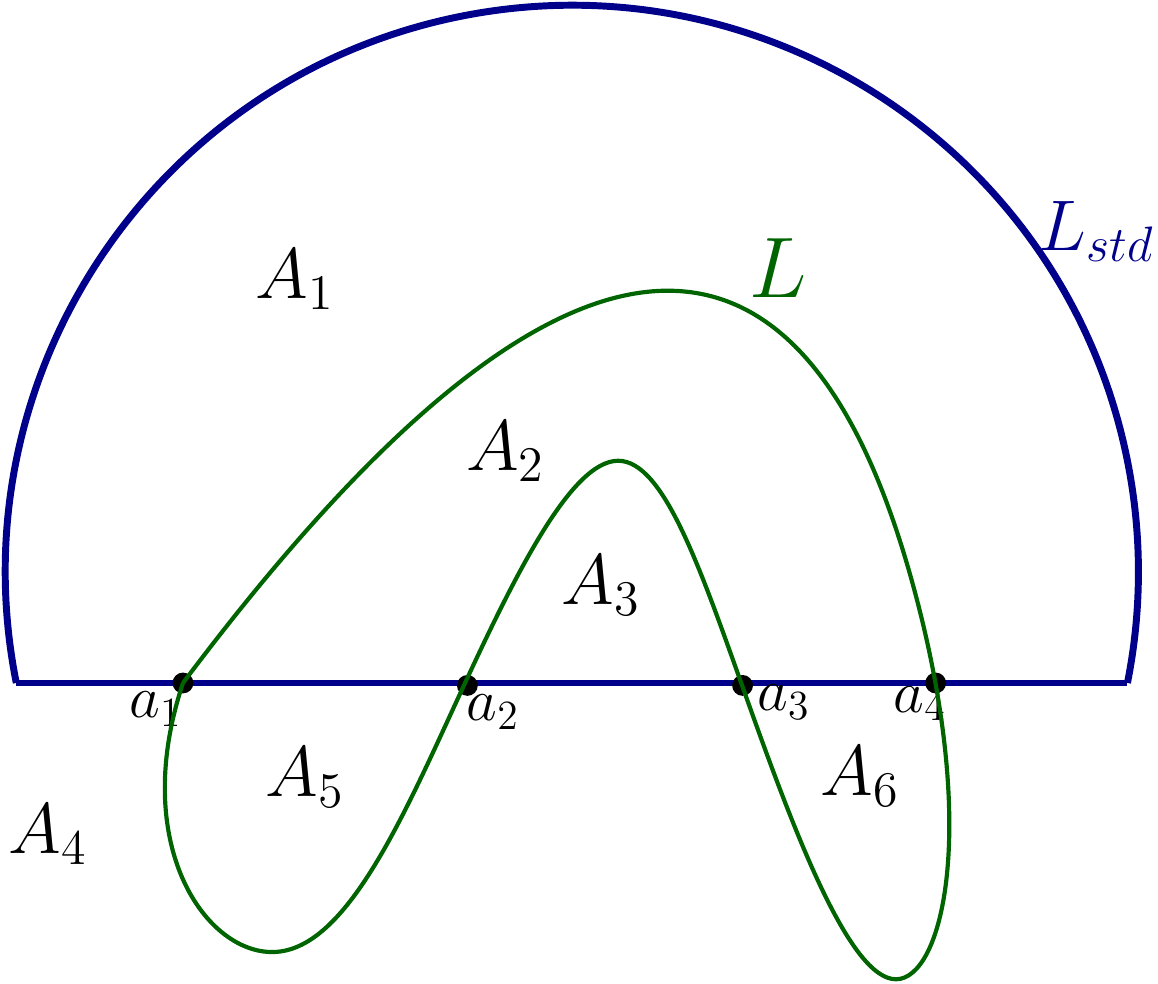}
	\caption{The blue line is the standard equator, and the green line represents the equator $L$. The intersections are the points $a_1,\ldots,a_4$, and $A_1,\ldots,A_6$ are the areas of connected components of $S^2 \setminus (L \cup L_{std})$. Note that the sum of the areas inside (or outside) each equator must be $\frac{1}{2}$.  }
	\label{equatorFig}
	
\end{figure}

The combinatorial Floer complex is generated by $a_1,\ldots,a_4$ over the Novikov ring with coefficients in $\mathbb{F}_2 = \Z / 2\Z$, where the quantum variable $q$ has index $2$ and action $\frac{1}{2}$.
Note that the definition in \cite{CombinatorialFloer} describes how to calculate the action difference, and the boundary operator $\del$, but there is freedom in the choice of cappings and indices. One can check that if there is a lune from $a_i$ to $a_j$ then the parity of $i$ and the parity of $j$ are different. This gives us the ability to choose the indices of $a_1,a_3$ to be $0$, and those of $a_2,a_4$ to be $1$.
Then for odd $i$, $\partial a_i = \sum q a_j$, where the sum runs over all even $j$'s that satsify that there is exactly one lune from $a_i$ to $a_j$ (the other possibilities are zero lunes or two lunes). Similarly for even $i$,  $\partial a_i = \sum a_j$, where the sum runs over all odd $j$'s that satisfy that there is exactly one lune from $a_i$ to $a_j$.

One could go over all the possibilities for lunes in our example and get the following.
\[ \partial a_1 = 0, \]
\[ \partial a_2 = a_1 + a_3, \]
\[ \partial a_3 = 0, \]
\[ \partial a_4 = a_1 + a_3. \]

The boundary depth is the lengh of the bar coming from the boundary $a_1+a_3$,
which is the action difference between the minimum between the actions of $a_2$ and $a_4$, and the action of $a_1+a_3$ which is the maximum between the actions of $a_1,a_3$.
We get that the boundary depth is
\[\min(a_2-a_1,a_2-a_3,a_4-a_1,a_4-a_3) .\]
Since the action difference equals the area of the lune between the intersection points, we get that the boundary depth is
\[ \min(|A_5|,|A_3|,|A_1|,|A_6|).\]
One can check that under our constraints for $A_1,\ldots,A_6$, an upper bound for the boundary depth is $\frac{1}{4}$, and if we put
\[ |A_1| = |A_3| = |A_5| = |A_6| = \frac{1}{4} - \epsilon,\]
\[ |A_2| = |A_4| = 2 \epsilon, \]
then all the constraints are satisfied, and the boundary depth is $\frac{1}{4} - \epsilon$. 

Finally, we remark that roughly the same example gives a boundary depth arbitrarily close to $\frac{1}{2}$ on the annulus $D^* S^1$, where we normalize the total area to be 1.
Indeed, one can take the sphere with the Lagrangian submanifolds $L_{std}$ and $L$, and then place two holes inside the areas $A_1$ and $A_5$ respectively.
The combinatorics of the lunes stays the same, except that one should remove lunes that pass through $A_1$ or $A_5$.
One gets that the boundaries change to
\[ \partial a_2 = \partial a_4 = a_3,\]
and still
\[\partial a_1 = \partial a_3 = 0.\]
Hence the boundary depth is
\[\min(a_2 - a_3, a_4 - a_3) = \min(|A_3|,|A_6|).\]
By taking
\[|A_3| = |A_6| = 1/2 - 2 \epsilon,\]
\[|A_1| = |A_2| = |A_4| = |A_5| = \epsilon,\]
one gets that the boundary depth is $1/2 - 2 \epsilon$, and all the constraints are satisfied.


\subsection{Relative ball embeddings}

In this section we show the existence of symplectic ball embeddings into a symplectic manifold $M$ relative to a Lagrangian submanifold $L,$ satisfying Lemma \ref{lem:lower bound on beta}, and hence providing lower bounds on $\overline{\beta}(M,L).$
In particular this proves the sharpness part of Theorem \ref{thm:slimLagrExamples}.  In this section, for $R> 0,$ we denote by $D(R)$ the standard symplectic ball of capacity $R$ (corresponding to radius $r=\sqrt{R/\pi}$).

\subsubsection{$S^1 \subset S^2$}\label{subsec:equator-sphere}

Recall that we let the total area of $S^2$ be equal to $1.$ Taking a point $p$ on $L=S^1,$ it is easy to see that $S^2\setminus \{p\}$ is symplectomorphic to the standard open disk $D$ of area $1,$ by a symplectomorphism $\varphi$ sending $S^1 \setminus \{p\}$ to the real axis intersected with $D.$ Taking the preimage by $\varphi$ of the closed disk $\overline{D}(1/2)$ yields the required embedding. This example generalizes to produce an open ball $D$ of capacity $1$ symplectically embedded in $\C P^n$ isomorphically onto $D_{st} = \C P^n \setminus \C P^{n-1}$ intersecting $\R P^n$ exactly along the real part. We call $D_{st}(1-\epsilon)$ the restriction of this embedding to $D(1-\epsilon).$ Together with Lemma \ref{lem:lower bound on beta} this shows that the bound in Case (\ref{thmBDbounds-case:RPn}) for $n=1,$ of Theorem \ref{thm:slimLagrExamples} is sharp.

\subsubsection{$S^1 \subset D^*S^1 = [-1/2,1/2] \times S^1$}

It is easy to construct the embedding of a ball of capacity $1-\eps$ relative to $L = S^1 \times \{0\}$ by considering $[-1/2,1/2] \times \{p\},$ for a point $p \in S^1.$ This, together with Lemma \ref{lem:lower bound on beta} this gives the lower bound \[\overline{\beta}(M,L) \geq 1/2\] in this case. This generalizes to the existence of a symplectic embedding of a ball of capacity $1-\eps$ inside $\C P^n \setminus Q^{n-1}$ relative to $\R P^n.$ The construction follows \cite[Section 3]{McDuffPolterovichPacking}. One considers the holomorphic automorphism $\sigma_t([z_0,\ldots,z_n]) = [z_0,\ldots,t^{-1}\cdot z_n]$ for $t > 0,$ notes that it preserves $L = \R P^n,$ and that for $t = T$ sufficiently large, $\sigma_T(Q^{n-1})$ lies in the complement of the ball $\overline{D}_{st}(1-\epsilon)$ from the above example. Considering the preimage $\overline{D}_0(1-\epsilon) = (\sigma_T)^{-1} (\overline{D}_{st}(1-\epsilon)),$ and observing that $\sigma_T^{*} \omega_{FS,n}$ is a K\"{a}hler form cohomologous to $\omega_{FS,n},$ we finish the construction by applying Moser's method relating these two forms, which can be arranged to produce diffeomorphisms preserving $L.$ In conclusion we obtain the embedding of a ball $\overline{D}(1-\epsilon)$ of capacity $1-\epsilon$ relative to $L.$ 

\begin{rmk}
Section \ref{sect:combinatorial} describes a combinatorial example showing the lower bound $\overline{\beta}(M,L) \geq 1/2.$ Moreover, it is interesting to observe that this lower bound is twice larger than the one in Case (\ref{thmBDbounds-case:RPn}) for $n=1,$ of Theorem \ref{thm:slimLagrExamples}.
\end{rmk}

\subsubsection{$\C P^n \cong \Delta \subset \C P^n \times (\C P^n)^{-}$}

Observe that $\C P^n \setminus \C P^{n-1},$ where we take $\C P^{n-1} = \{z_n = 0\},$ is symplectomorphic to the open $2n$-dimensional ball $B$ of capacity $1.$ We shall construct the required ball embedding inside $B \times B^{-}$ so that it intersect $\Delta_B$ exactly along the real part. First of all $B \times B^{-}$ contains a symplectically embedded copy of the open $4n$-dimensional disk $D$ of capacity $1.$ By a suitable unitary transformation it can be reparametrized to intersect $\Delta_B \subset B \times B^{-}$ exactly along the real part. Then taking the preimage of $\overline{D}(\frac{n}{n+1})$ yields the required embedding into $M = \C P^n \times (\C P^n)^{-}.$ We note that this provides the lower bound $\frac{n}{2(n+1)}$ on $\overline{\beta}(M,L)$, which is twice smaller than the upper bound from Theorem \ref{thm:slimLagrExamples}.

However, running the same argument as that for Lemma \ref{lem:lower bound on beta}, but in the absolute case, which can in fact be extracted from \cite{stevenson}, and applying it to the ball $\overline{D}_{st}(\frac{n}{n+1}) \subset \C P^n \setminus \C P^{n-1}$ yields, given $\eps >0,$ a Hamiltonian $H \in \cH_{\C P^n},$ with $\beta(H) > \frac{n}{n+1} - \epsilon.$ Applying Section \ref{sect: abs as rel}, we obtain a Hamiltonian $\hat{H} \in \cH_{\C P^n \times (\C P^n)^{-}}$ with $\beta(L,\hat{H}) > \frac{n}{n+1} - \epsilon.$ We note that instead of the disk $\overline{D}^{4n}(\frac{n}{n+1}),$ the Hamiltonian $\hat{H}$ is supported in the product $\overline{D}_{st}(\frac{n}{n+1}) \times \overline{D}_{st}(\frac{n}{n+1})^{-}.$ This shows that the bound in Case (\ref{thmBDbounds-case:CPn}) of Theorem \ref{thm:slimLagrExamples} is sharp.

\subsubsection{$S^n \subset Q^n$}\label{sect: embedding sphere in quadric}

As the cases $n=1,2$ were covered in previous examples, let $n\geq 3.$ The quadric $Q^n$ admits a degree $2$ branched cover $\pi: Q^n \to \C P^n$ ramified over $Q^{n-1} \subset \C P^n,$ with branch locus $Q^{n-1} \subset Q^n,$ such that $L = S^n$ is the preimage of $\R P^n \subset \C P^n$ \cite{Audin-Examples}. An elementary calculation shows that the restriction $\om_{FS,n+1}|_{Q^n}$ of the Fubini-Study form on $\C P^{n+1}$ to the quadric is cohomologous to $\pi^*(\om_{FS,n}).$ Hence fixing $\eps>0,$ and considering the preimage under $\pi$ of the  ball $\overline{D}(1-\epsilon)$ of capacity $1-\epsilon,$ embedded relative to $\R P^n$ inside $\C P^n \setminus Q^{n-1}$
we obtain, by an application of the Moser method relating $\pi^*(\om_{FS,n})$ and $\om_{FS,n+1}|_{Q^n},$ which can be arranged to produce diffeomorphisms preserving $L,$ a symplectic embedding of two disjoint balls of capacity $1-\epsilon$ each, with real part of each lying exactly on $L.$ We note that as $L$ is simply connected, $A_L = A_{Q^n} = 1$ in this case. We refer to \cite[Section 3]{McDuffPolterovichPacking} for related constructions.

Applying Lemma \ref{lem:lower bound on beta} to one of the balls of capacity $1-\epsilon,$ we obtain a Hamiltonian $H \in \cH_{Q^n}$ with $\beta(L,H) > \frac{1}{2} - \epsilon.$ Since $\epsilon > 0$ was arbitrary, this proves that the bound in Case (\ref{thmBDbounds-case:Sn}) of Theorem \ref{thm:slimLagrExamples} is sharp.

\begin{rmk}
We expect the upper bound in Theorem \ref{thm:slimLagrExamples} for $L=\R P^n,$ $n>1,$ to be sharp, however we were not able to prove this. Note that Lemma \ref{lem:lower bound on beta} does not apply in this case, because $L$ is monotone, with $n>1,$ and $\gcd(N_L,\dim L) = \gcd(n+1,n) = 1.$ The best lower bound we were able to produce in this case, by very slightly modifying the argument for case $n=1$ of Lemma \ref{lem:lower bound on beta}, is $\frac{1}{4} \leq \overline{\beta}(M,L),$ and $\frac{1}{4}$ is strictly smaller than the upper bound $\frac{n}{2n+2}$ for $n>1.$ Somewhat similarly, in the case of $\bH P^n \subset Gr(2,2n+2)$ the explicit embedding of \cite{GcLu} of a ball of capacity $A=\omega([\C P^1])$ adapted in such a way as to intersect $L = \bH P^n$ precisely along its real part, yields the lower bound $\frac{1}{2} A,$ that is strictly smaller, for $n>1,$ than the upper bound $\frac{n}{n+1} A$ that we have produced. 
\end{rmk}

\section{Discussion}

We collect a few remarks on related topics and possible future extensions of our results.

\begin{rmk}
 Considering the boundary depth $\beta$ of a Hamiltonian diffeomorphism of the sphere $S^2,$ for instance, or that of the Lagrangian equator $L_{eq}$ in $S^2$ with a Hamiltonian perturbation, one may wonder whether one can show the existence of a sequence in $\Ham(S^2)$ or in the Hamiltonian orbit of $L_{eq}$ on which $\beta$ tends to infinity. By \cite{UsherBD1,UsherBD2} this would have implied that such a sequence is not contained in a $R$-neighborhood in the Hofer metric of a given quasi-geodesic, or respectively $L_{eq},$ for each $R>0.$ This would have provided an answer to a well-known question of Polterovich and Kapovich from 2006. However, we have shown that this approach, per se, is rather naive, by providing a uniform upper bound on the boundary depth in these two cases. 
\end{rmk}

\begin{rmk}\label{rmk:Chekanov-transversality}
	We discuss the generality in Theorem \ref{thm:Chekanov}. The assumption that $(M,\omega)$ is monotone is not essential for the proof, as soon as the absolute Hamiltonian Floer theory, with associative pair-of-pants products, is well-defined with coefficients in $\mathbb{F}_2$. If in addition $L$ is orientable, one expects this statement to hold in full generality, with coefficients in a field of characteristic $0,$ by techniques of virtual fundamental cycles in Hamiltonian Floer theory (see \cite{FukayaOno1,LiuTian,PardonHam},\cite{FO3:book-vol12, HWZ-GW, HWZ-SC, HWZ-PF17}, and see also \cite{H-PF17,McDuff-K17} for a recent description of the state of the art and further references). In case when the cohomology class of symplectic form is rational, one may instead invoke classical transversality in a suitable way, following \cite{CharestWoodward-Floer17}. Finally, in case when $L$ is connected, orientable, and relatively spin,  following \cite{STV-Flux18} one may improve the bound $\hbar(J,L)$ to the invariant $\Psi(L),$ introduced {\it ibid.}, that depends only on $L \subset M$  (using virtual techniques for general closed $(M,\omega),$ or \cite{CharestWoodward-Floer17,CharestWoodward-Fukaya} when $(M,\omega)$ is rational). Moreover, one can improve the lower bound $\#(L \cap \phi(L)) \geq 1$ to $\#(L \cap \phi(L)) \geq \mathrm{cl}(L,\bK) + 1,$ via the cup-length of $L$ for the suitable coefficient field $\bK,$ in the general, possibly non-transverse case, following \cite{FloerCuplength,HoferCuplength,SchwarzCuplength}. These variations would require digressions on perturbation techniques, and on Lusternik-Schnirelman results in Floer theory, and hence shall appear elsewhere.
\end{rmk}

\begin{rmk}
	We remark that the class of examples in Theorem \ref{thm:slimLagrExamples}, $\RP^n, \C P^n, \HH P^n, S^n,$ consists of compact symmetric spaces of rank one. These are Zoll manifolds, that is they admit metrics all of whose prime geodesics are closed, and have the same period (cf. \cite{BesseClosed}). Moreover, their Lagrangian embeddings into monotone symplectic manifolds that we consider above can all be obtained by the so-called Zoll cut (see \cite{Audin-Zoll}), which is a particular case of the symplectic cut construction \cite{Lerman-cuts}, applied to the cotangent bundle of these manifolds. The remaining symmetric space with the same property is $\mathbb{O} P^2,$ the Cayley projective plane, and it should satisfy the conditions of Proposition \ref{Prop:slimLagr}, with respect to the Zoll cut embedding. However, as this case would require a digression on the octonions and exceptional Lie groups, it shall appear elsewhere.
	
\end{rmk}

\begin{rmk}
Finally, we expect it to be possible to extend Theorems \ref{thm:spectral_norm non-deg} and \ref{theorem: Lipschitz wide ext} to the case when $(M,\omega)$ is (monotone or symplectically aspherical) open tame at infinity, or compact with convex boundary, by working everywhere in the language of Floer cohomology instead of homology. 
\end{rmk}

\bibliography{bibliographyBBD}

\begin{thebibliography}{100}

\bibitem{AbouzaidBook}
M.~Abouzaid.
\newblock Symplectic cohomology and {V}iterbo's theorem.
\newblock In {\em Free loop spaces in geometry and topology}, volume~24 of {\em
  IRMA Lect. Math. Theor. Phys.}, pages 271--485. Eur. Math. Soc., Z\"urich,
  2015.

\bibitem{AK-simplehomotopy}
M.~Abouzaid and T.~Kragh.
\newblock Simple homotopy equivalence of nearby {L}agrangians.
\newblock Preprint, arXiv:1603.05431 [math.SG], 2016.

\bibitem{AbouzaidSeidel}
M.~{Abouzaid} and P.~{Seidel}.
\newblock {An open string analogue of Viterbo functoriality.}
\newblock {\em {Geom. Topol.}}, 14(2):627--718, 2010.

\bibitem{AS-KhovanovFloer}
M.~Abouzaid and I.~Smith.
\newblock Khovanov homology from {F}loer cohomology.
\newblock {\em J. Amer. Math. Soc.}, 2018.
\newblock Available at https://doi.org/10.1090/jams/902.

\bibitem{albers08}
P.~Albers.
\newblock A {L}agrangian {P}iunikhin-{S}alamon-{S}chwarz morphism and two
  comparison homomorphisms in {F}loer homology.
\newblock {\em Int. Math. Res. Not. IMRN}, (4):Art. ID rnm134, 56, 2008.

\bibitem{Team}
D.~{Alvarez-Gavela}, V.~{Kaminker}, A.~{Kislev}, K.~{Kliakhandler},
  A.~{Pavlichenko}, L.~{Rigolli}, D.~{Rosen}, O.~{Shabtai}, B.~{Stevenson}, and
  J.~{Zhang}.
\newblock Embeddings of free groups into asymptotic cones of hamiltonian
  diffeomorphisms.
\newblock {\em J. Topol. Anal.}, Online Ready, 2018.
\newblock Available at https://doi.org/10.1142/S1793525319500213.

\bibitem{Audin-Examples}
M.~Audin.
\newblock On the topology of {L}agrangian submanifolds. {E}xamples and
  counter-examples.
\newblock {\em Port. Math. (N.S.)}, 62(4):375--419, 2005.

\bibitem{Audin-Zoll}
M.~Audin.
\newblock Lagrangian skeletons, periodic geodesic flows and symplectic
  cuttings.
\newblock {\em Manuscripta Math.}, 124(4):533--550, 2007.

\bibitem{auroux}
D.~Auroux.
\newblock A beginner's introduction to {F}ukaya categories.
\newblock In {\em Contact and symplectic topology}, volume~26 of {\em Bolyai
  Soc. Math. Stud.}, pages 85--136. J\'anos Bolyai Math. Soc., Budapest, 2014.

\bibitem{Barannikov}
S.~A. Barannikov.
\newblock The framed {M}orse complex and its invariants.
\newblock In {\em Singularities and bifurcations}, volume~21 of {\em Adv.
  Soviet Math.}, pages 93--115. Amer. Math. Soc., Providence, RI, 1994.

\bibitem{BarraudCorneaSerre}
J.-F. Barraud and O.~Cornea.
\newblock Lagrangian intersections and the {S}erre spectral sequence.
\newblock {\em Ann. of Math.}, 166:657--722, 2007.

\bibitem{BauLes}
U.~Bauer and M.~Lesnick.
\newblock Induced matchings and the algebraic stability of persistence
  barcodes.
\newblock {\em J. Comput. Geom.}, 6(2):162--191, 2015.

\bibitem{BesseClosed}
A.~L. Besse.
\newblock {\em Manifolds all of whose geodesics are closed}, volume~93 of {\em
  Ergebnisse der Mathematik und ihrer Grenzgebiete [Results in Mathematics and
  Related Areas]}.
\newblock Springer-Verlag, Berlin-New York, 1978.
\newblock With appendices by D. B. A. Epstein, J.-P. Bourguignon, L.
  B\'erard-Bergery, M. Berger and J. L. Kazdan.

\bibitem{Biran:Nonintersections}
P.~Biran.
\newblock Lagrangian non-intersections.
\newblock {\em Geom. Funct. Anal.}, 16(2):279--326, 2006.

\bibitem{Bi-Co:qrel-long}
P.~Biran and O.~Cornea.
\newblock Quantum structures for {L}agrangian submanifolds.
\newblock {\em Preprint}, 2007.
\newblock Available at arXiv:0708.4221 [math.SG].

\bibitem{BiranCorneaLagrangianQuantumHomology}
P.~Biran and O.~Cornea.
\newblock A {L}agrangian quantum homology.
\newblock In {\em New perspectives and challenges in symplectic field theory},
  volume~49 of {\em CRM Proc. Lecture Notes}, pages 1--44. Amer. Math. Soc.,
  Providence, RI, 2009.

\bibitem{BiranCorneaRigidityUniruling}
P.~Biran and O.~Cornea.
\newblock Rigidity and uniruling for {L}agrangian submanifolds.
\newblock {\em Geom. Topol.}, 13(5):2881--2989, 2009.

\bibitem{BiranCorneaS-Fukaya}
P.~Biran, O.~Cornea, and E.~Shelukhin.
\newblock Lagrangian shadows and triangulated categories.
\newblock Preprint, arXiv:1806.06630 [math.SG], 2018.

\bibitem{BiranMembrez-Cubic}
P.~Biran and C.~Membrez.
\newblock The {L}agrangian cubic equation.
\newblock {\em Int. Math. Res. Not. IMRN}, (9):2569--2631, 2016.

\bibitem{BHS-spectrum}
L.~Buhovsky, V.~Huili\'{e}re, and S.~Seyfaddini.
\newblock The action spectrum and {$C^0$} symplectic topology.
\newblock Preprint, arXiv:1808.09790, 2018.

\bibitem{scl}
D.~{Calegari}.
\newblock {\em {scl.}}
\newblock Tokyo: Mathematical Society of Japan, 2009.

\bibitem{Carlsson}
G.~Carlsson.
\newblock Topology and data.
\newblock {\em Bull. Amer. Math. Soc. (N.S.)}, 46(2):255--308, 2009.

\bibitem{CZCG}
G.~Carlsson, A.~Zomorodian, A.~Collins, and L.~J. Guibas.
\newblock Persistence barcodes for shapes.
\newblock {\em International Journal of Shape Modeling}, 11(02):149--187, 2005.

\bibitem{CharestWoodward-Fukaya}
F.~Charest and C.~Woodward.
\newblock {Fukaya algebras via stabilizing divisors}.
\newblock {\em Preprint, arXiv:1505.08146 [math.SG]}.

\bibitem{CharestWoodward-Floer17}
F.~Charest and C.~Woodward.
\newblock Floer trajectories and stabilizing divisors.
\newblock {\em J. Fixed Point Theory Appl.}, 19(2):1165--1236, 2017.

\bibitem{Char}
F.~Charette.
\newblock A geometric refinement of a theorem of {C}hekanov.
\newblock {\em J. Symplectic Geom.}, 10(3):475--491, 2012.

\bibitem{charetteCornea}
F.~Charette and O.~Cornea.
\newblock Categorification of {S}eidel's representation.
\newblock {\em Israel J. Math.}, 211(1):67--104, 2016.

\bibitem{CdSGO-structure}
F.~Chazal, V.~de~Silva, M.~Glisse, and S.~Oudot.
\newblock {\em The structure and stability of persistence modules}.
\newblock SpringerBriefs in Mathematics. Springer, [Cham], 2016.

\bibitem{Chekanov}
Y.~Chekanov.
\newblock Lagrangian intersections, symplectic energy, and areas of holomorphic
  curves.
\newblock {\em Duke Math. J.}, (1):213--226, 1998.

\bibitem{ChekanovFinsler}
Y.~V. Chekanov.
\newblock Invariant {F}insler metrics on the space of {L}agrangian embeddings.
\newblock {\em Math. Z.}, 234(3):605--619, 2000.

\bibitem{CEH-stability}
D.~Cohen-Steiner, H.~Edelsbrunner, and J.~Harer.
\newblock Stability of persistence diagrams.
\newblock {\em Discrete Comput. Geom.}, 37(1):103--120, 2007.

\bibitem{CorneaRanicki}
O.~Cornea and A.~Ranicki.
\newblock Rigidity and gluing for {M}orse and {N}ovikov complexes.
\newblock {\em J. Eur. Math. Soc. (JEMS)}, 5(4):343--394, 2003.

\bibitem{CorneaS}
O.~Cornea and E.~Shelukhin.
\newblock Lagrangian cobordism and metric invariants.
\newblock {\em J. Diff. Geom.}
\newblock to appear. Available at arXiv:1511.08550 [math.SG].

\bibitem{CrawBo}
W.~Crawley-Boevey.
\newblock Decomposition of pointwise finite-dimensional persistence modules.
\newblock {\em J. Algebra Appl.}, 14(5):1550066, 8, 2015.

\bibitem{CombinatorialFloer}
V.~de~Silva, J.~W. Robbin, and D.~A. Salamon.
\newblock Combinatorial {F}loer homology.
\newblock {\em Mem. Amer. Math. Soc.}, 230(1080):v+114, 2014.

\bibitem{ELZ-simplification}
H.~Edelsbrunner, D.~Letscher, and A.~Zomorodian.
\newblock Topological persistence and simplification.
\newblock {\em Discrete Comput. Geom.}, 28(4):511--533, 2002.
\newblock Discrete and computational geometry and graph drawing (Columbia, SC,
  2001).

\bibitem{Entov-ICM}
M.~Entov.
\newblock Quasi-morphisms and quasi-states in symplectic topology.
\newblock In {\em Proceedings of the {I}nternational {C}ongress of
  {M}athematicians---{S}eoul 2014. {V}ol. {II}}, pages 1147--1171. Kyung Moon
  Sa, Seoul, 2014.

\bibitem{EntovPolterovich-semisimp}
M.~{Entov} and L.~{Polterovich}.
\newblock {Symplectic quasi-states and semi-simplicity of quantum homology.}
\newblock In {\em {Toric topology. International conference, Osaka, Japan, May
  28--June 3, 2006}}.

\bibitem{EntovPolterovichCalabiQM}
M.~Entov and L.~Polterovich.
\newblock Calabi quasimorphism and quantum homology.
\newblock {\em Int. Math. Res. Not.}, (30):1635--1676, 2003.

\bibitem{EntovPolterovich-rigid}
M.~Entov and L.~Polterovich.
\newblock Rigid subsets of symplectic manifolds.
\newblock {\em Compos. Math.}, 145(3):773--826, 2009.

\bibitem{FloerCuplength}
A.~{Floer}.
\newblock {Cuplength estimates on Lagrangian intersections.}
\newblock {\em {Comm. Pure Appl. Math.}}, 42:335--356, 1989.

\bibitem{Fraser}
M.~{Fraser}.
\newblock {Contact spectral invariants and persistence}.
\newblock Preprint arXiv:1502.05979, 2015.

\bibitem{FukayaImmersed}
K.~Fukaya.
\newblock Unobstructed immersed {L}agrangian correspondence and filtered a
  infinity functor.
\newblock Preprint, arXiv:1706.02131 [math.SG].

\bibitem{FO3-spec}
K.~Fukaya, Y.-G. Oh, H.~Ohta, and O.~K.
\newblock Spectral invariants with bulk, quasimorphisms and {L}agrangian
  {F}loer theory.
\newblock Preprint arXiv:1105.5123 [math.SG], 2011.

\bibitem{FO3:book-vol12}
K.~Fukaya, Y.-G. Oh, H.~Ohta, and K.~Ono.
\newblock {\em Lagrangian intersection {F}loer theory: anomaly and obstruction.
  {P}arts {I} and {II}}, volume 46.1 and 46.2 of {\em AMS/IP Studies in
  Advanced Mathematics}.
\newblock American Mathematical Society, Providence, RI, 2009.

\bibitem{FOOO-polydiscs}
K.~Fukaya, Y.-G. Oh, H.~Ohta, and K.~Ono.
\newblock Displacement of polydisks and {L}agrangian {F}loer theory.
\newblock {\em J. Symplectic Geom.}, 11(2):231--268, 2013.

\bibitem{FukayaOno1}
K.~Fukaya and K.~Ono.
\newblock Arnold conjecture and {G}romov-{W}itten invariant.
\newblock {\em Topology}, 38(5):933--1048, 1999.

\bibitem{GPS-covariantly}
S.~Ganatra, J.~Pardon, and V.~Shende.
\newblock Covariantly functorial wrapped {F}loer theory on {L}iouville sectors.
\newblock 2017.
\newblock Preprint arXiv:1706.03152 [math.SG].

\bibitem{Ghrist}
R.~Ghrist.
\newblock Barcodes: the persistent topology of data.
\newblock {\em Bull. Amer. Math. Soc. (N.S.)}, 45(1):61--75, 2008.

\bibitem{GG-pseudorotations}
V.~Ginzburg and B.~G\"{u}rel.
\newblock Hamiltonian pseudo-rotations of projective spaces.
\newblock {\em Invent. Math.}, Online First:1--50, 2018.
\newblock https://doi.org/10.1007/s00222-018-0818-9.

\bibitem{GriffithsMorgan-book}
P.~Griffiths and J.~Morgan.
\newblock {\em Rational homotopy theory and differential forms}, volume~16 of
  {\em Progress in Mathematics}.
\newblock Springer, New York, second edition, 2013.

\bibitem{HoferCuplength}
H.~{Hofer}.
\newblock {Lusternik-Schnirelman theory for Lagrangian intersections.}
\newblock {\em {Ann. Inst. Henri Poincar\'{e}}}, 5(5):465--499, 1988.

\bibitem{HoferMetric}
H.~Hofer.
\newblock On the topological properties of symplectic maps.
\newblock {\em Proc. Roy. Soc. Edinburgh Sect. A}, 115(1-2):25--38, 1990.

\bibitem{HWZ-SC}
H.~Hofer, K.~Wysocki, and E.~Zehnder.
\newblock sc-smoothness, retractions and new models for smooth spaces.
\newblock {\em Discrete Contin. Dyn. Syst.}, 28(2):665--788, 2010.

\bibitem{HWZ-GW}
H.~Hofer, K.~Wysocki, and E.~Zehnder.
\newblock Applications of polyfold theory {I}: {T}he polyfolds of
  {G}romov-{W}itten theory.
\newblock {\em Mem. Amer. Math. Soc.}, 248(1179), 2017.

\bibitem{HWZ-PF17}
H.~Hofer, K.~Wysocki, and E.~Zehnder.
\newblock Polyfolds and {F}redholm theory.
\newblock 2017.
\newblock Preprint, arXiv:1707.08941 [math.FA].

\bibitem{H-PF17}
H.~H.~W. Hofer.
\newblock Polyfolds and {F}redholm theory.
\newblock In {\em Lectures on geometry}, Clay Lect. Notes, pages 87--158.
  Oxford Univ. Press, Oxford, 2017.

\bibitem{huLalonde}
S.~Hu and F.~Lalonde.
\newblock A relative {S}eidel morphism and the {A}lbers map.
\newblock {\em Trans. Amer. Math. Soc.}, 362(3):1135--1168, 2010.

\bibitem{huLalondeLeclercq}
S.~Hu, F.~Lalonde, and R.~Leclercq.
\newblock Homological {L}agrangian monodromy.
\newblock {\em Geom. Topol.}, 15(3):1617--1650, 2011.

\bibitem{HLS-coisotropic}
V.~Humili\`ere, R.~Leclercq, and S.~Seyfaddini.
\newblock Coisotropic rigidity and {$C^0$}-symplectic geometry.
\newblock {\em Duke Math. J.}, 164(4):767--799, 2015.

\bibitem{hyvrier}
C.~Hyvrier.
\newblock Lagrangian circle actions.
\newblock {\em Algebr. Geom. Topol.}, 16(3):1309--1342, 2016.

\bibitem{Kat-Mil:PSS}
J.~Kati\'c and D.~Milinkovi\'c.
\newblock {P}iunikhin-{S}alamon-{S}chwarz isomorphisms for {L}agrangian
  intersections.
\newblock {\em Differential Geom. Appl.}, 22(2):215--227, 2005.
\newblock MR2122744, Zbl 1064.37041.

\bibitem{Lalonde-McDuff-Energy}
F.~Lalonde and D.~McDuff.
\newblock The geometry of symplectic energy.
\newblock {\em Ann. of Math. (2)}, 141(2):349--371, 1995.

\bibitem{Leclercq-spectral}
R.~Leclercq.
\newblock Spectral invariants in {L}agrangian {F}loer theory.
\newblock {\em J. Mod. Dyn.}, 2(2):249--286, 2008.

\bibitem{LeclercqZapolsky}
R.~{Leclercq} and F.~{Zapolsky}.
\newblock Spectral invariants for monotone {L}agrangians.
\newblock {\em J. Topol. Anal.}
\newblock Available at https://doi.org/10.1142/S1793525318500267.

\bibitem{LekiliPascaleff-equivariant}
Y.~Lekili and J.~Pascaleff.
\newblock Floer cohomology of {$\mathfrak{g}$}-equivariant {L}agrangian branes.
\newblock {\em Compos. Math.}, 152(5):1071--1110, 2016.

\bibitem{Lerman-cuts}
E.~Lerman.
\newblock Symplectic cuts.
\newblock {\em Math. Res. Lett.}, 2(3):247--258, 1995.

\bibitem{LSV-conj}
F.~Leroux, S.~Seyfaddini, and C.~Viterbo.
\newblock Barcodes and area-preserving homeomorphisms.
\newblock Preprint arXiv:1810.03139 [math.SG], 2018.

\bibitem{LisiRieser}
S.~Lisi and A.~Rieser.
\newblock Coisotropic {H}ofer-{Z}ehnder capacities and non-squeezing for
  relative embeddings.
\newblock 2013.
\newblock Preprint, arXiv:1312.7334 [math.SG].

\bibitem{LiuTian}
G.~Liu and G.~Tian.
\newblock Floer homology and {A}rnold conjecture.
\newblock {\em J. Differential Geom.}, 49(1):1--74, 1998.

\bibitem{MelissaLiu}
M.~Liu.
\newblock {Moduli of $J$-holomorphic curves with Lagrangian boundary conditions
  and open Gromov-Witten invariants for an $S^1$-equivariant pair}.
\newblock Preprint arXiv:math/0210257, 2004.

\bibitem{GcLu}
G.~Lu.
\newblock Gromov-{W}itten invariants and pseudo symplectic capacities.
\newblock {\em Israel J. Math.}, 156:1--63, 2006.

\bibitem{McDuff-K17}
D.~McDuff.
\newblock {Constructing the virtual fundamental class of a Kuranishi atlas}.
\newblock 2017.
\newblock Preprint, arXiv:1708.01127 [math.SG].

\bibitem{McDuffPolterovichPacking}
D.~McDuff and L.~Polterovich.
\newblock Symplectic packings and algebraic geometry.
\newblock {\em Invent. Math.}, 115(3):405--434, 1994.
\newblock With an appendix by Yael Karshon.

\bibitem{McDuffSalamonBIG}
D.~{McDuff} and D.~{Salamon}.
\newblock {\em {$J$-holomorphic curves and symplectic topology. 2nd ed.}},
  volume~52.
\newblock Providence, RI: American Mathematical Society (AMS), 2nd ed. edition,
  2012.

\bibitem{McDuffTolman}
D.~McDuff and S.~Tolman.
\newblock Topological properties of {H}amiltonian circle actions.
\newblock {\em IMRP Int. Math. Res. Pap.}

\bibitem{MonznerVicheryZapolsky}
A.~Monzner, N.~Vichery, and F.~Zapolsky.
\newblock Partial quasimorphisms and quasistates on cotangent bundles, and
  symplectic homogenization.
\newblock {\em J. Mod. Dyn.}, 6(2):205--249, 2012.

\bibitem{oancea}
A.~Oancea.
\newblock A survey of {F}loer homology for manifolds with contact type boundary
  or symplectic homology.
\newblock In {\em Symplectic geometry and {F}loer homology. {A} survey of the
  {F}loer homology for manifolds with contact type boundary or symplectic
  homology}, volume~7 of {\em Ensaios Mat.}, pages 51--91. Soc. Brasil. Mat.,
  Rio de Janeiro, 2004.

\bibitem{Oh-RiemannHilbert}
Y.-G. Oh.
\newblock Riemann-{H}ilbert problem and application to the perturbation theory
  of analytic discs.
\newblock {\em Kyungpook Math. J.}, 35(1):39--75, 1995.

\bibitem{Oh:SpectralSequence}
Y.-G. Oh.
\newblock Floer cohomology, spectral sequences, and the {M}aslov class of
  {L}agrangian embeddings.
\newblock {\em Internat. Math. Res. Notices}, 1996(7):305--346, 1996.

\bibitem{Oh-spec-lagr}
Y.-G. Oh.
\newblock Symplectic topology as the geometry of action functional. {II}.
  {P}ants product and cohomological invariants.
\newblock {\em Comm. Anal. Geom.}, 7(1):1--54, 1999.

\bibitem{Oh-Turkish}
Y.-G. Oh.
\newblock Floer homology and its continuity for non-compact {L}agrangian
  submanifolds.
\newblock {\em Turkish J. Math.}, 25(1):103--124, 2001.

\bibitem{Oh-chain}
Y.-G. Oh.
\newblock Chain level {F}loer theory and {H}ofer's geometry of the
  {H}amiltonian diffeomorphism group.
\newblock {\em Asian J. Math.}, 6(4):579--624, 2002.

\bibitem{Oh-construction}
Y.-G. Oh.
\newblock Construction of spectral invariants of {H}amiltonian paths on closed
  symplectic manifolds.
\newblock In {\em The breadth of symplectic and {P}oisson geometry}, volume 232
  of {\em Progr. Math.}, pages 525--570. Birkh\"auser Boston, Boston, MA, 2005.

\bibitem{Oh-specnorm}
Y.-G. {Oh}.
\newblock {Spectral invariants, analysis of the Floer moduli space, and
  geometry of the Hamiltonian diffeomorphism group.}
\newblock {\em {Duke Math. J.}}, 130(2):199--295, 2005.

\bibitem{OhBook}
Y.-G. Oh.
\newblock {\em Symplectic topology and {F}loer homology. {V}ols. 1 and 2},
  volume 27 and 28 of {\em New Mathematical Monographs}.
\newblock Cambridge University Press, Cambridge, 2015.
\newblock Symplectic geometry and pseudoholomorphic curves.

\bibitem{Oh-Zhu:PSS}
Y.-G. Oh and K.~Zhu.
\newblock Floer trajectories with immersed nodes and scale-dependent gluing.
\newblock {\em J. Symplectic Geom.}, 9:483--636, 2011.
\newblock MR2900788, Zbl 1257.53117.

\bibitem{PardonHam}
J.~Pardon.
\newblock An algebraic approach to virtual fundamental cycles on moduli spaces
  of pseudo-holomorphic curves.
\newblock {\em Geom. Topol.}, 20(2):779--1034, 2016.

\bibitem{PSS}
S.~Piunikhin, D.~Salamon, and M.~Schwarz.
\newblock Symplectic {F}loer-{D}onaldson theory and quantum cohomology.
\newblock In {\em Contact and symplectic geometry ({C}ambridge, 1994)},
  volume~8 of {\em Publ. Newton Inst.}, pages 171--200. Cambridge Univ. Press,
  Cambridge, 1996.

\bibitem{Polterovich-isotopy}
L.~Polterovich.
\newblock Symplectic displacement energy for {L}agrangian submanifolds.
\newblock {\em Ergodic Theory Dynam. Systems}, 13(2):357--367, 1993.

\bibitem{P-book}
L.~Polterovich.
\newblock {\em The geometry of the group of symplectic diffeomorphisms}.
\newblock Lectures in Mathematics ETH Z\"urich. Birkh\"auser Verlag, Basel,
  2001.

\bibitem{PolterovichRosen}
L.~Polterovich and D.~Rosen.
\newblock {\em Function theory on symplectic manifolds}, volume~34 of {\em CRM
  Monograph Series}.
\newblock American Mathematical Society, Providence, RI, 2014.

\bibitem{PolShe}
L.~Polterovich and E.~Shelukhin.
\newblock Autonomous {H}amiltonian flows, {H}ofer's geometry and persistence
  modules.
\newblock {\em Selecta Math. (N.S.)}, 22(1):227--296, 2016.

\bibitem{PolSheSto}
L.~Polterovich, E.~Shelukhin, and V.~Stojisavljevi\'c.
\newblock Persistence modules with operators in {M}orse and {F}loer theory.
\newblock {\em Mosc. Math. J.}, 17(4):757--786, 2017.

\bibitem{RobbinSalamonIndex}
J.~Robbin and D.~Salamon.
\newblock The {M}aslov index for paths.
\newblock {\em Topology}, 32(4):827--844, 1993.

\bibitem{SchwarzCuplength}
M.~{Schwarz}.
\newblock {A quantum cup-length estimate for symplectic fixed points.}
\newblock {\em {Invent. Math.}}, 133:353--397, 1998.

\bibitem{Schwarz:action-spectrum}
M.~Schwarz.
\newblock On the action spectrum for closed symplectically aspherical
  manifolds.
\newblock {\em Pacific J. Math.}, 193(2):419--461, 2000.

\bibitem{SeidelGraded}
P.~{Seidel}.
\newblock {Graded Lagrangian submanifolds.}
\newblock {\em {Bull. Soc. Math. Fr.}}

\bibitem{seidelInvertibles}
P.~Seidel.
\newblock {$\pi_1$} of symplectic automorphism groups and invertibles in
  quantum homology rings.
\newblock {\em Geom. Funct. Anal.}, 7(6):1046--1095, 1997.

\bibitem{SeidelBook}
P.~Seidel.
\newblock {\em Fukaya categories and {P}icard-{L}efschetz theory}.
\newblock Zurich Lectures in Advanced Mathematics. European Mathematical
  Society (EMS), Z\"urich, 2008.

\bibitem{SeidelDisjoinable}
P.~Seidel.
\newblock Disjoinable lagrangian spheres and dilations.
\newblock {\em Invent. Math.}, 197(2):299--359, 2014.

\bibitem{SeyfaddiniC0Limits}
S.~Seyfaddini.
\newblock {$C^0$}-limits of {H}amiltonian paths and the {O}h-{S}chwarz spectral
  invariants.
\newblock {\em Int. Math. Res. Not. IMRN}, (21):4920--4960, 2013.

\bibitem{ES-Viterbo}
E.~Shelukhin.
\newblock In preparation.

\bibitem{ShelukhinHZ}
E.~Shelukhin.
\newblock In preparation.

\bibitem{STV-Flux18}
E.~Shelukhin, D.~Tonkonog, and R.~Vianna.
\newblock {Geometry of symplectic flux and Lagrangian torus fibrations}.
\newblock {\em Preprint, arXiv:1804.02044 [math.SG]}.

\bibitem{SmithPencils}
I.~Smith.
\newblock Floer cohomology and pencils of quadrics.
\newblock {\em Invent. Math.}, 189(1):149--250, 2012.

\bibitem{Steenrod-book}
N.~Steenrod.
\newblock {\em The {T}opology of {F}ibre {B}undles}.
\newblock Princeton Mathematical Series, vol. 14. Princeton University Press,
  Princeton, N. J., 1951.

\bibitem{stevenson}
B.~Stevenson.
\newblock A quasi-isometric embedding into the group of {H}amiltonian
  diffeomorphisms with {H}ofer's metric.
\newblock {\em Israel J. Math.}, 223(1):141--195, 2018.

\bibitem{KimuraStasheffVoronov}
J.~S. T.~Kimura and A.~Voronov.
\newblock On operad structures of moduli spaces and string theory.
\newblock {\em Comm. Math. Phys.}

\bibitem{Usher-spec}
M.~Usher.
\newblock Spectral numbers in {F}loer theories.
\newblock {\em Compos. Math.}, 144(6):1581--1592, 2008.

\bibitem{Usher-Sharp}
M.~{Usher}.
\newblock {The sharp energy-capacity inequality.}
\newblock {\em {Commun. Contemp. Math.}}, 12(3):457--473, 2010.

\bibitem{Usher-private}
M.~Usher.
\newblock Private communication, 2011.

\bibitem{UsherBD1}
M.~Usher.
\newblock Boundary depth in {F}loer theory and its applications to
  {H}amiltonian dynamics and coisotropic submanifolds.
\newblock {\em Israel J. Math.}, 184:1--57, 2011.

\bibitem{UsherBD2}
M.~Usher.
\newblock Hofer's metrics and boundary depth.
\newblock {\em Ann. Sci. \'Ec. Norm. Sup\'er. (4)}, 46(1):57--128 (2013), 2013.

\bibitem{Usher-trick}
M.~Usher.
\newblock Observations on the {H}ofer distance between closed subsets.
\newblock {\em Math. Res. Lett.}, 22(6):1805--1820, 2015.

\bibitem{UsherZhang}
M.~Usher and J.~Zhang.
\newblock Persistent homology and {F}loer--{N}ovikov theory.
\newblock {\em Geom. Topol.}, 20(6):3333--3430, 2016.

\bibitem{ViterboMaslov}
C.~{Viterbo}.
\newblock Intersection de sous-vari\'{e}t\'{e}s lagrangiennes, fonctionnelles
  d’action et indice des syst\`{e}mes hamiltoniens.
\newblock {\em Bull. Soc. Math. France}, 115(3):361--390, 1987.

\bibitem{Viterbo-specGF}
C.~Viterbo.
\newblock Symplectic topology as the geometry of generating functions.
\newblock {\em Math. Ann.}, 292(4):685--710, 1992.

\bibitem{Zap:Orient}
F.~Zapolsky.
\newblock {The Lagrangian Floer-quantum-PSS package and canonical orientations
  in Floer theory}.
\newblock {\em Preprint}.

\bibitem{Zhang}
J.~{Zhang}.
\newblock {p-cyclic persistent homology and Hofer distance}.
\newblock {\em J. Symp. Geom.}
\newblock to appear. Available at arXiv:1605.07594.

\bibitem{CarlZom}
A.~Zomorodian and G.~Carlsson.
\newblock Computing persistent homology.
\newblock {\em Discrete Comput. Geom.}, 33(2):249--274, 2005.

\end{thebibliography}

\end{document}